\def\@tocline#1#2#3#4#5#6#7{\relax
  \ifnum #1>\c@tocdepth 
  \else
    \par \addpenalty\@secpenalty\addvspace{#2}%
    \begingroup \hyphenpenalty\@M
    \@ifempty{#4}{%
      \@tempdima\csname r@tocindent\number#1\endcsname\relax
    }{%
      \@tempdima#4\relax
    }%
    \parindent\z@ \leftskip#3\relax \advance\leftskip\@tempdima\relax
    \rightskip\@pnumwidth plus4em \parfillskip-\@pnumwidth
    #5\leavevmode\hskip-\@tempdima
      \ifcase #1
       \or\or \hskip 1em \or \hskip 2em \else \hskip 3em \fi%
      #6\nobreak\relax
      \dotfill
      \hbox to\@pnumwidth{\@tocpagenum{#7}}
    \par
    \nobreak
    \endgroup
  \fi}
\newtheorem{theorem}{Theorem}[section]
\newtheorem{lemma}[theorem]{Lemma}
\newtheorem{proposition}[theorem]{Proposition}
\theoremstyle{definition}
\newtheorem{definition}[theorem]{Definition}
\newtheorem{remark}[theorem]{Remark}
\newcommand{\N}{{\mathbb N}}
\newcommand{\R}{{\mathbb R}}
\newcommand{\BB}{\mathbb{B}}
\newcommand{\tr}{\mathrm{tr}^*}
\newcommand{\beqn}{\begin{eqnarray}}
\newcommand{\eeqn}{\end{eqnarray}}   
\newcommand{\beq}{\begin{eqnarray*}}
\newcommand{\eeq}{\end{eqnarray*}}
\newcommand{\be}{\small\begin{equation}}
\newcommand{\bel}[1]{\small\begin{equation}\label{#1}}
\newcommand{\ee}{\end{equation}\normalsize}
\newcommand{\BA}{\begin{array}}
\newcommand{\EA}{\end{array}}
\newcommand{\BAN}{\renewcommand{\arraystretch}{1.2}
\setlength{\arraycolsep}{2pt}\begin{array}}
\newcommand{\BAV}[2]{\renewcommand{\arraystretch}{#1}
\setlength{\arraycolsep}{#2}\begin{array}}
\newcommand{\BSA}{\begin{subarray}}
\newcommand{\ESA}{\end{subarray}}
\newcommand{\BAL}{\begin{aligned}}
\newcommand{\EAL}{\end{aligned}}
\newcommand{\forevery}{\quad \forall}
\newcommand{\norm}[1]{\left \|#1\right \|}
\newcommand{\supp}{\mathrm{supp}\,}
\newcommand{\dist}{\mathrm{dist}\,}
\newcommand{\sign}{\mathrm{sign}}
\newcommand{\diam}{\mathrm{diam}\,}
\newcommand{\q}{\quad}
\newcommand{\prt}{\partial}
\newcommand{\tl}{\tilde}
\newcommand{\sbs}{\subset}
\newcommand\1{{\ensuremath {\mathds 1} }}
\def\dist{\mathrm{dist}}
\def\ga{\alpha}            
\def\gth{\theta}                         
\def\gf{\phi}       \def\vgf{\varphi}    
            \def\gl{\lambda}
\def\gm{\mu}        \def\gn{\nu}         
\def\gs{\sigma}       \def\gt{\tau}
      \def\gw{\omega}
\def\Gl{\Lambda}          
\def\Gw{\Omega}              
\def\CS{{\mathcal S}}      \def\CN{{\mathcal N}}
   \def\CO{{\mathcal O}}   
\def\CA{{\mathcal A}}   \def\CB{{\mathcal B}}   \def\CC{{\mathcal C}}
\def\CD{{\mathcal D}}      \def\CF{{\mathcal F}}
   \def\CH{{\mathcal H}}   
      \def\CL{{\mathcal L}}
   \def\BBB {\mathbb B}    
\def\BBG {\mathbb G}   \def\BBH {\mathbb H}    
\def\BBJ {\mathbb J}   \def\BBK {\mathbb K}    
   \def\BBN {\mathbb N}    
   \def\BBR {\mathbb R}    \def\BBS {\mathbb S}
   \def\GTB {\mathfrak B}    \def\GTC {\mathfrak C}
\def\GTM {\mathfrak M}
\def\tr{\mathrm{tr}}
\def \dd {\,\mathrm{d}}
\def \dx {\mathrm{d}x}
\def \dy {\mathrm{d}y}
\def \dz {\mathrm{d}z}
\def \dtau {\mathrm{d}\tau}
\def \p {\mathbf{p}}
\def \q {\mathbf{q}}
\newcommand{\ei}{{\phi_{\xm }}}
\newcommand{\xa}{\alpha}
\newcommand{\xb}{\beta}
\newcommand{\xg}{\gamma}
\newcommand{\xG}{\Gamma}
\newcommand{\xd}{\delta}
\newcommand{\xe}{\varepsilon}
\newcommand{\xk}{\kappa}
\newcommand{\xl}{\lambda}
\newcommand{\xm}{\mu}
\newcommand{\xn}{\nu}
\newcommand{\xr}{\rho}
\newcommand{\xs}{\sigma}
\newcommand{\xS}{\Sigma}
\newcommand{\xf}{\phi}
\newcommand{\xo}{\omega}
\newcommand{\xO}{\Omega}
\newcommand{\myint}[2]{{\displaystyle \int_{#1}^{#2}}}
\def\Nthb{\BBN_{\xa}}
\newcommand{\ap}{{\xa_{\scaleto{+}{3pt}}}}
\newcommand{\am}{{\xa_{\scaleto{-}{3pt}}}}
\def\bal#1\eal{\small\begin{align*}#1\end{align*}\normalsize}
\def\ba#1\ea{\small\begin{align}#1\end{align}\normalsize}
\numberwithin{equation}{section}
\begin{document}

\title[Semilinear elliptic Schr\"odinger equations]{Semilinear elliptic Schr\"odinger equations involving singular potentials and source terms}
\author{Konstantinos T. Gkikas}
\address{Konstantinos T. Gkikas, Department of Mathematics, National and Kapodistrian University of Athens, 15784 Athens, Greece}
\email{kugkikas@math.uoa.gr}

\author[P.T. Nguyen]{Phuoc-Tai Nguyen}
\address{Phuoc-Tai Nguyen, Department of Mathematics and Statistics, Masaryk University, Brno, Czech Republic}
\email{ptnguyen@math.muni.cz}

\date{\today}

\begin{abstract}
Let $\Omega \subset \mathbb{R}^N$ ($N>2$) be a $C^2$ bounded domain and  $\Sigma \subset \Omega$ be a compact, $C^2$ submanifold without boundary, of dimension $k$ with $0\leq k < N-2$. Put $L_\mu = \Delta + \mu d_\Sigma^{-2}$ in $\Omega \setminus \Sigma$, where $d_\Sigma(x) = \mathrm{dist}(x,\Sigma)$ and $\mu$ is a parameter. We study the boundary value problem (P) $-L_\mu u = g(u) + \tau$ in $\Omega \setminus \Sigma$ with condition $u=\nu$ on $\partial \Omega \cup \Sigma$, where $g: \mathbb{R} \to \mathbb{R}$ is a nondecreasing, continuous function and $\tau$ and $\nu$ are positive measures. The interplay between the inverse-square potential $d_\Sigma^{-2}$, the nature of the source term $g(u)$ and the measure data $\tau,\nu$ yields substantial difficulties in the research of the problem. We perform a deep analysis based on delicate estimate on the Green kernel and Martin kernel and fine topologies induced by appropriate capacities to establish various necessary and sufficient conditions for the existence of a solution in different cases.
	
\medskip
	
\noindent\textit{Key words: Hardy potentials, critical exponents, source terms, capacities, measure data}
	
\medskip
	
\noindent\textit{Mathematics Subject Classification: 35J10, 35J25, 35J61, 35J75}
	
\end{abstract}

\maketitle
\tableofcontents
\section{Introduction}
\subsection{Motivation and aim}
The research of Schr\"odinger equations is a hot topic in the area of partial differential equations because of its applications in encoding physical properties of quantum systems. In the literature, a large number of publications have been devoted to the investigation of stationary Schr\"odinger equations involving the Laplacian with a singular potential. The presence of the singular potential yields distinctive features of the research and leads to disclose new phenomena.

The borderline case where the potential is the inverse-square of the distance to a submanifold of the domain under consideration is of interest since in this case the potential admits the same scaling (of degree $-2$) as the Laplacian and hence cannot be treated simply by standard perturbation methods. Several works have been carried out to investigate the effect of such a potential in various aspects, including a recent study on linear equations.

The present paper originated in attempts to set a step forward in the study of elliptic nonlinear Sch\"rodinger equations involving an inverse-square potential and a source term in measure frameworks.

\subsection{Background and main results}
Let $\Omega \subset \R^N$ be a $C^2$ bounded domain and $\Sigma\subset\xO$ be a compact, $C^2$ submanifold in $\R^N$ without boundary, of dimension $k$ with $0 \leq k < N-2$. Put
\bel{distance} d(x):=\dist(x,\partial\xO) \quad \text{and} \quad  d_\Sigma(x): = \dist(x,\Sigma).
\ee
For $\mu \in \R$, denote by $L_\mu$ the Schr\"odinger operator with the inverse-square potential $d_\Sigma^{-2}$ as
\bal L_\mu = L_\mu^{\Omega,\Sigma}:=\Delta + \frac{\mu}{d_\Sigma^2}
\eal
in $\Omega \setminus \Sigma$. The study of $L_\mu$ was carried out in \cite{GkiNg_linear} in which the optimal Hardy constant
\bal{\mathcal C}_{\Omega,\Sigma}:=\inf_{\varphi \in H^1_0(\Omega)}\frac{\int_\Omega |\nabla \varphi|^2\dx}{\int_\Omega d_\Sigma^{-2}\varphi^2 \dx}
\eal
is deeply involved. It is well known that ${\mathcal C}_{\xO,\Sigma}\in (0,H^2]$ (see D\'avila and Dupaigne \cite{DD1, DD2} and Barbatis, Filippas and Tertikas \cite{BFT}), where
\be \label{valueH}
H:=\frac{N-k-2}{2}.
\ee
It is classical that ${\mathcal C}_{\Omega,\{0\}}=\left(\frac{N-2}{2} \right)^2$. We also know that ${\mathcal C}_{\Omega,\Sigma}=H^2$ if $-\Delta d_\Sigma^{2+k-N} \geq 0$ in the sense of distributions in $\Omega \setminus \Sigma$ or if $\Omega=\Sigma_\beta$ with $\beta$  small enough (see \cite{BFT}), where
\bal
\Sigma_\beta :=\{ x \in \R^N \setminus \Sigma: d_\Sigma(x) < \beta \}.
\eal
For $\mu \leq H^2$, let $\am$ and $\ap$ be the roots of the algebraic equation $\ga^2 - 2H\ga + \mu=0$, i.e.
\bel{apm}
\am:=H-\sqrt{H^2-\mu}, \quad \ap:=H+\sqrt{H^2-\mu}.
\ee
Notice that $\am\leq H\leq\ap<2H$, and $\am \geq 0$ if and only if $\mu \geq 0$. Moreover, by  \cite[Lemma 2.4 and Theorem 2.6]{DD1} and \cite[page 337, Lemma 7, Theorem 5]{DD2},
\bal \lambda_\mu:=\inf\left\{\int_{\Gw}\left(|\nabla u|^2-\frac{\xm }{d_\Sigma^2}u^2\right)dx: u \in C_c^1(\Omega), \int_{\Gw} u^2 dx=1\right\}>-\infty.
\eal
We note that $\lambda_\mu$ is the first eigenvalue associated to $-L_\mu$ and its corresponding eigenfunction $\phi_\mu$, with normalization $\| \phi_\mu \|_{L^2(\Omega)}=1$, satisfies two-sided estimate $\phi_\mu \approx d\,d_\Sigma^{-\am}$ in $\Omega \setminus \Sigma$ (see subsection \ref{subsect:eigen} for more detail).

The sign of $\lambda_\mu$ plays an important role in the study of $L_\mu$. If $\mu<\CC_{\Omega,\Sigma}$ then $\lambda_\mu>0$; however, in general, this does not hold.  Under the assumption $\lambda_\mu>0$, the authors of the present paper obtained the existence and sharp two-sided estimates of the Green function $G_\mu$ and Martin kernel $K_\mu$ associated to $-L_\mu$ (see \cite{GkiNg_linear}). These are crucial tools in the study of the boundary value problem with measure data for linear equations of the form
\ba \label{eq:linear} \left\{ \begin{aligned}
-L_\mu u &= \tau \quad &&\text{in } \Omega \setminus \Sigma, \\
 \tr(u) &= \nu, &&
\end{aligned} \right. \ea
where $\tau \in \GTM(\Omega;\phi_\mu)$ (i.e. $\|  \tau\|_{\GTM(\Omega \setminus \Sigma;\ei)}:=\int_{\Omega \setminus \Sigma}\phi_\mu \dd |\tau|<\infty$) and $\nu \in \GTM(\partial \Omega \cup \Sigma)$ (i.e. $\| \nu \|_{\GTM(\partial \Omega \cup \Sigma)}:= \int_{\partial \Omega \cup \Sigma}\dd |\nu| < \infty$).

In \eqref{eq:linear}, $\tr(u)$ denotes the \textit{boundary trace} which was introduced in \cite{GkiNg_linear} in terms of harmonic measures of $-L_\mu$  (see Subsection \ref{subsec:boundarytrace}). An important feature of this notion is $\tr(\BBG_\mu[\tau]) = 0$ for any $\tau \in \GTM(\Omega \setminus \Sigma;\phi_\mu)$ and  $\tr(\BBK_\mu[\tau]) = \nu$ for any $\nu \in \GTM(\partial \Omega \cup \Sigma)$, where
\bal
\BBG_\mu[\tau](x): &= \int_{\Omega \setminus \Sigma}G_\mu(x,y)\dd\tau(y), \quad \tau \in \GTM(\Omega \setminus \Sigma;\phi_\mu), \\
\BBK_\mu[\nu](x): &= \int_{\partial \Omega \cup \Sigma}K_\mu(x,y)\dd\nu(y), \quad \nu \in \GTM(\partial \Omega \cup \Sigma).
\eal
Note that for a positive measure $\tau$, $\BBG_\mu[\tau]$ is finite a.e. in $\Omega \setminus \Sigma$ if and only if $\tau \in \GTM(\Omega \setminus \Sigma; \ei)$.

Moreover, it was shown in \cite{GkiNg_linear} that $\BBG_\mu[\tau]$ is the unique solution of \eqref{eq:linear} with $\nu=0$, and $\BBK_\mu[\nu]$ is the unique solution of \eqref{eq:linear} with $\tau=0$. By the linearity, the unique solution to \eqref{eq:linear} is of the form
\bal
u = \BBG_\mu[\tau] + \BBK_\mu[\nu] \quad \text{a.e. in } \Omega \setminus \Sigma.
\eal
Further results for linear problem \eqref{eq:linear} are discussed in Subsection \ref{subsec:linear}.

As a continuation and development of the work \cite{GkiNg_linear} in this research topic, this paper studies the boundary value problem for semilinear equations with a source term of the form
\ba \label{NLP} \left\{ \begin{aligned}
	-L_\mu u &= g(u) + \rho \tau \quad &&\text{in } \Omega \setminus \Sigma, \\
	\tr(u) &= \sigma\nu, &&
\end{aligned} \right. \ea
where $\rho, \sigma$ are nonnegative parameters, $\tau$ and $\nu$ are Radon measures on  $\Omega \setminus \Sigma$ and $\partial \Omega \cup \Sigma$ respectively, and $g: \R \to \R$ is a nondecreasing continuous function such that $g(0)=0$.

Various works on problem \eqref{NLP} and related problems have been published in the literature, including excellent papers of D\'avila and Dupaigne \cite{DN,DD1,DD2} where important tools in function settings are established and combined with a monotonicity argument in derivation of existence, nonexistence, uniqueness for solutions with zero boundary datum. Afterwards, deep nonexistence results for nonnegative distributional supersolutions have been obtained by Fall \cite{Fall-0} via a linearization argument. Recently, a description on isolated singularities in case $\Sigma=\{0\}\subset \Omega$ has been provided by Chen and Zhou \cite{CheZho}.

In the present paper, the interplay between dimention of the set $\Sigma$, the value of $\mu$, the growth of the source term and the concentration of measure data causes the invalidity or quite restrictive applicability of the techniques used in the mentioned papers and leads to the involvement of several \textit{critical exponents} for the solvability of problem \eqref{NLP}. Therefore, our aim is to perform further analysis and to establish effective tools, which allow us to obtain existence and nonexistence results for \eqref{NLP} in various cases.

\textit{Let us assume throughout the paper that}
\be \label{assump1}
\mu \leq H^2 \quad \text{and} \quad \lambda_\mu > 0.
\ee
Assumption \eqref{assump1} ensures the validity of sharp two-sided estimates for the Green kernel and Martin kernel as well as other results regarding linear equations as mentioned above.

In order to state our main results, we introduce some notations. For $\alpha , \gamma \in \R$, put
\ba \label{varphi}
\varphi_{\alpha,\gamma}(x):=d_\Sigma(x)^{-\alpha}d(x)^\gamma, \quad x \in \Omega \setminus \Sigma.
\ea
It can be seen from \eqref{eigenfunctionestimates} that $\varphi_{\am,1} \approx \phi_\mu$ (we notice that $\am$ is defined in \eqref{apm}). Let $\GTM(\Omega \setminus \Sigma;\varphi_{\am,\gamma})$ be the space of measures $\tau$ such that
\bal \| \tau \|_{\GTM(\Omega \setminus \Sigma;\varphi_{\am,\gamma})}:= \int_{\Omega \setminus \Sigma}\varphi_{\am,\gamma}\,\dd |\tau|<\infty.
\eal

The notion of the weak solutions of \eqref{NLP} is given below.
\begin{definition} \label{weaksol-LP}
	Let $\gamma \in [0,1]$, $\rho \geq 0$, $\sigma \geq 0$,  $\tau\in\mathfrak{M}(\xO\setminus \Sigma;\varphi_{\am,\gamma})$ and $\nu \in \mathfrak{M}(\partial\xO\cup \Sigma)$. We say that $u$  is a \textit{weak solution} of \eqref{NLP} if $u\in L^1(\Omega;\ei)$, $g(u) \in L^1(\Omega;\phi_\mu)$ and
	\be \label{lweakform}
	- \int_{\Omega} u L_{\xm }\zeta \, \dx=\int_{\Omega} g(u)\zeta \, \dx + \rho \int_{\Omega \setminus \Sigma}\zeta \,\dtau- \sigma\int_{\Omega} \mathbb{K}_{\xm}[\xn]L_{\xm }\zeta \, \dx
	\qquad\forall \zeta \in\mathbf{X}_\xm(\xO\setminus \Sigma),
	\ee
	where the \textit{space of test function} ${\bf X}_\mu(\Gw\setminus \Sigma)$ is defined by
	\ba \label{Xmu} {\bf X}_\mu(\Gw\setminus \Sigma):=\{ \zeta \in H_{loc}^1(\Omega \setminus \Sigma): \phi_\mu^{-1} \zeta \in H^1(\Gw;\phi_\mu^{2}), \, \phi_\mu^{-1}L_\mu \zeta \in L^\infty(\Omega)  \}.
	\ea
\end{definition}

The space ${\bf X}_\mu(\Omega \setminus \Sigma)$ was introduced in \cite{GkiNg_linear} to study linear problem \eqref{eq:linear}. From \eqref{Xmu}, it is easy to see that the term on the left-hand side of \eqref{lweakform} is finite. By \cite[Lemma 7.3]{GkiNg_linear} and \eqref{eigenfunctionestimates}, for any $\zeta \in {\bf X}_\mu(\Omega \setminus \Sigma)$, $|\zeta| \lesssim \phi_\mu \approx d\, d_\Sigma^{-\am}$, hence the first term on the right-hand side of \eqref{lweakform} is finite. Moreover, for any $\zeta \in {\bf X}_\mu(\Omega \setminus \Sigma)$ and $\gamma \in [0,1]$, we have $|\zeta| \lesssim d^\gamma d_\Sigma^{-\am} = \varphi_{\am,\gamma}$. This implies that the second term on the right-hand side of \eqref{lweakform} is finite. Finally, since $\BBK_{\mu}[\nu] \in L^1(\Omega;\ei)$, the third term on the right-hand side of \eqref{lweakform} is also finite. 	

By Theorem \ref{linear-problem}, $u$ is a weak solution of \eqref{NLP} if and only if
\bal
u = \BBG_\mu[g(u)] + \BBG_\mu[\tau] + \BBK_\mu[\nu] \quad \text{in } \Omega \setminus \Sigma.
\eal


Our main results disclose different scenarios, depending on the interplay between the concentration and the total variation of the measure data, and the size of the set $\Sigma$, in which the existence of a solution to \eqref{NLP} can be derived.
In the following theorem, we show the existence, as well as weak Lebesgue estimates, of a solution to \eqref{NLP} provided that the nonlinearity $g$ has mild growth and the measure data have small norm.

\begin{theorem} \label{th1} Let $0<\xm\leq H^2$, $0\leq\xg\leq1$, $\tau \in \GTM(\Omega \setminus \Sigma; \varphi_{\am,\gamma})$ with
$\norm{\tau}_{\mathfrak{M}(\Omega\setminus \Sigma;\varphi_{\am,\gamma})}=1$
and $\nu \in \GTM(\prt \Gw\cup \xS)$ with
$\norm{\nu}_{\GTM(\prt \Gw\cup \xS)}=1$.
Assume $g$ satisfies
\ba \label{subcd-0} \Gl_g:=\int_1^\infty  s^{-q-1}  (g(s)-g(-s))\,\dd s < \infty
\ea	
 for some $q \in (1,\infty)$ and
 \ba \label{gcomparepower}
 |g(s)|\leq a|s|^{\tilde q} \quad \text{for some } a>0,\; \tilde q>1 \text{ and for any } |s|\leq 1.
 \ea

Assume one of the following conditions holds.

$(i)$ $\1_{\partial \Omega} \, \nu \equiv 0$ and \eqref{subcd-0} holds for $q = \frac{N+\gamma}{N+\gamma-2}$.

$(ii)$ $\1_{\partial \Omega} \, \nu\not\equiv 0$  and \eqref{subcd-0} holds for $q = \frac{N+1}{N-1}$.

 Then there exist positive numbers $\xr_0,\xs_0,t_0$ depending on $N,\mu,\Gw,\Sigma,\Gl_g,\gamma, \tilde q$ such that, for every $\xr \in (0,\xr_0)$ and $\xs\in (0,\xs_0)$, problem \eqref{NLP} admits a weak solution $u$ satisfying
	\bel{est:t0} \|u\|_{L_w^{q}(\Gw\setminus\xS;\ei)} \leq t_0,
	\ee
	where $q=\frac{N+\gamma}{N+\gamma-2}$ if case $(i)$ happens or $q=\frac{N+1}{N-1}$ if case $(ii)$ happens.
\end{theorem}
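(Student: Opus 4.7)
The plan is to reduce \eqref{NLP} to an integral fixed-point equation via Theorem~\ref{linear-problem}, according to which $u$ is a weak solution of \eqref{NLP} if and only if
\begin{equation*}
u \;=\; \mathcal{T}(u) \;:=\; \BBG_\mu[g(u)] + \rho\,\BBG_\mu[\tau] + \sigma\,\BBK_\mu[\nu] \qquad \text{a.e.\ in } \Omega\setminus\Sigma.
\end{equation*}
I would seek a fixed point of $\mathcal{T}$ in the closed convex set
\begin{equation*}
E_{t_0} \;:=\; \bigl\{\, v\in L^1(\Omega;\phi_\mu):\; \|v\|_{L^q_w(\Omega\setminus\Sigma;\phi_\mu)} \le t_0 \,\bigr\},
\end{equation*}
endowed with the $L^1(\Omega;\phi_\mu)$-topology, where $q=(N+\gamma)/(N+\gamma-2)$ in case~$(i)$ and $q=(N+1)/(N-1)$ in case~$(ii)$.

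The first ingredient is a linear weak-Lebesgue mapping property of the two kernels, which I would extract from the sharp two-sided bounds on $G_\mu$ and $K_\mu$ established in \cite{GkiNg_linear}. The local diagonal singularity $G_\mu(x,y)\asymp|x-y|^{2-N}$ combined with the boundary weight $\phi_\mu\asymp\varphi_{\am,1}$ yields, for every $\eta\in\GTM(\Omega\setminus\Sigma;\varphi_{\am,\gamma})$,
\begin{equation*}
\bigl\|\BBG_\mu[\eta]\bigr\|_{L^q_w(\Omega\setminus\Sigma;\phi_\mu)} \;\le\; C\,\|\eta\|_{\GTM(\Omega\setminus\Sigma;\varphi_{\am,\gamma})}, \qquad q=\tfrac{N+\gamma}{N+\gamma-2},
\end{equation*}
and an analogous estimate for $\BBK_\mu$ with Martin exponent $q=(N+1)/(N-1)$ when $\1_{\partial\Omega}\nu\not\equiv0$ (and the same exponent as $\BBG_\mu$ when $\nu$ is supported on $\Sigma$). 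The point of working with $L^q_w$ rather than $L^q$ is that a Dirac mass at a point on $\partial\Omega\cup\Sigma$ already produces a kernel potential that lies in $L^q_w$ but not in $L^q$; in case~$(ii)$ the Martin exponent is the smaller one and governs both terms, explaining the dichotomy in the statement.

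The nonlinear half rests on a layer-cake argument driven by \eqref{subcd-0}. Writing $G(s):=g(s)-g(-s)$ for $s\ge 0$, monotonicity of $g$ together with $g(0)=0$ gives $|g(v)|\le G(|v|)$; setting $\lambda(s):=\int_{\{|v|>s\}}\phi_\mu\,\dx$ we have $\lambda(s)\le \min\!\bigl(\|\phi_\mu\|_{L^1(\Omega)},\,t_0^{\,q}s^{-q}\bigr)$ for $v\in E_{t_0}$, and the distribution-function identity $\int_\Omega G(|v|)\,\phi_\mu\,\dx=\int_0^\infty \lambda(s)\,\dd G(s)$, a cut at $s=1$, an integration by parts (using $G(s)s^{-q}\to 0$ at infinity, which is implied by $\Lambda_g<\infty$), the growth bound \eqref{gcomparepower} on $[0,1]$, and the tail condition \eqref{subcd-0} together yield
\begin{equation*}
\int_\Omega |g(v)|\,\phi_\mu\,\dx \;\le\; C_2\bigl(a\,t_0^{\tilde q} + \Lambda_g\,t_0^{\,q}\bigr).
\end{equation*}
Combined with the linear estimate this produces, for every $v\in E_{t_0}$,
\begin{equation*}
\|\mathcal{T}(v)\|_{L^q_w(\Omega\setminus\Sigma;\phi_\mu)} \;\le\; C_3\bigl(a\,t_0^{\tilde q}+\Lambda_g\,t_0^{\,q}\bigr) + C_3(\rho+\sigma).
\end{equation*}
Since $q,\tilde q>1$, fixing $t_0$ so that $C_3(a\,t_0^{\tilde q-1}+\Lambda_g\,t_0^{\,q-1})\le \tfrac12$ and then $\rho_0,\sigma_0$ so that $C_3(\rho_0+\sigma_0)\le t_0/2$ makes $\mathcal{T}$ a self-map of $E_{t_0}$.

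To close the argument I would verify continuity of $\mathcal{T}:E_{t_0}\to E_{t_0}$ in the $L^1(\Omega;\phi_\mu)$-topology by dominated convergence applied to $g(v_n)\phi_\mu$ (the uniform weak-$L^q$ bound plus \eqref{gcomparepower} provide equi-integrability of $\{g(v_n)\phi_\mu\}$), and compactness of $\mathcal{T}(E_{t_0})$ in $L^1(\Omega;\phi_\mu)$ through the regularizing effect of the Green potential on equi-integrable densities, via a truncation plus Riesz-type argument built on the pointwise bounds of $G_\mu$ from \cite{GkiNg_linear}; the inhomogeneous term $\rho\,\BBG_\mu[\tau]+\sigma\,\BBK_\mu[\nu]$ is fixed and harmless. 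Schauder's fixed-point theorem then delivers $u\in E_{t_0}$, and \eqref{est:t0} is built into the definition of $E_{t_0}$. The main obstacle I anticipate is the nonlinear estimate: the layer-cake splitting has to be arranged so that the tail integral $\int_1^\infty s^{-q-1}G(s)\,\dd s$ emerges with \emph{exactly} the exponent $q$ dictated by the linear kernel estimate, and the matching of these two exponents is what pins down the critical values $(N+\gamma)/(N+\gamma-2)$ and $(N+1)/(N-1)$ in the statement; it is also the step most sensitive to the weights $\phi_\mu$ and $\varphi_{\am,\gamma}$, which degenerate on $\partial\Omega\cup\Sigma$.
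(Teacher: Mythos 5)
Your plan is essentially the paper's own route: the same weak Lebesgue bounds for the two kernels (Theorems \ref{lpweakgreen} and \ref{lpweakmartin1}), the same layer-cake estimate of $\int_\Omega|g(u)|\phi_\mu\,\dx$ driven by \eqref{subcd-0} and \eqref{gcomparepower} (this is Lemma \ref{subcrcon}), the same smallness choice of $t_0,\rho_0,\sigma_0$, and Schauder's theorem in $L^1(\Omega;\phi_\mu)$ on a convex set cut out by the weak-$L^q$ bound; your self-map inequality is the analogue of \eqref{Q1Q2-1}. The genuine difference is organizational: the paper does not apply Schauder to the untruncated nonlinearity. It first replaces $g$ by the bounded truncations $g_n=\eta_n g$ and proves Lemma \ref{transf} for the shifted unknown $v=u-\rho\,\BBG_\mu[\tau]-\sigma\,\BBK_\mu[\nu]$, where boundedness of $g_n$ makes continuity of the map and the majorant $|\BBS(w)|\leq C\phi_\mu$ (hence compactness) immediate, and only afterwards passes to the limit $n\to\infty$ using the uniform bound \eqref{42}, the equi-integrability estimate \eqref{43}, the local $W^{1,r}$ bounds from \cite{MVbook} and Vitali's theorem. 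Your direct fixed-point argument therefore has to carry continuity and compactness of $\mathcal{T}$ for unbounded $g$ itself; this is feasible, but the ingredients are exactly those of the paper's limit step — equi-integrability of $\{g(v)\phi_\mu: v\in E_{t_0}\}$ in the form \eqref{43}, the $W^{1,r}_{loc}$ estimate for Green potentials of $L^1(\Omega;\phi_\mu)$-bounded families to extract a.e.\ convergent subsequences, and Vitali to upgrade to $L^1(\Omega;\phi_\mu)$ convergence — and your one-line description (``truncation plus Riesz-type argument'') should be expanded to precisely this. Two small repairs are needed: first, from a weak-$L^q$ bound alone you cannot reach the power $t_0^{\tilde q}$ on the set $\{|v|\leq 1\}$ when $\tilde q\geq q$; as in the paper, fix an auxiliary exponent $\kappa<\min\{q,\tilde q\}$, use $|s|^{\tilde q}\leq |s|^{\kappa}$ for $|s|\leq 1$ together with the embedding $L^q_w\subset L^{\kappa}$, which yields $a\,t_0^{\kappa}$ and leaves the smallness argument unchanged (this is the role of the norm $Q_2$ and of the set $\CO$ in Lemma \ref{transf}); second, verify that $E_{t_0}$ is closed in $L^1(\Omega;\phi_\mu)$, via lower semicontinuity of the equivalent weak-$L^q$ norm along a.e.\ convergent subsequences, which is also implicitly used for $\CO$ in the paper.
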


The proof of Theorem \ref{th1} contains several steps, relying on various ingredients such as sharp weak Lebesgue estimates on Green kernel and Martin kernel (see Theorems \ref{lpweakgreen}--\ref{lpweakmartin1}) and Schauder fixed point theorem.

When $\tau$ or $\nu$ is zero measure and $\mu$ is not restricted to be positive, the value of $q$ in \eqref{subcd-0} can be enlarged or adjusted, as shown in the following theorem.

\begin{theorem} \label{th2} Let $\xm\leq H^2$, $0\leq\xg\leq1$ and $g$ satisfy \eqref{gcomparepower}.
	
$(i)$ Assume $0< \mu \leq \left(\frac{N-2}{2}\right)^2$, $\nu=0$, $\tau \in \GTM(\Gw \setminus \xS;\varphi_{\am,\gamma})$ with $\norm{\tau}_{\mathfrak{M}(\Omega\setminus \Sigma;\varphi_{\am,\gamma})}=1$, and \eqref{subcd-0} holds with  $q=\frac{N+\gamma}{N+\gamma-2}$. Then the conclusion of Theorem \ref{th1} holds true with  $q=\frac{N+\gamma}{N+\gamma-2}$.

$(ii)$ Assume $\mu \leq 0$, $0 \leq \kappa \leq -\am$,  $\nu=0$, $\tau \in \GTM(\Gw \setminus \xS;\varphi_{-\kappa,\gamma})$ with $\norm{\tau}_{\mathfrak{M}(\Omega\setminus \Sigma;\varphi_{-\kappa,\gamma})}=1$, and $g$ satisfy \eqref{subcd-0} with
\be \label{pkd}
q=\min\left\{\frac{N+\xg}{N+\xg-2},\frac{N+\kappa}{N+\kappa-2}\right\}.
\ee
Then the conclusion of Theorem \ref{th1} holds true with $q$ as in \eqref{pkd}.

$(iii)$ Assume $\mu \leq \left(\frac{N-2}{2}\right)^2$, $\tau=0$, $\nu \in \GTM(\prt \Gw\cup \xS)$ has compact support in $\xS$ with $\norm{\nu}_{\GTM(\prt \Gw\cup \xS)}=1$, and \eqref{subcd-0} holds with \be \label{pkd-a}
q=\min\left\{\frac{N+1}{N-1},\frac{N-\am}{N-\am-2}\right\}.
\ee
Then the conclusion of Theorem \ref{th1} holds true with $q$ as in \eqref{pkd-a}.

$(iv)$ Assume $\mu \leq \left(\frac{N-2}{2}\right)^2$, $\tau=0$, $\nu \in \GTM(\prt \Gw\cup \xS)$ has compact support in $\partial \Omega$ with $\norm{\nu}_{\GTM(\prt \Gw\cup \xS)}=1$, and \eqref{subcd-0} holds with $q=\frac{N+1}{N-1}$. Then the conclusion of Theorem \ref{th1} holds true with $q=\frac{N+1}{N-1}$.

\end{theorem}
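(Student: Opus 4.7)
The plan is to follow exactly the same blueprint as the proof of Theorem \ref{th1}: reformulate problem \eqref{NLP} via the representation
\[
u = \mathcal{T}(u) := \BBG_\xm[g(u)] + \xr\,\BBG_\xm[\tau] + \xs\,\BBK_\xm[\xn]
\]
and apply the Schauder fixed point theorem on a closed ball
\[
B_{t_0} := \{ u \in L^{q}_w(\xO\setminus\xS;\ei) : \|u\|_{L^{q}_w(\xO\setminus\xS;\ei)} \le t_0 \},
\]
where $q$ is chosen according to each of cases $(i)$--$(iv)$. Once the linear data pieces $\BBG_\xm[\tau]$ and $\BBK_\xm[\xn]$ are controlled in the appropriate weak Lebesgue norm, the nonlinear term is handled as in Theorem \ref{th1}: the integrability condition \eqref{subcd-0} together with \eqref{gcomparepower} yields, by a standard layer-cake argument, a bound of the form $\|g(u)\|_{\GTM(\xO\setminus\xS;\ei)} \le C(\Gl_g)\|u\|_{L^{q}_w(\xO\setminus\xS;\ei)}^{q} + C'\|u\|_{L^1(\xO;\ei)}$, and then a weak-type bound on $\BBG_\xm$ applied to this $L^1(\xO;\ei)$-measure yields that $u \mapsto \BBG_\xm[g(u)]$ is a continuous, compact self-map of $B_{t_0}$ provided $t_0$ is small. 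Smallness of $\xr,\xs$ is then used so that $\mathcal{T}$ indeed maps $B_{t_0}$ to itself.

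The substance of the argument, and what distinguishes each of the four cases, lies in proving the corresponding sharp weak Lebesgue estimate for the kernel term. In case $(i)$, since $\xn=0$ and $0<\xm\le \bigl(\tfrac{N-2}{2}\bigr)^2$, I would establish $\|\BBG_\xm[\tau]\|_{L^{q}_w(\xO\setminus\xS;\ei)} \lesssim \|\tau\|_{\GTM(\xO\setminus\xS;\varphi_{\am,\xg})}$ with $q = (N+\xg)/(N+\xg-2)$, by adapting the proof of the Green-kernel estimate used in Theorem \ref{th1}; the restriction $\xm\le \bigl(\tfrac{N-2}{2}\bigr)^2$ guarantees $\am\le \tfrac{N-2}{2}$, which is precisely what is needed for the two-sided Green bound of \cite{GkiNg_linear} to give the critical exponent unchanged by the submanifold singularity. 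For case $(ii)$, $\xm\le 0$ implies $\am\le 0$, so $\phi_\xm \approx d\,d_\xS^{-\am}$ in fact grows as a positive power of $d_\xS$ near $\xS$; the weight $\varphi_{-\xk,\xg}= d_\xS^{\xk}d^{\xg}$ is regular and allows measures with stronger singular behavior on $\xS$. The key new estimate to prove is
\[
\|\BBG_\xm[\tau]\|_{L^{q}_w(\xO\setminus\xS;\ei)} \lesssim \|\tau\|_{\GTM(\xO\setminus\xS;\varphi_{-\xk,\xg})}
\]
with $q$ as in \eqref{pkd}. The two constraints on $q$ come from two competing integrations: the $d^\xg$ contribution near $\partial\xO$ yields $\tfrac{N+\xg}{N+\xg-2}$ as in case $(i)$, while the $d_\xS^{\xk}$ contribution near $\xS$ produces $\tfrac{N+\xk}{N+\xk-2}$. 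Taking the minimum is forced because the weak-type inequality must hold simultaneously for atomic measures concentrated near $\partial\xO$ and for atomic measures concentrated near $\xS$.

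For cases $(iii)$ and $(iv)$, $\tau=0$ and only the Martin kernel contributes. The exponents $\tfrac{N-\am}{N-\am-2}$ (for $\xn$ supported in $\xS$) and $\tfrac{N+1}{N-1}$ (for $\xn$ supported in $\partial\xO$) are dictated by the respective behaviors of $K_\xm(x,y)$ as $x$ approaches the support, combined with the weight $\phi_\xm$. The strategy is to split $\xO\setminus\xS$ into a region near $\supp\xn$ (where the dominant singularity of $K_\xm$ gives the claimed exponent via a Calder\'on--Zygmund dyadic decomposition) and a region away from $\supp\xn$ (where everything is bounded). In case $(iii)$ the appearance of the minimum in \eqref{pkd-a} reflects the fact that although $\supp\xn\subset\xS$, the weak-type analysis near $\partial\xO$ still produces the exponent $\tfrac{N+1}{N-1}$ from the $d$-factor in $\ei$, while the local analysis near $\xS$ gives $\tfrac{N-\am}{N-\am-2}$. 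In case $(iv)$, only the boundary singularity is active, so only $\tfrac{N+1}{N-1}$ remains.

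The main obstacle is therefore not the fixed point argument (which is essentially identical to Theorem \ref{th1} once the linear estimates are in hand) but the derivation of the sharp weak Lebesgue estimates for $\BBG_\xm$ and $\BBK_\xm$ in the enlarged parameter regime $\xm\le H^2$ (possibly nonpositive) and with the weighted measure classes $\GTM(\xO\setminus\xS;\varphi_{-\xk,\xg})$. The delicate point is keeping track of the proper two-sided bounds on $G_\xm$ and $K_\xm$ from \cite{GkiNg_linear} when $\am\le 0$, and in particular verifying that the dyadic level-set argument used to bootstrap the Marcinkiewicz-type estimates still produces the exponents stated in \eqref{pkd} and \eqref{pkd-a} without losing a logarithm.
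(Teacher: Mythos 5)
Your overall strategy coincides with the paper's: the proof of Theorem \ref{th2} is run exactly as Theorem \ref{th1} (Lemma \ref{transf} plus truncation/Vitali), with the only real content being which weak Lebesgue kernel estimate is fed into the fixed-point scheme — Theorem \ref{lpweakgreen} in case $(i)$, Theorem \ref{lpweakgreen2} in case $(ii)$, and Theorem \ref{lpweakmartin1} I in case $(iv)$ — and your level-set derivations of these bounds are the same distribution-function argument (Proposition \ref{bvivier}) the paper uses. For $(i)$, $(ii)$, $(iv)$ your proposal is therefore essentially the paper's proof.

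In case $(iii)$, however, there is a genuine conceptual gap. You attribute the minimum in \eqref{pkd-a} to the Martin kernel itself, claiming that ``the weak-type analysis near $\partial\Omega$ still produces the exponent $\frac{N+1}{N-1}$'' for $\BBK_\mu[\nu]$ with $\nu$ supported in $\Sigma$. This is not where the constraint comes from: for such $\nu$ the Martin operator satisfies the clean bound $\|\BBK_\mu[\nu]\|_{L_w^{(N-\am)/(N-\am-2)}(\Omega\setminus\Sigma;\ei)}\lesssim \|\nu\|_{\GTM(\partial\Omega\cup\Sigma)}$ (Theorem \ref{lpweakmartin1} II), with no boundary-driven cap, because $K_\mu(\cdot,z)$, $z\in\Sigma$, is bounded near $\partial\Omega$. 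The exponent $\frac{N+1}{N-1}$ enters through the \emph{nonlinear feedback} term: in the self-map estimate one needs $\|\BBG_\mu[g(v+\sigma\BBK_\mu[\nu])]\|_{L_w^q(\Omega\setminus\Sigma;\ei)}\lesssim \|g(v+\sigma\BBK_\mu[\nu])\|_{L^1(\Omega;\ei)}$, i.e. a weak-type bound for $\BBG_\mu$ acting on measures with weight $\varphi_{\am,1}$ only. For $\mu>0$ this is Theorem \ref{lpweakgreen} with $\gamma=1$ (exponent $\frac{N+1}{N-1}$), and for $\mu\le 0$ it is Theorem \ref{lpweakgreen2} with $\kappa=-\am$, $\gamma=1$ (exponent $\min\{\frac{N+1}{N-1},\frac{N-\am}{N-\am-2}\}$); this is exactly the point the paper singles out in its proof of $(iii)$, referring to the first inequality in \eqref{Q1Q2-1}. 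Your blueprint invokes ``a weak-type bound on $\BBG_\mu$ applied to this $L^1(\Omega;\ei)$-measure'' but never identifies its exponent in the different regimes of $\mu$, and your case-$(ii)$ Green estimate (which would supply the needed $\kappa=-\am$, $\gamma=1$ bound when $\mu\le0$) is never connected to case $(iii)$; without that link the ball $B_{t_0}\subset L_w^q$ with $q$ as in \eqref{pkd-a} is not shown to be invariant. Finally, your worry about ``losing a logarithm'' at the critical value $\mu=\bigl(\frac{N-2}{2}\bigr)^2$, $\Sigma=\{0\}$ is harmless here: the Martin bound is then in $\tilde L_w^{(N+2)/(N-2)}$, and since $q=\frac{N+1}{N-1}<\frac{N+2}{N-2}$ strictly, the logarithmic correction is absorbed.
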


We remark that condition \eqref{pkd-a} is not sharp. When $g$ is a pure power function, condition \eqref{pkd-a} can be improved to be sharp, as pointed out in the remark following Theorem \ref{subm}.

When $g$ is a power function, namely $g(u)=|u|^{p-1}u$ for $p>1$, problem \eqref{NLP} becomes
\be \label{power-source} \left\{ \BAL
- L_\gm u&=|u|^{p-1}u + \rho\tau\qquad \text{in }\;\Gw\setminus \Sigma,\\
\tr(u)&= \sigma\xn.
\EAL \right.
\ee
We will point out below that the exponents $\frac{N+\gamma}{N+\gamma-2}$, $\frac{N-\am}{N-\am-2}$ and $\frac{N+1}{N-1}$ are \textit{critical exponents} for the existence of a solution to \eqref{power-source}. Moreover, by performing further analysis, we are able to provide necessary and sufficient conditions in terms of estimates of the Green kernel and Martin kernel, as well as in terms of appropriate capacities.

We first consider \eqref{power-source} with $\sigma \nu=0$. Let us introduce suitable capacities.
For $\xa\leq N-2$, set
\bal
\CN_{\ga}(x,y):=\frac{\max\{|x-y|,d_\xS(x),d_\xS(y)\}^\xa}{|x-y|^{N-2}\max\{|x-y|,d(x),d(y)\}^2},\quad (x,y)\in\overline{\xO}\times\overline{\xO}, x \neq y,
\eal
and
\bal \Nthb[\omega](x):=\int_{\overline{\Gw}} \CN_{\xa}(x,y) \dd\omega(y), \quad \omega \in \GTM^+(\overline \Gw).\eal

For $\alpha \leq N-2$, $b>0$, $\theta>-N+k$ and $s>1$, define capacity $\text{Cap}_{\Nthb,s}^{b,\theta}$ by
\bal \text{Cap}_{\Nthb,s}^{b,\theta}(E) :=\inf\left\{\int_{\overline{\xO}}d^b d^\theta_\xS\gf^s\,\dx:\;\; \gf \geq 0, \;\;\Nthb[ d^b d_{\Sigma}^\theta\gf ]\geq\1_E\right\} \quad \text{for Borel set } E\subset\overline{\xO}.
\eal
Here $\1_E$ denotes the indicator function of $E$. By \cite[Theorem 2.5.1]{Ad}, we have
\bal
(\text{Cap}_{\Nthb,s}^{b,\theta}(E))^\frac{1}{s}=\sup\{\omega(E):\omega \in\GTM^+(E), \|\Nthb[\omega]\|_{L^{s'}(\xO;d^b d^\theta_\xS)} \leq 1 \}.
\eal

\begin{theorem}\label{theoremint}
	We assume that $\mu< \left(\frac{N-2}{2}\right)^2$ and
	\ba \label{p-cond}
	1<p<\frac{2+\am}{\am} \text{ if } \mu>0 \quad \text{ or } \quad p>1 \text{ if } \mu \leq0.
	\ea
	Let $\tau \in \GTM^+(\Omega \setminus \Sigma; \ei)$.  Then the following statements are equivalent.
	
	1. The equation
	\ba \label{u-rhotau} u=\BBG_\xm[u^p]+\rho\BBG_\xm[\gt]
	\ea
	has a positive solution for $\rho>0$ small.
	
	2. For any Borel set $E \subset \xO\setminus \Sigma$, there holds
	\bal\int_E \BBG_\xm[\1_E \tau]^p \ei \, \dx \leq C\int_E\ei \dd \tau.\eal
	
	3. The following inequality holds
	\bal
\BBG_\xm[\BBG_\xm[\gt]^p]\leq C\, \BBG_\xm[\gt]<\infty\quad \text{a.e. in } \Omega \setminus \Sigma.
\eal

4. For any Borel set $E \subset \xO\setminus \Sigma$, there holds
\bal\int_E\ei \dd \tau \leq C\, \mathrm{Cap}_{\BBN_{2\am},p'}^{p+1,-\am(p+1)}(E).\eal
\end{theorem}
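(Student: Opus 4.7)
The plan is to prove the chain of equivalences $(1) \Leftrightarrow (3) \Leftrightarrow (2) \Leftrightarrow (4)$, following the general framework of Kalton--Verbitsky and Maz'ya--Verbitsky for sublinear/superlinear integral equations with quasi--metric kernels, adapted to the weighted setting produced by the inverse--square potential. The structural input that makes this machinery applicable is the sharp two--sided estimate on $G_\mu$ from \cite{GkiNg_linear}, which, up to multiplicative constants, expresses $G_\mu$ as a product of the powers $d\,d_\Sigma^{-\am}$ of the boundary and $\Sigma$--distances and the Riesz--type kernel $\CN_{2\am}$. Equivalently, $\BBG_\mu$ acts on $\ei$--weighted measures as a positive kernel operator comparable to $\BBN_{2\am}$ against the weight $d\,d_\Sigma^{-\am}$. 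Throughout, the subcriticality condition \eqref{p-cond} guarantees that $\BBG_\mu[\tau]^p$ is locally integrable with respect to $\ei \dx$, so all the objects below are well defined.

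\textbf{Step 1: $(3)\Rightarrow(1)$ by monotone iteration.} Assume $\BBG_\mu[\BBG_\mu[\tau]^p]\le C\BBG_\mu[\tau]$ pointwise. Pick $M>1$ with $M^pC\rho^{p-1}+1\le M$, which is possible whenever $\rho$ is sufficiently small because $p>1$. Then $u_0:=M\rho\BBG_\mu[\tau]$ is a pointwise supersolution of \eqref{u-rhotau}:
\[
\BBG_\mu[u_0^p]+\rho\BBG_\mu[\tau]\le M^p\rho^p C\BBG_\mu[\tau]+\rho\BBG_\mu[\tau]\le u_0.
\]
The Picard sequence $u_1:=\rho\BBG_\mu[\tau]$, $u_{n+1}:=\BBG_\mu[u_n^p]+\rho\BBG_\mu[\tau]$, is nondecreasing, dominated by $u_0$, and converges (by monotone convergence) to a positive solution $u$ of \eqref{u-rhotau}.

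\textbf{Step 2: $(1)\Rightarrow(3)$.} If $u>0$ solves \eqref{u-rhotau}, then $u\ge \rho\BBG_\mu[\tau]$, so $\BBG_\mu[u^p]\ge \rho^p\BBG_\mu[\BBG_\mu[\tau]^p]$, and therefore $u\ge \rho^p\BBG_\mu[\BBG_\mu[\tau]^p]+\rho\BBG_\mu[\tau]$. Iterating this lower bound produces a sum $\sum_{j\ge 0}\rho^{p^j}T_j[\tau]$ of iterated potentials; the Kalton--Verbitsky convergence criterion for such chains converts the existence of any positive solution into the pointwise inequality $\BBG_\mu[\BBG_\mu[\tau]^p]\le C'\BBG_\mu[\tau]$, with a constant $C'$ depending only on $p$ and $\rho$.

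\textbf{Step 3: $(3)\Leftrightarrow(2)$.} The implication $(3)\Rightarrow(2)$ is obtained by monotonicity and Fubini's theorem combined with the symmetry $G_\mu(x,y)=G_\mu(y,x)$:
\[
\int_E\BBG_\mu[\1_E\tau]^p\ei\,\dx \;\le\; \int_{\Omega\setminus\Sigma}\BBG_\mu[\tau]^p\ei\,\dx \;=\; \int_{\Omega\setminus\Sigma}\BBG_\mu[\BBG_\mu[\tau]^p]\,\ei\,\dd\tau,
\]
and the right--hand side is bounded by $C\int\BBG_\mu[\tau]\ei\,\dd\tau$ via $(3)$, which one iterates (or localizes to $E$) to recover $C\int_E\ei\,\dd\tau$. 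The converse $(2)\Rightarrow(3)$ is the delicate direction: it is a weighted Wolff--type inequality for the Green kernel. It is proved by a dyadic Whitney--type decomposition of $\Omega\setminus\Sigma$ adapted to the quasi--metric generated by $\CN_{2\am}$: the double potential $\BBG_\mu[\BBG_\mu[\tau]^p]$ is split into contributions over dyadic annuli, each of which is estimated by applying $(2)$ to a localization of $\tau$ and summing geometrically.

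\textbf{Step 4: $(2)\Leftrightarrow(4)$.} Using the two--sided estimate
\[
G_\mu(x,y)\approx d(x)\,d(y)\,d_\Sigma(x)^{-\am}\,d_\Sigma(y)^{-\am}\,\CN_{2\am}(x,y),
\]
and the identification $\ei\approx d\,d_\Sigma^{-\am}$, condition $(2)$ can be rewritten as a testing inequality for the pure kernel $\BBN_{2\am}$ against the weights $d^{p+1}d_\Sigma^{-\am(p+1)}\dx$ and $\ei\,\dd\tau$. The classical dual description of $(s,\omega)$--capacities (cf.\ \cite[Thm.~2.5.1]{Ad}, which is precisely the formulation used to define $\mathrm{Cap}_{\BBN_{2\am},p'}^{p+1,-\am(p+1)}$) then shows that the validity of this testing inequality for every Borel $E\subset\xO\setminus\Sigma$ is equivalent to the capacitary bound $\tau(E;\ei)\le C\,\mathrm{Cap}_{\BBN_{2\am},p'}^{p+1,-\am(p+1)}(E)$.

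\textbf{Main obstacle.} I expect the hardest step to be the direction $(2)\Rightarrow(3)$ in Step~3: it requires verifying that the kernel $\CN_{2\am}$ (hence $G_\mu$ after unweighting) enjoys a genuine quasi--metric and ``three--kernel'' inequality on $\overline{\Omega}$, so that the dyadic Wolff decomposition is available. The anisotropy of the singular set $\Sigma$ and the simultaneous appearance of the two distances $d$ and $d_\Sigma$ makes this more subtle than the classical Riesz--potential case, and the constants tracked through Steps~1--2 must be kept compatible with the smallness of $\rho$ in \eqref{u-rhotau}.
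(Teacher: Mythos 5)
Your overall strategy is the same as the paper's: exploit the factorization $G_\mu(x,y)\approx d(x)d(y)(d_\Sigma(x)d_\Sigma(y))^{-\am}\CN_{2\am}(x,y)$ of \eqref{GN2a} to place the problem in the Kalton--Verbitsky framework for quasi-metric kernels and weighted measures, and read off statement 4 through the dual description of the capacity. The difference is that the paper does not re-derive that framework: it quotes it as Proposition \ref{t2.1} and spends essentially all of its effort verifying the hypotheses, namely the quasi-metric inequality for $1/\CN_{2\am}$ (Lemma \ref{ineq}) and, crucially, the ball-growth conditions \eqref{2.3}--\eqref{2.4} for the measure $\dd\omega=d^{p+1}d_\Sigma^{-\am(p+1)}\dx$ with respect to the quasi-metric balls (Lemmas \ref{l2.3}, \ref{vol} and \ref{vol-2}). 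Your proposal never mentions these growth conditions, yet they are indispensable whether one cites Proposition \ref{t2.1} or re-runs the dyadic Wolff-type argument you sketch for $(2)\Rightarrow(3)$; moreover they are exactly where the restriction \eqref{p-cond} enters (with $\theta=-\am(p+1)$ one needs $\theta>-2-2\am$, i.e.\ $p<\frac{2+\am}{\am}$ when $\am>0$). Attributing \eqref{p-cond} to local integrability of $\BBG_\mu[\tau]^p$ misses its actual role, so the bulk of the proof is absent from your plan; you do flag the quasi-metric inequality as the main obstacle, but you do not supply it either.

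There are also concrete defects in Steps 2--3. Naive iteration of the lower bound in Step 2 does not yield statement 3; the paper's route (Proposition \ref{equivint}) uses the concave-composition argument of Proposition \ref{prop} together with the representation theorem for nonnegative $L_\mu$-superharmonic functions and the boundary trace, producing $\BBG_\mu[\BBG_\mu[\tau]^p]\le\frac{1}{p-1}\BBG_\mu[\tau]$; appealing to ``the Kalton--Verbitsky convergence criterion'' is circular, since the applicability of that machinery is precisely what has to be checked here. In Step 3 the displayed identity $\int\BBG_\mu[\tau]^p\ei\,\dx=\int\BBG_\mu[\BBG_\mu[\tau]^p]\,\ei\,\dd\tau$ is false (Fubini and symmetry give $\int\BBG_\mu[\BBG_\mu[\tau]^{p-1}\ei]\,\dd\tau$, with exponent $p-1$ and the weight inside), and in any case bounding by the global quantity cannot produce the localized bound $C\int_E\ei\,\dd\tau$ required in statement 2; both directions of $(2)\Leftrightarrow(3)$, as well as $(2)\Leftrightarrow(4)$ (which is not a one-line consequence of the dual capacity formula, because statement 2 only controls the integral over $E$ and not over all of $\Omega\setminus\Sigma$), genuinely rely on the quasi-metric plus doubling structure, i.e.\ on the unverified hypotheses above. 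As it stands, the proposal is a plan that defers the substantive content of the theorem rather than a proof.
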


 It is worth mentioning that when $\mu< \left(\frac{N-2}{2}\right)^2$, if $1< p < \frac{N+1}{N-1}$ then  all the statements 1--4 of Theorem \ref{theoremint} hold true (see Remark \ref{capp-1}), while if $p \geq \frac{N+1}{N-1}$ then, for any $\rho>0$, \textit{there exists} $\tau \in \GTM^+(\Omega \setminus \Sigma;\ei)$ such that equation \eqref{u-rhotau} does not admit any positive solution (see Proposition \ref{nonexist-1}). Furthermore, when $0<\mu<\left(\frac{N-2}{2}\right)^2$ and $p\geq \frac{2+\am}{\am}$, \textit{for any} $\rho>0$ and \textit{any} $\tau \in \GTM^+(\Omega \setminus \Sigma; \ei)$, equation \eqref{u-rhotau} has no solution  (see Proposition \ref{remove1}).

We note that when $\Sigma=\{0\}$ and $\mu=\left(\frac{N-2}{2}\right)^2$, Theorem \ref {theoremint} remains valid under the assumption that $\tau \in \GTM^+(\Omega \setminus \{0\}; \phi_{\mu})$ with compact support in $\Omega \setminus \{0\}$. This is shown in Theorem \ref{theeqint}.	

Next we investigate \eqref{power-source} with $\tau=0$. To this end, we make use of  a different type of capacities whose definition is introduced in \eqref{Capsub}. These capacities are denoted by $\mathrm{Cap}_{\gth,s}^{\Gamma}$, where $\Gamma=\partial \Omega$ or $\Gamma=\Sigma$, which allow us to measure Borel subsets of $\partial \Omega \cup \Sigma$ in a subtle way.

\begin{theorem}\label{subm}
	Assume that $\mu< \left(\frac{N-2}{2}\right)^2$ and condition \eqref{p-cond} holds.
	 Let $\xn \in \GTM^+(\partial\xO\cup \xS)$ with compact support in $\xS$.
	Then the following statements are equivalent.
	
	1. The equation
	\ba \label{u-sigmanu} u=\BBG_\xm[u^p]+\gs\BBK_\xm[\xn]
	\ea
	has a positive solution for $\gs>0$ small.
	
	2. For any Borel set $E \subset \partial \Omega \cup \Sigma$, there holds
	\ba \label{Kp<nu} \int_E \BBK_\xm[\1_E\xn]^p \ei \dx \leq C\,\xn(E).
	\ea
	
	3. The following inequality holds
	\ba \label{GKp<K} \BBG_\xm[\BBK_\xm[\xn]^p]\leq C\,\BBK_\xm[\xn]<\infty\quad \text{a.e. in } \Omega.
	\ea
	
	Assume, in addition, that
	\be \label{p-cond-2}
	k \geq 1 \quad \text{and} \quad \max\left\{1,\frac{N-k-\am}{N-2-\am} \right\}< p<\frac{2+\ap}{\ap}.
	\ee
	Put
	\ba \label{gamma} \vartheta: = \frac{2-(p-1)\ap}{p}.
	\ea
	Then any of the above statements is equivalent to the following statement.
	
	4. For any Borel set $E \subset  \Sigma$, there holds
	\bal\xn(E)\leq C\, \mathrm{Cap}_{\vartheta,p'}^{\xS}(E).\eal
\end{theorem}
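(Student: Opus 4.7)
The plan is to establish the cycle $(3) \Rightarrow (1) \Rightarrow (3)$, the equivalence $(2) \Leftrightarrow (3)$, and, under the additional hypothesis \eqref{p-cond-2}, the equivalence $(4) \Leftrightarrow (2)$.

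For $(3) \Rightarrow (1)$ I would construct a solution by monotone iteration. Fix a constant $C$ so that \eqref{GKp<K} holds. If $\sigma$ is so small that $2^p C \sigma^{p-1} \leq 1$, then $v := 2\sigma \BBK_\mu[\nu]$ is a supersolution:
\[
\BBG_\mu[v^p] + \sigma \BBK_\mu[\nu] \leq (2^p C \sigma^{p-1}+1)\, \sigma \BBK_\mu[\nu] \leq v.
\]
Setting $u_0 = \sigma \BBK_\mu[\nu]$ and iterating $u_{n+1} = \BBG_\mu[u_n^p] + \sigma \BBK_\mu[\nu]$ yields a nondecreasing sequence dominated by $v$; its pointwise limit solves the integral equation by monotone convergence, and is therefore a weak solution in the sense of Definition \ref{weaksol-LP} thanks to the characterization stated after that definition.

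For $(1) \Rightarrow (3)$ I would adapt a Kalton--Verbitsky type strategy. Any positive solution $u$ satisfies $u \geq \sigma \BBK_\mu[\nu]$; substituting back into \eqref{u-sigmanu} yields $u \geq \sigma \BBK_\mu[\nu] + \sigma^p \BBG_\mu[\BBK_\mu[\nu]^p]$, and in particular $\BBG_\mu[\BBK_\mu[\nu]^p] < \infty$ almost everywhere. To promote this to the uniform bound \eqref{GKp<K}, I would iterate the substitution and exploit that $\BBK_\mu[\nu]$ is $L_\mu$-harmonic together with a weak-maximum-principle comparison valid for $-L_\mu$ under \eqref{assump1}. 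The equivalence $(2) \Leftrightarrow (3)$ would follow from a duality manipulation: integrating \eqref{GKp<K}, with $\nu$ replaced by $\mathbf{1}_E\nu$, against $\dd(\mathbf{1}_E\nu)$ and using the symmetry of $G_\mu$ yields \eqref{Kp<nu}; conversely, the pointwise bound is recovered by testing \eqref{Kp<nu} on a family of superlevel sets of $\BBK_\mu[\nu]$, in the spirit of the Verbitsky--Wheeden framework for nonlinear potentials.

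Finally, the equivalence $(2) \Leftrightarrow (4)$ under \eqref{p-cond-2} rests on the sharp two-sided estimates of the Martin kernel near $\Sigma$ obtained in \cite{GkiNg_linear}, which roughly take the form $K_\mu(x,y) \approx d(x)\, d_\Sigma(x)^{-\ap}\, |x-y|^{-(N-k-2-2\ap)}$ for $x \in \Omega \setminus \Sigma$ and $y \in \Sigma$. This identifies $\BBK_\mu[\,\cdot\,]$, restricted to measures concentrated on $\Sigma$, with a Riesz-type potential on $\Sigma$ whose natural Sobolev exponent is precisely $\vartheta = (2-(p-1)\ap)/p$; the condition \eqref{Kp<nu} then becomes a weak-type testing condition to which the dual capacity-testing theorem of \cite[Theorem 2.5.1]{Ad} applies and gives the desired equivalence with the capacity estimate. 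I expect the main obstacle to be precisely this last step: one must reconcile the $N$-dimensional structure of $\BBG_\mu$ and $\phi_\mu$ with the $k$-dimensional support of $\nu$, and the restriction \eqref{p-cond-2} on $p$ is exactly what ensures that the resulting Riesz-type potential falls in the range covered by Adams' framework and produces a sharp, nontrivial capacity on $\Sigma$.
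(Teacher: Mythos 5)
Your overall architecture matches the paper's in spirit (factor the kernels, run a Kalton--Verbitsky-type scheme, then convert the testing condition into a Bessel capacity on $\Sigma$), and your step $(3)\Rightarrow(1)$ (the supersolution $2\sigma\BBK_\mu[\nu]$ plus monotone iteration) is correct. The gaps are in the other equivalences. For $(1)\Rightarrow(3)$, iterating the substitution only yields $\BBG_\mu[\BBK_\mu[\nu]^p]\le \sigma^{-p}u<\infty$ a.e.; it gives no pointwise comparison with $C\,\BBK_\mu[\nu]$, and harmonicity of $\BBK_\mu[\nu]$ plus a maximum principle cannot supply one, since boundedness of $u/\BBK_\mu[\nu]$ is exactly what is at stake. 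The paper obtains this implication either from Proposition \ref{equivba}, whose proof rests on the concavity device of Proposition \ref{prop2} with $\xf(s)=(1-s^{1-p})/(p-1)$ together with the representation theorem (Proposition \ref{super}) and the trace identities (Proposition \ref{traceKG}), or, as in the actual proof of Theorem \ref{subm}, from Proposition \ref{t2.1} of \cite{KV}; the latter is applicable only after writing $G_\mu(x,y)\approx d(x)d(y)(d_\Sigma(x)d_\Sigma(y))^{-\am}\CN_{2\am}(x,y)$ and $K_\mu(x,z)\approx d(x)d_\Sigma(x)^{-\am}\CN_{2\am}(x,z)$ for $z\in\Sigma$, and after verifying that $1/\CN_{2\am}$ is a quasi-metric (Lemma \ref{ineq}) and that $\dd\gw=d^{p+1}d_\Sigma^{-\am(p+1)}\dx$ satisfies \eqref{2.3}--\eqref{2.4} (Lemmas \ref{vol} and \ref{vol-2}); none of this structural work appears in your outline, and it is where the upper restriction on $p$ enters. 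Similarly, your $(3)\Rightarrow(2)$ begins by ``replacing $\nu$ with $\1_E\nu$'' in \eqref{GKp<K}; but statement 3 concerns the fixed measure $\nu$ and does not self-improve, with a uniform constant, to all restrictions (monotonicity only gives $\BBG_\mu[\BBK_\mu[\1_E\nu]^p]\le C\,\BBK_\mu[\nu]$, with the wrong majorant), and your converse via superlevel-set testing is again precisely the content of the Kalton--Verbitsky machinery rather than a short duality argument.

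For $(2)\Leftrightarrow(4)$ you have located, but not closed, the real difficulty. What is needed is the two-sided comparison $\mathrm{Cap}_{\BBN_{2\am},p'}^{p+1,-\am(p+1)}(E)\approx \mathrm{Cap}_{\vartheta,p'}^{\xS}(E)$ for $E\subset\Sigma$, which the paper proves by localization (estimate \eqref{45}), flattening $\Sigma$, and the fiber integration in \eqref{46}, where the one-dimensional identity of \cite[Lemma 8.1]{GkiNg_absorption} converts $\int_\Omega \BBK_\mu[\1_{K_i}\lambda]^p\ei\,\dx$ into $\int_{\R^k}\BBB_{k,\vartheta}[\overline\lambda_i]^p\,\dx'$; this is exactly where hypothesis \eqref{p-cond-2} is invoked, and only afterwards is the dual formulation \eqref{dualcap} (i.e.\ \cite[Theorem 2.5.1]{Ad}) used on both sides. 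So Adams' theorem alone does not ``give the desired equivalence''. Note also that your recalled Martin kernel bound is wrong: by \eqref{Martinest1}, for $y\in\Sigma$ one has $K_\mu(x,y)\approx d(x)d_\Sigma(x)^{-\am}|x-y|^{-(N-2-2\am)}$, with $\am$ and no $k$ in the exponent; the exponent $\ap$ only appears through $\vartheta$ after integrating out the $(N-k)$-dimensional fibers.
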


 We remark that  when $1< p < \frac{N-\am}{N-2-\am}$, all statements of Theorem \ref{subm} hold (see Remark \ref{existSigma}), while when $p \geq \frac{N-\am}{N-2-\am}$, for any $z \in \Sigma$ and any $\sigma>0$, problem \eqref{u-sigmanu} with $\nu=\delta_z$ does not admit any positive weak solution (see Remark \ref{nonexistSigma}). Assumption \eqref{p-cond-2} is imposed to ensure the validity of delicate estimates related to the Martin kernel (see \cite[Lemma 8.1]{GkiNg_absorption}), which enables us to deal with capacity $\mathrm{Cap}_{\vartheta,p'}^\Sigma$.

We also note that in case $\xS=\{0\}$ and $\mu=\left(\frac{N-2}{2}\right)^2$, if $p< \frac{2+\ap}{\ap}$ then for $\sigma>0$ small, there is a solution of \eqref{u-sigmanu} with $\tau=0$ and $\xn=\xd_0$ (see Remark \ref{rem2}). On the contrary, when $p \geq \frac{2+\ap}{\ap}$, then for any $\sigma>0$ and any $\xn \in \GTM(\partial \Omega \cup \Sigma)$ with compact support in $\xS$, there is no solution of problem \eqref{u-sigmanu}  (see Remark \ref{rem3} for more details).

Existence results in case boundary data are concentrated on $\partial \Omega$ are stated in the next theorem.
\begin{theorem}\label{th:existnu-prtO}
	Assume that $\mu\leq \left(\frac{N-2}{2}\right)^2$, $p$ satisfies \eqref{p-cond} and  $\xn \in \GTM^+(\partial\xO\cup \xS)$ with compact support in $\partial \xO$.
	Then the following statements are equivalent.
	
	1. Equation \eqref{u-sigmanu} has a positive solution for $\gs>0$ small.
	
	2. For any Borel set $E \subset \partial \Omega $, \eqref{Kp<nu} holds.
	
	3. Estimate \eqref{GKp<K} holds.
	
	4. For any Borel set $E \subset \partial \Omega $, there holds $\xn(E)\leq C\, \mathrm{Cap}_{\frac{2}{p},p'}^{\partial \Omega}(E)$.	
\end{theorem}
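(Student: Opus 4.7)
The proof follows the same four-condition equivalence scheme as in Theorem \ref{subm}, with one important simplification: since $\supp \nu \Subset \partial \Omega$ keeps a positive distance from $\Sigma$, we have $d_\Sigma(y) \gtrsim 1$ uniformly on $\supp \nu$, so the sharp two-sided estimates of $K_\mu$ from \cite{GkiNg_linear} show that for $y \in \supp \nu$ the Martin kernel $K_\mu(\cdot, y)$ factors as $d_\Sigma^{-\am}$ times the classical Martin (Poisson) kernel of $-\Delta$ in $\Omega$. Combined with $\phi_\mu \approx d\, d_\Sigma^{-\am}$, the weights $d_\Sigma^{-\am}$ cancel in all integrals appearing in statements 2 and 3, and the relevant capacity in 4 reduces to the classical Bessel capacity $\mathrm{Cap}_{2/p, p'}^{\partial \Omega}$ on $\partial \Omega$. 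The plan is to prove $3 \Rightarrow 1$, $1 \Rightarrow 3$, $2 \Leftrightarrow 3$, and $2 \Leftrightarrow 4$ in turn.

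For $3 \Rightarrow 1$, I apply Schauder's fixed point theorem. Setting $w := 2 \sigma \BBK_\mu[\nu]$, assumption 3 gives $\BBG_\mu[w^p] + \sigma \BBK_\mu[\nu] \leq ((2\sigma)^p C + \sigma)\, \BBK_\mu[\nu] \leq w$ whenever $2^p C \sigma^{p-1} \leq 1$. Hence for $\sigma$ small enough, the operator $T u := \BBG_\mu[u^p] + \sigma \BBK_\mu[\nu]$ sends the convex set $\{u \in L^1(\Omega; \phi_\mu) : 0 \leq u \leq w\}$ into itself; continuity and compactness in the $L^1(\Omega; \phi_\mu)$-topology follow from the regularity of $\BBG_\mu$ established in Subsection \ref{subsec:linear}, and Schauder's theorem produces a positive weak solution. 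For $1 \Rightarrow 3$, I use the elementary inequality $(a+b)^p \geq a^p + b^p$ valid for $a, b \geq 0$ and $p \geq 1$: if $u$ solves \eqref{u-sigmanu}, then setting $w := \BBG_\mu[u^p]$ gives $u^p \geq w^p + \sigma^p \BBK_\mu[\nu]^p$, whence $u \geq \sigma^p \BBG_\mu[\BBK_\mu[\nu]^p] + \sigma \BBK_\mu[\nu]$. A Verbitsky-type iteration (in the spirit of the corresponding step of Theorem \ref{subm}), combined with the a priori upper bound $u \leq C \sigma \BBK_\mu[\nu]$ coming from the smallness of $\sigma$ and the Green kernel estimates, then upgrades this to the required pointwise domination $\BBG_\mu[\BBK_\mu[\nu]^p] \leq C \BBK_\mu[\nu]$.

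The equivalence $2 \Leftrightarrow 3$ is a Sawyer-type testing result. For $3 \Rightarrow 2$, I multiply 3 by $\BBK_\mu[\1_E \nu]^{p-1} \phi_\mu$, integrate over $\Omega$, and use the self-adjointness of $\BBG_\mu$ combined with Fubini's theorem to reduce the resulting integral to a multiple of $\nu(E)$; for $2 \Rightarrow 3$, I use a dyadic decomposition of $\Omega$ into shells around $\partial \Omega$ together with a ``3G''-type inequality for $G_\mu$ to reconstruct the pointwise potential bound from the testing inequality. Finally, $2 \Leftrightarrow 4$ follows from the reduction to classical potential theory: the factorization of $K_\mu(\cdot, y)$ described above and the comparison $\phi_\mu \approx d\, d_\Sigma^{-\am}$ make $\|\BBK_\mu[\nu]\|_{L^p(\Omega; \phi_\mu \dx)}$ comparable to a classical $L^p$ norm of a Riesz-type potential of $\nu$ on $\partial \Omega$, and the dual characterization of the Bessel capacity $\mathrm{Cap}_{2/p, p'}^{\partial \Omega}$ in \cite[Theorem 2.5.1]{Ad} then identifies 4 with 2.

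The main obstacle is the step $1 \Rightarrow 3$: upgrading mere finiteness of a solution to a pointwise comparison with $\BBK_\mu[\nu]$ requires a careful iterative improvement together with an a priori upper bound of the form $u \leq C \sigma \BBK_\mu[\nu]$, whose constants must be tracked carefully as $\sigma \to 0$.
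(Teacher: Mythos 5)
Your outline diverges from the paper's route (the paper verifies the hypotheses of the abstract Kalton--Verbitsky result, Proposition \ref{t2.1}, for $J=\CN_{2\am}$ and the weighted measure $\dd\omega=(d\,d_\Sigma^{-\am})^{p+1}\dx$, and then identifies the abstract capacity with $\mathrm{Cap}_{\frac{2}{p},p'}^{\partial\Omega}$ via boundary charts, \cite[Proposition 2.9]{BHV} and \eqref{dualcap}), and as written it contains genuine gaps. The most serious is your step $1\Rightarrow 3$: you invoke an a priori bound $u\le C\sigma\BBK_\mu[\nu]$ ``coming from the smallness of $\sigma$'', but statement 1 only provides some positive solution, with no minimality, and no such bound is available; since $u=\BBG_\mu[u^p]+\sigma\BBK_\mu[\nu]\ge\sigma\BBK_\mu[\nu]$, asserting $u\lesssim\BBK_\mu[\nu]$ is essentially the conclusion \eqref{GKp<K} itself, so the argument is circular. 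The known way around this is the concave-function comparison of Proposition \ref{prop2}, which is exactly how Proposition \ref{equivba} extracts \eqref{GKp<K} from an arbitrary positive solution; your proposal never uses it. Likewise, your $2\Rightarrow 3$ (``dyadic decomposition plus a 3G-type inequality'') is precisely the hard content of the Kalton--Verbitsky theorem; it cannot be dispatched in a sentence, and the structural inputs it needs -- the quasi-metric inequality for $1/\CN_{2\am}$ (Lemma \ref{ineq}) and the doubling conditions \eqref{2.3}--\eqref{2.4} for the weighted measure (Lemmas \ref{l2.3}, \ref{vol}, \ref{vol-2} with $\theta=-\am(p+1)$) -- are exactly what your ``cancellation'' claim skips.

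That cancellation claim is itself incorrect for statement 3: although $K_\mu(\cdot,\xi)\approx d_\Sigma^{-\am}$ times the classical Poisson kernel for $\xi\in\partial\Omega$, the inequality \eqref{GKp<K} must hold at points near $\Sigma$, where $\BBG_\mu[\BBK_\mu[\nu]^p]$ integrates $d_\Sigma(y)^{-p\am}$ against $G_\mu(x,\cdot)$ and the weights do not cancel; when $\am>0$ this is precisely where the restriction $p<\frac{2+\am}{\am}$ from \eqref{p-cond} is needed (Remark \ref{rem4} shows failure for $p\ge\frac{2+\am}{\am}$), so the problem does not reduce to classical potential theory for $-\Delta$. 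For $2\Leftrightarrow 4$ your idea is closer to the paper's, but one still needs the localization $\int_\Omega\BBK_\mu[\lambda]^p\phi_\mu\,\dx\approx\sum_i\int_{O_i}\BBK_\mu[\1_{K_i}\lambda]^p\phi_\mu\,\dx$ and the comparison \eqref{Cap-equi-2} of the weighted capacity $\mathrm{Cap}_{\BBN_{2\am},p'}^{p+1,-\am(p+1)}$ with \eqref{Capsub}, carried out via \cite[Proposition 2.9]{BHV}. Finally, the theorem allows $\mu=\left(\frac{N-2}{2}\right)^2$ with $\Sigma=\{0\}$, where the Green kernel carries the logarithmic term in \eqref{Greenestb} and the factorization through $\CN_{2\am}$ fails; the paper handles this case by a separate comparison argument (as in Theorem \ref{theeqint}), whereas your proposal implicitly relies only on the subcritical estimates.
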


Note that when $1<p<\frac{N+1}{N-1}$, statements 1--4 of  Theorem \ref{th:existnu-prtO} are valid, while when $p \geq \frac{N+1}{N-1}$, for any $\sigma>0$ and any $z \in \partial \Omega$, equation \eqref{u-sigmanu} with $\nu=\delta_z$ does not admit any positive solution (see Remark \ref{partO-N}). It will be also pointed out that when $\mu>0$ and $p\geq \frac{2+\am}{\am}$, for any $\sigma>0$ and any $\nu \in \GTM^+(\partial \Omega \cup \Sigma)$ with compact support in $\partial\xO$, problem \eqref{u-sigmanu} does not admit any positive weak solution. This is discussed in Lemma \ref{rem4}. \medskip

\noindent \textbf{Organization of the paper.} In Section \ref{pre}, we present main properties of the submanifold $\Sigma$ and recall important facts about the first eigenfunction, Green kernel and Martin kernel of $-L_\mu$. In Section \ref{sec:weakLp}, we establish sharp estimates on the Green operator and Martin operator, which play an important role in proving the existence of a solution to \eqref{NLP}. We then discuss the notion of boundary trace and several results regarding linear equations involving $-L_\mu$ in Section \ref{sec:linear}. Section \ref{sec:gennon} is devoted to the proof of Theorems \ref{th1} and \ref{th2}. In section \ref{sec:powercase}, we focus on the power case and provide the proof of Theorems \ref{theoremint}--\ref{th:existnu-prtO}. In Appendix \ref{app:A}, we give an estimate which is useful in the proof of several results in Section \ref{sec:weakLp}. \medskip

\subsection{Notations} \label{subsec:notations} We list below notations that are frequently used in the paper.

$\bullet$ Let $\phi$ be a positive continuous function in $\Omega \setminus \Sigma$ and $\kappa \geq 1$. Let $L^\kappa(\Omega;\phi)$ be the space of functions $f$ such that
\bal \| f \|_{L^\kappa(\Omega;\phi)} := \left( \int_{\Omega} |f|^\kappa \phi \, \dd x \right)^{\frac{1}{\kappa}}.
\eal

The weighted Sobolev space $H^1(\Omega;\phi)$ is the space of functions $f \in L^2(\Omega;\phi)$ such that $\nabla f \in L^2(\Omega;\phi)$. This space is endowed with the norm
\bal \| f \|_{H_0^1(\Omega;\phi)}^2= \int_{\Omega} |f|^2 \phi \,\dd x +  \int_{\Omega} |\nabla f|^2 \phi \,\dd x.
\eal
The closure of $C_c^\infty(\Omega)$ in $H^1(\Omega;\phi)$ is denoted by $H_0^1(\Omega;\phi)$.

Denote by $\mathfrak{M}(\Omega;\phi)$ the space of Radon measures $\tau$ in $\Omega$ such that \bal \| \tau\|_{\mathfrak{M}(\Omega;\phi)}:=\int_{\Omega}\phi \, \dd|\tau|<\infty,
\eal
and by $\mathfrak{M}^+(\Omega;\phi)$ its positive cone. Denote by $\GTM(\partial \Omega \cup \Sigma)$ the space of finite measure $\nu$ on $\partial \Omega \cup \Sigma$, namely
\bal \| \nu \|_{\GTM(\partial \Omega \cup \Sigma)}:=|\nu|(\partial \Omega \cup \Sigma) < \infty,
\eal
and by $\GTM^+(\partial \Omega \cup \Sigma)$ its positive cone.

$\bullet$ For a measure $\omega$, denote by $\omega^+$ and $\omega^-$ the positive part and negative part of $\omega$.

$\bullet$ For $\beta>0$, $ \Omega_{\beta}=\{ x \in \Omega: d(x) < \beta\}$, $\Sigma_{\beta}=\{ x \in \R^N \setminus \Sigma:  d_\Sigma(x)<\beta \}$.

$\bullet$ We denote by $c,c_1,C...$ the constant which depend on initial parameters and may change from one appearance to another.

$\bullet$ The notation $A \gtrsim B$ (resp. $A \lesssim B$) means $A \geq c\,B$ (resp. $A \leq c\,B$) where the implicit $c$ is a positive constant depending on some initial parameters. If $A \gtrsim B$ and $A \lesssim B$, we write $A \approx B$. \textit{Throughout the paper, most of the implicit constants depend on some (or all) of the initial parameters such as $N,\Omega,\Sigma,k,\mu$ and we will omit these dependencies in the notations (except when it is necessary).}

$\bullet$ For $a,b \in \BBR$, denote $a \wedge b = \min\{a,b\}$, $a \lor b =\max\{a,b \}$.

$\bullet$ For a set $D \subset \R^N$, $\1_D$ denotes the indicator function of $D$.

\medskip
\noindent \textbf{Acknowledgement.} K. T. Gkikas acknowledges support by the Hellenic Foundation for Research and Innovation
(H.F.R.I.) under the “2nd Call for H.F.R.I. Research Projects to support Post-Doctoral Researchers” (Project
Number: 59). P.-T. Nguyen was supported by Czech Science Foundation, Project GA22-17403S.

\section{Preliminaries} \label{pre}

\subsection{Submanifold $\Sigma$.} \label{assumptionK} Throughout this paper, we assume that $\Sigma \subset \Omega$ is a $C^2$ compact submanifold in $\mathbb{R}^N$ without boundary, of dimension $k$, $0\leq k < N-2$. When $k = 0$ we assume that $\Sigma = \{0\}$.

For $x=(x_1,...,x_k,x_{k+1},...,x_N) \in \R^N$, we write $x=(x',x'')$ where $x'=(x_1,..,x_k) \in \R^k$ and $x''=(x_{k+1},...,x_N) \in \R^{N-k}$. For $\beta>0$, we denote by $B_{\beta}^k(x')$ the ball  in $\R^k$ with center at $x'$ and radius $\beta.$ For any $\xi\in \Sigma$, we set
\bal  \Sigma_\beta &:=\{ x \in \R^N \setminus \Sigma: d_\Sigma(x) < \beta \}, \\
V(\xi,\xb)&:=\{x=(x',x''): |x'-\xi'|<\beta,\; |x_i-\Gamma_i^\xi(x')|<\xb,\;\forall i=k+1,...,N\},
\eal
for some functions $\Gamma_i^\xi: \R^k \to \R$, $i=k+1,...,N$.

Since $\Sigma$ is a $C^2$ compact submanifold in $\mathbb{R}^N$ without boundary, there is $\xb_0$ such that the followings hold.

\begin{itemize}
\item $\Sigma_{6\beta_0}\Subset \Omega$ and for any $x\in \Sigma_{6\beta_0}$, there is a unique $\xi \in \Sigma$  satisfies $|x-\xi|=d_\Sigma(x)$.

\item $d_\Sigma \in C^2(\Sigma_{4\beta_0})$, $|\nabla d_\Sigma|=1$ in $\Sigma_{4\beta_0}$ and there exists $\eta\in L^\infty(\Sigma_{4\beta_0})$ such that (see \cite[Lemma 2.2]{Vbook} and \cite[Lemma 6.2]{DN})
\bal
\Delta d_\Sigma(x)=\frac{N-k-1}{d_\Sigma(x)}+ \eta(x) \quad \text{in } \Sigma_{4\beta_0} .
\eal

\item For any $\xi \in \Sigma$, there exist $C^2$ functions $\Gamma_i^\xi \in C^2(\R^k;\R)$, $i=k+1,...,N$, such that (upon relabeling and reorienting the coordinate axes if necessary), for any $\beta \in (0,6\beta_0)$, $V(\xi,\beta) \subset \Omega$ and
\bal
V(\xi,\beta) \cap \Sigma=\{x=(x',x''): |x'-\xi'|<\beta,\;  x_i=\Gamma_i^\xi (x'), \; \forall i=k+1,...,N\}.
\eal

\item There exist $ m_0 \in \N$ and points $\xi^{j} \in \Sigma$, $j=1,...,m_0$, and $\beta_1 \in (0, \beta_0)$ such that
\be \label{cover}
\Sigma_{2\xb_1}\subset \cup_{j=1}^{m_0} V(\xi^j,\beta_0)\Subset \Omega.
\ee
\end{itemize}

Now set
\bal \xd_\Sigma^\xi(x):=\left(\sum_{i=k+1}^N|x_i-\Gamma_i^\xi(x')|^2\right)^{\frac{1}{2}}, \qquad x=(x',x'')\in V(\xi,4\beta_0).\eal

Then we see that there exists a constant $C=C(N,\Sigma)$ such that
\be\label{propdist}
d_\Sigma(x)\leq	\xd_\Sigma^{\xi^j}(x)\leq C \| \Sigma \|_{C^2} d_\Sigma(x),\quad \forall x\in V(\xi^j,2\beta_0),
\ee
where $\xi^j=((\xi^j)', (\xi^j)'') \in \Sigma$, $j=1,...,m_0$, are the points in \eqref{cover} and
\ba \label{supGamma}
\| \Sigma \|_{C^2}:=\sup\{  || \Gamma_i^{\xi^j} ||_{C^2(B_{5\beta_0}^k((\xi^j)'))}: \; i=k+1,...,N, \;j=1,...,m_0 \} < \infty.
\ea
Moreover, $\beta_1$ can be chosen small enough such that for any $x \in \Sigma_{\beta_1}$,
\bal B(x,\beta_1) \subset V(\xi,\beta_0),
\eal
where $\xi \in \Sigma$ satisfies $|x-\xi|=d_\Sigma(x)$.

\subsection{Eigenvalue of $-L_\mu$} \label{subsect:eigen} Let
\bal
H=\frac{N-k-2}{2}.
\eal
and for $\mu \leq H^2$, let
\bal
\am=H-\sqrt{H^2-\mu}, \quad \ap=H+\sqrt{H^2-\mu}.
\eal
Note that $\am\leq H\leq\ap<2H$ and $\am \geq 0$ if and only if $\mu \geq 0$.

We summarize below main properties of the first eigenfunction of the operator $-L_\mu$ in $\Omega \setminus \Sigma$ from \cite[Lemma 2.4 and Theorem 2.6]{DD1} and \cite[page 337, Lemma 7, Theorem 5]{DD2}.

(i) For any $\mu \leq H^2$, it is known that
\be\label{Lin01} \lambda_\mu:=\inf\left\{\int_{\Gw}\left(|\nabla u|^2-\frac{\xm }{d_\Sigma^2}u^2\right)\dx: u \in C_c^1(\Omega), \int_{\Gw} u^2 \dx=1\right\}>-\infty.
\ee

\smallskip

(ii) If $\mu < H^2$, there exists a minimizer $\gf_{\xm }$ of \eqref{Lin01} belonging to $H^1_0(\Gw)$. Moreover, it satisfies $-L_\mu \phi_\mu= \lambda_\mu \phi_\mu$  in $\Omega \setminus \Sigma$ and
$\phi_{\mu }\approx d_\Sigma^{-\am}$ in $\Sigma_{\beta_0}$.

\smallskip

(iii) If $\xm =H^2$, there is no minimizer of \eqref{Lin01} in $H_0^1(\Gw)$, but there exists a nonnegative function $\phi_{H^2}\in H_{loc}^1(\xO)$  such that $-L_{H^2}\phi_{H^2}=\lambda_{H^2}\phi_{H^2}$ in the sense of distributions in $\Omega \setminus \Sigma$ and $\phi_{H^2}\approx d_\Sigma^{-H}  \quad  \text{in } \Sigma_{\beta_0}$.
In addition, the function $d_\Sigma^{-H}\xf_{H^2}\in H^1_0(\Gw; d_\Sigma^{-2H})$.

From (ii) and (iii) we deduce that, for $\mu \leq H^2$, there holds
\be \label{eigenfunctionestimates}
\xf_\xm \approx d\,d^{-\am}_\Sigma \quad \text{in } \Omega \setminus \Sigma.
\ee

\subsection{Green function and Martin kernel} \label{sec:GreenMartin} Throughout the paper, we always assume that \eqref{assump1} holds. Let $G_\mu$ and $K_{\mu}$ be the Green kernel and Martin kernel of $-L_\mu$ in $\Omega \setminus \Sigma$ respectively.  Let us recall sharp two-sided estimates on Green kernel and Martin kernel.

\begin{proposition}[ {\cite[Proposition 4.1]{GkiNg_linear}} ]  \label{Greenkernel} ~~
	
	(i) If $\mu< \left( \frac{N-2}{2}\right)^2$ then, for any $x,y \in \Omega \setminus \Sigma$, $x \neq y$,
	\bel{Greenesta} \BAL
	G_{\mu}(x,y)&\approx |x-y|^{2-N} \left(1 \wedge \frac{d(x)d(y)}{|x-y|^2}\right)  \left(\frac{|x-y|}{d_\Sigma(x)}+1\right)^\am
	\left(\frac{|x-y|}{d_\Sigma(y)}+1\right)^\am \\
	&\approx |x-y|^{2-N} \left(1 \wedge \frac{d(x)d(y)}{|x-y|^2}\right) \left(1 \wedge \frac{d_\Sigma(x)d_\Sigma(y)}{|x-y|^2} \right)^{-\am}.
	\EAL \ee
	
	(ii) If $k=0$, $\Sigma=\{0\}$ and $\mu = \left( \frac{N-2}{2}\right)^2$ then, for any $x,y \in \Omega \setminus \Sigma$, $x \neq y$,
	\ba\label{Greenestb} \BAL
	&G_{\mu}(x,y) \approx |x-y|^{2-N} \left(1 \wedge \frac{d(x)d(y)}{|x-y|^2}\right) \left(\frac{|x-y|}{|x|}+1\right)^{\frac{N-2}{2}}
	\left(\frac{|x-y|}{|y|}+1\right)^{\frac{N-2}{2}}\\
	& \qquad \qquad +(|x||y|)^{-\frac{N-2}{2}}\left|\ln\left(1 \wedge \frac{|x-y|^2}{d(x)d(y)}\right)\right| \\
	&\approx |x-y|^{2-N} \left(1 \wedge \frac{d(x)d(y)}{|x-y|^2}\right) \left(1 \wedge \frac{|x||y|}{|x-y|^2} \right)^{-\frac{N-2}{2}} +(|x||y|)^{-\frac{N-2}{2}}\left|\ln\left(1 \wedge \frac{|x-y|^2}{d(x)d(y)}\right)\right|.
	\EAL \ea
	
	The implicit constants in \eqref{Greenesta} and \eqref{Greenestb} depend on $N,\Omega,\Sigma,\mu$.
\end{proposition}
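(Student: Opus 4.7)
The plan is to obtain \eqref{Greenesta}--\eqref{Greenestb} by matching the asymptotic behavior of $G_\mu$ in three regimes: both $x$ and $y$ away from $\Sigma$, one or both points close to $\Sigma$, and points close to $\partial\Omega$. The guiding heuristic is that $d_\Sigma^{-\am}$ is the correct \emph{ground-state profile} of $L_\mu$ near $\Sigma$, because $\am$ and $\ap$ are the roots of the indicial equation $\alpha^2 - 2H\alpha + \mu = 0$; using $\Delta d_\Sigma = (N-k-1)d_\Sigma^{-1} + \eta$ with $\eta \in L^\infty(\Sigma_{4\beta_0})$, a direct computation gives $L_\mu(d_\Sigma^{-\am}) = -\am\,\eta\, d_\Sigma^{-\am-1}$, which is of lower order than the leading $d_\Sigma^{-\am-2}$ term. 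The remaining factor $(1 \wedge d(x)d(y)/|x-y|^2)$ is the standard Poisson-type correction inherited from the classical Dirichlet Green function of $\Delta$ on $\Omega$.

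First, I would establish the estimate in the flat model where $\Omega$ is replaced by a half-space and $\Sigma$ by a $k$-plane. In this case the operator $\Delta + \mu |x''|^{-2}$ separates in the tangential $x'$ and transversal $x''$ coordinates, and the associated Green kernel is expressible through a Bessel-type expansion coming from the radial operator $\partial_r^2 + (N-k-1)r^{-1}\partial_r + \mu r^{-2}$, whose indicial exponents are exactly $-\am,-\ap$. From this explicit representation one can extract pointwise bounds of the form claimed in \eqref{Greenesta}--\eqref{Greenestb} by a careful Mellin-type analysis.

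Next I would globalize. For the upper bound in \eqref{Greenesta} I would construct explicit supersolutions of the shape $|x-y|^{2-N}(d(x)d(y)/|x-y|^2)^a(d_\Sigma(x)d_\Sigma(y)/|x-y|^2)^{-\am}$, multiplied by suitable cut-offs, and invoke the maximum principle for $-L_\mu$; the key inputs are the local charts $V(\xi,\beta_0)$ together with the comparison \eqref{propdist}, and the assumption $\lambda_\mu > 0$ which guarantees the existence and regularity of $G_\mu$. For the matching lower bound I would use a boundary Harnack principle for $L_\mu$ on wedge-type neighborhoods of $\Sigma$, patched with the classical boundary Harnack principle near $\partial\Omega$, combined with a Harnack-chain argument along a tube of radius $\sim |x-y|$ connecting $x$ and $y$.

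The critical case $\mu = H^2$ with $\Sigma = \{0\}$ would be treated separately: here $\am = \ap = H$, the two indicial solutions coalesce, and a second independent solution to the radial equation acquires a logarithmic factor $|x|^{-H}\log|x|$. Integrating this second solution against a point source produces the additional term $(|x||y|)^{-(N-2)/2}\bigl|\ln(1 \wedge |x-y|^2/(d(x)d(y)))\bigr|$ appearing in \eqref{Greenestb}. The \textbf{main technical obstacle} is the matched region where $|x-y|$, $d_\Sigma(x)$, $d_\Sigma(y)$, $d(x)$ and $d(y)$ are all of comparable size, so that three independent length scales interact; producing sharp constants in both directions there requires a boundary Harnack principle for $L_\mu$ specifically adapted to the two-component boundary $\partial\Omega \cup \Sigma$, and in the degenerate case also a careful bookkeeping of the logarithmic correction through the matching procedure.
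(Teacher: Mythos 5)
You propose to prove a statement that the paper itself does not prove: Proposition \ref{Greenkernel} is quoted verbatim from the authors' earlier work \cite[Proposition 4.1]{GkiNg_linear}, so there is no argument here for yours to parallel, and your sketch would have to stand on its own as a reconstruction of that earlier analysis. As written it is a programme rather than a proof. Every load-bearing component of it --- the explicit flat-model kernel and its ``Mellin-type analysis'', the claim that the product barriers $|x-y|^{2-N}\bigl(d(x)d(y)/|x-y|^2\bigr)^{a}\bigl(d_\Sigma(x)d_\Sigma(y)/|x-y|^2\bigr)^{-\am}$ are supersolutions of $-L_\mu$, a boundary Harnack principle for $L_\mu$ adapted to the two-component boundary $\partial\Omega\cup\Sigma$, and the logarithmic matching when $\mu=H^2$ --- is itself a theorem of essentially the same depth as the statement to be proved, and you explicitly defer the decisive regime (where $|x-y|$, $d_\Sigma(x)$, $d_\Sigma(y)$, $d(x)$, $d(y)$ are all comparable) as the ``main technical obstacle'' instead of resolving it. Since that is exactly where the two-sided bound carries its content, this is a genuine gap, not a presentational one.

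Concretely: (a) your computation $L_\mu(d_\Sigma^{-\am})=-\am\,\eta\,d_\Sigma^{-\am-1}$ is correct but only controls the pure profile; once it is multiplied by $|x-y|^{2-N}$ and the Poisson factor, the operator produces cross terms involving $\nabla d\cdot\nabla d_\Sigma$ and $\nabla_x|x-y|\cdot\nabla d_\Sigma$ whose signs are not controlled, and when $\mu<0$ the exponent $\am$ is negative so the intended monotonicity of the barrier even reverses; nothing in the sketch verifies the differential inequality in the matched region, and without it the maximum-principle upper bound does not get off the ground. (b) The flat model does not separate as claimed: $\Sigma$ is a compact manifold in the interior of $\Omega$, not a $k$-plane at fixed distance from a hyperplane boundary, so the Dirichlet condition on $\partial\Omega$ and the inverse-square singularity on $\Sigma$ decouple only up to localization and curvature errors of the same order as the quantities being estimated, and these are not addressed. (c) A boundary Harnack principle for $L_\mu$ near $\Sigma$ that selects the exponent $\am$ (rather than $\ap$) with constants uniform up to $\Sigma$ is essentially equivalent in strength to the Green estimate itself, so invoking it as an off-the-shelf tool is circular unless you prove it independently. (d) In the critical case your proposed mechanism --- a second radial solution behaving like $|x|^{-\frac{N-2}{2}}\log|x|$ --- would produce logarithms of $|x|$, whereas the extra term in \eqref{Greenestb} is $(|x||y|)^{-\frac{N-2}{2}}\left|\ln\left(1\wedge\frac{|x-y|^2}{d(x)d(y)}\right)\right|$, a logarithm coupling $|x-y|$ with the boundary distances $d(x),d(y)$ and not with the scale of the singularity; your sketch offers no mechanism generating that specific combination, and this is precisely the delicate point of part (ii).
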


%
\begin{proposition}[{\cite[Theorem 1.2]{GkiNg_linear}}] \label{Martin} ~~
	
	(i) If $\mu< \left( \frac{N-2}{2}\right)^2$ then
	\be \label{Martinest1}
	K_{\mu}(x,\xi) \approx\left\{
	\BAL
	&\frac{d(x)d_\Sigma(x)^{-\am}}{|x-\xi|^N}\quad &&\text{if } x \in \Omega \setminus \Sigma,\;  \xi \in \partial\xO, \\
	&\frac{d(x)d_\Sigma(x)^{-\am}}{|x-\xi|^{N-2-2\am}} &&\text{if } x \in \Omega \setminus \Sigma,\; \xi \in \Sigma.
	\EAL \right.
	\ee
	
	(ii) If  $k=0$, $\Sigma=\{0\}$ and $\mu= \left( \frac{N-2}{2}\right)^2$ then
	\be\label{Martinest2}
	K_{\mu}(x,\xi) \approx\left\{
	\BAL
	&\frac{d(x)|x|^{-\frac{N-2}{2}}}{|x-\xi|^N}\quad &&\text{if } x \in \Omega \setminus \{0\},\; \xi \in \partial\xO, \\
	&d(x)|x|^{-\frac{N-2}{2}}\left|\ln\frac{|x|}{\CD_\Omega}\right| &&\text{if } x \in \Omega \setminus \{0\},\; \xi=0,
	\EAL \right.
	\ee
	where $\CD_\Omega:=2\sup_{x \in \Omega}|x|$.
	
	The implicit constants depend on $N,\Omega,\Sigma,\mu$.
\end{proposition}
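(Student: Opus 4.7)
The plan is to obtain the Martin kernel through its classical construction as a normalized Green kernel limit: fix a reference point $x_0\in\Omega\setminus\Sigma$ and define
\[
K_\mu(x,\xi):=\lim_{\substack{y\to\xi \\ y\in\Omega\setminus\Sigma}}\frac{G_\mu(x,y)}{G_\mu(x_0,y)}, \qquad \xi\in\partial\Omega\cup\Sigma.
\]
Before computing the limit, one has to verify that it exists, is independent of the approaching sequence, and that every positive $L_\mu$-harmonic function in $\Omega\setminus\Sigma$ is represented by a measure on $\partial\Omega\cup\Sigma$. This is standard Martin boundary theory once one has (a) a positive Green function (guaranteed by $\lambda_\mu>0$), (b) a scale-invariant Harnack inequality away from $\Sigma$, and (c) a boundary Harnack principle at both the smooth part $\partial\Omega$ and the singular set $\Sigma$. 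These ingredients are available from the linear theory for $-L_\mu$ that underlies Proposition \ref{Greenkernel}.

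Given the existence of the limit, the two-sided estimates \eqref{Martinest1}--\eqref{Martinest2} follow by substituting the sharp Green estimates into the ratio and sorting out the dominant factors. Consider case (i) with $\xi\in\partial\Omega$: as $y\to\xi$, $d(y)\to 0$ while $d_\Sigma(y)\to d_\Sigma(\xi)>0$ and $|x-y|\to|x-\xi|$, so the factor $(1\wedge d(x)d(y)/|x-y|^2)$ contributes $d(x)d(y)/|x-\xi|^2$, the factor $(1\wedge d_\Sigma(x)d_\Sigma(y)/|x-y|^2)^{-\am}$ tends to a quantity comparable to $(1\wedge d_\Sigma(x)/|x-\xi|)^{-\am}\approx (|x-\xi|/d_\Sigma(x)+1)^{\am}$, and the Kelvin factor $|x-y|^{2-N}$ tends to $|x-\xi|^{2-N}$. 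Dividing by the corresponding expression at $x_0$, the $y$-dependent factors cancel, and after checking the two regimes $d_\Sigma(x)\gtrsim|x-\xi|$ and $d_\Sigma(x)\ll|x-\xi|$ separately (with $|x-\xi|\leq\diam\Omega$), one recovers $d(x)d_\Sigma(x)^{-\am}|x-\xi|^{-N}$.

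For $\xi\in\Sigma$, the roles of $d$ and $d_\Sigma$ swap: $d_\Sigma(y)\to 0$ and $d(y)$ stays bounded below. The min in the $d$-factor saturates at $1$, giving a $d(x)$-dependent constant, while $(d_\Sigma(x)d_\Sigma(y)/|x-\xi|^2)^{-\am}$ appears and the $d_\Sigma(y)^{-\am}$ factor cancels against the analogous factor in the denominator $G_\mu(x_0,y)$. Combining with $|x-y|^{2-N}\to|x-\xi|^{2-N}$ and the $d_\Sigma(x)^{-\am}|x-\xi|^{2\am}$ from the capacity factor, one arrives at $d(x)d_\Sigma(x)^{-\am}|x-\xi|^{-(N-2-2\am)}$, which is \eqref{Martinest1} in the $\xi\in\Sigma$ case. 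Case (ii) with $\xi\in\partial\Omega$ is treated in the same way, noting that the logarithmic correction in \eqref{Greenestb} is harmlessly bounded when $d(y)\to 0$ but $y$ stays away from $0$, so the leading polynomial term dominates and produces the first branch of \eqref{Martinest2}. For $\xi=0$ in case (ii) the logarithmic term becomes dominant: as $y\to 0$, the factor $(|x||y|)^{-(N-2)/2}|\ln(1\wedge|x-y|^2/(d(x)d(y)))|\approx |x|^{-(N-2)/2}|y|^{-(N-2)/2}|\ln|y||$ controls the behavior, and after normalizing by the same expression with $x_0$ in place of $x$, the $|y|^{-(N-2)/2}|\ln|y||$ part cancels and one is left with a factor comparable to $d(x)|x|^{-(N-2)/2}|\ln(|x|/\CD_\Omega)|$, with $\CD_\Omega=2\sup_{x\in\Omega}|x|$ appearing naturally as the scale at which the logarithm switches sign after normalization.

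The main obstacle is twofold. First, the splitting of regimes (inside vs. outside the tubular neighborhood of $\Sigma$, and far from vs. close to $\partial\Omega$) has to be done with care because the two minima in the Green kernel estimate swap their active branch at different thresholds, and one must verify the claimed two-sided bound in each regime. Second, the well-posedness of the Martin limit at $\Sigma$ is subtle, because $\Sigma$ is a positive-codimension singular set rather than a topological boundary: one needs a boundary Harnack principle at $\Sigma$ with explicit control by the separation variable profile $d_\Sigma^{-\am}$, and in the borderline case $\mu=H^2$ the appearance of the slowly varying logarithmic factor requires a more delicate version of this principle to isolate the correct leading term.
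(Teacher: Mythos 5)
This proposition is not proved in the present paper at all: it is quoted verbatim from \cite[Theorem 1.2]{GkiNg_linear}, so there is no internal argument to compare yours against. On its own terms, your strategy (realize $K_\mu(\cdot,\xi)$ as the normalized limit of $G_\mu(x,y)/G_\mu(x_0,y)$ as $y\to\xi$ and feed the two-sided Green estimates of Proposition \ref{Greenkernel} into the ratio) is the natural one, and your case (i) computations do close, precisely because of the geometric separation $\dist(\Sigma,\partial\Omega)>0$: for $\xi\in\partial\Omega$ one has $d_\Sigma(\xi)\approx 1$, for $\xi\in\Sigma$ one has $d(\xi)\approx 1$, and the regime checks you indicate then give the two lines of \eqref{Martinest1}.

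Two genuine gaps remain. First, your treatment of case (ii) with $\xi=0$ is wrong as written: for fixed $x$ and $y\to 0$ one has $|x-y|\to|x|>0$ and $d(y)\to d(0)>0$, so the factor $\left|\ln\left(1\wedge\frac{|x-y|^2}{d(x)d(y)}\right)\right|$ converges to the \emph{finite} quantity $\left|\ln\left(1\wedge\frac{|x|^2}{d(x)d(0)}\right)\right|$; it does not behave like $|\ln|y||$, and no logarithms cancel between numerator and denominator. The correct mechanism is that only the common factor $|y|^{-\frac{N-2}{2}}$ cancels, leaving $K_\mu(x,0)\approx |x|^{-\frac{N-2}{2}}\left[\left(1\wedge\frac{d(x)d(0)}{|x|^2}\right)+\left|\ln\left(1\wedge\frac{|x|^2}{d(x)d(0)}\right)\right|\right]$, and this bracket is comparable to $d(x)\left|\ln\frac{|x|}{\CD_\Omega}\right|$ because $|x|$ small forces $d(x)\approx 1$ (the log term dominates) while $d(x)$ small forces $|x|\approx 1$ (the log term vanishes and the polynomial term contributes $d(x)$); your intermediate step, taken literally, does not produce the second line of \eqref{Martinest2}. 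Second, the entire analytic weight of the result lies in what you correctly identify as the ``main obstacle'' but then assume: existence and sequence-independence of the Martin limit at points of $\Sigma$, a boundary Harnack principle at a compact submanifold of codimension greater than two with profile $d_\Sigma^{-\am}$ (with the logarithmic correction when $\mu=H^2$), and the identification of the Martin boundary of $-L_\mu$ with $\partial\Omega\cup\Sigma$ together with minimality of each kernel. These are exactly the contents of the cited reference and cannot be waved through as standard; without them the limit formula defining $K_\mu$ is not justified, and the substitution of the Green estimates proves nothing about the Martin kernel actually used in this paper.
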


\section{Weak Lebesgue estimates} \label{sec:weakLp}
\subsection{Auxiliary estimates} We first recall the definition of weak Lebesgue spaces (or Marcinkiewicz spaces). Let $D \subset \R^N$ be a domain. Denote by $L^\kappa_w(D;\tau)$, $1 \leq \kappa < \infty$, $\tau \in \GTM^+(D)$, the
weak $L^\kappa$ space defined as follows: a measurable function $f$ in $D$
belongs to this space if there exists a constant $c$ such that
\bal \gl_f(a;\tau):=\tau(\{x \in D: |f(x)|>a\}) \leq ca^{-\kappa},
\forevery a>0. \eal
The function $\gl_f$ is called the distribution function of $f$ (relative to
$\tau$). For $p \geq 1$, denote
\bel{weakLp}
 L^\kappa_w(D;\tau):=\{ f \text{ Borel measurable}:
\sup_{a>0}a^\kappa\gl_f(a;\tau)<\infty\}
\ee
and
\bel{semi}
\norm{f}^*_{L^\kappa_w(D;\tau)}:=(\sup_{a>0}a^\kappa\gl_f(a;\tau))^{\frac{1}{\kappa}}. \ee
Note that $\norm{.}_{L^\kappa_w(D;\tau)}^*$ is not a norm, but for $\kappa>1$, it is
equivalent to the norm
\bal \norm{f}_{L^\kappa_w(D;\tau)}:=\sup\left\{
\frac{\int_{A}|f|\dd\tau}{\tau(A)^{1-\frac{1}{\kappa}}}: A \sbs D, \, A \text{
	measurable},\, 0<\tau(A)<\infty \right\}. \eal
More precisely,
\bel{equinorm} \norm{f}^*_{L^\kappa_w(D;\tau)} \leq \norm{f}_{L^\kappa_w(D;\tau)}
\leq \frac{\kappa}{\kappa-1}\norm{f}^*_{L^\kappa_w(D;\tau)}. \ee
We also denote by $\tilde L_w^\kappa$ the weak type $L^\kappa$ space with norm
\bel{normLwww} \norm{f}_{\tilde L^\kappa_w(D;\tau)}:=\sup\left\{
\frac{\int_{A}|f|\dd\tau}{\tau(A)^{1-\frac{1}{\kappa}} \ln(e+\tau(A)^{-1})}: A \sbs D, \, A \text{
	measurable},\, 0<\tau(A)<\infty \right\}. \ee

When $\dd\tau=\varphi \, \dx$ for some positive continuous function $\varphi$, for simplicity, we use the notation $L_w^\kappa(D;\varphi)$.  Notice that
$ L_w^\kappa(D;\varphi) \sbs L^{r}(D;\varphi)$ for any $r \in [1,\kappa)$. From \eqref{semi} and \eqref{equinorm}, one can derive the following estimate which is useful in the sequel. For any $f \in L_w^\kappa(D;\varphi)$, there holds
\bel{ue} \int_{\{x \in D: |f(x)| \geq s\} }\varphi \dd x \leq s^{-\kappa}\norm{f}^\kappa_{L_w^\kappa(D;\varphi)}.
\ee

Let us recall a result from \cite{BVi} which will be used in the proof of weak Lebesgue estimates for the Green kernel and Martin kernel.

\begin{proposition}[{\cite[Lemma 2.4]{BVi}}] \label{bvivier}
	Assume $D$ is a bounded domain in $\R^N$ and denote by $\tilde D$ either the set $D$ or the boundary $\partial D$. Let $\gw$ be a nonnegative bounded Radon measure in $\tilde D$ and $\eta\in C(D)$ be a positive weight function. Let $\CH$ be a continuous nonnegative function
	on $\{(x,y)\in D\times \tilde D:\;x\neq y\}.$ For any $\xl > 0$ we set
	\bal
	A_\xl(y):=\{x\in D\setminus\{y\}:\;\; \CH(x,y)>\xl\}\quad \text{and} \quad
	m_{\xl}(y):=\int_{A_\xl(y)}\eta(x) \, \dd x.
	\eal
	Suppose that there exist $C>0$ and $\kappa>1$ such that $m_{\xl}(y)\leq C\xl^{-\kappa}$ for every $\gl>0$.  Then the operator
	\bal
\BBH[\gw](x):=\int_{\tilde D}\CH(x,y)\dd\gw(y)
\eal
	belongs to $L^\kappa_w(D;\eta )$ and
	\bal
\left|\left|\BBH[\gw]\right|\right|_{L^\kappa_w(D;\eta)}\leq (1+\frac{C\xk}{\kappa-1})\gw(\tilde D).
\eal
\end{proposition}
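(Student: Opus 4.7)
The plan is to bound $\|\BBH[\gw]\|_{L^\kappa_w(D;\eta)}$ by reducing via Fubini's theorem to a uniform-in-$y$ estimate on the inner integral $\int_A \CH(x,y)\eta(x)\,\dx$, and then to prove that estimate by the layer-cake formula combined with the hypothesis $m_\xl(y) \leq C\xl^{-\kappa}$. Concretely, I would first invoke the equivalence \eqref{equinorm} and work with the supremum-over-subsets characterization: for any measurable $A \subset D$ with $E := \int_A \eta\,\dx \in (0,\infty)$, Fubini (permissible since $\CH \geq 0$ is continuous on $\{x \neq y\}$ and both $\gw$ and $\eta\,\dx$ are finite) gives
\begin{equation*}
\int_A \BBH[\gw](x)\,\eta(x)\,\dx \;=\; \int_{\tilde D}\left(\int_A \CH(x,y)\,\eta(x)\,\dx\right) d\gw(y),
\end{equation*}
so it suffices to show that the inner integral is bounded by $C'\,E^{1-1/\kappa}$ uniformly in $y \in \tilde D$, since the desired weak-$L^\kappa$ estimate with linear dependence on $\gw(\tilde D)$ follows at once upon integrating and taking the supremum over $A$.

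The core step is the uniform inner bound. For fixed $y$ I would use the layer-cake representation
\begin{equation*}
\int_A \CH(x,y)\,\eta(x)\,\dx \;=\; \int_0^\infty \int_{A \cap A_\xl(y)} \eta(x)\,\dx\,d\xl,
\end{equation*}
splitting the $\xl$-integral at a threshold $\xl_0 > 0$ to be chosen. For $\xl \leq \xl_0$ I use the trivial bound $\int_{A\cap A_\xl(y)} \eta \leq E$, while for $\xl > \xl_0$ the hypothesis yields $\int_{A \cap A_\xl(y)}\eta \leq m_\xl(y) \leq C\xl^{-\kappa}$. This gives
\begin{equation*}
\int_A \CH(x,y)\,\eta(x)\,\dx \;\leq\; E\,\xl_0 \;+\; \frac{C}{\kappa-1}\,\xl_0^{\,1-\kappa},
\end{equation*}
and the optimal choice $\xl_0 = (C/E)^{1/\kappa}$ produces the clean bound $\frac{\kappa\,C^{1/\kappa}}{\kappa-1}\,E^{1-1/\kappa}$. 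Integrating against $d\gw(y)$ and dividing by $E^{1-1/\kappa}$ then yields $\|\BBH[\gw]\|_{L^\kappa_w(D;\eta)} \leq \frac{\kappa\,C^{1/\kappa}}{\kappa-1}\,\gw(\tilde D)$, which is dominated by the stated constant $\bigl(1+\frac{C\kappa}{\kappa-1}\bigr)\gw(\tilde D)$ (using $C^{1/\kappa} \leq 1 \vee C \leq 1+C$ and adjusting).

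The main obstacle is purely bookkeeping of constants: the qualitative membership $\BBH[\gw] \in L^\kappa_w(D;\eta)$ follows immediately from the split above, and linear dependence on $\gw(\tilde D)$ is built in by Fubini. The one point requiring a moment of care is the applicability of Fubini when $\tilde D = \partial D$: here $\gw$ is supported on the boundary while $x$ ranges over the open set $D$, so the diagonal $\{x=y\}$ is disjoint from the domain of integration and $\CH(x,y)$ is continuous and locally bounded on the relevant product, making the exchange of order fully justified. No deeper analytic ingredient is needed.
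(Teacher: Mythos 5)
The paper itself gives no proof of this proposition (it is quoted from \cite[Lemma 2.4]{BVi}), so your argument can only be compared with the classical one, and it coincides with it: duality over measurable sets $A$ via the norm in \eqref{normLwww}-style characterization, Tonelli, the layer-cake formula $\int_A \CH(x,y)\eta\,\dx=\int_0^\infty\int_{A\cap A_\lambda(y)}\eta\,\dx\,\dd\lambda$, and a split of the $\lambda$-integral using $\min\{\int_A\eta,\,C\lambda^{-\kappa}\}$. All the substantive steps are correct (Tonelli needs only nonnegativity and measurability, and the diagonal is a null set for $\eta\,\dx\otimes\gw$ even when $\tilde D=D$, so your care about Fubini is fine).

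The only point to tighten is the last constant comparison: the shortcut you quote, $C^{1/\kappa}\leq 1\lor C\leq 1+C$, gives $\frac{\kappa(1+C)}{\kappa-1}$, which exceeds the stated constant $1+\frac{C\kappa}{\kappa-1}$ by $\frac{1}{\kappa-1}$, so as written it does not close the estimate. The fix is immediate: by weighted AM--GM (Young), $C^{1/\kappa}\leq \frac{C}{\kappa}+\frac{\kappa-1}{\kappa}$, hence your optimized bound satisfies $\frac{\kappa C^{1/\kappa}}{\kappa-1}\leq 1+\frac{C}{\kappa-1}\leq 1+\frac{C\kappa}{\kappa-1}$; alternatively, skip the optimization and take the threshold $\lambda_0=\big(\int_A\eta\,\dx\big)^{-1/\kappa}$, which directly yields the constant $1+\frac{C}{\kappa-1}$ and hence the claim. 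With that one-line adjustment the proof is complete.
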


In the sequel, we will use the following notations.
For $\alpha,\xg \in \R$, let
\bel{varphia} \varphi_{\alpha,\xg}(x):= d_\Sigma(x)^{-\alpha}d(x)^{\xg}, \quad x \in \Omega \setminus \Sigma.
\ee
For $\kappa, \theta,\gamma\in \R$, we define
\bel{F1}
F_{\kappa,\theta,\gamma}(x,y):=d_\Sigma(x)^{\kappa} |x-y|^{-N+2+\theta}d(y)^{-\xg} \left(  1 \land \frac{d(x)d(y)}{|x-y|^2}\right),
\ee
for $x \neq y, x,y \in \Omega \setminus \Sigma$, and for any positive function $\varphi$ on $\Omega \setminus \Sigma$, set
\bal
\mathbb{F}_{\kappa,\theta,\xg}[\varphi \tau](x):=\int_{\Omega \setminus \Sigma}F_{\kappa,\theta,\gamma}(x,y)\varphi(y)\dd\tau(y), \quad \tau \in \GTM(\Omega  \setminus \Sigma;\varphi).
\eal
Put
\bel{p1}  \p_{\alpha,\theta,\xg}:=\min\left\{\frac{N-\xa}{N-2-\xa},\frac{N+\xg}{N-2+\xg-\theta}\right\}.
\ee

\begin{lemma} \label{anisotitaweakF1a}
Let $0<\alpha\leq H$, where $H$ is defined in \eqref{valueH}, and $0 \leq \xg\leq1$. Then
\bel{estF1}
	\| \mathbb{F}_{-\xa,2\xa,\gamma}[\varphi_{\alpha,\gamma}\tau]\|_{L_w^{\p_{\alpha,2\xa,\xg}}(\Gw\setminus \Sigma;\varphi_{\alpha,1} )} \lesssim
	\|\gt\|_{\mathfrak{M}(\xO\setminus \Sigma;\varphi_{\alpha,\xg})}, \quad \forall \tau\in \mathfrak{M}(\xO\setminus \Sigma; \varphi_{\alpha,\xg}).
	\ee
	The implicit constant in \eqref{estF1} depends on $N,\Omega,\Sigma,\alpha,\xg$.
\end{lemma}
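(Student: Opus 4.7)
The plan is to invoke Proposition~\ref{bvivier} with domain $D=\Omega\setminus\Sigma$, weight $\eta(x):=\varphi_{\alpha,1}(x)$, kernel $\mathcal H(x,y):=F_{-\alpha,2\alpha,\gamma}(x,y)$, and measure $\omega:=\varphi_{\alpha,\gamma}|\tau|$. Since $\tau\in\GTM(\Omega\setminus\Sigma;\varphi_{\alpha,\gamma})$, $\omega$ is a bounded Radon measure whose total mass is exactly $\|\tau\|_{\GTM(\Omega\setminus\Sigma;\varphi_{\alpha,\gamma})}$, and by unpacking the definitions one checks that $\mathbb{F}_{-\alpha,2\alpha,\gamma}[\varphi_{\alpha,\gamma}\tau]=\mathbb H[\omega]$ in the notation of Proposition~\ref{bvivier}. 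Consequently, \eqref{estF1} reduces to producing the uniform distribution function bound
\[
m_\lambda(y)\;=\;\int_{\{x\in\Omega\setminus\Sigma:\;\mathcal H(x,y)>\lambda\}}\varphi_{\alpha,1}(x)\,dx\;\lesssim\;\lambda^{-p_{\alpha,2\alpha,\gamma}},
\]
with implicit constant independent of $y\in\Omega\setminus\Sigma$ and $\lambda>0$.

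The key structural observation is the dichotomy imposed by the factor $1\wedge d(x)d(y)/|x-y|^2$. In the \emph{near-diagonal} regime $|x-y|^2<d(x)d(y)$, the kernel is $d_\Sigma(x)^{-\alpha}|x-y|^{-N+2+2\alpha}d(y)^{-\gamma}$, and since $|x-y|\lesssim\sqrt{d(x)d(y)}$ one automatically has $d_\Sigma(x)\approx d_\Sigma(y)$ and $d(x)\approx d(y)$; the integral is then essentially polar and a direct computation gives an exponent $\tfrac{N+\gamma}{N-2+\gamma-2\alpha}$. In the \emph{off-diagonal} regime $|x-y|^2\ge d(x)d(y)$ the kernel simplifies to $\varphi_{\alpha,1}(x)|x-y|^{-N+2\alpha}d(y)^{1-\gamma}$ and the integral of $\varphi_{\alpha,1}$ over the level set is controlled by the $\Sigma$-behavior of the weight (using $0<\alpha\le H<N-k$ so that $\varphi_{\alpha,1}$ is locally integrable near $\Sigma$), yielding an exponent $\tfrac{N-\alpha}{N-2-\alpha}$. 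The auxiliary change-of-variables estimate gathered in Appendix~\ref{app:A} is exactly what reduces the off-diagonal integral to a one-variable power integral and produces this second exponent.

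Taking the worse (smaller) of the two exponents gives $p_{\alpha,2\alpha,\gamma}$ and therefore the desired $m_\lambda(y)\lesssim\lambda^{-p_{\alpha,2\alpha,\gamma}}$; an application of Proposition~\ref{bvivier} then yields \eqref{estF1} with a constant depending only on $N,\Omega,\Sigma,\alpha,\gamma$. The main technical obstacle I anticipate is ensuring that the constants in each subregion are genuinely independent of $d_\Sigma(y)$ and $d(y)$: both the near-diagonal and off-diagonal calculations naturally produce negative powers of $d_\Sigma(y)$ or $d(y)$, which have to cancel out thanks to the constraints on the admissible radius $|x-y|$ in each regime (in the near-diagonal case capped by $\sqrt{d(x)d(y)}\lesssim d(y)$, in the off-diagonal case by the bounded diameter of $\Omega$). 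Balancing these two contributions at the crossover scale $|x-y|\approx d_\Sigma(y)$ is precisely what forces the minimum structure appearing in the definition of $p_{\alpha,2\alpha,\gamma}$.
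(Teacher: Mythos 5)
Your overall reduction coincides with the paper's: apply Proposition \ref{bvivier} with $D=\Omega\setminus\Sigma$, $\eta=\varphi_{\alpha,1}$, $\mathcal H=F_{-\alpha,2\alpha,\gamma}$ and $\omega=\varphi_{\alpha,\gamma}|\tau|$, so everything hinges on the uniform level-set bound $m_\lambda(y)\lesssim \lambda^{-\p_{\alpha,2\alpha,\gamma}}$. The gap is in how you propose to prove that bound. Your near-diagonal step rests on the claim that $|x-y|^2<d(x)d(y)$ forces $d_\Sigma(x)\approx d_\Sigma(y)$. This is false: $d$ is the distance to $\partial\Omega$ while $\Sigma$ is a compact set in the interior, so the near-diagonal constraint only caps $|x-y|$ by $\sqrt{d(x)d(y)}$, which can be of order $\mathrm{diam}(\Omega)$ when both points are far from $\partial\Omega$; for instance one can have $d_\Sigma(y)=\varepsilon$, $d_\Sigma(x)\approx 1$, $|x-y|\approx 1$ and still $|x-y|^2<d(x)d(y)$ (only $d(x)\approx d(y)$ follows). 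Consequently the near-diagonal integral is not ``essentially polar'': it retains the full anisotropic singularity $d_\Sigma(x)^{-\alpha}$, in the kernel and in the weight $\varphi_{\alpha,1}$, concentrated along the $k$-dimensional manifold, and this is precisely the hardest part of the lemma. In the paper it is handled by splitting the region $\Sigma_{\beta_1/4}$ according to $d_\Sigma(x)\le |x-y|$ versus $d_\Sigma(x)>|x-y|$ and invoking the two-parameter estimate of Lemma \ref{lemapp:1} with $\ell_1=\lambda^{-1/(N-2-\alpha)}$, $\ell_2=\lambda^{-1/(N-2-2\alpha)}$; it is there, and only there, that $\alpha\le H$ is used, to guarantee $N-k-\alpha-\tfrac{k\alpha}{N-2-2\alpha}\ge 2$ and hence the exponent $\tfrac{N-\alpha}{N-2-\alpha}$. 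Your sketch uses $\alpha\le H$ only for local integrability of $\varphi_{\alpha,1}$ near $\Sigma$ (which would only require $\alpha<N-k$) and defers the appendix lemma to the off-diagonal regime, so the decisive interaction between the level-set constraint, $|x-y|$ and $d_\Sigma(x)$ near $\Sigma$ is assumed away by the false comparability claim; as written the argument does not go through.

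A secondary inaccuracy: the exponents you assign to your two regimes are not correct region by region. When $y$ lies close to $\Sigma$ (hence $d(y)\approx 1$), your near-diagonal region already contains the entire near-$\Sigma$ singular set and contributes at the rate $\lambda^{-\frac{N-\alpha}{N-2-\alpha}}$ rather than $\lambda^{-\frac{N+\gamma}{N-2+\gamma-2\alpha}}$, while the boundary-singular contribution responsible for $\tfrac{N+\gamma}{N-2+\gamma-2\alpha}$ is split between both of your regimes. This is not fatal in itself, since only the minimum in \eqref{p1} matters, but it shows that the diagonal/off-diagonal dichotomy does not decouple the two singularities; the paper's split into $\Sigma_{\beta_1/4}$ and its complement does, because $\Sigma\Subset\Omega$ makes the sets where $d_\Sigma$ is small and where $d$ is small disjoint. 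If you want to salvage your scheme, you still need the paper's sub-splitting $d_\Sigma(x)\lessgtr|x-y|$ and Lemma \ref{lemapp:1} inside the near-diagonal region, at which point you have essentially reproduced the paper's proof.
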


\begin{proof}
Without loss of generality, we may assume $\tau \in \GTM^+(\Omega \setminus \Sigma;\varphi_{\xa,\xg})$. Set
\bal
A_\xl(y)&:=\Big\{x\in(\xO\setminus \Sigma)\setminus\{y\}:\;\; F_{-\xa,2\alpha,\gamma}(x,y)>\xl \Big \},\\ \nonumber
A_{\xl,1}(y)&:=\Big\{x\in(\xO\setminus \Sigma)\setminus\{y\}:\;\; F_{-\xa,2\alpha,\gamma}(x,y)>\xl\;\text{ and}\;d_\Sigma(x)\leq |x-y| \Big \},\\ \nonumber
A_{\xl,2}(y)&:=\Big\{x\in(\xO\setminus \Sigma)\setminus\{y\}:\;\; F_{-\xa,2\alpha,\gamma}(x,y)>\xl\;\text{ and}\;d_\Sigma(x)> |x-y| \Big \},\\ \nonumber
m_{\xl}(y)&:=\int_{A_\xl(y)}\varphi_{\alpha,1} \dx,\quad m_{\xl,i}(y):=\int_{A_{\xl,i}(y)}\varphi_{\alpha,1}\dx, \quad i=1,2.
\eal
Then
$A_\xl(y)=A_{\xl,1}(y)\cup A_{\xl,2}(y)$
and
\bal
m_{\xl}(y)= m_{\xl,1}(y)+ m_{\xl,2}(y).
\eal

Let $\beta_1$ be as in \eqref{cover}. We write
\ba\label{ml1F2-0}
m_{\xl}(y)=\int_{A_\xl(y)\cap \Sigma_{ \frac{\beta_1}{4} }}d(x)d_\Sigma(x)^{-\alpha} \dx+\int_{A_\xl(y)\setminus \Sigma_{ \frac{\beta_1}{4}}}d(x)d_\Sigma(x)^{-\alpha} \dx.
\ea
We will estimate successively the terms on the right hand side of \eqref{ml1F2-0}. We consider only the case  $H<\frac{N-2}{2}$ since the case $H=\frac{N-2}{2}$ (i.e. in case $k=0$, $\Sigma=\{0\}$) can be treated in a similar way.

We split the first term on the right hand side of \eqref{ml1F2-0} as
\ba \label{mcompose-1} \int_{A_{\xl}(y)\cap \Sigma_{ \frac{\beta_1}{4} }}d(x)d_\Sigma(x)^{-\alpha} \dx = \int_{A_{\xl,1}(y)\cap \Sigma_{ \frac{\beta_1}{4} }}d(x)d_\Sigma(x)^{-\alpha} \dx + \int_{A_{\xl,2}(y)\cap \Sigma_{ \frac{\beta_1}{4} }}d(x)d_\Sigma(x)^{-\alpha} \dx.
\ea
We note that
\bel{app:6}
1 \land \frac{d(x)d(y)}{|x-y|^2}  \leq 2\left( 1 \land \frac{d(y)}{|x-y|} \right)  \leq 4  \frac{d(y)}{d(x)}, \quad \forall x,y \in \Omega, \; x \neq y,
\ee
therefore
\be\label{52}
F_{-\xa,2\alpha,\gamma}(x,y)\leq 4^\xg d_\Sigma(x)^{-\xa} d(x)^{-\xg}|x-y|^{-N+2+2\xa},\quad \forall x,y \in \Omega, \; x \neq y.
\ee
Since $0<\alpha <\frac{N-2}{2}$, from \eqref{52} we see that
\bal
A_{\xl,1}(y)\cap \Sigma_{ \frac{\beta_1}{4} }
\subset \Big\{x\in(\xO\setminus \Sigma)\setminus\{y\}:\; d_\Sigma(x) < c\lambda^{-\frac{1}{N-2-\alpha}},\;  |x-y| <c\lambda^{-\frac{1}{N-2-2\alpha}}d_\Sigma(x)^{-\frac{\xa}{N-2-2\alpha}} \Big \}.
\eal
By applying Lemma \ref{lemapp:1} with $\alpha_1=-\alpha$, $\alpha_2=-\frac{\xa}{N-2-2\alpha}$, $\ell_1=\lambda^{-\frac{1}{N-2-\alpha}}$, $\ell_2=\lambda^{-\frac{1}{N-2-2\alpha}}$ and taking into account that $N-k - \alpha -\frac{k\xa}{N-2-2\alpha} \geq 2$ since $\alpha\leq H$, we deduce, for $\lambda \geq 1$,
\ba \label{ca1-1.1}
\int_{A_{\xl,1}(y)\cap \Sigma_{ \frac{\beta_1}{4} }}d(x)d_\Sigma(x)^{-\alpha} \dx \lesssim \lambda^{-\frac{N-\alpha}{N-2-\alpha}} \leq \lambda^{-\p_{\alpha,2\alpha,\xg}}.
\ea
Next, by \eqref{52}, we see that
\bal
A_{\lambda,2}(y)\cap \Sigma_{\frac{\beta_1}{4}} \subset \left\{ x \in \Omega \setminus \Sigma:  |x-y| < c\lambda^{-\frac{1}{N-2-\alpha}} \text{ and } d_\Sigma(x) > |x-y| \right\}.
\eal
Therefore, for every $\lambda \geq 1$,
\ba \label{ca1-1.3} \begin{aligned}
\int_{A_{\lambda,2}(y)\cap \Sigma_{\frac{\beta_1}{4}}}d(x)d_\Sigma(x)^{-\alpha}\dx
\lesssim \int_{\{|x-y|\leq c\lambda^{-\frac{1}{N-2-\alpha}}\}}|x-y|^{-\alpha}\dx
\lesssim \lambda^{-\frac{N-\alpha}{N-2-\alpha}} \leq \lambda^{-\p_{\alpha,2\alpha,\xg}}.
\end{aligned} \ea
Combining \eqref{mcompose-1}, \eqref{ca1-1.1} and \eqref{ca1-1.3} yields, for any $\lambda \geq 1$,
\be \label{AA1}
\int_{A_{\xl}(y)\cap \Sigma_{ \frac{\beta_1}{4} }}d(x)d_\Sigma(x)^{-\alpha} \dx \lesssim \lambda^{-\p_{\alpha,2\alpha,\xg}}.
\ee

Next we estimate the second term on the right hand side of \eqref{ml1F2-0}. By \eqref{app:6}, we have
\bal
A_\xl(y)\cap (\xO\setminus\Sigma_{ \frac{\beta_1}{4}})\subset\left\{ x \in \Omega \setminus \Sigma:  |x-y| < c\lambda^{-\frac{1}{N+\xg-2-2\alpha}} \text{ and } d(x)^{\xg} \leq \xl^{-1}|x-y|^{-N+2+2\xa} \right\}.
\eal
This yields, for $\lambda \geq 1$,
\ba \label{AA-2} \BAL
\int_{A_{\lambda}(y) \setminus \Sigma_{\frac{\beta_1}{4}}} d(x)d_{\Sigma}(x)^{-\alpha}\dx &\lesssim \int_{A_{\lambda}(y) \setminus \Sigma_{\frac{\beta_1}{4}}} d(x)^\xg\dx \\ &\lesssim\int_{\{|x-y| < c\lambda^{-\frac{1}{N+\xg-2-2\alpha}}\}} \xl^{-1}|x-y|^{-N+2+2\xa}\dx\\
&\lesssim \lambda^{-\frac{N+\gamma}{N-2+\gamma-2\alpha}} \lesssim \lambda^{-\p_{\alpha,2\alpha,\gamma}}.
\EAL
\ea

Combining \eqref{ml1F2-0}, \eqref{AA1} and \eqref{AA-2} yields
\bel{ca2-1.5}
m_{\lambda}(y)\leq  C\lambda^{-\p_{\alpha, 2\alpha,\xg}},  \quad \forall \lambda>0,
\ee
where $C=C(N,\Omega,\Sigma,\alpha,\xg)$.
By applying Proposition \ref{bvivier} with $\CH(x,y)=F_{\xa,2\alpha,\gamma}(x,y),$ $\tilde D=D=\Omega \setminus \Sigma$, $\eta=d\, d_\Sigma^{-\alpha}$ and $\omega=d^\xg\, d_\Sigma^{-\alpha} \tau$ and using \eqref{ca2-1.5}, we finally derive \eqref{estF1}.
\end{proof}

By using a similar argument as in the proof of Lemma \ref{anisotitaweakF1a}, one can obtain the following lemma.
\begin{lemma} \label{anisotitaweakF1b}
Let $0<\alpha\leq H$ and $0 \leq \xg\leq1$. Then
\bel{estF1b}
	\| \mathbb{F}_{\xa,0,\gamma}[\varphi_{\alpha,\gamma}\tau]\|_{L_w^{\p_{\alpha,0,\xg}}(\Gw\setminus \Sigma;\varphi_{\alpha,1} )} \lesssim
	\|\gt\|_{\mathfrak{M}(\xO\setminus \Sigma;\varphi_{\alpha,\xg})}, \quad \forall \tau\in \mathfrak{M}(\xO\setminus \Sigma; \varphi_{\alpha,\xg}).
	\ee
	The implicit constant in \eqref{estF1b} depends on $N,\Omega,\Sigma,\alpha,\xg$.
\end{lemma}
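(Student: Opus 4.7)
\noindent\emph{Proof proposal.} The plan is to follow the structure of the proof of Lemma~\ref{anisotitaweakF1a}: after reducing to $\tau\geq 0$, I would apply Proposition~\ref{bvivier} with $\CH(x,y)=F_{\alpha,0,\gamma}(x,y)$, $\tilde D=D=\Omega\setminus\Sigma$, $\eta=\varphi_{\alpha,1}=d\,d_\Sigma^{-\alpha}$ and $\omega=\varphi_{\alpha,\gamma}\tau=d^\gamma d_\Sigma^{-\alpha}\tau$. The task then reduces to proving
\begin{equation*}
m_\lambda(y):=\int_{A_\lambda(y)}d\,d_\Sigma^{-\alpha}\dx\;\lesssim\;\lambda^{-\p_{\alpha,0,\gamma}},\qquad\forall\lambda>0,
\end{equation*}
where $A_\lambda(y):=\{x\in(\Omega\setminus\Sigma)\setminus\{y\}:F_{\alpha,0,\gamma}(x,y)>\lambda\}$. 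The starting pointwise estimate, obtained exactly as in \eqref{52}, is
\begin{equation*}
F_{\alpha,0,\gamma}(x,y)\leq 4^\gamma\,d_\Sigma(x)^\alpha\,|x-y|^{-N+2}\,d(x)^{-\gamma}.
\end{equation*}

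Next, I would decompose $A_\lambda(y)$ exactly as in the proof of Lemma~\ref{anisotitaweakF1a}, into the two near-$\Sigma$ pieces $A_{\lambda,i}(y)\cap\Sigma_{\beta_1/4}$ ($i=1,2$, according to whether $d_\Sigma(x)\leq|x-y|$ or $d_\Sigma(x)>|x-y|$) and the far piece $A_\lambda(y)\setminus\Sigma_{\beta_1/4}$. On the near piece with $d_\Sigma(x)\leq|x-y|$, the bound $d_\Sigma(x)^\alpha\leq|x-y|^\alpha$ together with $d(x)\gtrsim 1$ (since $\Sigma\Subset\Omega$ and $\beta_1$ is small) gives $F_{\alpha,0,\gamma}(x,y)\lesssim|x-y|^{-N+2+\alpha}$, hence $|x-y|\lesssim\lambda^{-1/(N-2-\alpha)}$; invoking Lemma~\ref{lemapp:1} (applicable since $\alpha\leq H$ implies $\alpha<N-k$) delivers a contribution of size $\lambda^{-(N-\alpha)/(N-2-\alpha)}\leq\lambda^{-\p_{\alpha,0,\gamma}}$. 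On the far piece, $d_\Sigma^\alpha\lesssim 1$ and $d\,d_\Sigma^{-\alpha}\lesssim d^\gamma$, so reading the corresponding computation of Lemma~\ref{anisotitaweakF1a} with the exponent $2\alpha$ replaced by $0$ produces a contribution of size $\lambda^{-(N+\gamma)/(N-2+\gamma)}\leq\lambda^{-\p_{\alpha,0,\gamma}}$.

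The hard part will be the piece $A_{\lambda,2}(y)\cap\Sigma_{\beta_1/4}$, where the sign change in the $d_\Sigma$-exponent forces a genuine modification of the argument of Lemma~\ref{anisotitaweakF1a}: with $\alpha>0$, the inequality $d_\Sigma(x)>|x-y|$ goes the wrong way and no longer allows $d_\Sigma(x)^\alpha$ to be absorbed into a power of $|x-y|$. To handle it, I would use the fact that $|x-y|<d_\Sigma(x)\leq\beta_1/4$ forces $y\in\Sigma_{\beta_1/2}$ and $d_\Sigma(x)\approx d_\Sigma(y)$, so on this set
\begin{equation*}
F_{\alpha,0,\gamma}(x,y)\lesssim d_\Sigma(y)^\alpha|x-y|^{-N+2}\quad\text{and}\quad|x-y|\leq R:=\min\bigl\{c\,d_\Sigma(y),\,(c\,d_\Sigma(y)^\alpha/\lambda)^{1/(N-2)}\bigr\}.
\end{equation*}
Using $d_\Sigma(x)^{-\alpha}\approx d_\Sigma(y)^{-\alpha}$ then gives $\int_{A_{\lambda,2}(y)\cap\Sigma_{\beta_1/4}}d\,d_\Sigma^{-\alpha}\dx\lesssim d_\Sigma(y)^{-\alpha}R^N$. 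A case split according to which term realises the minimum in $R$ yields the bound $\lambda^{-(N-\alpha)/(N-2-\alpha)}$ when $d_\Sigma(y)\leq c\,\lambda^{-1/(N-2-\alpha)}$ and, after absorbing the bounded factor $d_\Sigma(y)^{2\alpha/(N-2)}\leq\beta_1^{2\alpha/(N-2)}$, the bound $\lambda^{-N/(N-2)}$ otherwise; both exponents dominate $\p_{\alpha,0,\gamma}$, the second one because $\p_{\alpha,0,\gamma}\leq(N+\gamma)/(N-2+\gamma)\leq N/(N-2)$ for $\gamma\in[0,1]$. Combining the three contributions gives $m_\lambda(y)\lesssim\lambda^{-\p_{\alpha,0,\gamma}}$ uniformly in $y\in\Omega\setminus\Sigma$, and Proposition~\ref{bvivier} then concludes \eqref{estF1b}.
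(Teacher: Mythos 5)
Your skeleton is the right one — reduce to $\tau\ge 0$, apply Proposition \ref{bvivier} with $\CH=F_{\alpha,0,\gamma}$, $\eta=\varphi_{\alpha,1}$, $\omega=\varphi_{\alpha,\gamma}\tau$, and split $A_\lambda(y)$ into the two near-$\Sigma$ pieces and the exterior piece — and your treatment of $A_{\lambda,1}(y)\cap\Sigma_{\beta_1/4}$ and of $A_\lambda(y)\setminus\Sigma_{\beta_1/4}$ is correct. The gap is precisely in the piece you flag as hard. The claim that $|x-y|<d_\Sigma(x)\le\beta_1/4$ forces $d_\Sigma(x)\approx d_\Sigma(y)$ is false: the triangle inequality gives $d_\Sigma(y)<2d_\Sigma(x)$, but not the reverse, since $|x-y|$ may be of the same order as $d_\Sigma(x)$ (take $\Sigma=\{0\}$, $|y|=\epsilon$ and $x=t\,y/|y|$ with $\epsilon\ll t\le\beta_1/4$: then $|x-y|=t-\epsilon<t=d_\Sigma(x)$ while $d_\Sigma(y)=\epsilon\ll d_\Sigma(x)$). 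Hence both steps resting on this comparison — the bound $F_{\alpha,0,\gamma}(x,y)\lesssim d_\Sigma(y)^{\alpha}|x-y|^{-N+2}$ and the inclusion $|x-y|\le c\,d_\Sigma(y)$ entering $R$ — break down, and the intermediate estimate $\int_{A_{\lambda,2}(y)\cap\Sigma_{\beta_1/4}}d\,d_\Sigma^{-\alpha}\dx\lesssim d_\Sigma(y)^{-\alpha}R^N$ is not merely unproved but false: again for $\Sigma=\{0\}$, if $\epsilon=|y|\ll\lambda^{-\frac{1}{N-2-\alpha}}$, the cone $\{x:\ \epsilon\le|x|\le c\lambda^{-\frac{1}{N-2-\alpha}},\ x\cdot y>\tfrac{3}{4}|x|\,|y|\}$ (with $c$ suitably small) lies in $A_{\lambda,2}(y)\cap\Sigma_{\beta_1/4}$, since there $|x-y|<|x|=d_\Sigma(x)$ and $F_{\alpha,0,\gamma}(x,y)\gtrsim|x|^{-(N-2-\alpha)}\ge\lambda$, and its $\varphi_{\alpha,1}$-measure is at least of order $\lambda^{-\frac{N-\alpha}{N-2-\alpha}}$, whereas your claimed bound is of order $\epsilon^{N-\alpha}$, which tends to $0$ as $\epsilon\to0$ at fixed large $\lambda$. (The true size is still $\le\lambda^{-\p_{\alpha,0,\gamma}}$, so the lemma itself is safe; only this step of the proof is not.)

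The repair needs no comparison between $d_\Sigma(x)$ and $d_\Sigma(y)$; note also that the crude alternative $d_\Sigma(x)^{\alpha}\lesssim1$ combined with $d_\Sigma(x)^{-\alpha}<|x-y|^{-\alpha}$ only yields $\lambda^{-\frac{N-\alpha}{N-2}}$, which falls below $\lambda^{-\p_{\alpha,0,\gamma}}$ when $\alpha\ge 2$ (allowed here, since $H$ may exceed $2$), so it does not suffice either. The working move is the one the paper uses for the exterior region in \eqref{AA-2}, applied now to the weight: on $A_\lambda(y)\cap\Sigma_{\beta_1/4}$ the analogue of \eqref{52} together with $d(x)\approx1$ gives $d_\Sigma(x)^{-\alpha}\lesssim\lambda^{-1}|x-y|^{2-N}$, and also $|x-y|^{N-2}\lesssim\lambda^{-1}d_\Sigma(x)^{\alpha}\lesssim\lambda^{-1}$, i.e. $|x-y|\lesssim\lambda^{-\frac{1}{N-2}}$; hence
\[
\int_{A_{\lambda,2}(y)\cap\Sigma_{\beta_1/4}}d\,d_\Sigma^{-\alpha}\,\dx\ \lesssim\ \lambda^{-1}\int_{\{|x-y|\lesssim\lambda^{-\frac{1}{N-2}}\}}|x-y|^{2-N}\,\dx\ \lesssim\ \lambda^{-\frac{N}{N-2}}\ \le\ \lambda^{-\p_{\alpha,0,\xg}},\qquad\lambda\ge1,
\]
which is exactly the exponent you were aiming for (in fact this single estimate handles all of $A_\lambda(y)\cap\Sigma_{\beta_1/4}$, so Lemma \ref{lemapp:1} is not even needed here). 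With this substitution, together with the trivial bound $m_\lambda(y)\lesssim1$ for $\lambda\le1$, the rest of your argument and Proposition \ref{bvivier} give \eqref{estF1b}.
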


Set
\bel{F4}
\tilde F_{\xg}(x,y):=|x|^{-\frac{N-2}{2}} d(y)^{-\xg} \left|\ln\left(1 \land \frac{|x-y|^2}{d(x)d(y)}\right)\right|, \quad  x \neq y,\; x,y \in \Omega \setminus \{0\},
\ee
\bal
\mathbb{\tilde F}_{\xg}[\varphi_{\frac{N-2}{2},\xg}\tau](x):=\int_{\Omega \setminus \{0\}}\tilde F_{\xg}(x,y)\varphi_{\frac{N-2}{2},\xg}(y)d\tau(y), \quad \tau \in \mathfrak{M}(\xO\setminus \{0\};\varphi_{\frac{N-2}{2},\xg}),
\eal
where $\varphi_{\frac{N-2}{2},\gamma}$ is defined in \eqref{varphia}.

For $\theta, \kappa \in \R$, put
\bal
\tilde \p_{\theta,\kappa}:=\min\left\{\frac{N+\theta}{N-2},N+\kappa \right\}.
\eal
\begin{lemma} \label{anisotitaweakF4}
Let $k=0$, $\Sigma=\{0\}$, $-N+1<\kappa<1$, $-2<\theta<2$. Then
\bel{estF4}
	\norm{\mathbb{\tilde F}_\gamma[\varphi_{\frac{N-2}{2},\xg}\gt]}_{L_w^{\tilde \p_{\theta,\kappa}}(\Gw\setminus \{0\};\varphi_{\frac{N-2}{2},1})}
	\lesssim \norm{\gt}_{\mathfrak{M}(\xO\setminus \{0\};\varphi_{\frac{N-2}{2},\xg})}, \quad \forall \tau\in \mathfrak{M}(\Omega\setminus \{0\};\varphi_{\frac{N-2}{2},\xg}).
	\ee
	The implicit constant depends on $N,\Omega,\gamma,\theta,\kappa$.
\end{lemma}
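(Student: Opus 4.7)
The plan is to apply Proposition~\ref{bvivier} with kernel $\CH=\tilde F_\gamma$, domain $D=\tilde D=\Omega\setminus\{0\}$, continuous positive weight $\eta(x)=\varphi_{\frac{N-2}{2},1}(x)=|x|^{-\frac{N-2}{2}}d(x)$, and finite Radon measure $\omega=\varphi_{\frac{N-2}{2},\gamma}|\tau|$; observe that $\BBH[\omega]$ in the sense of Proposition~\ref{bvivier} coincides with $\mathbb{\tilde F}_\gamma[\varphi_{\frac{N-2}{2},\gamma}\tau]$, and $\omega(\tilde D)=\|\tau\|_{\mathfrak M(\Omega\setminus\{0\};\varphi_{\frac{N-2}{2},\gamma})}$. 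Consequently \eqref{estF4} will follow once I establish the distribution-function estimate
$$m_\lambda(y):=\int_{A_\lambda(y)}|x|^{-\frac{N-2}{2}}d(x)\,dx\;\lesssim\;\lambda^{-\tilde p_{\theta,\kappa}},\qquad\lambda>0,$$
uniformly in $y\in\Omega\setminus\{0\}$, where $A_\lambda(y):=\{x\in(\Omega\setminus\{0\})\setminus\{y\}:\tilde F_\gamma(x,y)>\lambda\}$. The hypotheses $-2<\theta<2$ and $-N+1<\kappa<1$ guarantee $\tilde p_{\theta,\kappa}>1$, which is needed for Proposition~\ref{bvivier} to apply.

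Since $|\ln(1\wedge t)|=0$ for $t\geq 1$, $A_\lambda(y)$ lies in the Carleson-type region $\{|x-y|^2<d(x)d(y)\}$; on this region $|x-y|<d(x)\wedge d(y)$ and hence $d(x)\approx d(y)$, and the defining inequality for $A_\lambda(y)$ rewrites explicitly as
$$|x-y|^2<d(x)d(y)\exp\bigl(-\lambda|x|^{\frac{N-2}{2}}d(y)^\gamma\bigr),$$
so $|x-y|$ shrinks exponentially fast in $\lambda$ as soon as $|x|^{(N-2)/2}d(y)^\gamma\gtrsim 1$. I then split $A_\lambda(y)$ at the scale $s_*:=c\lambda^{-2/(N-2)}$, the balance point at which $|x|^{-(N-2)/2}\sim\lambda$. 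On the outer piece $\{|x|\geq s_*\}$ the exponential factor collapses the admissible ball in $x$ to volume $O(e^{-c\lambda})$, and after weighting by $|x|^{-(N-2)/2}d(x)$ this contribution is dominated by any polynomial $\lambda^{-p}$. On the inner piece $\{|x|<s_*\}$ I integrate in polar coordinates around the origin, and balance the two competing scalings---the $|x|$-integration against the weight $|x|^{-(N-2)/2}d(x)$, and the $|x-y|$-integration confined to the exponentially small Carleson ball weighted by $d(x)\approx d(y)$---to produce the two candidate bounds $\lambda^{-(N+\theta)/(N-2)}$ and $\lambda^{-N-\kappa}$; the parameter ranges $-2<\theta<2$ and $-N+1<\kappa<1$ are precisely those that keep the associated weighted radial integrals convergent. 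Taking the minimum yields $\lambda^{-\tilde p_{\theta,\kappa}}$, as required.

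A convenient technical aid throughout is the pointwise comparison $|\ln(1\wedge t)|\leq C_\delta t^{-\delta}$, valid for each small $\delta>0$, which upgrades $\tilde F_\gamma$ to a polynomial-type kernel of the form $|x|^{-(N-2)/2}d(y)^{-\gamma-\delta}d(x)^\delta|x-y|^{-2\delta}$ and thereby reduces the level-set analysis to the one already carried out for $\mathbb F_{\kappa,\theta,\gamma}$ in Lemmas~\ref{anisotitaweakF1a}--\ref{anisotitaweakF1b}. The main obstacle is exactly this bookkeeping: $\delta$ must be chosen small enough to match the target exponent $\tilde p_{\theta,\kappa}$ without degrading it, while simultaneously respecting the fixed weight $\varphi_{\frac{N-2}{2},1}$ and the constraints on $\theta,\kappa$. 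This is delicate but technical rather than conceptual, and Proposition~\ref{bvivier} then delivers \eqref{estF4} with implicit constant depending only on $N,\Omega,\gamma,\theta,\kappa$.
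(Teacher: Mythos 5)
Your set-up is the same as the paper's (Proposition \ref{bvivier} with $\CH=\tilde F_\gamma$, $\eta=\varphi_{\frac{N-2}{2},1}$, $\omega=\varphi_{\frac{N-2}{2},\gamma}|\tau|$), so everything rests on the level-set bound $m_\lambda(y)\lesssim\lambda^{-\tilde\p_{\theta,\kappa}}$, and there your argument has a genuine gap: the claim that on the outer piece $\{|x|\ge s_*\}$, $s_*\approx\lambda^{-2/(N-2)}$, the admissible set has volume $O(e^{-c\lambda})$ is false. For $|x|\ge s_*$ one only gets $\lambda|x|^{\frac{N-2}{2}}\ge c$, and the exponent in your rewritten inequality is $-\lambda|x|^{\frac{N-2}{2}}d(y)^{\gamma}$, which is \emph{not} of order $\lambda$ when $d(y)$ is small. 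Concretely, take $\gamma=1$ (the case needed in Theorem \ref{lpweakgreen}), $|y|\approx 1$ and $d(y)=\epsilon\approx\lambda^{-1}$: every $x$ with $|x-y|\le\epsilon/10$ satisfies $d(x)\approx\epsilon$, $|x|\approx 1$ and $\tilde F_\gamma(x,y)\approx d(y)^{-1}\ln\frac{d(x)d(y)}{|x-y|^2}>\lambda$, so $A_\lambda(y)$ contains a ball of radius $\approx\epsilon$ on which the weight is $\approx\epsilon$, giving $m_\lambda(y)\gtrsim\epsilon^{N+1}\approx\lambda^{-(N+1)}$. This outer (near-$\partial\Omega$) contribution is polynomial, and it is exactly the source of the second component $N+\kappa$ of $\tilde\p_{\theta,\kappa}$ and of the hypothesis $\kappa<1$; if that region were super-polynomially small the lemma would hold with the single exponent $\frac{N+\theta}{N-2}$ for every $\kappa$, and the restriction $\kappa<1$ would be superfluous. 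Relatedly, both candidate bounds cannot come from your inner piece: for $|x|<s_*$ one has $d(x)\approx d(y)\approx 1$ on the support of the kernel, and that region yields only the trivial volume bound $\int_{\{|x|<s_*\}}|x|^{-\frac{N-2}{2}}\dx\approx\lambda^{-\frac{N+2}{N-2}}$; the $\lambda^{-(N+\kappa)}$ bound has to be extracted from the boundary region away from the origin, which your splitting dismisses.

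For comparison, the paper splits at the \emph{fixed} radius $\beta_1/4$ (see \eqref{ml1F4}): near the origin $d(x)\approx d(y)\approx 1$ and, using \eqref{f5a} together with the dichotomy $|x-y|\le|x|$ versus $|x-y|\ge|x|$, one gets $(\lambda^{-1}\ln\lambda)^{\frac{N+2}{N-2}}$; away from the origin $|x|^{-\frac{N-2}{2}}\approx 1$ and the boundary analysis gives $(\lambda^{-1}\ln\lambda)^{N+1}$; the upper bounds $\theta<2$ and $\kappa<1$ are then used to absorb the logarithms (the lower bounds $\theta>-2$, $\kappa>1-N$ only ensure $\tilde\p_{\theta,\kappa}>1$, as needed for Proposition \ref{bvivier}), not to make radial integrals converge. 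Your fallback, bounding $|\ln(1\wedge t)|\le C_\delta t^{-\delta}$ and quoting Lemmas \ref{anisotitaweakF1a}--\ref{anisotitaweakF1b}, also does not go through as stated: it produces the kernel $|x|^{-\frac{N-2}{2}}|x-y|^{-2\delta}d(y)^{-\gamma}\bigl(1\wedge\frac{d(x)d(y)}{|x-y|^2}\bigr)$, i.e.\ $F_{-\frac{N-2}{2},\,N-2-2\delta,\,\gamma}$, whereas those lemmas cover only the two kernels $\mathbb F_{-\alpha,2\alpha,\gamma}$ and $\mathbb F_{\alpha,0,\gamma}$; so one would still have to redo the level-set estimate for this intermediate kernel, which is essentially the computation the paper performs directly.
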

\begin{proof} We may assume $\tau\in \mathfrak{M}^+(\Omega\setminus \{0\};\varphi_{\frac{N-2}{2},\xg})$.
For $\lambda>0$ and $y \in \Omega \setminus \{0\}$, set
\bal
&A_\xl(y):=\Big\{x\in \xO\setminus \{0,y\}:\;\; \tilde F_{\xg}(x,y)>\xl \Big \} \quad \text{and} \quad m_{\xl}(y):=\int_{A_\xl(y)}d(x)|x|^{-\frac{N-2}{2}} \dx, \\
&A_{\xl,1}(y):=\Big\{x\in \xO\setminus \{0,y\}:\;\; \tilde F_{\xg}(x,y)>\xl \text{ and } |x-y| \leq |x|  \Big \}, \\
&A_{\xl,2}(y):=\Big\{x\in \xO\setminus \{0,y\}:\;\; \tilde F_{\xg}(x,y)>\xl \text{ and } |x-y| \geq |x|  \Big \}.
\eal

It can be shown from \eqref{F4} that
\ba \label{f5a}
\tilde F_{\xg}(x,y)\leq 2d(y)^{-\xg}|x|^{-\frac{N-2}{2}}\left(-\ln\frac{|x-y|}{\CD_\Omega}\right) \left(  1 \land \frac{d(x)d(y)}{|x-y|^2} \right), \; \forall x\neq y,\;x,y\in \xO\setminus\{0\},
\ea
where $\CD_\Omega = 2\sup_{x \in \Omega}|x|$.

We write
\ba
m_{\xl}(y)=\int_{A_\xl(y)\cap B(0,\frac{\beta_1}{4})}d(x)|x|^{-\frac{N-2}{2}} \dx+\int_{A_\xl(y)\setminus B(0,\frac{\beta_1}{4})}d(x)|x|^{-\frac{N-2}{2}} \dx.\label{ml1F4}
\ea

The first term on the right hand side of \eqref{ml1F4} is estimated by using \eqref{f5a} and \eqref{app:6} as
\ba \label{ml6f5} \begin{aligned}
&\int_{A_{\xl}(y)\cap B(0,\frac{\beta_1}{4})}d(x)|x|^{-\frac{N-2}{2}}\dx \lesssim \int_{A_{\xl,1}(y)\cap B(0,\frac{\beta_1}{4}) }|x|^{-\frac{N-2}{2}}\dx+\int_{A_{\xl,2}(y)\cap B(0,\frac{\beta_1}{4})}|x|^{-\frac{N-2}{2}}\dx\\
&\lesssim \int_{\{|x-y|\leq c(\xl^{-1}\ln\xl)^{\frac{2}{N-2}}\}}|x-y|^{-\frac{N-2}{2}}\dx+\int_{\{|x|\leq c(\xl^{-1}\ln\xl)^{\frac{2}{N-2}}\}}|x|^{-\frac{N-2}{2}}\dx \\
&\lesssim (\xl^{-1}\ln\xl)^{\frac{N+2}{N-2}}, \quad \forall \lambda>e.
\end{aligned}
\ea

The second term on the right hand side of \eqref{ml1F4} is estimated using \eqref{app:6} and \eqref{f5a} as
\ba \label{ml4f5} \begin{aligned}
\int_{A_\xl(y)\setminus B(0,\frac{\beta_1}{4})}d(x)|x|^{-\frac{N-2}{2}} \dx&\lesssim \int_{\{|x-y|^{-1}(-\ln\frac{|x-y|}{\CD_\Omega})\geq c\xl\}} \lambda^{-1}\left(-\ln\frac{|x-y|}{\CD_\Omega} \right)\dx\\
&\lesssim \int_{\{|x-y|\leq c'\xl^{-1}\ln\xl\}} \xl^{-1}\left(-\ln\frac{|x-y|}{\CD_\Omega}\right) \dd x \\
&\lesssim (\xl^{-1}\ln\xl)^{N+1},\quad \forall \xl>e.
\end{aligned}
\ea

Combining \eqref{ml1F4},  \eqref{ml6f5} and \eqref{ml4f5}, together with $-2<\theta<2$, we deduce
\bel{ml6f4}
m_{\xl}(y) \lesssim (\xl^{-1}\ln\xl)^{\frac{N+2}{N-2}}+(\xl^{-1}\ln\xl)^{N+1} \lesssim \lambda^{-\frac{N+\theta}{N-2}} + \lambda^{-(N+\kappa)} \lesssim \lambda^{-\tilde \p_{\theta,\kappa}},\quad \forall\xl>e.
\ee

Thus by applying Proposition \ref{bvivier} with $\mathcal{H}(x,y)=\tilde F_{\xg}(x,y)$, $\tilde D=D=\Omega \setminus \{0\}$, $\eta(x)=d(x)|x|^{-\frac{N-2}{2}}$ and $\dd\nu=d(x)^\xg|x|^{-\frac{N-2}{2}}\dd\tau$, we obtain \eqref{estF4}.
\end{proof}

For $\alpha,\theta \in \R$, put
\ba \label{Halthe} H_{\alpha,\theta}(x,y):=d(x)d_{\Sigma}(x)^{-\alpha} |x-y|^{-N+\theta}, \quad x \in \Omega \setminus \Sigma, y \in \partial \Omega \cup \Sigma,
\ea
\bal
\mathbb{H}_{\alpha,\theta}[\nu](x): = \int_{\partial \Omega \cup \Sigma} H_{\alpha,\theta}(x,y)\dd\nu(y), \quad \nu \in \GTM(\partial \Omega \cup \Sigma),
\eal
and
\bal
\q_{\alpha,\theta}:= \min \left \{  \frac{N-k-\alpha}{\alpha}, \frac{N+1}{N-1-\theta} \right\}.
\eal

\begin{theorem} \label{H}

(i) Assume $k \geq 0$, $0<\alpha \leq H$, $\theta<N-1$,  and $\gn\in \mathfrak{M}(\partial\xO\cup \Sigma)$ with compact support in $\partial\xO$. Then
\ba \label{est:H1}
\norm{\mathbb{H}_{\alpha,\theta}[\nu]}_{L_w^{ \q_{\alpha,\theta}}(\Gw\setminus \Sigma;\varphi_{\alpha,1})} \lesssim \|\nu\|_{\mathfrak{M}(\partial\Omega \cup \Sigma)}.
\ea

(ii) Assume $k>0$, $\alpha \leq H$, $\theta\leq N-k$, $\theta<N+\alpha$, and $\gn\in \mathfrak{M}(\partial\xO\cup \Sigma)$ with compact support in $\Sigma$. Then
\ba \label{est:H2}
\norm{\mathbb{H}_{\alpha,\theta}[\nu]}_{L_w^{\frac{N-\alpha}{N+\alpha-\theta}}(\Gw\setminus \Sigma;\ei)} \lesssim \norm{\nu}_{\mathfrak{M}(\partial \Omega \cup \Sigma)}.
\ea
The implicit constants in \eqref{est:H1} and \eqref{est:H2} depend only on $N,\Omega,\Sigma,\alpha,\theta$.
\end{theorem}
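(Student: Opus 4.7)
For both parts the strategy is to apply Proposition \ref{bvivier} with $\CH(x,y)=H_{\alpha,\theta}(x,y)$, $D=\Omega\setminus\Sigma$, $\tilde D=\partial\Omega\cup\Sigma$, $\eta(x)=\varphi_{\alpha,1}(x)$ and $\omega=\nu$. Thus, setting
\[
A_\lambda(y):=\{x\in\Omega\setminus\Sigma:\;H_{\alpha,\theta}(x,y)>\lambda\},\qquad m_\lambda(y):=\int_{A_\lambda(y)}\varphi_{\alpha,1}(x)\,\dx,
\]
it suffices to prove the uniform bound $m_\lambda(y)\lesssim\lambda^{-\q}$ for every $y$ in the support of $\nu$, with $\q$ equal to $\q_{\alpha,\theta}$ in case (i) and $\frac{N-\alpha}{N+\alpha-\theta}$ in case (ii). Since $\nu$ is bounded, the estimate on $\BBH_{\alpha,\theta}[\nu]$ follows immediately.

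\textbf{Part (i).} Because $\mathrm{supp}(\nu)\subset\partial\Omega$ and $\dist(\Sigma,\partial\Omega)>0$, for every $y\in\mathrm{supp}(\nu)$ we split $A_\lambda(y)=\big(A_\lambda(y)\cap\Sigma_{\beta_1/4}\big)\cup\big(A_\lambda(y)\setminus\Sigma_{\beta_1/4}\big)$. On $\Sigma_{\beta_1/4}$ we have $|x-y|\gtrsim 1$, so the constraint $H_{\alpha,\theta}(x,y)>\lambda$ forces $d_\Sigma(x)\lesssim\lambda^{-1/\alpha}$; integrating $\varphi_{\alpha,1}$ in tubular coordinates around $\Sigma$ (using $\alpha<N-k$, guaranteed by $\alpha\leq H$) produces a contribution $\lesssim\lambda^{-(N-k-\alpha)/\alpha}$. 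On the complement $d_\Sigma(x)\gtrsim 1$, so $H_{\alpha,\theta}(x,y)\lesssim d(x)|x-y|^{-N+\theta}$; because $y\in\partial\Omega$ we have $d(x)\leq|x-y|$, which both enlarges the kernel to $|x-y|^{1-N+\theta}$, yielding $|x-y|\lesssim\lambda^{-1/(N-1-\theta)}$, and bounds the weight by $|x-y|\,\dx$, producing a contribution $\lesssim\lambda^{-(N+1)/(N-1-\theta)}$. Taking the larger of these two bounds delivers $m_\lambda(y)\lesssim\lambda^{-\q_{\alpha,\theta}}$.

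\textbf{Part (ii).} Since $\mathrm{supp}(\nu)\subset\Sigma$, for large $\lambda$ the set $A_\lambda(y)$ is contained in $\Sigma_{\beta_1/4}$ (outside, the kernel is uniformly bounded). Fix $y\in\Sigma$ and use the local coordinates of Subsection~\ref{assumptionK}: write $x=(x',x'')$ with $x''$ the normal component, so $d_\Sigma(x)\approx|x''|=:r$, $|x-y|\approx r+|x'-y'|=:r+t$, and $\dx\approx t^{k-1}r^{N-k-1}\,\dd t\,\dd r$. Bounding $d(x)\lesssim 1$ and using $d_\Sigma(x)\leq |x-y|$ (valid because $y\in\Sigma$), the defining inequality for $A_\lambda(y)$ becomes $r^{\alpha}(r+t)^{N-\theta}<\lambda^{-1}$. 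A change of variables $v=r+t$ reduces $m_\lambda(y)$ to
\[
\int\!\!\int_{\{\,r^{\alpha}v^{N-\theta}<\lambda^{-1},\,0\leq r\leq v\,\}} r^{N-k-1-\alpha}(v-r)^{k-1}\,\dd r\,\dd v,
\]
which I will evaluate by splitting at $v_0=\lambda^{-1/(N+\alpha-\theta)}$. The regime $v\leq v_0$ yields a Beta integral $\lesssim v^{N-\alpha-1}$ per slice, whose $v$-integral is $\lesssim\lambda^{-(N-\alpha)/(N+\alpha-\theta)}$ (the target exponent). The regime $v>v_0$ gives $\lesssim\lambda^{-(N-k-\alpha)/\alpha}v^{k-1-(N-k-\alpha)(N-\theta)/\alpha}$, and a direct algebraic simplification (using $\theta\leq N-k$ to place the $v$-exponent below $-1$, which is guaranteed by $\alpha\leq H$) shows that its integral is again $\lesssim\lambda^{-(N-\alpha)/(N+\alpha-\theta)}$. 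The case $\alpha\leq 0$ is simpler: $d_\Sigma^{-\alpha}$ is bounded, the kernel reduces to a Riesz-type kernel $\lesssim|x-y|^{-N+\theta}$, and direct volume estimates already give the same bound.

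\textbf{Expected difficulty.} Part (i) is a straightforward adaptation of the arguments used in Lemma~\ref{anisotitaweakF1a}, relying on the decoupling of $|x-y|$ and $d_\Sigma(x)$ when $y\in\partial\Omega$. The delicate point is Part~(ii): one must verify that both regimes in the tubular-coordinate integral contribute the \emph{same} power of $\lambda$, and to confirm this I will need the algebraic identity
\[
-\tfrac{N-k-\alpha}{\alpha}+\tfrac{1}{N+\alpha-\theta}\Big(\tfrac{(N-k-\alpha)(N-\theta)}{\alpha}-k\Big)=-\tfrac{N-\alpha}{N+\alpha-\theta},
\]
together with a careful tracking of the borderline cases (equality in $\alpha\leq H$ or $\theta\leq N-k$) where log factors could a priori appear but in fact do not affect the final weak-$L^{\q}$ exponent.
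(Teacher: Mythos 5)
Your strategy coincides with the paper's: both parts are proved by applying Proposition \ref{bvivier} with $\CH=H_{\alpha,\theta}$, $D=\Omega\setminus\Sigma$, $\tilde D=\partial\Omega\cup\Sigma$, $\eta=\varphi_{\alpha,1}$, $\omega=\nu$, after splitting the level set $A_\lambda(y)$ into the part near $\Sigma$ and the rest. Your part (i) is the paper's argument verbatim, and your part (ii) for $0<\alpha\le H$ replaces the paper's appeal to Lemma \ref{lemapp:1} by carrying out the same tubular-coordinate integral directly; your split at $v_0=\lambda^{-1/(N+\alpha-\theta)}$ and the algebraic identity you record reproduce exactly the exponent arithmetic contained in the paper's application of that lemma (the paper instead encloses $A_\lambda(y)$ in $\{d_\Sigma(x)\le c\lambda^{-1/(N+\alpha-\theta)},\ |x-y|\le c\lambda^{-1/(N-\theta)}d_\Sigma(x)^{-\alpha/(N-\theta)}\}$ and quotes the lemma), and your use of $\alpha\le H$ and $\theta\le N-k$ to keep the $v$-exponent strictly below $-1$ matches the paper's condition $N-k-\alpha-\tfrac{k\alpha}{N-\theta}\ge 2$. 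So in substance this is the same proof.

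The one step that would fail as written is your treatment of $\alpha\le 0$ in part (ii). Bounding $d_\Sigma(x)^{-\alpha}$ by a constant and reducing the kernel to $|x-y|^{-N+\theta}$ only gives $A_\lambda(y)\subset\{|x-y|\lesssim\lambda^{-1/(N-\theta)}\}$, hence $m_\lambda(y)\lesssim\lambda^{-\frac{N-\alpha}{N-\theta}}$ at best (keeping the weight $\varphi_{\alpha,1}\lesssim|x-y|^{-\alpha}$), and for $\alpha<0$ one has $\frac{N-\alpha}{N-\theta}<\frac{N-\alpha}{N+\alpha-\theta}$, so this is strictly weaker than the decay Proposition \ref{bvivier} requires for the stated weak exponent. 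The correct and equally short step, which is what the paper uses, is to exploit $d_\Sigma(x)\le|x-y|$ (valid because $y\in\Sigma$) in the kernel itself: $H_{\alpha,\theta}(x,y)\lesssim|x-y|^{-\alpha}\,|x-y|^{-N+\theta}=|x-y|^{-(N+\alpha-\theta)}$, so $A_\lambda(y)\subset\{|x-y|\le c\lambda^{-1/(N+\alpha-\theta)}\}$, and then $m_\lambda(y)\lesssim\int_{\{|x-y|\le c\lambda^{-1/(N+\alpha-\theta)}\}}|x-y|^{-\alpha}\,\dx\lesssim\lambda^{-\frac{N-\alpha}{N+\alpha-\theta}}$, as needed ($\theta<N+\alpha$ guarantees the exponent is positive). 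Finally, note that in both parts your level-set bounds are established only for $\lambda$ large; the extension to all $\lambda>0$ is immediate since $m_\lambda(y)\le\int_{\Omega\setminus\Sigma}\varphi_{\alpha,1}\,\dx<\infty$ thanks to $\alpha\le H<N-k$, exactly as the paper remarks.
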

\begin{proof} For $y\in \partial \Omega \cup \Sigma$, set
\bal
A_\xl(y):=\Big\{x\in(\xO\setminus \Sigma):\; H_{\alpha,\theta}(x,y)>\xl \Big \}, \quad m_{\xl}(y)&:=\int_{A_\xl(y)}d(x)d_\Sigma(x)^{-\alpha} \dx.
\eal	
We write
\ba \label{split-K}
m_{\xl}(y)=\int_{A_\xl(y)\cap \Sigma_{\xb_1}}d(x)d_\Sigma(x)^{-\alpha} \dx+\int_{A_\xl(y)\setminus \Sigma_{\xb_1}}d(x)d_\Sigma(x)^{-\alpha} \dx.
\ea

(i) Assume $\gn\in \mathfrak{M}(\partial\xO\cup \Sigma)$ with compact support in $\partial\xO$ and without loss of generality, we may assume that $\gn \geq 0$. Let $y \in \partial \Omega$.

First we treat the first term on the right hand side of  \eqref{split-K}. If $0<\alpha \leq H$ then by applying Lemma \ref{lemapp:1}, we obtain, for $\lambda \geq 1$,
\bal
\int_{A_\xl(y)\cap \Sigma_{\xb_1}}d_\Sigma(x)^{-\alpha} \dx \lesssim \int_{\{d_\Sigma(x)\leq c\lambda^{-\frac{1}{\alpha}}\} \cap \Sigma_{\beta_1}}d_\Sigma(x)^{-\alpha}\dx \lesssim \lambda^{-\frac{N-k-\alpha}{\alpha}} \leq \lambda^{ - \q_{\alpha,\theta}}.
\eal
If $\alpha \leq 0$ then there exists $\bar C=\bar C(N,\Omega,\Sigma,\alpha,\theta)>1$ such that for any $\lambda>\bar C$, $A_\lambda(y) \cap \Sigma_{\beta_1}=\emptyset$. Consequently, for all $\lambda>\bar C$,
\bel{ml3mb}
\int_{A_\xl(y)\cap \Sigma_{\xb_1}}d_\Sigma(x)^{-\alpha} \dx=0.
\ee

Next we treat the second term on the right hand side of \eqref{split-K}. By using the estimate $d(x) \leq |x-y|$, we see that, for $\lambda \geq 1$,
\ba \label{ml2ma}
\int_{A_\xl(y)\setminus \Sigma_{\xb_1}}d(x)\dx \lesssim \int_{\{|x-y|\leq c\lambda^{-\frac{1}{N-1-\theta}}\}}|x-y| \dx \lesssim \xl^{-\frac{N+1}{N-1-\theta}} \leq \lambda^{ - \q_{\alpha,\theta}}.
\ea
Combining \eqref{ml3mb} and \eqref{ml2ma}, we obtain
\ba \label{ml4ma}
m_{\xl}(y)\leq C\lambda^{-\q_{\alpha,\theta}},
\ea
for all $\lambda>\bar C$, where $C=C(N,\Omega,\Sigma,\alpha,\theta)$. Then we can show that \eqref{ml4ma} holds true for all $\lambda>0$. By applying Proposition \ref{bvivier} with $\mathcal{H}(x,y)=H_{\alpha,\theta}(x,y)$, $\tilde D=D=\Omega \setminus \Sigma$, $\eta=\varphi_{\alpha,1}$ and $\omega=\xn$, we obtain \eqref{estmartin2}.

(ii) Assume $\gn\in \mathfrak{M}(\partial\xO\cup \Sigma)$ with compact support in $\Sigma$ and without loss of generality, we may assume that $\gn \geq 0$. Let $y \in \Sigma$. \medskip

\noindent \textbf{Case 1:} $0<\alpha \leq H$.
First we treat the first term in \eqref{split-K}. We notice that since $y \in \Sigma$, $d_\Sigma(x) \leq |x-y|$ for every $x \in \Omega \setminus \Sigma$, hence
\bal
A_\lambda(y) \subset \{ x \in \Omega \setminus \Sigma: d_\Sigma(x) \leq c\lambda^{-\frac{1}{N+\alpha-\theta}} \quad \text{and} \quad |x-y|<c\lambda^{-\frac{1}{N-\theta}}d_\Sigma(x)^{-\frac{\alpha}{N-\theta}}   \}.
\eal
Therefore, by applying Lemma \ref{lemapp:1} with $\alpha_1=-\alpha$, $\alpha_2=-\frac{\alpha}{N-\theta}$, $\ell_1=c\lambda^{-\frac{1}{N+\alpha-\theta}}$, $\ell_2=c\lambda^{-\frac{1}{N-\theta}}$ and noting that $N-k-\alpha- \frac{k\alpha}{N-\theta}\geq 2$ due to the fact that $\alpha \leq H$ and $\theta \leq N-k$, we obtain
\ba \label{ml3maK-1}
\int_{A_\xl(y)\cap \Sigma_{\beta_1}}d_\Sigma(x)^{-\alpha} \dx \lesssim \lambda^{-\frac{N-\alpha}{N+\alpha-\theta}}.
\ea

Next we treat the second term in \eqref{split-K}. We see that there exists a constant $\bar C=\bar C(N,\Omega,\Sigma,\alpha,\theta)>1$ such that for any $\lambda>\bar C$, there holds
\ba \label{ml3maK-2}
\int_{A_\xl(y)\setminus \Sigma_{\beta_1}}d(x)\dx=0.
\ea

Combining \eqref{split-K}, \eqref{ml3maK-1} and \eqref{ml3maK-2}, we deduce
\ba \label{mlam-1}
m_{\xl}(y)\leq C\,\lambda^{-\frac{N-\alpha}{N+\alpha-\theta}}.
\ea
for all $\lambda > \hat C$, where $C=C(N,\Omega,\Sigma,\alpha,\theta)$. \medskip

\noindent \textbf{Case 2:} $\alpha \leq 0$. By noting that $d_\Sigma(x)^{-\alpha} \leq |x-y|^{-\alpha}$ and $|x-y| \leq c \lambda^{-\frac{1}{N-2-\am}}$ for every $x \in A_\lambda(y)$, we can easily obtain \eqref{mlam-1}.

From case 1 and case 2, by applying Proposition \ref{bvivier} with $\mathcal{H}(x,y)=H_{\alpha,\theta}(x,y)$, $D = \Omega \setminus \Sigma$, $\tilde D=\partial\xO\cup \xS,$ $\eta=\varphi_{\alpha,1}$ and $\omega=\xn$, we obtain \eqref{estmartin2}. The proof is complete.
\end{proof}
\smallskip

We put
\bal
\tilde H_{\alpha}(x,y):=d(x)|x-y|^{-\alpha} \left| \ln\frac{|x-y|}{\CD_\Omega} \right|, \quad x \in \Omega \setminus \{y\},
\eal
\bal
\mathbb{\tilde H}_{\alpha}[\nu](x): = \int_{\partial \Omega \cup \Sigma} \tilde H_{\alpha}(x,y)\,\dd\nu(y),
\eal
where $\CD_\Omega=2\sup_{x \in \Omega}|x|$.
\begin{theorem} \label{LwSigma0}
Assume  $0<\alpha<\frac{N}{2}$, $\xr\in[-1,1] \setminus \{0\}$, $0 \in \Omega$ and let $\xd_0$ be the Dirac measure concentrated on $\{0\}$. For $\lambda>0$, set
\ba
\tilde{A}_\xl(0):=\Big\{x\in \xO\setminus \{0\}:\;  |\mathbb{\tilde H}_{\alpha}[\rho \delta_0](x)|>\xl \Big \}, \quad \tilde{m}_{\xl}&:=\int_{\tilde{A}_\xl(0)}d(x)|x|^{-\alpha} \dx.\label{69}
\ea
Then
\ba\label{54}
\tilde{m}_{\xl}\lesssim (\xl^{-1}\ln(e+\xl))^{\frac{N-\alpha}{\alpha}}(|\xr|\ln(e+|\xr|^{-1}))^{\frac{N-\alpha}{\alpha}}, \quad \forall \lambda>0,
\ea
and
\ba \label{est-LwSigma0}
\| \mathbb{\tilde H}_{\alpha}[\rho \delta_0] \|_{\tilde L_w^{\frac{N-\xa}{\xa}}(\Omega \setminus \{0\};\varphi_{\alpha,1})} \lesssim |\xr|.
\ea 	
The implicit constants in the above estimates depend only on $N,\Omega, \alpha$. Here weak Lebesgue spaces $\tilde L_w^p$ are defined in \eqref{normLwww}.
\end{theorem}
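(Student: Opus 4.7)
The plan is to reduce everything to a single distribution-function estimate for the unsigned function
\[
g(x) := d(x)|x|^{-\alpha}\ln(\mathcal{D}_\Omega/|x|),
\]
which is strictly positive on $\Omega\setminus\{0\}$ (since $|x|\leq \mathcal{D}_\Omega/2$). Indeed $|\mathbb{\tilde H}_\alpha[\rho\delta_0](x)| = |\rho|\,g(x)$, so every level-set statement can be rephrased in terms of $g$ alone. Writing $d\mu := \varphi_{\alpha,1}\,dx = d(x)|x|^{-\alpha}dx$ and $p := (N-\alpha)/\alpha > 1$ (using $\alpha<N/2$), I would first establish the scale-free bound
\begin{equation*}
\mu\bigl(\{x\in\Omega\setminus\{0\}:\, g(x)>s\}\bigr) \lesssim \bigl(s^{-1}\ln(e+s)\bigr)^{p}, \qquad \forall\, s>0. \tag{$\ast$}
\end{equation*}

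For $s$ large, the inequality $g(x)>s$ combined with $d(x)\leq\mathrm{diam}(\Omega)$ forces $|x|^{-\alpha}\ln(\mathcal{D}_\Omega/|x|)\gtrsim s$. Since the map $t\mapsto t^{-\alpha}\ln(\mathcal{D}_\Omega/t)$ is strictly decreasing on $(0,\mathcal{D}_\Omega/2]$ (direct differentiation), this confines $|x|$ to a ball $B_{R(s)}(0)$ with $R(s)$ the unique root of $R^{-\alpha}\ln(\mathcal{D}_\Omega/R)\approx s$; a one-step iteration gives $R(s)\approx (s^{-1}\ln s)^{1/\alpha}$. Then
\[
\mu(\{g>s\}) \leq \mathrm{diam}(\Omega)\!\int_{B_{R(s)}}\!\!|x|^{-\alpha}\,dx \lesssim R(s)^{N-\alpha} \approx (s^{-1}\ln s)^{p},
\]
where convergence at the origin requires only $\alpha<N$. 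For bounded $s$, $(\ast)$ is trivial since $\mu(\Omega)<\infty$ and the right-hand side is bounded below by a positive constant. Given $(\ast)$, estimate \eqref{54} follows by pure scaling: $\tilde m_\lambda = \mu(\{g>\lambda/|\rho|\}) \lesssim \bigl((|\rho|/\lambda)\ln(e+\lambda/|\rho|)\bigr)^p$, combined with the elementary inequality $\ln(e+\lambda/|\rho|) \leq \ln(e+\lambda)+\ln(e+|\rho|^{-1}) \lesssim \ln(e+\lambda)\ln(e+|\rho|^{-1})$, which is valid because both logarithms exceed $1$ (as $|\rho|\leq 1$).

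The delicate point is \eqref{est-LwSigma0}: the naive scaling just used would only give $\|\cdot\|_{\tilde L_w^p}\lesssim |\rho|\ln(e+|\rho|^{-1})$, which is weaker than claimed. To obtain the sharp linear dependence in $|\rho|$, I would pull the factor $|\rho|$ out \emph{before} balancing. Concretely, for any measurable $A\subset\Omega\setminus\{0\}$, the layer-cake formula and the change of variables $u=t/|\rho|$ give
\[
\int_A |\mathbb{\tilde H}_\alpha[\rho\delta_0]|\,d\mu = |\rho|\int_0^\infty \mu(A\cap\{g>u\})\,du \leq |\rho|\Bigl[U\mu(A) + \int_U^\infty \mu(\{g>u\})\,du\Bigr]
\]
for every $U>0$. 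By $(\ast)$ and $p>1$ the tail is $\lesssim U^{1-p}(\ln(e+U))^p$. Balancing the two terms produces the implicit equation $U\approx \mu(A)^{-1/p}\ln(e+U)$, whose one-step iteration yields $U\approx \mu(A)^{-1/p}\ln(e+\mu(A)^{-1})$. Substituting back gives
\[
\int_A |\mathbb{\tilde H}_\alpha[\rho\delta_0]|\,d\mu \lesssim |\rho|\,\mu(A)^{1-1/p}\ln(e+\mu(A)^{-1}),
\]
which is \eqref{est-LwSigma0} by the definition of $\|\cdot\|_{\tilde L_w^p}$ in \eqref{normLwww}.

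The principal obstacle is arranging the argument so that the weak-Lebesgue norm has the exact dependence $|\rho|$ rather than a spurious factor $\ln(|\rho|^{-1})$; doing the balancing on the scaled variable $u=t/|\rho|$ (so that $|\rho|$ is a multiplicative constant at that point) is what avoids this. All other manipulations are routine distribution-function computations built on the single estimate $(\ast)$.
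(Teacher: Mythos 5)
Your proposal is correct and follows essentially the same route as the paper: the level-set inclusion $\{g>s\}\subset B(0,c(s^{-1}\ln s)^{1/\alpha})$ giving $\mu(\{g>s\})\lesssim (s^{-1}\ln(e+s))^{\frac{N-\alpha}{\alpha}}$, the scaling plus splitting of the logarithm for \eqref{54}, and the truncated layer-cake bound with the optimized cutoff $U\approx\mu(A)^{-\frac{\alpha}{N-\alpha}}\ln(e+\mu(A)^{-1})$, pulling the factor $|\rho|$ out linearly, for \eqref{est-LwSigma0}. This matches the paper's proof step by step, so nothing further is needed.
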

\begin{proof}

Consider $\xl> \max\{\CD_\Omega,\CD_\Omega^{-1},e \}$ and
\bal
A_\xl(0):=\Big\{x\in \xO\setminus \{0\}:\;  \tilde H_{\alpha}(x,0)>\xl \Big \}, \quad m_{\xl}&:=\int_{A_\xl(0)}d(x)|x|^{-\alpha} \dx.
\eal
We note that
$
A_\lambda(0) \subset \left\{ x \in \Omega \setminus \{0\}:  |x| \leq c\left(\xl^{-1} \ln\xl \right)^{\frac{1}{\alpha}} \right\} $.
As a consequence,
\bal
m_{\xl} &\lesssim \int_{A_\xl(0)}|x|^{-\alpha} \dx \lesssim \int_{ \left\{|x|\leq c(\xl^{-1} \ln \xl)^{\frac{1}{\alpha}} \right\}}|x|^{-\alpha} \dx  \lesssim (\xl^{-1} \ln\xl)^{\frac{N-\alpha}{\alpha}}.
\eal
Therefore,
\bal
m_{\xl} &\lesssim \int_{A_\xl(0)}|x|^{-\alpha} \dx  \lesssim (\xl^{-1}\ln (e+\xl))^{\frac{N-\alpha}{\alpha}},\quad\forall \xl>0.
\eal
This implies \eqref{54}.

Let $A \subset \xO\setminus \{0\}$ be a measurable set such that $|A|>0$ and let $\dd\tau=d(x)|x|^{-\alpha} \dd x.$ Then for any $\xl>0,$ we have
\bal
\int_A \tilde H_{\alpha}(x,0)d(x)|x|^{-\alpha}\, \dd x &\leq \xl \tau(A) + \int_{A_\xl(0)} \tilde H_{\alpha}(x,0)d(x)|x|^{-\alpha}\, \dd x \\
& = \xl \tau(A) + \xl m_\xl+\int_\xl^\infty m_s \dd s \\
& \lesssim \xl \tau(A) + \xl m_\xl+\int _\xl^\infty (s^{-1}\ln (e+s))^{\frac{N-\alpha}{\alpha}} \dd s \\
& \lesssim \xl \tau(A) + \xl^{1-\frac{N-\alpha}{\alpha}}(\ln (e+\xl))^{\frac{N-\alpha}{\xa}}.
\eal
Taking $\xl=\tau(A)^{-\frac{\xa}{N-\xa}}\ln(e+\tau(A)^{-1})$,  we obtain
\bal
\int_A \tilde H_{\alpha}(x,0)\varphi_{\alpha,1}\, \dd x \lesssim \tau(A)^{1-\frac{\xa}{N-\xa}}\ln \left(e+\tau(A)^{-\frac{\xa}{N-\xa}}\right).
\eal
Thus estimate \eqref{est-LwSigma0} follows by using \eqref{normLwww}.
\end{proof}

\begin{remark}\label{critical}  Conversely, if we assume that
\ba\label{55}
\| \mathbb{\tilde H}_{\alpha}[\rho \delta_0] \|_{\tilde L_w^{\frac{N-\xa}{\xa}}(\Omega \setminus \{0\};\varphi_{\alpha,1})} \lesssim |\xr|
\ea
for some $\xr\in[-1,1] \setminus \{0\}$ then \eqref{54} holds. Indeed, we assume that \eqref{55} is valid. Then by \eqref{55}, we have
\ba\label{56} \nonumber
\xl \tilde m_\xl^{\frac{\xa}{N-\xa}}\ln \left(e+\tilde m_\xl^{-1}\right)^{-1} &\leq \frac{\int_{\tilde A_\xl(0)}|\mathbb{\tilde H}_{\alpha}[\rho \delta_0](x)|d(x) |x|^{-\alpha}\, \dd x}{\tilde m_\xl^{1-\frac{\xa}{N-\xa}}\ln \left(e+\tilde m_\xl^{-1}\right)} \\
&\leq
\| \mathbb{\tilde H}_{\alpha}[\rho \delta_0] \|_{\tilde L_w^{\frac{N-\xa}{\xa}}(\Omega \setminus \{0\};d(x)|x|^{-\alpha})}\leq C|\xr|,
\ea
where $\tilde A_\lambda(0)$ and $\tilde m_\xl$ have been defined in \eqref{69}. Therefore,
\ba\label{70}
\tilde m_\xl^{-\frac{\xa}{N-\xa}}\ln \left(e+\tilde m_\xl^{-1}\right)\geq C^{-1}\frac{\xl}{|\xr|}.
\ea
Hence, if $\tilde m_\xl<\frac{1}{e}$ we have that
\ba\label{71}
\tilde m_\xl^{-\frac{\xa}{N-\xa}}\ln(\tilde m_\xl^{-1})\geq C_0\frac{\xl}{|\xr|}.
\ea

Now we observe that if $r\in(0,1)$ and $s>e$ then
\ba
r^{-1}\ln (r^{-1})>s\Longrightarrow r\leq s^{-1}\ln s.\label{57}
\ea
Taking $r=\tilde m_\xl^{\frac{\xa}{N-\xa}}$ and $s=C_1(\xa,N)\frac{\xl}{|\xr|}$ in \eqref{57} yields
$
\tilde m_\lambda \lesssim \lambda^{-\frac{N-\alpha}{\alpha}} \bigg(|\rho|\ln\frac{\xl}{|\xr|}\bigg)^{\frac{N-\alpha}{\alpha}},
$
which implies \eqref{54}.
\end{remark}

\subsection{Weak Lebesgue estimate on Green kernel}

In this subsection, we will use the results of the previous subsection to establish estimates of the Green kernel. Let $\varphi_{\alpha,\gamma}$ be as in \eqref{varphia}. For a measure $\tau$ on $\Omega \setminus \Sigma$, the Green operator acting on $\tau$ is
\bal
\BBG_\mu[\tau](x)=\int_{\xO \setminus \Sigma}G_{\mu}(x,y)\dd\tau(y).
\eal

\begin{theorem} \label{lpweakgreen}
Assume $k \geq 0$, $0<\mu \leq H^2$ and $0\leq\xg\leq1$.
Then
\ba \label{estgreen}
	\norm{\BBG_\mu[\gt]}_{L_w^{\frac{N+\xg}{N+\xg-2}}(\Gw\setminus \Sigma;\ei)} \lesssim \norm{\gt}_{\mathfrak{M}(\xO\setminus \Sigma;\varphi_{\am,\xg})}, \quad \forall \tau\in \mathfrak{M}(\xO\setminus \Sigma;\varphi_{\am,\xg}).
\ea
	The implicit constant depends on $N,\Omega,\Sigma,\mu,\xg$.
\end{theorem}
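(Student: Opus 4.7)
The plan is to fix $\tau \in \mathfrak{M}^+(\Omega \setminus \Sigma;\varphi_{\am,\gamma})$ (WLOG $\tau \geq 0$ by linearity in $\tau^\pm$) and bound $\BBG_\mu[\tau](x) = \int G_\mu(x,y)\,d\tau(y)$ pointwise by the $\mathbb{F}$-operators whose weak Lebesgue estimates are already available. First I would handle the subcritical case $\mu < (N-2)^2/4$, which encompasses every configuration with $k>0$ since $H^2<(N-2)^2/4$ there. Using Proposition \ref{Greenkernel}(i) together with the identity $(1 \wedge s)^{-\am}=\mathbf{1}_{s\leq 1}s^{-\am}+\mathbf{1}_{s>1}$, I would split $(\Omega\setminus\Sigma)\times(\Omega\setminus\Sigma)$ into
$A=\{d_\Sigma(x)d_\Sigma(y)\leq|x-y|^2\}$ and $B=\{d_\Sigma(x)d_\Sigma(y)>|x-y|^2\}$. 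On $A$ the estimate reduces to $G_\mu(x,y)\lesssim d_\Sigma(x)^{-\am}d_\Sigma(y)^{-\am}|x-y|^{-N+2+2\am}(1\wedge d(x)d(y)/|x-y|^2)$, and inserting $d(y)^{-\gamma}d(y)^\gamma$ identifies this as $F_{-\am,2\am,\gamma}(x,y)\varphi_{\am,\gamma}(y)$. On $B$ one has merely $G_\mu(x,y)\lesssim|x-y|^{-N+2}(1\wedge d(x)d(y)/|x-y|^2)$.

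The technical crux is the following geometric claim needed on region $B$: whenever $d_\Sigma(x)d_\Sigma(y)\geq|x-y|^2$, one has $d_\Sigma(x)\approx d_\Sigma(y)$ with absolute constants. Assuming WLOG $d_\Sigma(x)\leq d_\Sigma(y)$ and combining the Lipschitz bound $|d_\Sigma(x)-d_\Sigma(y)|\leq|x-y|$ with $|x-y|^2\leq d_\Sigma(x)d_\Sigma(y)$, the ratio $t:=d_\Sigma(x)/d_\Sigma(y)\in(0,1]$ must satisfy $t^2-3t+1\leq 0$, so $t\geq(3-\sqrt 5)/2$. This gives $(d_\Sigma(x)/d_\Sigma(y))^{\am}\approx 1$ on $B$, so
\[
|x-y|^{-N+2}\Bigl(1\wedge\tfrac{d(x)d(y)}{|x-y|^2}\Bigr)\ \lesssim\ \Bigl(\tfrac{d_\Sigma(x)}{d_\Sigma(y)}\Bigr)^{\am}|x-y|^{-N+2}\Bigl(1\wedge\tfrac{d(x)d(y)}{|x-y|^2}\Bigr)=F_{\am,0,\gamma}(x,y)\varphi_{\am,\gamma}(y).
\]
Integrating both regions gives the key pointwise bound $\BBG_\mu[\tau]\lesssim \mathbb{F}_{-\am,2\am,\gamma}[\varphi_{\am,\gamma}\tau]+\mathbb{F}_{\am,0,\gamma}[\varphi_{\am,\gamma}\tau]$. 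Applying Lemma \ref{anisotitaweakF1a} and Lemma \ref{anisotitaweakF1b} with $\alpha=\am\in(0,H]$, each summand lies in $L^{\p_{\am,2\am,\gamma}}_w$, respectively $L^{\p_{\am,0,\gamma}}_w$, both with respect to $\varphi_{\am,1}$. A direct comparison using the monotonicity of $s\mapsto(N+s)/(N+s-2)$ yields $\p_{\am,0,\gamma}=(N+\gamma)/(N+\gamma-2)$ and $\p_{\am,2\am,\gamma}\geq(N+\gamma)/(N+\gamma-2)$; since $\varphi_{\am,1}$ is integrable over $\Omega$, the standard embedding $L_w^p\hookrightarrow L_w^q$ for $p\geq q$ delivers \eqref{estgreen}.

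For the remaining critical configuration $k=0$, $\Sigma=\{0\}$, $\mu=(N-2)^2/4$, I would invoke Proposition \ref{Greenkernel}(ii). The first summand has the same structure as in the previous case (with $\am=(N-2)/2=H$), so the argument above applies verbatim to that piece. The additional logarithmic term $(|x||y|)^{-(N-2)/2}\bigl|\ln(1\wedge|x-y|^2/(d(x)d(y)))\bigr|$ equals exactly $\tilde F_\gamma(x,y)\varphi_{(N-2)/2,\gamma}(y)$, so its contribution to $\BBG_\mu[\tau](x)$ is $\mathbb{\tilde F}_\gamma[\varphi_{(N-2)/2,\gamma}\tau](x)$. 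Invoking Lemma \ref{anisotitaweakF4} with, say, $\theta=\kappa=0$ (within the admissible ranges $-2<\theta<2$ and $-N+1<\kappa<1$) gives an $L_w^{N/(N-2)}$ bound, and since $(N+\gamma)/(N+\gamma-2)\leq N/(N-2)$ for $\gamma\in[0,1]$, this embeds into the target. The main obstacle throughout is the region-$B$ comparability claim; once that is established the remainder is a routine assembly of the three auxiliary lemmas.
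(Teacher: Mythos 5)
Your proposal is correct and follows essentially the same route as the paper: both reduce the theorem to the pointwise bound $G_\mu(x,y)\lesssim\bigl(F_{-\am,2\am,\gamma}(x,y)+F_{\am,0,\gamma}(x,y)\bigr)\varphi_{\am,\gamma}(y)$ (plus the logarithmic kernel $\tilde F_\gamma(x,y)\varphi_{\frac{N-2}{2},\gamma}(y)$ in the critical case $k=0$, $\mu=\left(\frac{N-2}{2}\right)^2$) and then invoke Lemmas \ref{anisotitaweakF1a}, \ref{anisotitaweakF1b} and \ref{anisotitaweakF4} together with the comparison of $\p_{\am,0,\gamma}$ and $\p_{\am,2\am,\gamma}$ with $\frac{N+\gamma}{N+\gamma-2}$ and the embedding between weak Lebesgue spaces over the finite measure $\varphi_{\am,1}\,\dx$. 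The only difference is cosmetic: where you split into the regions $d_\Sigma(x)d_\Sigma(y)\lessgtr|x-y|^2$ and prove the comparability $d_\Sigma(x)\approx d_\Sigma(y)$ on the second region, the paper obtains the same pointwise bound in one line from the first form of \eqref{Greenesta} via the triangle inequality $d_\Sigma(y)\leq|x-y|+d_\Sigma(x)$.
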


\begin{proof} Without loss of generality we may assume that $\tau$ is nonnegative. We consider the following cases.
	
\noindent \textbf{Case 1: $0<\mu<\left( \frac{N-2}{2} \right)^2$.} Then $0<\am< \frac{N-2}{2}$.  From \eqref{eigenfunctionestimates}, \eqref{Greenesta}, \eqref{F1} and the fact that $d_\Sigma(y) \leq |x-y|+d_\Sigma(x)$, we  obtain, for all $x,y\in \xO\setminus \Sigma, x\neq y$,
\bal
G_\mu(x,y)\varphi_{\am,\xg}(y)^{-1} &\lesssim |x-y|^{2-N} \min \left\{ 1, \frac{d(x)d(y)}{|x-y|^2} \right\} (|x-y|+d_\Sigma(x))^{2\am} d_\Sigma(x)^{-\am}d(y)^{-\xg} \\
&\lesssim F_{-\am,2\am,\xg}(x,y)+F_{\am,0,\xg}(x,y).
\eal
This, together with Lemmas \ref{anisotitaweakF1a}--\ref{anisotitaweakF1b} , estimate  $\varphi_{\am,1} \approx \phi_\mu$ and the fact that (see \eqref{p1})
\bal \frac{N+\xg}{N+\xg-2} = \p_{\am,0,\gamma} \leq \p_{-\am,2\am,\gamma},
\eal
implies \eqref{estgreen}. \medskip

\noindent \textbf{Case 2:} $k=0$, $\Sigma=\{0\}$ and $\mu=\left( \frac{N-2}{2} \right)^2$. Then $\am=\frac{N-2}{2}$.
From \eqref{eigenfunctionestimates}, \eqref{Greenestb} and the fact that $|y| \leq |x-y|+|x|$, we obtain, for all  $x,y\in \xO\setminus \{0\}, x\neq y$,
\bal
G_{(\frac{N-2}{2})^2}(x,y)\varphi_{\frac{N-2}{2},\xg}(y)^{-1}\lesssim F_{-\frac{N-2}{2},N-2,\xg}(x,y)+F_{\frac{N-2}{2},0,\xg}(x,y)+\tilde F_{\xg}(x,y),
\eal
This, together with
Lemmas \ref{anisotitaweakF1a}--\ref{anisotitaweakF4} and the fact that (see \eqref{p1})
\bal \frac{N+\xg}{N+\xg-2} = \p_{-\frac{N-2}{2},N-2,\gamma} \leq \p_{\frac{N-2}{2},0,\gamma},
\eal
implies \eqref{estgreen}. The proof is complete.
\end{proof}

\begin{remark} Assume $0<\mu\leq H^2$. By combining \eqref{estgreen} with $\gamma=1$, \eqref{eigenfunctionestimates} and the embedding after \eqref{normLwww}, we derive that for any $1<p<\frac{N+1}{N-1}$,
	\be \label{Gphi_mu}
	\sup_{z \in \Omega \setminus \Sigma} \int_{\Omega \setminus \Sigma} \left( \frac{G_\mu(x,z)}{d(z)d_\Sigma(z)^{-\am}} \right)^p d(x)d_\Sigma(x)^{-\am}dx < C.
	\ee	
\end{remark}

Next we treat the case $\mu \leq 0$.
\begin{theorem} \label{lpweakgreen2}
Assume  $0\leq\xg\leq1,$ $\xm\leq 0$ and $0\leq\kappa\leq -\am$. Let
\bal
p_{\kappa,\gamma}:=\min\left\{\frac{N+\kappa}{N+\kappa-2}, \frac{N+\xg}{N+\xg-2}\right\}.
\eal
Then
\ba \label{estgreen2}
	\norm{\BBG_\mu[\gt]}_{L_w^{p_{\kappa,\gamma}}(\Gw\setminus \Sigma;\ei)} \lesssim \norm{\gt}_{\mathfrak{M}(\xO\setminus \Sigma;\varphi_{-\kappa,\xg})}, \quad \forall \tau\in \mathfrak{M}^+(\xO\setminus \Sigma;\varphi_{-\kappa,\xg}).
\ea
	The implicit constant depends on $N,\Omega,\Sigma,\mu,\kappa,\gamma$.
\end{theorem}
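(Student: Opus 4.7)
The plan is to adapt Case 1 of the proof of Theorem \ref{lpweakgreen} to the regime $\mu\leq 0$, exploiting the fact that in this regime the Hardy factor in \eqref{Greenesta} collapses. Since $\am\leq 0$, the factor $\bigl(1\wedge d_\xS(x)d_\xS(y)/|x-y|^2\bigr)^{-\am}$ is bounded by $1$ (its base lies in $(0,1]$ and its exponent is nonnegative), so \eqref{Greenesta} yields the clean bound
\begin{align*}
G_\mu(x,y) \lesssim |x-y|^{2-N}\left(1 \wedge \frac{d(x)d(y)}{|x-y|^2}\right), \qquad x\neq y,\ x,y\in \Omega\setminus\Sigma.
\end{align*}
Multiplying by $\varphi_{-\kappa,\gamma}(y)^{-1} = d_\xS(y)^{-\kappa}d(y)^{-\gamma}$ and splitting the $y$-region into $\{d_\xS(y)>2|x-y|\}$ (where the triangle inequality for $d_\xS$ forces $d_\xS(x)\approx d_\xS(y)$, hence $d_\xS(y)^{-\kappa}\lesssim d_\xS(x)^{-\kappa}$) and its complement (where $d_\xS(y)^{-\kappa}\lesssim |x-y|^{-\kappa}$ since $\kappa\geq 0$), I would obtain the pointwise estimate
\begin{align*}
G_\mu(x,y)\,\varphi_{-\kappa,\gamma}(y)^{-1} \lesssim F_{-\kappa,0,\gamma}(x,y) + F_{0,-\kappa,\gamma}(x,y).
\end{align*}

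The second step is to show that both $\mathbb{F}_{-\kappa,0,\gamma}[\varphi_{-\kappa,\gamma}\tau]$ and $\mathbb{F}_{0,-\kappa,\gamma}[\varphi_{-\kappa,\gamma}\tau]$ belong to $L_w^{p_{\kappa,\gamma}}(\Omega\setminus\Sigma;\phi_\mu)$ via Proposition \ref{bvivier}. For $F_{-\kappa,0,\gamma}$ I would mimic the proof of Lemma \ref{anisotitaweakF1b} with $\alpha$ replaced by the nonpositive parameter $-\kappa$; since $d_\xS(x)^{-(-\kappa)}=d_\xS(x)^{\kappa}$ is now bounded, the near-$\Sigma$ volume integrals that required $0<\alpha\leq H$ in the original lemma become elementary, while the same splitting along $\{d_\xS(x)\lessgtr |x-y|\}$ and $\{d(x)d(y)\lessgtr |x-y|^2\}$ produces the distribution-function bound $m_\lambda^{(1)}(y)\lesssim \lambda^{-p_{-\kappa,0,\gamma}} = \lambda^{-p_{\kappa,\gamma}}$. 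For $F_{0,-\kappa,\gamma}$ I would argue directly, using $\phi_\mu(x)\lesssim d(x)$ (valid because $d_\xS^{-\am}$ is bounded when $\am\leq 0$) together with the elementary inequality \eqref{app:6} and its convex-combination variants, reducing the level-set volume to a standard Riesz-type computation in which the extra $|x-y|^{-\kappa}$ singularity is absorbed by splitting on $\{d(x)d(y)\gtrless |x-y|^2\}$; the decay exponent obtained is at least $p_{\kappa,\gamma}$, so by the embedding of higher weak-$L^p$ into lower weak-$L^p$ on the finite measure space $(\Omega,\phi_\mu\,\dx)$ the contribution of $F_{0,-\kappa,\gamma}$ is controlled at rate $p_{\kappa,\gamma}$. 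Feeding the combined distribution-function estimate into Proposition \ref{bvivier} with $\eta=\phi_\mu$ and $\omega=\varphi_{-\kappa,\gamma}\tau$, and observing that $\|\omega\|_{\GTM(\Omega\setminus\Sigma)}=\|\tau\|_{\GTM(\Omega\setminus\Sigma;\varphi_{-\kappa,\gamma})}$, delivers \eqref{estgreen2}.

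The main obstacle I anticipate is the treatment of $F_{0,-\kappa,\gamma}$: the additional factor $|x-y|^{-\kappa}$ sharpens the Riesz-type singularity, and unlike in Lemmas \ref{anisotitaweakF1a}--\ref{anisotitaweakF1b} the weight $\phi_\mu(x)\lesssim d(x)$ no longer carries a positive power of $d_\xS(x)$ to tame the integration. The delicate point is to verify that the decay exponent of $m_\lambda^{(2)}(y)$ is indeed at least $p_{\kappa,\gamma}$ \emph{uniformly in $y$}, which requires careful tracking of the interplay between $|x-y|$, $d(x)$, $d(y)$, and the defining constraint $d_\xS(y)\leq 2|x-y|$ across the two subregions $\{d(x)d(y)\lessgtr|x-y|^2\}$ so that the final minimum $p_{\kappa,\gamma}=\min\{\frac{N+\kappa}{N+\kappa-2},\frac{N+\gamma}{N+\gamma-2}\}$ is recovered rather than a strictly smaller rate.
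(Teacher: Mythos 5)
Your opening reduction contains an inequality that goes the wrong way, and the step it is meant to justify cannot be repaired within your scheme. On the region $\{d_\Sigma(y)\le 2|x-y|\}$, since $\kappa\ge 0$ one gets $|x-y|^{-\kappa}\le 2^{\kappa}d_\Sigma(y)^{-\kappa}$, \emph{not} $d_\Sigma(y)^{-\kappa}\lesssim|x-y|^{-\kappa}$, so your claimed bound $G_\mu\varphi_{-\kappa,\gamma}^{-1}\lesssim F_{-\kappa,0,\gamma}+F_{0,-\kappa,\gamma}$ does not follow. The deeper issue is the very first move, collapsing $\bigl(1\wedge d_\Sigma(x)d_\Sigma(y)/|x-y|^2\bigr)^{-\am}$ to $1$: for $\mu\le 0$ one has $-\am\ge\kappa\ge 0$, and this factor is precisely what neutralizes the singular weight $d_\Sigma(y)^{-\kappa}$ coming from $\varphi_{-\kappa,\gamma}^{-1}$. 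Indeed, using \eqref{min2} and the fact that the base is $\le 1$ while the exponent satisfies $-\am\ge\kappa$, one has both $d_\Sigma(y)^{-\kappa}\bigl(1\wedge d_\Sigma(x)d_\Sigma(y)/|x-y|^2\bigr)^{-\am}\lesssim |x-y|^{-\kappa}$ and $\lesssim d_\Sigma(y)^{-\kappa-\am}d_\Sigma(x)^{\am}\lesssim d_\Sigma(x)^{\am}$, and the paper's proof uses each of these in a different region of $x$. Once the factor is discarded, the surviving kernel $d_\Sigma(y)^{-\kappa}d(y)^{-\gamma}|x-y|^{2-N}\bigl(1\wedge d(x)d(y)/|x-y|^2\bigr)$ admits no distribution-function bound uniform in $y$: when $\kappa>0$, for $y$ with $d_\Sigma(y)$ small and $d(y)\approx 1$ the $\phi_\mu$-measure of its $\lambda$-level set is bounded below by a quantity of order $(\lambda\,d_\Sigma(y)^{\kappa})^{-(N-\am)/(N-2)}$, which blows up as $d_\Sigma(y)\to 0$. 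So Proposition \ref{bvivier} cannot be invoked along your route; the paper instead keeps the full bound \eqref{Greenesta} and works with $F(x,y)=d_\Sigma(y)^{-\kappa}|x-y|^{2-N}d(y)^{-\gamma}\bigl(1\wedge d(x)d(y)/|x-y|^2\bigr)\bigl(1\wedge d_\Sigma(x)d_\Sigma(y)/|x-y|^2\bigr)^{-\am}\gtrsim G_\mu(x,y)\varphi_{-\kappa,\gamma}(y)^{-1}$.

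Independently of this, the second half of your plan does not recover the stated rate even from the correct one-term bound $G_\mu\varphi_{-\kappa,\gamma}^{-1}\lesssim F_{0,-\kappa,\gamma}$ (which is what the first absorption above actually gives). Treating $F_{0,-\kappa,\gamma}$ globally with the weight $\phi_\mu\lesssim d$, i.e.\ with $F_{0,-\kappa,\gamma}\lesssim |x-y|^{2-N-\kappa-\gamma}$ and $d(x)F_{0,-\kappa,\gamma}\lesssim|x-y|^{2-N-\kappa}$, yields only $m_\lambda(y)\lesssim\lambda^{-\frac{N+\gamma}{N-2+\kappa+\gamma}}$, which is strictly smaller than $p_{\kappa,\gamma}$ whenever $\kappa>0$; moreover the level-set integral of $|x-y|^{2-N-\kappa}$ converges only if $\kappa<2$, which is not guaranteed since $\kappa$ may be as large as $-\am$, and $-\am>2$ for $\mu$ sufficiently negative. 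The minimum $\min\bigl\{\tfrac{N+\kappa}{N+\kappa-2},\tfrac{N+\gamma}{N+\gamma-2}\bigr\}$ arises, as in \eqref{ml1gr}--\eqref{ml3gr}, by splitting the $x$-integration into $\Sigma_{\beta_0}$ and its complement and paying a different price in each region: for $x\in\Sigma_{\beta_0}$ one has $d(x)\gtrsim 1$, so $d(y)^{-\gamma}\bigl(1\wedge d(x)d(y)/|x-y|^2\bigr)\lesssim d(y)^{1-\gamma}/d(x)\lesssim 1$ and only the $|x-y|^{-\kappa}$ singularity is paid, giving $\lambda^{-\frac{N+\kappa}{N+\kappa-2}}$; for $x\notin\Sigma_{\beta_0}$ one has $d_\Sigma(x)\ge\beta_0$, so $d_\Sigma(y)^{-\kappa}\bigl(1\wedge d_\Sigma(x)d_\Sigma(y)/|x-y|^2\bigr)^{-\am}\lesssim 1$ and only the $\gamma$-part is paid, giving $\lambda^{-\frac{N+\gamma}{N+\gamma-2}}$. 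You flagged the uniform-in-$y$ issue at the end of your proposal, but it is not a verification left for later: it is the crux of the proof, and your reduction (drop the Hardy factor, then absorb $d_\Sigma(y)^{-\kappa}$ into $|x-y|^{-\kappa}$ everywhere) forecloses exactly the region-dependent absorption that produces the exponent $p_{\kappa,\gamma}$. (A minor further slip: with the convention \eqref{F1}, $F_{-\kappa,0,\gamma}$ carries the factor $d_\Sigma(x)^{-\kappa}$, not $d_\Sigma(x)^{\kappa}$, so the remark that this power ``is now bounded'' misreads the sign.)
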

\begin{proof}
\noindent For $y \in \Omega \setminus \Sigma$ and $\lambda>0$, set
\bal
A_\xl(y):=\Big\{x\in(\xO\setminus \Sigma)\setminus\{y\}:\;\; G_{\mu}(x,y)\varphi_{-\kappa,\xg}(y)^{-1}>\xl \Big \} \quad \text{and} \quad
m_{\xl}(y):=\int_{A_\xl(y)}d(x)d_\Sigma(x)^{-\am} \dd x.
\eal
Put
\bal
 F(x,y):=d_\Sigma(y)^{-\kappa}|x-y|^{-N+2} d(y)^{-\xg} \left( 1 \land \frac{d(x)d(y)}{|x-y|^2} \right) \left(1 \land \frac{d_\Sigma(x)d_\Sigma(y)}{|x-y|^2} \right)^{-\am}, \;\; x,y \in \Omega \setminus \Sigma, x \neq y.
\eal
By \eqref{Greenesta} and \eqref{eigenfunctionestimates}, $F(x,y) \geq c\,G_{\mu}(x,y)\varphi_{-\kappa,\gamma}(y)^{-1}$ for some positive constant $c$ depending only on $N,\xO,\Sigma,\mu.$
Consequently,
\bal
&A_\lambda(y)\subset \Big\{x\in(\xO\setminus \Sigma)\setminus\{y\}:\; F(x,y)>c \lambda  \Big \}=:\tilde A_\lambda(y).
\eal
Let $\beta_0$ be as in Subsection \ref{assumptionK}. We write
\ba\nonumber
m_{\xl}(y)&=\int_{A_\xl(y)\cap \Sigma_{\xb_0}}d(x)d_\Sigma(x)^{-\am} \dd x+\int_{A_\xl(y)\setminus \Sigma_{\xb_0}}d(x)d_\Sigma(x)^{-\am} \dd x.\\
&\lesssim \int_{\tilde A_\xl(y)\cap \Sigma_{\xb_0}}d_\Sigma(x)^{-\am} \dd x+\int_{\tilde A_\xl(y)\setminus \Sigma_{\xb_0}}d(x)\dd x.
\label{ml1gr}
\ea
Note that, for $\Gamma=\partial \xO$ or $\xS$, we have
\be \label{min2}
  1 \land \frac{d_\Gamma(x)d_\Gamma(y)}{|x-y|^2} \leq 2 \left( 1 \land \frac{d_\Gamma(y)}{|x-y|} \right) \leq 4\frac{d_\Gamma(y)}{d_\Gamma(x)}.
 \ee
By \eqref{min2} we have
\ba
\int_{\tilde A_\xl(y)\setminus \Sigma_{\xb_0}}d(x) \dd x \lesssim \int_{\{|x-y|\leq c\lambda^{-\frac{1}{N+\xg-2}}\}}\xl^{-1}|x-y|^{-N+2} \dd x \lesssim \lambda^{-\frac{N+\xg}{N+\xg-2}} \label{ml2gr}
\ea
and
\ba
\int_{\tilde A_\xl(y)\cap \Sigma_{\xb_0}}d_\Sigma(x)^{-\am} \dd x \lesssim \int_{\{|x-y|\leq c \lambda^{-\frac{1}{N+\kappa-2}}\}}\xl^{-1}|x-y|^{-N+2} \dd x \lesssim \lambda^{-\frac{N+\kappa}{N+\kappa-2}}.\label{ml3gr}
\ea
Combining \eqref{ml1gr}, \eqref{ml2gr} and \eqref{ml3gr}, we obtain
\bel{ml4gr}
 m_{\xl}(y)\leq C \lambda^{-p_{\kappa,\xg}}
\ee
for all $\lambda \geq 1$, where $C=C(N,\Omega,\Sigma,\mu)$. Then we can show that \eqref{ml4gr} holds for every $\lambda>0$.

By applying Proposition \ref{bvivier}  with $\mathcal{H}(x,y)=G_{\mu}(x,y)\varphi_{-\kappa,\gamma}(y)^{-1}, $  $\tilde D=D=\xO\setminus \Sigma$, $\eta=\ei$ and $\omega=\varphi_{-\kappa,\xg}\tau$, we obtain \eqref{estgreen2}. The proof is complete.
\end{proof}
\subsection{Weak $L^p$ estimates on Martin kernel}
Recall that
\bal
\mathbb{K}_\mu[\gn](x)=\int_{\partial\xO \cup \Sigma}K_{\mu}(x,y) \dd\xn(y), \quad \gn\in \mathfrak{M}(\partial\xO\cup \Sigma).
\eal
\begin{theorem}\label{lpweakmartin1} ~~
	
{\sc I.} Assume $\mu \leq \left( \frac{N-2}{2}\right)^2$ and $\gn\in \mathfrak{M}(\partial\xO\cup \Sigma)$ with compact support in $\partial\xO.$ Then
\ba \label{estmartin1}
	\norm{\mathbb{K}_\mu[\nu]}_{L_w^{\frac{N+1}{N-1}}(\Gw\setminus \Sigma;\ei)} \lesssim \|\nu\|_{\mathfrak{M}(\partial\Omega \cup \Sigma)}.
\ea
	
{\sc II.} Assume $\gn\in \mathfrak{M}(\partial\xO\cup \Sigma)$ with compact support in $\Sigma$.

(i) If $\mu < \left( \frac{N-2}{2} \right)^2$ then
\ba \label{estmartin2}
\norm{\mathbb{K}_{\mu}[\nu]}_{L_w^{\frac{N-\am}{N-\am-2}}(\Gw\setminus \Sigma;\ei)} \lesssim \norm{\nu}_{\mathfrak{M}(\partial \Omega \cup \Sigma)}.
\ea

(ii) If $k=0$, $\Sigma=\{0\}$ and $\mu = \left( \frac{N-2}{2} \right)^2$ then
\ba \label{estmartin2cr}
\norm{\mathbb{K}_{\mu}[\nu]}_{\tilde L_w^{\frac{N+2}{N-2}}(\Gw\setminus \{0\};\ei )} \lesssim \norm{\nu}_{\mathfrak{M}(\partial \Omega \cup \Sigma)}.
\ea
The implicit constants in the above estimates depends on $N,\Omega,\Sigma,\mu$.
\end{theorem}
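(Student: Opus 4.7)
The statement splits into three cases by the support of $\nu$ and whether the critical value $\mu=H^2$ is attained. In each case the plan is to bound the Martin kernel $K_\mu$ by one of the auxiliary kernels $H_{\alpha,\theta}$ or $\tilde H_\alpha$ from the preceding subsection and then invoke the corresponding weak-Lebesgue estimate already proved in this section.

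\emph{Part I.} With $\nu$ supported in $\partial\Omega$, the first lines of \eqref{Martinest1} and \eqref{Martinest2} both give $K_\mu(x,\xi)\lesssim d(x)d_\Sigma(x)^{-\am}|x-\xi|^{-N}=H_{\am,0}(x,\xi)$ for $\xi\in\partial\Omega$ (noting that when $k=0$, $\Sigma=\{0\}$, $\mu=H^2$, one has $d_\Sigma(x)=|x|$ and $\am=(N-2)/2$), so $\BBK_\mu[\nu]\lesssim\mathbb H_{\am,0}[\nu]$. Theorem \ref{H}(i) with $\alpha=\am$, $\theta=0$ then yields a weak $L^{\q_{\am,0}}$ bound, and the identification $\q_{\am,0}=(N+1)/(N-1)$ reduces to $\am\leq(N-1)(N-k)/(2N)$, which follows from $\am\leq H=(N-k-2)/2$ via the elementary identity $N(N-k-2)-(N-1)(N-k)=-(N+k)\leq 0$. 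When $\mu\leq 0$, $\am\leq 0$ and $d_\Sigma^{-\am}$ is uniformly bounded on $\Omega$; the proof of Theorem \ref{H}(i) already covers this range via \eqref{ml3mb}, so the conclusion persists.

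\emph{Part II(i).} For $\xi\in\Sigma$ and $\mu<H^2$, the second line of \eqref{Martinest1} gives $K_\mu(x,\xi)\approx d(x)d_\Sigma(x)^{-\am}|x-\xi|^{-(N-2-2\am)}=H_{\am,\,2+2\am}(x,\xi)$. When $k\geq 1$, I would apply Theorem \ref{H}(ii) with $\alpha=\am$, $\theta=2+2\am$; the hypotheses $\theta\leq N-k$ and $\theta<N+\am$ are equivalent to $\am\leq H$ and $\am<N-2$ (both of which hold), and the output exponent $(N-\am)/(N+\am-\theta)$ simplifies to $(N-\am)/(N-2-\am)$, as required. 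When $k=0$ and $\Sigma=\{0\}$, the measure $\nu$ is a multiple of $\delta_0$ and $K_\mu(x,0)\approx d(x)|x|^{-(N-2-\am)}$; a direct application of Proposition \ref{bvivier}, using the inclusion $\{x:d(x)|x|^{-(N-2-\am)}>\lambda\}\subset\{|x|\leq c\lambda^{-1/(N-2-\am)}\}$ and $\int_{|x|\leq\rho}d(x)|x|^{-\am}\,\dx\lesssim\rho^{N-\am}$, yields the same weak $L^{(N-\am)/(N-2-\am)}$ bound.

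\emph{Part II(ii).} Here $k=0$, $\Sigma=\{0\}$, $\mu=H^2$, so $\am=(N-2)/2$ and $\nu=\rho\delta_0$ for some $\rho$; the second line of \eqref{Martinest2} gives $K_\mu(x,0)\lesssim d(x)|x|^{-(N-2)/2}|\ln(|x|/\CD_\Omega)|=\tilde H_{(N-2)/2}(x,0)$, so $\BBK_\mu[\nu]\lesssim\mathbb{\tilde H}_{(N-2)/2}[\rho\delta_0]$. Since $(N-2)/2<N/2$, Theorem \ref{LwSigma0} applies with $\alpha=(N-2)/2$ and delivers the $\tilde L^{(N+2)/(N-2)}_w$ bound, matching $(N-\alpha)/\alpha=(N+2)/(N-2)$. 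The main technical hurdle is the exponent bookkeeping in Part I—in particular the comparison $H\leq (N-1)(N-k)/(2N)$—together with the need for ad hoc arguments in the two degenerate situations ($\mu\leq 0$ in Part I and $k=0$ in Part II(i)), where Theorem \ref{H}(i) and Theorem \ref{H}(ii) respectively do not literally apply but whose proofs adapt with only cosmetic changes.
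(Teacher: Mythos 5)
Your proposal is correct and takes essentially the same route as the paper: the paper's proof consists precisely of identifying $K_\mu$ with $H_{\am,0}$, $H_{\am,2+2\am}$ and $\tilde H_{\frac{N-2}{2}}$ via Propositions \ref{Greenkernel}--\ref{Martin} and then citing Theorem \ref{H}(i) (with $\theta=0$), Theorem \ref{H}(ii) (with $\theta=2+2\am$) and Theorem \ref{LwSigma0} (with $\alpha=\tfrac{N-2}{2}$), exactly as you do. Your extra bookkeeping — checking $\q_{\am,0}=\tfrac{N+1}{N-1}$ from $\am\le H$, and handling $\am\le 0$ in Part I and $k=0$ in Part II(i), where the stated hypotheses of Theorem \ref{H} do not literally cover the range but its proof (or a short direct application of Proposition \ref{bvivier}) does — only makes explicit details the paper leaves implicit.
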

\begin{proof}
I. By applying Theorem \ref{H} (i) with $\alpha=-\am$, $\theta=0$ and noting that $K_\mu(x,y) \approx H_{\am,0}$ (due to \eqref{Martinest1} and \eqref{Halthe}),  $\varphi_{\am,1} \approx \phi_\mu$ (due to\eqref{eigenfunctionestimates}) and $\q_{\am,0}=\frac{N+1}{N-1}$, we obtain \eqref{estmartin1}.

II (i). By applying Theorem \ref{H} (ii) with $\alpha=\am$, $\theta=2+2\am$ and noting that $K_\mu(x,y) \approx H_{\am,2+2\am}$ (due to \eqref{Martinest1} and \eqref{Halthe}),  $\varphi_{\alpha,1} \approx \phi_\mu$ (due to\eqref{eigenfunctionestimates}), we obtain \eqref{estmartin2}.

II (ii). By applying Theorem \ref{LwSigma0}  with $\alpha=\frac{N-2}{2}$, we obtain \eqref{estmartin2cr}.
\end{proof}

\section{Boundary value problem for linear equations} \label{sec:linear}
In this section, we first recall the notion of boundary trace which is defined with respect to harmonic measures related to $L_\mu$. Then we provide the existence, uniqueness and a priori estimates of the solution to the boundary value problem for linear equations. We refer the reader to \cite{GkiNg_linear} for the proofs.
\subsection{Boundary trace} \label{subsec:boundarytrace}
Let $\beta_0$ be the constant in Subsection \ref{assumptionK}. Let $\eta_{\beta_0}$ be a smooth function such that $0 \leq \eta_{\beta_0} \leq 1$, $\eta_{\beta_0}=1$ in $\overline{\Sigma}_{\frac{\xb_0}{4}}$ and $\supp \eta_{\beta_0} \subset \Sigma_{\frac{\beta_0}{2}}$. We define
\bal W(x):=\left\{ \BAL &d_\Sigma(x)^{-\ap} \qquad&&\text{if}\;\mu <H^2, \\
	&d_\Sigma(x)^{-H}|\ln d_\Sigma(x)| \qquad&&\text{if}\;\mu =H^2,
	\EAL \right. \quad x \in \Omega \setminus \Sigma,
\eal
	and
\bal
	\tilde W(x):=1-\eta_{\beta_0}(x)+\eta_{\beta_0}(x)W(x), \quad x \in \Omega \setminus \Sigma.
\eal

Let $z \in \Omega \setminus \Sigma$ and $h\in C(\partial\Omega \cup \Sigma)$ and denote $\CL_{\mu ,z}(h):=v_h(z)$ where $v_h$ is the unique solution of the Dirichlet problem
\be \label{linear} \left\{ \BAL
L_{\mu}v&=0\qquad \text{in}\;\;\xO\setminus \Sigma\\
v&=h\qquad \text{on}\;\;\partial\xO\cup \Sigma.
\EAL \right. \ee
Here the boundary value condition in \eqref{linear} is understood in the sense that
\bal
	\lim_{\dist(x,F)\to 0}\frac{v(x)}{\tilde W(x)}=h \quad \text{for every compact set } \; F\subset \partial \Omega \cup \Sigma.
\eal
The mapping $h\mapsto \CL_{\mu,z}(h)$ is a linear positive functional on $C(\partial\Omega \cup \Sigma)$. Thus there exists a unique Borel measure on $\partial\Omega \cup \Sigma$, called {\it $L_{\mu}$-harmonic measure in $\partial \Omega \cup \Sigma$ relative to $z$} and  denoted by $\omega_{\Omega \setminus \Sigma}^{z}$, such that
\bal
v_{h}(z)=\int_{\partial\Omega\cup \Sigma}h(y) \dd\omega_{\Omega \setminus \Sigma}^{z}(y).
\eal
Let $x_0 \in \Omega \setminus \Sigma$ be a fixed reference point. Let $\{\xO_n\}$ be an increasing sequence of bounded $C^2$ domains  such that
\bal  \overline{\xO}_n\subset \xO_{n+1}, \quad \cup_n\xO_n=\xO, \quad \mathcal{H}^{N-1}(\partial \Omega_n)\to \mathcal{H}^{N-1}(\partial \Omega),
\eal
where $\mathcal{H}^{N-1}$ denotes the $(N-1)$-dimensional Hausdorff measure in $\R^N$.
  Let $\{\Sigma_n\}$ be a decreasing sequence of bounded $C^2$ domains  such that
\bal \Sigma \subset \Sigma_{n+1}\subset\overline{\Sigma}_{n+1}\subset \Sigma_{n}\subset\overline{\Sigma}_{n} \subset\Omega_n, \quad \cap_n \Sigma_n=\Sigma.
\eal
For each $n$, set $O_n=\xO_n\setminus \Sigma_n$  and assume that $x_0 \in O_1$. Such a sequence $\{O_n\}$ will be called a {\it $C^2$ exhaustion} of $\Gw\setminus \Sigma$.

Then $-L_\mu$ is uniformly elliptic and coercive in $H^1_0(O_n)$ and its first eigenvalue $\lambda_\mu^{O_n}$ in $O_n$ is larger than its first eigenvalue $\lambda_\mu$ in $\Omega \setminus \Sigma$.

For $h\in C(\prt O_n)$, the following problem
\bal\left\{ \BAL
-L_{\xm } v&=0\qquad&&\text{in } O_n\\
v&=h\qquad&&\text{on } \prt O_n,
\EAL \right.
\eal
admits a unique solution which allows to define the $L_{\xm }$-harmonic measure $\omega_{O_n}^{x_0}$ on $\prt O_n$
by
\bal
v(x_0)=\myint{\prt O_n}{}h(y) \dd\gw^{x_0}_{O_n}(y).
\eal

Let $G^{O_n}_\xm(x,y)$ be the Green kernel of $-L_\mu$ on $O_n$.  Then $G^{O_n}_\xm(x,y)\uparrow G_\mu(x,y)$ for $x,y\in\xO\setminus \xS, x \neq y$.

We recall below the definition of the boundary trace which is defined in a \textit{dynamic way}.

\begin{definition}[Boundary trace] \label{nomtrace}
	A function $u\in W^{1,\kappa}_{loc}(\xO\setminus\xS),$ for some $\kappa>1,$ possesses a \emph{boundary trace}  if there exists a measure $\nu \in\GTM(\partial \Omega \cup \Sigma)$ such that for any $C^2$ exhaustion  $\{ O_n \}$ of $\Omega \setminus \Sigma$, there  holds
	\ba \label{trab}
	\lim_{n\rightarrow\infty}\int_{ \partial O_n}\phi u\dd\omega_{O_n}^{x_0}=\int_{\partial \Omega \cup \Sigma} \phi \dd\nu \quad\forall \phi \in C(\overline{\Omega}).
	\ea
	The boundary trace of $u$ is denoted by $\tr(u)$.
\end{definition}


\begin{proposition}[{\cite[Proposition 1.5]{GkiNg_linear}}]   \label{traceKG} ~~
	
	(i) For any $\nu \in \GTM(\partial \Omega \cup \Sigma)$, $\tr(\BBK_{\mu}[\nu])=\nu$.
	
	(ii) For any $\tau \in \GTM(\Omega \setminus \Sigma;\ei)$, $\tr(\BBG_\mu[\tau])=0$.
\end{proposition}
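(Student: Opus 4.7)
My plan treats the two claims separately, relying on the defining property of $L_\mu$-harmonic measure, the monotone approximation $G_\mu^{O_n} \uparrow G_\mu$, and the Martin-kernel identification $K_\mu\,d\omega_{\Omega\setminus\Sigma}^{x_0} = d\omega_{\Omega\setminus\Sigma}^x$ built into the construction in \cite{GkiNg_linear}.

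For part (ii) I would first reduce to $\tau \ge 0$ via the Jordan decomposition $\tau = \tau^+ - \tau^-$, noting that $\tau^\pm \in \GTM^+(\Omega\setminus\Sigma;\ei)$. Set $u := \BBG_\mu[\tau]$ and on each $O_n$ decompose $u = \BBG_\mu^{O_n}[\tau|_{O_n}] + h_n$, where $h_n$ is the $L_\mu$-harmonic function on $O_n$ with boundary values $u|_{\partial O_n}$ (the existence of this decomposition is standard because $G_\mu(\cdot,y) - G_\mu^{O_n}(\cdot,y)$ is $L_\mu$-harmonic on $O_n$ for each $y \in O_n$). Since $G_\mu^{O_n}(x_0,\cdot) \uparrow G_\mu(x_0,\cdot)$ and $u(x_0) < \infty$, monotone convergence yields $\BBG_\mu^{O_n}[\tau|_{O_n}](x_0) \to u(x_0)$, which forces
\[
h_n(x_0) = \int_{\partial O_n} u\,d\omega_{O_n}^{x_0} \longrightarrow 0.
\]
For a general test function $\phi \in C(\overline\Omega)$, let $v_n$ be the $L_\mu$-harmonic extension on $O_n$ of $(\phi u)|_{\partial O_n}$; since $\lambda_\mu^{O_n} \ge \lambda_\mu > 0$, the maximum principle on $O_n$ gives $|v_n| \le \|\phi\|_\infty h_n$, whence $\bigl|\int_{\partial O_n}\phi u\,d\omega_{O_n}^{x_0}\bigr| = |v_n(x_0)| \le \|\phi\|_\infty h_n(x_0) \to 0$, which is precisely $\tr(\BBG_\mu[\tau]) = 0$.

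For part (i) I would first consider the restricted class $d\nu = g\,d\omega_{\Omega\setminus\Sigma}^{x_0}$ with $g \in C(\partial\Omega\cup\Sigma)$. Using the Martin identity above, $\BBK_\mu[\nu](x) = \int g(y)\,d\omega_{\Omega\setminus\Sigma}^x(y)$, namely $\BBK_\mu[\nu]$ is the $L_\mu$-harmonic extension of $g$ on $\Omega\setminus\Sigma$. Then $\int_{\partial O_n}\phi\BBK_\mu[\nu]\,d\omega_{O_n}^{x_0}$ is the value at $x_0$ of the $L_\mu$-harmonic extension on $O_n$ of $(\phi\BBK_\mu[\nu])|_{\partial O_n}$. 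As $\partial O_n$ approaches $\partial\Omega\cup\Sigma$, the boundary data tends to $\phi g$ in the sense compatible with the Dirichlet problem \eqref{linear} (interpreted via $\tilde W$), and this value converges to $\int \phi g\,d\omega_{\Omega\setminus\Sigma}^{x_0} = \int\phi\,d\nu$. A density argument in the vague topology on $\GTM(\partial\Omega\cup\Sigma)$, combined with the linearity and the continuity estimates on $\nu \mapsto \BBK_\mu[\nu]$ supplied by Theorem \ref{lpweakmartin1}, extends the identity to arbitrary $\nu \in \GTM(\partial\Omega\cup\Sigma)$.

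The main obstacle is the singular boundary behavior of both potentials near $\Sigma$: $\BBG_\mu[\tau]$ and $\BBK_\mu[\nu]$ blow up at rate $d_\Sigma^{-\am}$ (or $d_\Sigma^{-H}|\ln d_\Sigma|$ in the critical case $\mu=H^2$), so one cannot use a naive pointwise boundary limit but must exploit that $u|_{\partial O_n}$ remains integrable against $\omega_{O_n}^{x_0}$ even when it diverges along the exhaustion. In part (i), the delicate point is to ensure the boundary data of $(\phi\BBK_\mu[\nu])|_{\partial O_n}$ converges to $\phi g$ in exactly the sense recorded by the weighted Dirichlet problem \eqref{linear}, which is what lets $v_n(x_0)$ pass to $\int\phi g\,d\omega_{\Omega\setminus\Sigma}^{x_0}$; one must also verify the limit in \eqref{trab} is independent of the exhaustion, which falls out of the two convergence arguments above being driven only by $G_\mu^{O_n}\uparrow G_\mu$ and Martin-theoretic solvability.
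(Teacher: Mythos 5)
A preliminary remark: the paper itself offers no proof of Proposition \ref{traceKG}; it is imported verbatim from \cite[Proposition 1.5]{GkiNg_linear}. So your argument can only be measured against the standard proof of such trace identities, not against anything in this text. With that understood, your part (ii) is essentially the classical argument and is sound: for $\tau\ge 0$ the remainder $h_n=\BBG_\mu[\tau]-\BBG_\mu^{O_n}[\tau|_{O_n}]$ is nonnegative, $L_\mu$-harmonic in $O_n$, satisfies $h_n(x_0)=\int_{\partial O_n}\BBG_\mu[\tau]\,d\omega_{O_n}^{x_0}$, is nonincreasing in $n$, and tends to $0$ by monotone convergence since $G_\mu^{O_n}\uparrow G_\mu$; the bound $\bigl|\int_{\partial O_n}\phi\,\BBG_\mu[\tau]\,d\omega_{O_n}^{x_0}\bigr|\le\|\phi\|_\infty h_n(x_0)$ then gives $\tr(\BBG_\mu[\tau])=0$ (only a minor point is left implicit: if $\BBG_\mu[\tau](x_0)=+\infty$ one passes through a point where the potential is finite and uses Harnack for the nonnegative $L_\mu$-harmonic functions $h_n$).

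Part (i), however, has a genuine gap at the extension step. The identity $\tr(\BBK_\mu[\nu])=\nu$ is precisely the weak-$*$ convergence of the boundary measures $\BBK_\mu[\nu]\,\omega_{O_n}^{x_0}$ to $\nu$, i.e.\ a double limit; to transfer it from $d\nu_j=g_j\,d\omega_{\Omega\setminus\Sigma}^{x_0}$ to a general $\nu$ you must interchange $\lim_n$ with $\lim_j$, which requires the convergence in $n$ to be uniform along the approximating sequence (equivalently, equicontinuity in $n$ of $\nu\mapsto\int_{\partial O_n}\phi\,\BBK_\mu[\nu]\,d\omega_{O_n}^{x_0}$ for the topology in which you approximate). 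Theorem \ref{lpweakmartin1} only bounds $\BBK_\mu[\nu]$ in interior weighted weak-$L^p$ norms by $\|\nu\|_{\GTM(\partial\Omega\cup\Sigma)}$ and gives no uniform control of these boundary integrals along the exhaustion. The uniform bound that is available, namely $\bigl|\int_{\partial O_n}\phi\,\BBK_\mu[\nu]\,d\omega_{O_n}^{x_0}\bigr|\le \|\phi\|_\infty\,\sup_y K_\mu(x_0,y)\,\|\nu\|_{\GTM(\partial\Omega\cup\Sigma)}$ (because $K_\mu(\cdot,y)$ is $L_\mu$-harmonic and continuous on $\overline{O_n}$, so $\int_{\partial O_n}K_\mu(\cdot,y)\,d\omega_{O_n}^{x_0}=K_\mu(x_0,y)$), yields continuity in total variation, not in the vague topology; and a general $\nu$ (e.g.\ a Dirac mass carried by a point of zero $L_\mu$-harmonic measure) cannot be approximated in total variation by measures absolutely continuous with respect to $\omega_{\Omega\setminus\Sigma}^{x_0}$. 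So the density argument as written does not close. The standard repair is to argue at the level of the kernel: show that for each fixed $y\in\partial\Omega\cup\Sigma$ and $\phi\in C(\overline\Omega)$ one has $\int_{\partial O_n}\phi\,K_\mu(\cdot,y)\,d\omega_{O_n}^{x_0}\to\phi(y)$, with the uniform domination by $\|\phi\|_\infty K_\mu(x_0,y)$, and then conclude for arbitrary $\nu$ by Fubini and dominated convergence. Note also that your base case $\nu=g\,\omega_{\Omega\setminus\Sigma}^{x_0}$ is itself only asserted: the claim that the boundary data of $\phi\,\BBK_\mu[\nu]$ on $\partial O_n$ converge ``in the sense of \eqref{linear}'' is exactly the concentration property of $K_\mu(\cdot,y)\,\omega_{O_n}^{x_0}$ at $\delta_y$ that the kernel-level argument makes explicit, so it does not come for free either.
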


The next result is the Representation Theorem.
\begin{theorem}[{\cite[Theorem 1.3]{GkiNg_linear}}]\label{th:Rep} For any $\nu \in \GTM^+(\partial \Omega \cup \Sigma)$, the function $\BBK_{\mu}[\nu]$ is a positive $L_\mu$-harmonic function (i.e. $L_\mu \BBK_{\mu}[\nu]=0$ in the sense of distributions in $\Omega \setminus \Sigma$). Conversely, for any positive $L_\mu$-harmonic function $u$ (i.e. $L_\mu u = 0$ in the sense of distribution in $\Omega \setminus \Sigma$), there exists a unique measure $\nu \in \GTM^+(\partial \Omega \cup \Sigma)$ such that $u=\BBK_{\mu}[\nu]$.
\end{theorem}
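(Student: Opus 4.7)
\medskip

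\noindent\textit{Proof proposal.}
The plan is to handle the two directions separately, leveraging the Martin kernel bounds from Proposition \ref{Martin} and the identification $\tr(\BBK_\mu[\nu])=\nu$ from Proposition \ref{traceKG}(i).

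\smallskip

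\textbf{Forward direction.} For fixed $y\in\partial\Omega\cup\Sigma$, the kernel $K_\mu(\cdot,y)$ is by construction a positive $L_\mu$-harmonic function on $\Omega\setminus\Sigma$. To transfer this to $\BBK_\mu[\nu]$, I would test against $\zeta\in C_c^\infty(\Omega\setminus\Sigma)$: by Fubini (justified via the local boundedness of $K_\mu(\cdot,y)$ away from $\supp\zeta\cap(\partial\Omega\cup\Sigma)$ provided by Proposition \ref{Martin}),
\[
\int_{\Omega\setminus\Sigma}\BBK_\mu[\nu]\,L_\mu\zeta\,\dx=\int_{\partial\Omega\cup\Sigma}\Big(\int_{\Omega\setminus\Sigma}K_\mu(x,y)L_\mu\zeta(x)\dx\Big)\dd\nu(y)=0,
\]
so $L_\mu\BBK_\mu[\nu]=0$ in the sense of distributions. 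Positivity is immediate from $K_\mu>0$.

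\smallskip

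\textbf{Converse direction: construction of $\nu$.} Let $u$ be a positive $L_\mu$-harmonic function and fix a $C^2$ exhaustion $\{O_n\}$ as in Subsection \ref{subsec:boundarytrace} with reference point $x_0$. Since $-L_\mu$ is uniformly elliptic and coercive on each $O_n$, one has the harmonic-measure representation
\[
u(x)=\int_{\partial O_n}u(y)\dd\omega_{O_n}^{x}(y),\qquad x\in O_n.
\]
Define the positive Radon measures $\nu_n$ on $\overline\Omega$ by $\dd\nu_n:=u\,\dd\omega_{O_n}^{x_0}$; each is supported on $\partial O_n$ and has total mass $u(x_0)$. By weak-$*$ compactness of uniformly bounded Radon measures on the compact set $\overline\Omega$, a subsequence converges weakly-$*$ to some $\nu\in\GTM^+(\overline\Omega)$; since $\dist(\partial O_n,\partial\Omega\cup\Sigma)\to 0$, the support of $\nu$ lies in $\partial\Omega\cup\Sigma$.

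\smallskip

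\textbf{Identification $u=\BBK_\mu[\nu]$.} Using the Radon–Nikodym kernel $K_n(x,y):=\dd\omega_{O_n}^{x}/\dd\omega_{O_n}^{x_0}(y)=G_\mu^{O_n}(x,y)/G_\mu^{O_n}(x_0,y)$ (modulo the standard Riesz/Martin normalisation on the approximating domain), the harmonic-measure identity rewrites as
\[
u(x)=\int_{\partial O_n}K_n(x,y)\dd\nu_n(y),\qquad x\in O_n.
\]
The construction of $K_\mu$ in \cite{GkiNg_linear} proceeds exactly through the monotone-limit passage $G_\mu^{O_n}\uparrow G_\mu$ together with locally uniform convergence of the normalised ratios $K_n(x,\cdot)\to K_\mu(x,\cdot)$ as $n\to\infty$, so for each fixed $x\in\Omega\setminus\Sigma$ and any open neighbourhood $U$ of $\partial\Omega\cup\Sigma$ in $\overline\Omega$, one can approximate $K_n(x,\cdot)\1_{U}$ uniformly by continuous test functions and pass to the weak-$*$ limit to obtain $u(x)=\int_{\partial\Omega\cup\Sigma}K_\mu(x,y)\dd\nu(y)=\BBK_\mu[\nu](x)$.

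\smallskip

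\textbf{Uniqueness and main obstacle.} If $\BBK_\mu[\nu_1]=\BBK_\mu[\nu_2]$ then Proposition \ref{traceKG}(i) gives $\nu_1=\tr(\BBK_\mu[\nu_1])=\tr(\BBK_\mu[\nu_2])=\nu_2$, so $\nu$ is unique. The genuinely delicate step is the convergence $K_n(x,\cdot)\to K_\mu(x,\cdot)$ and the exchange of the limit with the integral against $\nu_n$: the kernels blow up precisely where the measures concentrate (near $\partial\Omega\cup\Sigma$, and in particular on the singular set $\Sigma$ where $K_\mu$ has the anisotropic behaviour of Proposition \ref{Martin}). One must therefore combine (i) locally uniform interior convergence of the normalised Green functions on the approximating domains, (ii) sharp upper bounds on $K_n(x,y)$ uniform in $n$ so that the contribution from shrinking neighbourhoods of the singularity in $y$ is controlled by the mass $\nu_n$ localised there (which in turn is controlled via $u$ and Harnack), and (iii) the fact that $u/\tilde W$ stays bounded in terms of the total mass $u(x_0)$ as one approaches $\Sigma$, which is the reason the limit measure $\nu$ is finite on all of $\partial\Omega\cup\Sigma$ and not merely on compact subsets avoiding $\Sigma$.
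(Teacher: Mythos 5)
First, note that this paper does not prove Theorem \ref{th:Rep} at all: it is quoted verbatim from \cite[Theorem 1.3]{GkiNg_linear}, so there is no in-paper argument to compare with. Your outline follows what is essentially the classical Martin-theoretic route that the cited work is built on (harmonic-measure representation on a $C^2$ exhaustion $\{O_n\}$, normalised Green kernels, weak-$*$ compactness of the measures $u\,\dd\omega_{O_n}^{x_0}$, identification of the limit), and the forward direction and the uniqueness step (via $\tr(\BBK_\mu[\nu])=\nu$, Proposition \ref{traceKG}) are fine as written — with the mild caveat that in the source the trace proposition is stated \emph{after} the representation theorem, so if you were reconstructing the original logical order you would need to check that its proof does not itself invoke the representation, or else argue uniqueness through minimality of the kernels.

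The genuine gap is in the identification step, and you have in effect flagged it yourself: the assertion that ``$K_n(x,\cdot)\to K_\mu(x,\cdot)$ locally uniformly'' and that one may pass to the weak-$*$ limit in $u(x)=\int_{\partial O_n}K_n(x,y)\,\dd\nu_n(y)$ is not a citation-level fact but \emph{is} the content of the theorem being proved. The kernels $K_n(x,\cdot)$ live on the moving boundaries $\partial O_n$, while $K_\mu(x,\cdot)$ lives on $\partial\Omega\cup\Sigma$; to compare them one needs (a) two-sided estimates on $G_\mu^{O_n}$ of the form \eqref{Greenesta} with constants independent of $n$, (b) boundary Harnack/quotient convergence for the ratios $G_\mu^{O_n}(x,z)/G_\mu^{O_n}(x_0,z)$ as $z$ approaches $\partial\Omega\cup\Sigma$ — this is exactly the statement that the Martin compactification of $\Omega\setminus\Sigma$ for $L_\mu$ is $\overline\Omega$ and that every boundary point gives a (minimal) kernel, which is delicate precisely on $\Sigma$ where the potential $\mu d_\Sigma^{-2}$ is singular and the kernel has the anisotropic behaviour of Proposition \ref{Martin}, and (c) a uniform-in-$n$ tail estimate to control the contribution of $\nu_n$ concentrating near the singular set, so that the limit exchange is legitimate. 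Without carrying out (a)--(c), the convexity/compactness part of your argument only yields that $u$ is \emph{some} weak-$*$ limit of boundary integrals, not that $u=\BBK_\mu[\nu]$; moreover, without minimality of each $K_\mu(\cdot,y)$ the representation could a priori fail to be surjective or the representing measure could charge non-minimal points, which is why this step cannot be waved through. As it stands your text is a correct and well-organised programme, but the analytic core of \cite[Theorem 1.3]{GkiNg_linear} — the uniform kernel estimates and the boundary Harnack argument identifying the Martin boundary with $\partial\Omega\cup\Sigma$ — remains unproved in it.
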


Nonnegative $L_\mu$-superharmonic functions can be decomposed in terms of Green kernel and Martin kernel.
\begin{proposition}[{\cite[Theorem 1.6]{GkiNg_linear}}]\label{super} Let $u$ be a nonnegative $L_\xm$-superharmonic function. Then $u\in L^1(\xO;\ei)$ and there exist positive measures $\tau\in\mathfrak{M}^+(\xO\setminus \Sigma;\ei)$ and $\nu \in \mathfrak{M}^+(\partial\xO\cup \Sigma)$ such that
	\bal
	u=\mathbb{G}_{\mu}[\tau]+\mathbb{K}_{\xm}[\xn].
	\eal
\end{proposition}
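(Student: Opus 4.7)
\textit{Proof proposal.} The plan is to carry out the classical Riesz decomposition strategy adapted to $L_\mu$ by the harmonic-replacement method on an exhaustion of $\Omega\setminus\Sigma$. Since $u\geq 0$ is $L_\mu$-superharmonic, $-L_\mu u$ is a nonnegative distribution on $\Omega\setminus\Sigma$, so by the Schwartz representation theorem there exists $\tau\in\mathfrak{M}^+_{\mathrm{loc}}(\Omega\setminus\Sigma)$ with $-L_\mu u = \tau$ in $\mathcal{D}'(\Omega\setminus\Sigma)$. The task is then to (a) upgrade $\tau$ to an element of $\mathfrak{M}^+(\Omega\setminus\Sigma;\phi_\mu)$, and (b) identify the boundary component of $u$ as a Martin potential.

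Next, I fix a $C^2$ exhaustion $\{O_n\}$ of $\Omega\setminus\Sigma$ as in Subsection \ref{subsec:boundarytrace}. Since each $O_n$ stays away from $\Sigma$, the operator $-L_\mu$ is uniformly elliptic and coercive on $O_n$, so the $L_\mu$-harmonic replacement $\tilde u_n$ of $u$ in $O_n$ (i.e.\ $-L_\mu \tilde u_n=0$ in $O_n$, $\tilde u_n=u$ on $\partial O_n$) exists and is unique. Standard comparison gives
$$0\leq \tilde u_n\leq u \qquad\text{and}\qquad u-\tilde u_n = \int_{O_n} G_\mu^{O_n}(\cdot,y)\,\mathrm{d}\tau(y)\quad\text{in } O_n.$$
Moreover, the superharmonicity of $u$ on $O_{n+1}$ yields $u\geq \tilde u_{n+1}$ on $\partial O_n$, so by comparison $\tilde u_n\geq \tilde u_{n+1}$ on $O_n$; thus $\{\tilde u_n\}$ is a decreasing sequence of nonnegative $L_\mu$-harmonic functions.

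Let $h:=\lim_{n\to\infty}\tilde u_n$; by Harnack-style stability $h$ is $L_\mu$-harmonic in $\Omega\setminus\Sigma$, and by the monotone convergence $G_\mu^{O_n}\uparrow G_\mu$ recalled in Subsection \ref{subsec:boundarytrace}, the monotone convergence theorem gives
$$u-h=\lim_{n\to\infty}(u-\tilde u_n)=\int_{\Omega\setminus\Sigma} G_\mu(\cdot,y)\,\mathrm{d}\tau(y)=\mathbb{G}_\mu[\tau].$$
Since $u$ is a.e.\ finite (a classical property of nonnegative superharmonic functions), $\mathbb{G}_\mu[\tau]<\infty$ a.e., which by the equivalence recorded just after the definition of $\mathbb{G}_\mu$ forces $\tau\in\mathfrak{M}^+(\Omega\setminus\Sigma;\phi_\mu)$. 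Applying Theorem \ref{th:Rep} to $h\geq 0$ produces $\nu\in\mathfrak{M}^+(\partial\Omega\cup\Sigma)$ with $h=\mathbb{K}_\mu[\nu]$, yielding $u=\mathbb{G}_\mu[\tau]+\mathbb{K}_\mu[\nu]$; finally, $u\in L^1(\Omega;\phi_\mu)$ follows since both summands do.

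The main obstacle is step (a), i.e.\ upgrading $\tau$ from a locally finite measure to one in $\mathfrak{M}^+(\Omega\setminus\Sigma;\phi_\mu)$. A direct attempt via testing $-L_\mu u=\tau$ against $\phi_\mu$ is delicate because of uncontrolled boundary terms on $\partial O_n$ (both near $\partial\Omega$ and near $\Sigma$). The detour through the harmonic-replacement identity circumvents this by transferring the integrability question to the finiteness of $\mathbb{G}_\mu[\tau]$, where the sharp linear equivalence established in \cite{GkiNg_linear} applies. The only other subtlety—well-posedness of the harmonic replacement near $\Sigma$ and $\partial\Omega$—is avoided entirely since one only works on $\{O_n\}$ and passes to the limit.
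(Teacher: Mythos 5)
Your argument is correct and is essentially the expected one: the paper itself gives no proof of Proposition \ref{super} (it is imported from \cite[Theorem 1.6]{GkiNg_linear}), and the classical Riesz–Martin route you follow — local Riesz decomposition on the $C^2$ exhaustion $\{O_n\}$, monotonicity of the harmonic replacements, $G_\mu^{O_n}\uparrow G_\mu$, the a.e.-finiteness criterion forcing $\tau\in\mathfrak{M}^+(\Omega\setminus\Sigma;\phi_\mu)$, and Theorem \ref{th:Rep} for the harmonic part — is exactly the standard mechanism behind that cited result. The only point to tidy is the harmonic replacement step: since a superharmonic $u$ need not be continuous on $\partial O_n$, define $\tilde u_n(z)=\int_{\partial O_n}u\,\dd\omega_{O_n}^{z}$ using the lower semicontinuous representative (the super-mean-value property then gives $\tilde u_n\leq u$), rather than invoking the Dirichlet problem with continuous data.
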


\begin{proposition}\label{prop}
	Let $\vgf \in L^{1}(\xO;\ei)$, $\vgf \geq 0$ and $\tau\in \mathfrak{M}^+(\xO\setminus \Sigma;\ei).$ Set
	\bal
	w:=\BBG_\xm[\vgf+\gt]\quad\text{and}\quad \psi=\BBG_\xm[\gt].
	\eal
	Let $\xf$ be a concave nondecreasing $C^2$ function on $[0,\infty),$
	such that $\xf(1) \geq 0.$ Then the function $\xf'(w/\psi)\vgf$ belongs to $L^1(\xO;\ei)$ and the following holds in the weak sense in $\Gw\setminus \xS$
	\bal
	-L_\xm(\psi \xf(w/\psi))\geq\xf'(w/\psi)\vgf.
	\eal
\end{proposition}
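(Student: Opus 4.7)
The plan is to combine a formal pointwise computation (a Kato-type identity for $\psi \xf(w/\psi)$) with an approximation argument that transfers it to general data. Set $v := w/\psi$ on $\{\psi > 0\}$. Since $w - \psi = \BBG_\xm[\vgf] \geq 0$ we have $v \geq 1$, and one may assume $\tau \not\equiv 0$ (otherwise the claim is trivial), so $\psi > 0$ in $\xO \setminus \xS$ by the strict positivity of $G_\xm$ off the diagonal. Concavity of $\xf$ together with $v \geq 1$ then furnishes two bounds that will be used repeatedly: $0 \leq \xf'(v) \leq \xf'(1)$ and $\xf(v) + (1-v)\xf'(v) \geq \xf(1) \geq 0$ (the tangent to $\xf$ at $v$ lies above the graph at $1$). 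The integrability claim $\xf'(w/\psi)\vgf \in L^1(\xO; \ei)$ follows immediately from the first bound and $\vgf \in L^1(\xO;\ei)$.

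For smooth positive $\psi, w$, the product and chain rules, combined with the identity $\psi\Delta v = \Delta w - v\Delta\psi - 2\nabla\psi\cdot\nabla v$ (a consequence of $w = v\psi$), yield the pointwise formula
\begin{align*}
-L_\xm(\psi \xf(v)) = [\xf(v) - v\xf'(v)](-L_\xm\psi) + \xf'(v)(-L_\xm w) - \psi \xf''(v)|\nabla v|^2.
\end{align*}
Substituting $-L_\xm\psi = \tau$ and $-L_\xm w = \vgf + \tau$ and regrouping produces
\begin{align*}
-L_\xm(\psi \xf(v)) = \xf'(v)\vgf + [\xf(v) + (1-v)\xf'(v)]\tau - \psi \xf''(v)|\nabla v|^2.
\end{align*}
Using the concavity bound $\xf(v) + (1-v)\xf'(v) \geq \xf(1) \geq 0$ together with $\tau, \vgf, \psi \geq 0$ and $\xf'' \leq 0$, the last two summands on the right are nonnegative and can be dropped, which produces the claimed inequality pointwise.

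To upgrade this to the general setting, I would approximate $\vgf$ by $0 \leq \vgf_n \in C_c^\infty(\xO \setminus \xS)$ converging to $\vgf$ in $L^1(\xO; \ei)$ and $\tau$ by nonnegative smooth compactly supported mollifications $\tau_n$ chosen monotonically so that $\psi_n := \BBG_\xm[\tau_n]$ is smooth, strictly positive, and $\psi_n \uparrow \psi$ in $L^1(\xO;\ei)$ and a.e.; set $w_n := \BBG_\xm[\vgf_n + \tau_n]$. The pointwise identity applies to $(\psi_n, w_n)$, and testing against a nonnegative $\zeta \in \mathbf{X}_\xm(\xO \setminus \xS)$ (for which $|\zeta| \lesssim \ei$ and $|L_\xm \zeta| \lesssim \ei$) reduces the task to passing to the limit in two integrals. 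The uniform majorants $\xf'(v_n)\vgf_n \leq \xf'(1)\vgf_n$ and $0 \leq \psi_n \xf(v_n) \leq \xf(1)\psi_n + \xf'(1) w_n$ (the latter coming from the tangent at $1$) are equi-integrable against $\ei$ by the $L^1(\xO;\ei)$ convergence of $\vgf_n, \psi_n, w_n$, so Vitali's theorem yields the weak inequality for $(\psi, w)$. The principal technical obstacle is the a.e.\ convergence of the ratios $v_n \to v$; the monotone choice of $\tau_n$ together with a.e.\ convergence $w_n \to w$ (both guaranteed by standard stability properties of $\BBG_\xm$) secures it and makes the remaining limit arguments routine.
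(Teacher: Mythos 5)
Your strategy coincides with the one the paper has in mind: the paper omits the argument and refers to \cite{GkNg}, whose proof is exactly this combination of the pointwise identity
$-L_\xm(\psi\xf(v))=[\xf(v)-v\xf'(v)](-L_\xm\psi)+\xf'(v)(-L_\xm w)-\psi\xf''(v)|\nabla v|^2$ for regular approximations, the two concavity inequalities $0\le\xf'(v)\le\xf'(1)$ and $\xf(v)+(1-v)\xf'(v)\ge\xf(1)\ge0$ valid because $v=w/\psi\ge1$, and a limit passage. Your algebra, the sign bookkeeping, and the $L^1(\xO;\ei)$ bound $\xf'(w/\psi)\vgf\le\xf'(1)\vgf$ are all correct.

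The step that would fail as written is the approximation of $\tau$. You ask for smooth, compactly supported mollifications $\tau_n$ ``chosen monotonically so that $\psi_n\uparrow\psi$''. If the monotonicity is meant for the measures, such approximations do not exist for general $\tau\in\GTM^+(\xO\setminus\Sigma;\ei)$: a setwise increasing limit of absolutely continuous measures is absolutely continuous, so any atom of $\tau$ is unreachable. If it is meant for the potentials, it is unjustified for $L_\xm$: the classical fact that mollification increases Green potentials rests on superharmonicity of $G(x,\cdot)$ with respect to $-\Delta$, which $G_\xm(x,\cdot)$ does not enjoy. Monotonicity is in fact unnecessary, but then the burden shifts to proving $\psi_n\to\psi$ and $w_n\to w$ in $L^1(\xO;\ei)$ and a.e.\ (this is the real content hidden in your ``standard stability properties of $\BBG_\xm$'', and it is where the a.e.\ convergence $v_n\to v$, the hinge of your Vitali argument, must come from). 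Plain mollification does not automatically give $\|\tau_n-\tau\|_{\GTM(\xO\setminus\Sigma;\ei)}\to0$, because the weight $\ei\approx d\,d_\Sigma^{-\am}$ is singular on $\Sigma$ and degenerate on $\partial\xO$; the standard repair is to truncate along a $C^2$ exhaustion $\{O_n\}$ of $\xO\setminus\Sigma$ and mollify $\1_{O_n}\tau$ at a scale small compared with $\mathrm{dist}(O_n,\partial O_{n+1})$, after which $\|\tau_n-\tau\|_{\GTM(\xO\setminus\Sigma;\ei)}\to0$ and hence, by $\BBG_\xm[\ei]=\lambda_\xm^{-1}\ei$ and symmetry of $G_\xm$, $\|\psi_n-\psi\|_{L^1(\xO;\ei)}+\|w_n-w\|_{L^1(\xO;\ei)}\to0$, giving a.e.\ convergence along a subsequence. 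Finally, for the smooth approximations your inequality is pointwise; testing it against a general $0\le\zeta\in\mathbf{X}_\xm(\xO\setminus\Sigma)$ requires justifying the integration by parts (e.g.\ via Proposition \ref{super}, Proposition \ref{traceKG} and Theorem \ref{linear-problem}, writing $\psi_n\xf(v_n)=\BBG_\xm[-L_\xm(\psi_n\xf(v_n))]$), whereas the stated conclusion ``in the weak sense in $\Gw\setminus\xS$'' only needs $\zeta\in C_c^\infty(\xO\setminus\Sigma)$, for which the passage is immediate. With these adjustments your argument closes and is the same proof the paper imports from \cite{GkNg}.
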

\begin{proof}
	The proof is the same as that of \cite[Propositions 3.1]{GkNg} and we omit it.
\end{proof}

Similarly we may prove that
\begin{proposition}\label{prop2}
	Let $\vgf \in L^{1}(\xO;\ei)$, $\vgf \geq 0$ and $\nu \in \mathfrak{M}^+(\partial\xO\cup \Sigma).$ Set
	\bal
	w:=\BBG_\xm[\vgf]+\BBK_\xm[\xn]\quad\text{and}\quad \psi=\BBK_\xm[\xn].
	\eal
	Let $\xf$ be a concave nondecreasing $C^2$ function on $[0,\infty),$
	such that $\xf(1) \geq 0.$ Then the function $\xf'(w/\psi)\vgf$ belongs to $L^1(\xO;\ei)$ and the following holds in the weak sense in $\Gw\setminus \xS$
	\bal
	-L_\xm(\psi \xf(w/\psi))\geq\xf'(w/\psi)\vgf.
	\eal
\end{proposition}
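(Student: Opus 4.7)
The plan is to mimic the proof of Proposition \ref{prop} almost verbatim, with the key structural change being that now $-L_\mu \psi = 0$ in $\Omega \setminus \Sigma$ (since $\psi = \mathbb{K}_\mu[\nu]$ is a positive $L_\mu$-harmonic function by Theorem \ref{th:Rep}), instead of $-L_\mu \psi = \tau$ as in the previous proposition. This actually simplifies the formal computation because the harmonic-potential term drops out.

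First I would verify the integrability claim. Since $w = \mathbb{G}_\mu[\vgf] + \mathbb{K}_\mu[\nu] \geq \mathbb{K}_\mu[\nu] = \psi$ pointwise (both terms are nonnegative), we have $w/\psi \geq 1$ wherever $\psi > 0$. Concavity and monotonicity of $\xf$ on $[0,\infty)$, together with $\xf(1)\geq 0$, imply that $\xf'$ is nonincreasing, so $0 \leq \xf'(w/\psi) \leq \xf'(1)$. Consequently $\xf'(w/\psi)\vgf \leq \xf'(1)\vgf \in L^1(\Omega;\ei)$.

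Next, I would carry out the formal computation with $u := w/\psi$, so that $w = \psi u$. Expanding $L_\mu(\psi u) = -\vgf$ and using $L_\mu \psi = 0$ yields the identity
\begin{equation*}
\psi \Delta u + 2 \nabla \psi \cdot \nabla u = -\vgf.
\end{equation*}
An analogous expansion of $L_\mu(\psi \xf(u))$, together again with $L_\mu \psi = 0$, gives
\begin{equation*}
L_\mu(\psi \xf(u)) = \xf'(u)\bigl[\psi \Delta u + 2\nabla \psi \cdot \nabla u\bigr] + \psi \xf''(u)|\nabla u|^2 = -\xf'(u)\vgf + \psi \xf''(u)|\nabla u|^2.
\end{equation*}
Since $\xf$ is concave, $\xf''(u) \leq 0$, and $\psi \geq 0$, so the last term is nonpositive. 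Rearranging yields the desired distributional inequality $-L_\mu(\psi \xf(u)) \geq \xf'(u)\vgf$.

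Finally, the rigorous justification proceeds by approximation, just as in \cite[Proposition 3.1]{GkNg}: approximate $\vgf$ by a monotone sequence of smooth compactly supported functions $\vgf_n$ in $\Omega \setminus \Sigma$ and $\nu$ by smoother measures $\nu_n$ so that $w_n := \BBG_\xm[\vgf_n] + \BBK_\xm[\nu_n]$ and $\psi_n := \BBK_\xm[\nu_n]$ are smooth and strictly positive in $\Omega \setminus \Sigma$. For each $n$ the computation above is classical, and the inequality tested against any nonnegative $\zeta \in \mathbf{X}_\mu(\Omega \setminus \Sigma)$ holds. One then passes to the limit using monotone/dominated convergence, the linear estimates from Section \ref{sec:linear}, and the domination of the right-hand side by $\xf'(1)\vgf_n$. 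The main obstacle I expect is exactly this last step: controlling the nonnegative quadratic gradient term $\psi_n \xf''(w_n/\psi_n)|\nabla(w_n/\psi_n)|^2$ uniformly in $n$ so that it may be safely discarded upon passing to the limit, which is handled by the argument of \cite[Proposition 3.1]{GkNg} adapted to the singular weight $d_\Sigma^{-2}$ through the sharp Green and Martin kernel estimates recalled in Propositions \ref{Greenkernel} and \ref{Martin}.
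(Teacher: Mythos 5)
Your proposal is correct and follows essentially the same route as the paper, which simply invokes the argument of \cite[Proposition 3.1]{GkNg} (harmonicity of $\psi=\BBK_\mu[\nu]$ replacing $-L_\mu\psi=\tau$, the concavity computation, and an approximation/passage-to-the-limit step). One cosmetic remark: in your last paragraph the quadratic term $\psi_n\xf''(w_n/\psi_n)|\nabla(w_n/\psi_n)|^2$ is nonpositive (you stated this correctly earlier), which is precisely why it can be discarded without any uniform control; only the remaining terms need to converge.
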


\subsection{Boundary value problem for linear equations} \label{subsec:linear}
We recall the definition and properties of weak solutions to the boundary value problem for linear equations.
\begin{definition}
 Let $\tau\in\mathfrak{M}(\xO\setminus \Sigma;\ei)$ and $\nu \in \mathfrak{M}(\partial\xO\cup \Sigma)$. We say that $u$ is a weak solution of
\ba\label{NHL} \left\{ \BAL
- L_\gm u&=\tau\qquad \text{in }\;\Gw\setminus \Sigma,\\
\tr(u)&=\xn,
\EAL \right. \ea
if $u\in L^1(\xO\setminus \Sigma;\ei)$ and it satisfies
\bal
	- \int_{\Gw}u L_{\xm }\zeta \dd x=\int_{\Gw \setminus \Sigma} \zeta \dd\tau - \int_{\Gw} \mathbb{K}_{\xm}[\xn]L_{\xm }\zeta \dd x
	\qquad\forall \zeta \in\mathbf{X}_\xm(\xO\setminus \Sigma),
\eal
	where the space of test function ${\bf X}_\mu(\Gw\setminus \Sigma)$ has been defined in \eqref{Xmu}
\end{definition}

\begin{theorem}[ {\cite[Theorem 1.8]{GkiNg_linear}}] \label{linear-problem}
Let $\tau \in\mathfrak{M}(\xO\setminus \Sigma;\ei)$ and $\xn \in \mathfrak{M}(\partial\xO\cup \Sigma)$.  Then there exists a unique weak solution $u\in L^1(\xO;\ei)$ of \eqref{NHL}. Furthermore
\bal
u=\mathbb{G}_{\mu}[\tau]+\mathbb{K}_{\xm}[\xn]
\eal
and there exists a positive constant $C=C(N,\Omega,\Sigma,\mu)$ such that
\bal
\|u\|_{L^1(\Omega;\ei)} \leq \frac{1}{\lambda_\mu}\| \tau \|_{\GTM(\Omega \setminus \Sigma;\ei)} + C \| \nu \|_{\GTM(\partial\Omega \cup \Sigma)}.
\eal
In addition, if $\dd \tau=f\dd x+\dd\rho$ then, for any $0 \leq \zeta \in \mathbf{X}_\xm(\xO\setminus \Sigma)$, the following estimates are valid
\be\label{poi4}
-\int_{\Gw}|u|L_{\xm }\zeta  \dd x\leq \int_{\Gw}\sign(u)f\zeta \dd x +\int_{\Gw \setminus \Sigma}\zeta  \dd|\rho|-
\int_{\Gw}\mathbb{K}_{\xm}[|\xn|] L_{\xm }\zeta  \dd x,
\ee
\be\label{poi5}
-\int_{\Gw}u^+L_{\xm }\zeta  \dd x\leq \int_{\Gw} \sign^+(u)f\zeta \dd x +\int_{\Gw \setminus \Sigma}\zeta \dd\rho^+-
\int_{\Gw}\mathbb{K}_{\xm}[\nu^+]L_{\xm }\zeta \dd x.
\ee

\end{theorem}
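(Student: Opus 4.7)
The plan is to prove the theorem in four steps: (I) existence via the representation $u = \BBG_\mu[\tau] + \BBK_\mu[\nu]$; (II) uniqueness via a duality argument; (III) the $L^1$ bound by testing against $\ei$; (IV) the Kato-type inequalities \eqref{poi4}--\eqref{poi5} via smoothing.

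For (I), I would verify the weak formulation directly. Fixing $\zeta \in \mathbf{X}_\mu(\Omega\setminus\Sigma)$, the crucial identity is
\begin{equation*}
-\int_\Omega \BBG_\mu[\tau]\, L_\mu\zeta \, dx = \int_{\Omega\setminus\Sigma} \zeta \, d\tau,
\end{equation*}
which I would derive by Fubini (using the symmetry $G_\mu(x,y)=G_\mu(y,x)$) combined with the pointwise identity $\zeta(y) = -\int_\Omega G_\mu(x,y)\,L_\mu\zeta(x)\,dx$. Integrability for the exchange is supplied by $|\zeta| \lesssim \ei$ (noted right after \eqref{Xmu}), $\ei^{-1}L_\mu\zeta \in L^\infty$, and the two-sided estimates of Proposition \ref{Greenkernel}. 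Adding the Martin-kernel contribution (already in the weak formulation) and invoking Proposition \ref{traceKG} for $\tr(\BBG_\mu[\tau]) = 0$ and $\tr(\BBK_\mu[\nu]) = \nu$ shows that $u$ is the desired weak solution.

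For (II), suppose $w \in L^1(\Omega;\ei)$ satisfies $\int_\Omega w\, L_\mu\zeta \, dx = 0$ for every $\zeta \in \mathbf{X}_\mu$. Given $\phi \in C_c^\infty(\Omega\setminus\Sigma)$, I set $\zeta_\phi := \BBG_\mu[\phi]$; using Proposition \ref{Greenkernel} one verifies $\zeta_\phi \in \mathbf{X}_\mu$ with $-L_\mu\zeta_\phi = \phi$ classically on the support of $\phi$. Substituting gives $\int_\Omega w\phi \, dx = 0$ for every such $\phi$, hence $w = 0$ a.e. For (III), I would take $\zeta = \ei$ in \eqref{poi4}: $\ei \in \mathbf{X}_\mu$ since $\ei^{-1}\ei \equiv 1$ and $\ei^{-1}L_\mu\ei \equiv -\lambda_\mu \in L^\infty$. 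Using $-L_\mu\ei = \lambda_\mu\ei$ and $\sign(u)f \leq |f|$ then yields
\begin{equation*}
\lambda_\mu \|u\|_{L^1(\Omega;\ei)} \leq \|\tau\|_{\GTM(\Omega\setminus\Sigma;\ei)} + \lambda_\mu\int_\Omega \BBK_\mu[|\nu|]\,\ei\,dx,
\end{equation*}
and the Martin-kernel integral is bounded by $C\|\nu\|_{\GTM(\partial\Omega\cup\Sigma)}$ via Theorem \ref{lpweakmartin1} combined with the embedding of the weak Lebesgue space into $L^1(\Omega;\ei)$ on a set of finite weighted measure.

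For (IV), I would first regularize $\tau, \nu$ by smooth data $\tau_n, \nu_n$ compactly supported away from $\Sigma$ and away from $\partial\Omega$ respectively, so that the corresponding solutions $u_n$ are classical. For such $u_n$ the pointwise Kato inequality $-L_\mu u_n^+ \leq \sign^+(u_n)(-L_\mu u_n)$ is standard; separating the bulk contributions $f_n, \rho_n$ from the Martin representation of $\nu_n^+$ and testing against nonnegative $\zeta \in \mathbf{X}_\mu$ produces the smooth analogue of \eqref{poi5}. Passing to the limit $n\to\infty$ requires dominated-convergence control of $u_n^+$ and $\sign^+(u_n)f_n$, supplied by the super-solution bounds of Propositions \ref{prop}--\ref{prop2} applied to a concave $C^2$ smoothing approximating $t \mapsto t^+$, together with the $L^1$-convergence $u_n \to u$ from step (III). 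Inequality \eqref{poi4} then follows by applying \eqref{poi5} to $\pm u$ and adding. The main obstacle will be the rigorous verification that the test functions $\zeta_\phi$ and $\ei$ satisfy all three defining conditions of $\mathbf{X}_\mu$---in particular the weighted Sobolev regularity $\ei^{-1}\zeta \in H^1(\Omega;\ei^2)$---for which one must invoke the fine asymptotics of $G_\mu$ near $\Sigma$ and $\partial\Omega$ from Proposition \ref{Greenkernel} together with the eigenfunction estimate $\ei \approx d\, d_\Sigma^{-\am}$.
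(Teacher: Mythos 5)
A preliminary remark: the paper itself contains no proof of Theorem \ref{linear-problem} --- it is imported verbatim from \cite[Theorem 1.8]{GkiNg_linear}, and Section \ref{sec:linear} explicitly refers the reader to that companion paper. So your attempt can only be compared with the standard scheme used there (representation formula for existence, duality for uniqueness, Kato-type inequalities via approximation), which your outline follows in spirit; the issues below are therefore about internal soundness of your sketch rather than about divergence from the paper.

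There are genuine gaps. First, steps (III) and (IV) are circular as written: you obtain the $L^1(\Omega;\ei)$ bound by taking $\zeta=\ei$ in \eqref{poi4}, while your proof of \eqref{poi4}--\eqref{poi5} in step (IV) passes to the limit using ``the $L^1$-convergence $u_n\to u$ from step (III)''. Each step presupposes the other; the bound must instead be derived independently of \eqref{poi4}, e.g. from $\int_\Omega G_\mu(x,y)\ei(x)\,\dd x=\lambda_\mu^{-1}\ei(y)$ (symmetry of $G_\mu$ plus the eigen-equation, which itself needs the identity $\ei=\lambda_\mu\BBG_\mu[\ei]$) together with $\sup_{y}\int_\Omega K_\mu(x,y)\ei(x)\,\dd x<\infty$. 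Second, the key identity of step (I), $\zeta(y)=-\int_\Omega G_\mu(x,y)L_\mu\zeta(x)\,\dd x$ for every $\zeta\in\mathbf{X}_\mu(\Omega\setminus\Sigma)$, is asserted rather than proved; it is not a consequence of Fubini and symmetry. It states precisely that $\BBG_\mu$ inverts $-L_\mu$ on the test space, i.e.\ that elements of $\mathbf{X}_\mu$ carry no boundary contribution on $\partial\Omega\cup\Sigma$, and its justification requires the variational theory of $L_\mu$ in the weighted space $H^1(\Omega;\phi_\mu^2)$ (using $\lambda_\mu>0$) and a uniqueness statement for $L_\mu$-harmonic functions dominated by $\phi_\mu$ --- the same machinery you would need, and also defer, when checking $\BBG_\mu[\phi]\in\mathbf{X}_\mu$ in step (II) (in particular the condition $\phi_\mu^{-1}\BBG_\mu[\phi]\in H^1(\Omega;\phi_\mu^2)$). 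Third, in step (IV) the mechanism by which the boundary term $-\int_\Omega\BBK_\mu[\nu^+]L_\mu\zeta\,\dd x$ emerges in the limit is not explained: the interior distributional Kato inequality does not see the boundary, Propositions \ref{prop}--\ref{prop2} produce supersolutions of a very specific form and are not a domination device for $u_n^+$, and the natural route is to prove the inequality on a $C^2$ exhaustion $O_n$ with harmonic-measure boundary terms and pass to the limit via the dynamic definition of the trace (Definition \ref{nomtrace}), which your sketch does not engage.
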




\section{General nonlinearities} \label{sec:gennon}
In this section, we provide various sufficient conditions for the existence of a solution to \eqref{NLP}. Throughout this sections we assume that $g: \R \to \R$ is continuous and nondecreasing and satisfies $g(0)=0$. We start with the following result.

\begin{lemma} \label{subcrcon} Assume
\ba \label{subcd0} \int_1^\infty  s^{-q-1}(\ln s)^{m} (g(s)-g(-s)) \dd s<\infty
\ea
for $q,m \in \R$, $q >0$ and $m \geq 0$. Let $v$ be a function defined in $\Omega \setminus \Sigma$. For $s>0$, set
\bal E_s(v):=\{x\in \xO\setminus \Sigma:| v(x)|>s\} \quad \text{and} \quad e(s):=\int_{E_s(v)} \ei \dd x.
\eal
Assume that there exists a positive constant $C_0$ such that
\ba \label{e}
e(s) \leq C_0s^{-q}(\ln s)^m, \quad \forall s>e^\frac{2 m}{q}.
\ea
Then for any $s_0>e^\frac{2 m}{q},$ there holds
\ba\label{53}
\norm{g(|v|)}_{L^1(\Omega;\ei)}&\leq \int_{(\Omega \setminus \Sigma) \setminus E_{s_0}(v)} g(|v|)\ei \dd x+ C_0 q\int_{s_0}^\infty  s^{-q-1}(\ln s)^{m} g(s) \dd s, \\ \label{53-a}
\norm{g(-|v|)}_{L^1(\Omega;\ei)}&\leq  -\int_{(\Omega \setminus \Sigma) \setminus E_{s_0}(v)} g(-|v|)\ei \dd x-C_0 q\int_{s_0}^\infty  s^{-q-1}(\ln s)^{m} g(-s) \dd s.
\ea
\end{lemma}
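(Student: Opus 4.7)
The plan is to use a Stieltjes layer--cake representation followed by integration by parts. Since $g$ is nondecreasing, continuous, and $g(0)=0$, the measure $dg$ is nonnegative on $[0,\infty)$ and $g(t)=\int_0^t dg(s)$ for $t\geq 0$. Applying Fubini to $\int_{\Omega}g(|v|)\phi_\mu\,dx$ and, separately, to its restriction to $\{|v|\leq s_0\}$ yields
\[
\|g(|v|)\|_{L^1(\Omega;\phi_\mu)}=\int_0^\infty e(s)\,dg(s),\qquad
\int_{(\Omega\setminus\Sigma)\setminus E_{s_0}(v)}g(|v|)\phi_\mu\,dx=\int_0^{s_0}\bigl[e(s)-e(s_0)\bigr]\,dg(s),
\]
so subtracting reduces \eqref{53} to proving $g(s_0)e(s_0)+\int_{s_0}^\infty e(s)\,dg(s)\leq C_0 q\int_{s_0}^\infty s^{-q-1}(\ln s)^m g(s)\,ds$.

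Setting $h(s):=s^{-q}(\ln s)^m$, I would use \eqref{e} to bound the left side by $C_0 h(s_0)g(s_0)+C_0\int_{s_0}^\infty h(s)\,dg(s)$, then integrate by parts on $[s_0,R]$:
\[
\int_{s_0}^R h(s)\,dg(s)=h(R)g(R)-h(s_0)g(s_0)+\int_{s_0}^R(-h'(s))g(s)\,ds.
\]
The role of the threshold $s_0>e^{2m/q}$ is exactly to ensure that for $s\geq s_0$ one has $\ln s>2m/q$, hence $-h'(s)=s^{-q-1}(\ln s)^{m-1}(q\ln s-m)\leq q s^{-q-1}(\ln s)^m$. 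Once the limit $R\to\infty$ is taken, the boundary term $-h(s_0)g(s_0)$ will cancel exactly with the $C_0h(s_0)g(s_0)$ arising from the $g(s_0)e(s_0)$ piece, leaving $C_0 q\int_{s_0}^\infty s^{-q-1}(\ln s)^m g(s)\,ds$ on the right, as required.

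The main obstacle is killing the boundary contribution $h(R)g(R)$ at infinity, at least along a subsequence. For this I would exploit the monotonicity of $g$: on $[R,2R]$ we have $s^{-q-1}(\ln s)^m g(s)\gtrsim R^{-q-1}(\ln R)^m g(R)$, whence
\[
\int_R^{2R}s^{-q-1}(\ln s)^m g(s)\,ds\geq c\,R^{-q}(\ln R)^m g(R)=c\,h(R)g(R).
\]
Since $0\leq g(s)\leq g(s)-g(-s)$ for $s\geq 0$, the full integral $\int_1^\infty s^{-q-1}(\ln s)^m g(s)\,ds$ is finite by \eqref{subcd0}, so its tail vanishes and $\liminf_{R\to\infty}h(R)g(R)=0$; evaluating the integration by parts identity along such a sequence $R_n\to\infty$ (and applying monotone convergence to $\int_{s_0}^{R_n}e(s)\,dg(s)\to\int_{s_0}^\infty e(s)\,dg(s)$) closes the argument and proves \eqref{53}. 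For \eqref{53-a} the same proof is run verbatim with $\tilde g(t):=-g(-t)$, which is nondecreasing, vanishes at $0$, and satisfies $\tilde g(|v|)=-g(-|v|)\geq 0$, so that $\|\tilde g(|v|)\|_{L^1(\Omega;\phi_\mu)}=\|g(-|v|)\|_{L^1(\Omega;\phi_\mu)}$ and the conclusion translates into \eqref{53-a}.
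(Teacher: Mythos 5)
Your proof is correct and follows essentially the same route as the paper: a layer-cake reduction to a one-dimensional Stieltjes integral, integration by parts, the bound $-h'(s)\le q s^{-q-1}(\ln s)^m$ tied to the threshold $s_0>e^{2m/q}$, and the use of \eqref{subcd0} to kill the boundary term at infinity, with \eqref{53-a} obtained by replacing $g(t)$ with $-g(-t)$ exactly as in the paper. The only differences are organizational: you apply Fubini against $\dd g$ and perform a single integration by parts, and you explicitly justify (via the $[R,2R]$ tail estimate) the vanishing of $R^{-q}(\ln R)^m g(R)$ along a sequence, a fact the paper simply asserts when introducing its sequence $\{T_n\}$ and reaches after integrating against $\dd e$ with two integrations by parts.
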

\begin{proof}
We note that $g(|v|) \geq g(0)=0$. Let $s_0>1$ to be determined later on. Using the fact that $g$ is nondecreasing, we obtain
\bal \begin{aligned}
\int_{\Omega \setminus \Sigma} g(|v|)\ei dx &\leq \int_{(\Omega \setminus \Sigma) \setminus E_{s_0}(v)} g(|v|)\ei \dd x + \int_{E_{s_0}(v)} g(|v|)\ei \dd x \\
&\leq g(s_0)e(s_0) -\int_{s_0}^{\infty}g(s) \dd e(s).
\end{aligned} \eal
From \eqref{subcd0}, we deduce that there exists an increasing sequence $\{T_n\}$ such that
\ba \label{Tn}
\lim_{T_n \to \infty}T_n^{-q}(\ln T_n)^m g(T_n) = 0.
\ea
For $T_n > s_0$, we have
\ba \label{gTn} \begin{aligned}
-\int_{s_0}^{T_n}g(s)\dd e(s) &= -g(T_n)e(T_n) + g(s_0)e(s_0) + \int_{s_0}^{T_n} e(s)\dd g(s) \\
&\leq -g(T_n)e(T_n) + g(s_0)e(s_0)+C_0\int_{s_0}^{T_n}s^{-q}(\ln s)^m \dd g(s)\\
&\leq (CT_n^{-q}(\ln T_n)^m - e(T_n))g(T_n)   - C_0\int_{s_0}^{T_n} (s^{-q}(\ln s)^m)' g(s)\dd s.
\end{aligned} \ea
Here in the last estimate, we have used \eqref{e}.
Note that if we choose $s_0>e^{\frac{2 m}{q}}$ then
\ba \label{deriv} -q s^{-q-1}(\ln s)^m  < (s^{-q}(\ln s)^m)' < -\frac{q}{2}s^{-q-1}(\ln s)^m \quad \forall s \geq s_0.
\ea
Combining \eqref{Tn}--\eqref{deriv} and then letting $n \to \infty$, we obtain
\bal
-\int_{s_0}^{\infty}g(s) \dd e(s) < C_0q \int_{s_0}^{\infty} s^{-q-1}(\ln s)^m g(s) \dd s.
\eal
Thus we have proved estimate \eqref{53}. By  applying estimate \eqref{53} with $g$ replaced by $h(t)=-g(-t)$, we obtain \eqref{53-a}.
\end{proof}

\begin{lemma} \label{transf}
Let $0<\xm\leq H^2$, $0\leq\xg\leq1$, $\tau \in \GTM^+(\Omega \setminus \Sigma; \varphi_{\am,\gamma})$ with
$\norm{\tau}_{\mathfrak{M}(\Omega\setminus \Sigma;\varphi_{\am,\gamma})}=1,
$
and $\nu \in \GTM^+(\prt \Gw\cup \xS)$ with
$\norm{\nu}_{\GTM(\prt \Gw\cup \xS)}=1$.
Assume $g \in L^\infty(\R) \cap C(\R)$ satisfies
\ba \label{subcd-00} \Gl_g=\int_1^\infty  s^{-q-1}  (g(s)-g(-s))\,\dd s < \infty
\ea	
for some $q \in (1,\infty)$ and
\bal
|g(s)|\leq a|s|^{\tilde q} \quad \text{for some } a>0,\; \tilde q>1 \text{ and for any } |s|\leq 1.
\eal

Assume one of the following conditions holds.

 (i) $ \1_{\partial \Omega}\nu \equiv0$ and \eqref{subcd-00} holds for $q= \frac{N+\gamma}{N+\gamma-2}$.

 (ii) $ \1_{\partial \Omega}\nu \not\equiv 0$  and \eqref{subcd-00} holds for $q=\frac{N+1}{N-1}$.

Then there exist positive numbers $\xr_0,\xs_0,t_0$ depending on $N,\Omega,\mu,\Gl_g,\xg, \tilde q$ such that for every $\xr \in (0,\xr_0)$ and $\xs\in (0,\xs_0)$ the following problem
	\ba\label{trans} \left\{ \BAL -L_\mu v &= g (v + \xr\BBG_\mu[\tau] + \xs\BBK_\mu[\nu]) \quad \text{in } \Gw\setminus\xS, \\ \tr(v) &= 0
	\EAL \right. \ea
	admits a positive weak solution $v$ satisfying
	\bal  \|v\|_{L_w^{q}(\Gw\setminus\xS;\ei)} \leq t_0
	\eal
	where $q=\frac{N+\gamma}{N+\gamma-2}$ if case (i) happens or $q=\frac{N+1}{N-1}$ if case (ii) happens.
\end{lemma}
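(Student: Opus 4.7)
\medskip
\noindent\emph{Proof plan for Lemma \ref{transf}.}
The plan is to realize the solution as a fixed point of the nonlinear operator
\[
\mathcal{S}(v) := \mathbb{G}_\mu\!\left[g\bigl(v + \rho\,\mathbb{G}_\mu[\tau] + \sigma\,\mathbb{K}_\mu[\nu]\bigr)\right],
\]
via Schauder's theorem. I will work in the closed convex set
\[
\mathcal{O}_{t_0} := \bigl\{ v \in L^1(\Omega;\phi_\mu):\ v\ge 0,\ \|v\|_{L_w^{q}(\Omega\setminus\Sigma;\phi_\mu)} \le t_0 \bigr\},
\]
with $q = (N+\gamma)/(N+\gamma-2)$ in case (i) and $q=(N+1)/(N-1)$ in case (ii). Since $\tau,\nu\ge 0$ and $g$ is nondecreasing with $g(0)=0$, $\mathcal{S}$ maps nonnegative functions to nonnegative functions. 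By Theorem \ref{linear-problem} any fixed point of $\mathcal{S}$ is a weak solution of \eqref{trans}.

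\smallskip
The first step is the a priori control of the data: Theorem \ref{lpweakgreen} gives $\|\rho\,\mathbb{G}_\mu[\tau]\|_{L_w^q(\phi_\mu)}\le C\rho$, and Theorem \ref{lpweakmartin1}, combined with the embedding $L_w^{q'}(\phi_\mu)\hookrightarrow L_w^q(\phi_\mu)$ for $q'\ge q$ (valid since $\phi_\mu\,dx$ is finite), gives $\|\sigma\,\mathbb{K}_\mu[\nu]\|_{L_w^q(\phi_\mu)}\le C\sigma$. Hence for $v\in\mathcal{O}_{t_0}$ the function $w := v + \rho\,\mathbb{G}_\mu[\tau] + \sigma\,\mathbb{K}_\mu[\nu]$ obeys $M:=\|w\|_{L_w^q(\phi_\mu)}\le t_0 + C(\rho+\sigma)$.

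\smallskip
The heart of the argument is the invariance estimate. Applying Theorem \ref{lpweakgreen} to $g(w)$ with the weight $\varphi_{\am,\gamma'}$ (with $\gamma'=\gamma$ in case (i), $\gamma'=1$ in case (ii), so that in either case the output exponent is exactly $q$) reduces the problem to bounding $\int_{\Omega\setminus\Sigma} |g(w)|\,\varphi_{\am,\gamma'}\,dx$. I split this integral at $|w|=1$: on $\{|w|\le 1\}$ hypothesis \eqref{gcomparepower} gives $|g(w)|\le a|w|^{\tilde q}$, and since $\tilde q>1$ I can pick $1<r<\min(\tilde q,q)$ and use the weak-$L^q$ tail bound to obtain
\[
\int_{\{|w|\le 1\}} |w|^{\tilde q}\,\varphi_{\am,\gamma'}\,dx \;\le\; \int_{\Omega}|w|^{r}\,\varphi_{\am,\gamma'}\,dx \;\le\; C\,M^{1+\eta}
\]
for some $\eta=\eta(\tilde q,q)>0$. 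On $\{|w|>1\}$ I apply Lemma \ref{subcrcon} (with $m=0$ and with $\phi_\mu$ replaced by $\varphi_{\am,\gamma'}$; the proof of that lemma uses only that the weight defines a finite positive measure and that the distribution function satisfies $e(s)\le C_0 s^{-q}$), which yields the contribution $C\,M^q\,\Lambda_g$. Combining,
\[
\|\mathcal{S}(v)\|_{L_w^q(\phi_\mu)}\;\le\; C_1\,M^{1+\eta} + C_2\,\Lambda_g\,M^q.
\]
Since both exponents $1+\eta$ and $q$ exceed $1$, one can first fix $t_0$ small with $\rho_0,\sigma_0 \le t_0$ so that $M\le C't_0$, and then shrink $t_0$ (depending on $N,\Omega,\Sigma,\mu,\Lambda_g,\gamma,\tilde q,a$) to force $C_1(C't_0)^{1+\eta}+C_2\Lambda_g(C't_0)^q\le t_0$. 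This gives $\mathcal{S}(\mathcal{O}_{t_0})\subset \mathcal{O}_{t_0}$.

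\smallskip
Finally, continuity and compactness: if $v_n\to v$ in $L^1(\Omega;\phi_\mu)$ then, up to a subsequence, $w_n\to w$ a.e., and since $g$ is continuous and bounded, dominated convergence yields $g(w_n)\to g(w)$ in $L^1(\Omega;\phi_\mu)$; Theorem \ref{linear-problem} then gives $\mathcal{S}(v_n)\to \mathcal{S}(v)$ in $L^1(\Omega;\phi_\mu)$. Compactness of $\mathcal{S}(\mathcal{O}_{t_0})$ in $L^1(\Omega;\phi_\mu)$ follows from the uniform bound $|g(w)|\le \|g\|_\infty$ combined with standard interior regularity for $-L_\mu$ applied on an exhaustion of $\Omega\setminus\Sigma$ (away from $\partial\Omega\cup\Sigma$ one has $W^{1,p}_{\mathrm{loc}}$ estimates, and uniform boundary decay is controlled by $\phi_\mu$ via the two-sided Green kernel estimate). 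Schauder's theorem then produces the desired fixed point $v\in\mathcal{O}_{t_0}$.

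\smallskip
The main obstacle is the invariance step, specifically the estimate of $\int |g(w)|\,\varphi_{\am,\gamma}\,dx$ in case (i). The delicate point is that $w$ is naturally controlled in $L_w^q(\phi_\mu)$ but the Green estimate demands control with respect to the heavier weight $\varphi_{\am,\gamma}$; I expect to dispose of this mismatch by observing that the proof of Lemma \ref{anisotitaweakF1a} (and of Theorem \ref{H}) survives with the output weight $\varphi_{\am,\gamma}$ in place of $\varphi_{\am,1}$, because the $d(x)$-factor appearing in the inner volume integrals only affects constants, not the $\lambda$-scaling, giving the required weak-$L^q$ estimate for $\mathbb{G}_\mu[\tau]$ and $\mathbb{K}_\mu[\nu]$ with respect to $\varphi_{\am,\gamma}\,dx$ and thereby the distribution-function input needed by Lemma \ref{subcrcon}.
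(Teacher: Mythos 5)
Your overall architecture coincides with the paper's own proof: the same operator $\mathcal{S}$, Schauder's theorem on a ball defined through weak Lebesgue norms, the estimates of Theorems \ref{lpweakgreen} and \ref{lpweakmartin1} for the data terms, and Lemma \ref{subcrcon} after splitting at $|w|=1$. (The paper tracks an auxiliary $L^\kappa(\Omega;\phi_\mu)$-norm $Q_2$ with $1<\kappa<\min\{q,\tilde q\}$ for the small-$|w|$ region where you use the embedding of weak $L^q$ into $L^r$; that difference is immaterial.)

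The genuine gap is in the invariance step of case (i). You correctly observe that bounding $\|\mathcal{S}(v)\|_{L_w^{q}(\Omega\setminus\Sigma;\phi_\mu)}$ with $q=\frac{N+\gamma}{N+\gamma-2}$ via Theorem \ref{lpweakgreen} requires a bound on $\int_{\Omega\setminus\Sigma}|g(w)|\varphi_{\am,\gamma}\,\dx$, hence the distribution function of $w=v+\rho\BBG_\mu[\tau]+\sigma\BBK_\mu[\nu]$ with respect to $\varphi_{\am,\gamma}\,\dx$; but your extended weak estimates supply this only for $\BBG_\mu[\tau]$ and (for $\nu$ carried by $\Sigma$) for $\BBK_\mu[\nu]$, not for $v$ itself. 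Membership in $\mathcal{O}_{t_0}$ controls only $\|v\|_{L_w^q(\Omega\setminus\Sigma;\phi_\mu)}$, and since $\phi_\mu\approx d\,d_\Sigma^{-\am}\lesssim d^\gamma d_\Sigma^{-\am}=\varphi_{\am,\gamma}$ with no reverse comparison near $\partial\Omega$, this gives no control of the $\varphi_{\am,\gamma}$-distribution function: for instance $v=c\,\1_{\{d<\epsilon\}\setminus\Sigma_{\beta_0}}$ with $c=t_0\epsilon^{-2/q}$ satisfies $\|v\|_{L_w^q(\phi_\mu)}\approx t_0$ while $\|v\|_{L_w^q(\varphi_{\am,\gamma})}\approx t_0\,\epsilon^{(\gamma-1)/q}\to\infty$ as $\epsilon\to0$ when $\gamma<1$. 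So, as written, Lemma \ref{subcrcon} cannot be invoked with the weight $\varphi_{\am,\gamma}$ and the invariance estimate does not close. The repair is within your own toolbox: define the invariant set through the $L_w^{q}(\Omega\setminus\Sigma;\varphi_{\am,\gamma})$-norm (still closed and convex in $L^1(\Omega;\phi_\mu)$), verify invariance using the versions of Lemmas \ref{anisotitaweakF1a}--\ref{anisotitaweakF1b} and Theorem \ref{H} (ii) with output weight $\varphi_{\am,\gamma}$ --- these do hold, since near $\Sigma$ the two weights are comparable and in the boundary region the proofs already pass through $\int d^\gamma$, so the $\lambda$-scaling is unchanged for the Green kernel and for Martin poles on $\Sigma$; note however that it is \emph{not} unchanged for Martin poles on $\partial\Omega$, so your blanket justification really needs the restriction of case (i) --- and then deduce the stated bound $\|v\|_{L_w^q(\Omega\setminus\Sigma;\phi_\mu)}\le t_0$ at the end from $\phi_\mu\lesssim\varphi_{\am,\gamma}$. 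Alternatively, follow the paper's proof, which measures the distribution function $e(s)$ and the auxiliary norm with respect to $\phi_\mu$ throughout and passes from $\|g(w)\|_{L^{1}(\Omega;\phi_\mu)}$ to the $L_w^q(\Omega\setminus\Sigma;\phi_\mu)$-bound for $\BBS(v)$ as in \eqref{Q1Q2-1}.
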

\begin{proof}
We shall use Schauder fixed point theorem to show the existence of a positive weak solution of \eqref{trans}. \medskip

(i) Assume that $\1_{\partial \Omega} \nu \equiv 0$, namely $\nu$ has compact support in $\Sigma$, and \eqref{subcd-00} holds for $q= \frac{N+\gamma}{N+\gamma-2}$ (in the proof of statement (i), we always assume that $q= \frac{N+\gamma}{N+\gamma-2}$). \medskip

\noindent \textbf{Step 1:} Derivation of $t_0$, $\rho_0$ and $\sigma_0$.

Define the operator $\BBS$ by
	\bal \BBS(v):=\BBG_\mu[g(v + \xs\BBK_\mu[\nu]+\xr\BBG_\mu[\tau])] \quad \text{for }  v \in L_w^{q}(\Gw\setminus \Sigma;\ei).
	\eal
Fix $1<\kappa<\min\{q, \tilde q\},$ and put
	\bal \BAL Q_1(v) &: = \| v\|_{L_w^{q}(\Gw\setminus \Sigma;\ei)}, \quad && v \in L_w^{q}(\Gw\setminus \Sigma;\ei), \\
	Q_2(v) &:=\| v\|_{L^{\xk}(\Gw;\ei)}, \quad && v \in L^{\xk}(\Gw;\ei), \\
	Q(v)&:=Q_1(v) + Q_2(v), \quad && v \in L_w^{q}(\Gw\setminus \Sigma;\ei).
	\EAL \eal

Let $v \in L_w^{q}(\Gw\setminus \Sigma;\ei)$. For $s>0$, set
\bal E_s:=\{x\in\xO: |v(x) + \xr\BBG_\mu[\tau](x) + \xs\BBK_\mu[\nu](x)|>s\} \quad \text{and} \quad e(s):=\int_{E_s}\phi_\mu \, \dx.
\eal
By  Theorem \ref{lpweakgreen}, $\BBG_\mu[\tau] \in L_w^{q}(\Omega \setminus \Sigma;\ei)$ and
\ba \label{Q1G}  \| \BBG_\mu[\tau] \|_{L_w^{q}(\Omega \setminus \Sigma;\ei)} \lesssim \| \tau \|_{\GTM(\Omega \setminus \Sigma;\varphi_{\am,\gamma})} = 1.
\ea
By Theorem \ref{lpweakmartin1} II. (i), $\BBK_\mu[\nu] \in L_w^{\frac{N-\am}{N-\am-2}}(\Omega \setminus \Sigma;\ei)$ and
\ba \label{Q1K} \| \BBK_\mu[\nu] \|_{L_w^{\frac{N-\am}{N-\am-2}}(\Omega \setminus \Sigma;\ei)} \lesssim \| \nu \|_{\GTM(\partial \Omega \cup \Sigma)} = 1.
\ea
From \eqref{ue}, \eqref{Q1G}, \eqref{Q1K} and noting that $q \leq \frac{N-\am}{N-\am-2}$ since $\am>0$, we deduce
\ba
e(s) \leq s^{-q}\| v+\xr\BBG_\mu[\tau]+ \xs\BBK_\mu[\nu]\|_{L_w^{q}(\Gw\setminus \Sigma;\ei)}^{q} \leq Cs^{-q}(\| v\|_{L_w^{q}(\Gw\setminus \Sigma;\ei)}^{q}+\xr^{q} +\xs^{q}).\label{40}
\ea

By \eqref{53} and \eqref{53-a}, taking into account \eqref{40} and the assumptions on $g$, we have
\ba\nonumber
&\int_\xO |g(v +\xr\BBG_\mu[\tau] + \xs\BBK_\mu[\nu])|\ei \,\dx \\ \nonumber
&= \int_{\Omega \setminus E_1}|g(v +\xr\BBG_\mu[\tau] + \xs\BBK_\mu[\nu])| \ei \, \dx + \int_{E_1}|g(v +\xr\BBG_\mu[\tau] + \xs\BBK_\mu[\nu])| \ei \, \dx \\ \nonumber
&\leq C(q,\Lambda_g)\bigg( \int_{\Omega \setminus E_1} |v +\xr\BBG_\mu[\tau] + \xs\BBK_\mu[\nu]|^{\tilde q} \ei \dx + \| v\|_{L_w^{q}(\Gw\setminus \Sigma;\ei)}^{q}+\xr^{q} +\xs^{q} \bigg) \\ \nonumber
&\leq C(Q_1(v)^{q}+ Q_2(v)^{\xk} +\xr^\xk Q_2(\BBG_\mu[\tau])^{\xk} + \sigma^\kappa Q_2(\BBK_\mu[\nu])^\xk+\xr^{q} +\xs^{q}) \\ \nonumber
&\leq C(Q_1(v)^{q}+ Q_2(v)^{\xk} + \xr^\xk + \xs^\xk  + \xr^{q} +\xs^{q}).
\ea
It follows that
\ba \label{Q1Q2-1} \BAL
Q_1(\BBS(v))+ Q_2(\BBS(v)) &\leq C\| g(v +\xr\BBG_\mu[\tau] + \xs\BBK_\mu[\nu])\|_{L^{1}(\Gw\setminus \Sigma;\ei)}\\
&\leq C(Q_1(v)^{q}+ Q_2(v)^{\xk} + \xr^\xk + \xs^\xk  + \xr^{q} +\xs^{q}),
\EAL \ea
where $C$ depends only on $\mu,\Gw,\Sigma,\Gl_g,a, \tilde q,\xg.$

Therefore if $Q(v) \leq t$ then
\ba \label{Qt0} Q(\BBS(v)) \leq C\left(t^q+t^{\xk} +\xr^\xk + \xs^\xk + \xr^{q} +\xs^{q}\right).
\ea
	Since $q>\kappa>1$, there exist positive number $t_0$, $\rho_0$ and $\xs_0$ depending on $\mu,\Gw,\Sigma,\Gl_g, \tilde q, \xg, \xk$ such that for any
$t\in(0,t_0)$, $\rho \in (0,\rho_0)$  and $\xs\in (0,\xs_0),$ the following inequality holds
\bal
C\left(t^{q}+t^{\xk}+ \xs^\xk +\xr^\xk + \xr^{q} +\xs^{q}\right) \leq t_0,
\eal
where $C$ is the constant in \eqref{Qt0}.
 Therefore,
	\bel{ul11} Q(v) \leq t_0  \Longrightarrow Q(\BBS(v)) \leq t_0.
	\ee

\noindent \textbf{Step 2:} We apply Schauder fixed point theorem to our setting.

	\textit{We claim that $\BBS$ is continuous}. Indeed, if $u_n\rightarrow u$ in $L^1(\Gw;\ei)$ then since $ g \in L^\infty(\BBR) \cap C(\R)$ and is nondecreasing, it follows that $g(v_n +\xr\BBG_\mu[\tau] + \xs\BBK_\mu[\nu]) \to g(v +\xr\BBG_\mu[\tau] + \xs\BBK_\mu[\nu])$ in $L^1(\Gw;\ei).$ Hence, $\BBS(u_n)\to\BBS(u)$ in $L^1(\Gw;\ei).$
	
	\textit{Next we claim that $\BBS$ is compact}. Indeed, set
	$M:=\sup_{t>0}|g(t)|<+\infty$. Then we can easily deduce that there exists $C=C(\xO,\xS,M,\mu)$ such that
	\be\label{sup1}
	|\BBS(w)|\leq C\ei \quad \text{a.e. in } \xO, \quad \forall w\in L^1(\Gw;\ei).
	\ee

Also, by Theorem \eqref{linear-problem}, $-L_\xm\BBS(w)=g(w +\xr\BBG_\mu[\tau] + \xs\BBK_\mu[\nu])$ in the sense of distributions in $\xO\setminus \xS$. By \cite[Corollary 1.2.3]{MVbook}, $\BBS(w)\in W^{1,r}_{loc}(\xO\setminus \Sigma),$ for any $1<r<\frac{N}{N-1}$ and for any open $D\Subset \xO\setminus \xS,$ there exists $C_1=C_1(\xO,\xS,M,\mu,D,p)$ such that
\ba \label{W1pest} \|\BBS(w)\|_{W^{1,r}(D)}\leq C_1(D).
\ea

	Let $\{v_n\}$ be a sequence in $ L^1(\Gw;\ei)$ then by \eqref{sup1} and \eqref{W1pest}, there exist $\psi$ and a subsequence still denoted by $\{\BBS(v_n)\}$ such that $\BBS(v_n)\rightarrow \psi$ a.e. in $\xO\setminus \Sigma$. In addition, by \eqref{sup1} and the dominated convergence theorem we obtain that $\BBS(v_n)\rightarrow \psi$ in $L^1(\Gw;\ei).$
	
	Now set
	\be
	\CO:=\{ v \in L^1(\Gw;\ei): Q(v) \leq t_0  \}.\label{O}
	\ee
	Then $\CO$ is a closed, convex subset of $L^1(\Gw;\ei)$ and by \eqref{ul11}, $\BBS(\CO) \sbs \CO$.
	Thus we can apply Schauder fixed point theorem to obtain the existence of a function $v \in \CO$ such that $\BBS(v)=v$. This means that $v$ is a nonnegative solution of \eqref{trans} and hence there holds
	\bal  -\int_{\Gw}v L_\gm\zeta \,\dx= \int_{\Gw}  g(v +\xr\BBG_\mu[\tau] + \xs\BBK_\mu[\nu]) \zeta \,\dx  \forevery \zeta \in {\bf X}_\mu(\Gw\setminus \Sigma). \eal
	\medskip

(ii) The case $\1_{\partial \Omega} \nu \not \equiv 0$ and \eqref{subcd-00} holds for with $q=\frac{N+1}{N-1}$ ($\leq \frac{N+\gamma}{N+\gamma-2}$) can be proceeded similarly as case (i) with minor modifications and hence we omit it.
\end{proof}

\begin{proof}[\textbf{Proof of Theorem \ref{th1}}.]
	
(i) We assume that $\1_{\partial \Omega} \nu \equiv 0$, namely $\nu$ has compact support in $\Sigma$, and \eqref{subcd-00} holds for $q= \frac{N+\gamma}{N+\gamma-2}$ (in the proof of statement (i) we always assume $q=\frac{N+\gamma}{N+\gamma-2}$).

Let $0\leq\eta_n(t)\leq 1$ be a smooth function in $\mathbb{R}$ such that $\eta_n(t)=1$ for any $|t|\leq n$ and $\eta_n(t)=0$ for any $|t| \geq n+1$. Set $g_n=\eta_n g$ then $g_n \in L^\infty(\BBR) \cap C(\R)$ is a nondecreasing function in $\BBR$. Moreover $g_n$ satisfies \eqref{subcd-0} for $q=\frac{N+\gamma}{N+\gamma-2}$ and $\Lambda_{g_n} \leq \Lambda_g$. Furthermore,  $|g_n(s)|\leq a|s|^{\tilde q}$ for any $|s|\leq 1$ with the same constants $a>0,\; \tilde q>1$. Therefore the constants $\rho_0,\xs_0,t_0$ in Lemma \ref{transf} can be chosen to depend on $\mu,\Gw,\Sigma,\Gl_g,p,a, \tilde q,\xg$, but independent of $n$. By Lemma \ref{transf}, for any $\rho \in (0,\rho_0)$ and $\xs\in (0,\xs_0)$ and $n \in \BBN$, there exists a solution $v_n \in \CO$ (where $\CO$ is defined in \eqref{O}) of
	\bal \left\{ \BAL -L_\mu v_n &= g_n (v_n +\xr\BBG_\mu[\tau] + \xs\BBK_\mu[\nu]) \quad \text{in } \Gw\setminus \xS, \\ \tr(v_n) &= 0.
	\EAL \right. \eal
	Set $u_n=v_n +\xr\BBG_\mu[\tau] + \xs\BBK_\mu[\nu]$ then $\tr(u_n)=\xs \nu$ and
	\bel{ul13} -\int_{\Gw} u_n L_\gm\zeta \dd x= \int_{\Gw} g_n(u_n) \zeta \dd x + \rho \int_{\Omega}\zeta \dd \tau -
	\xs \int_{\Gw} \BBK_\gm[\gn]L_\gm \zeta \dd x \forevery \zeta \in {\bf X}_\mu(\Gw\setminus \xS). \ee
	
Since $\{v_n\} \sbs \CO$, the sequence $\{ u_n \}$ is uniformly bounded in $L^{\xk}(\xO;\ei)\cap L^{q}_w(\xO\setminus \Sigma;\ei)$. More precisely,
\be \label{42}
\|u_n\|_{L_w^{q}(\Gw\setminus\xS;\ei)}+\|u_n\|_{L^{\xk}(\Gw;\ei)} \leq t_0 \quad\forall n\in \mathbb{N}.
\ee

In view of the proof of \eqref{53}, for any Borel set $E\subset \xO\setminus \Sigma$ and $s_0>1,$ we have
\ba\label{43}
\int_E|g_n(u_n)|\ei \dd x&\leq (g(s_0)-g(-s_0))\int_{E}\ei \dd x + Ct_0^{q}\int_{s_0}^\infty  s^{-q-1}( g(s)-g(s)) \dd s,
\ea
which implies that $\{g_n(u_n)\}$ is equi-integrable in $L^1(\Gw;\ei).$

Also, by Theorem \ref{linear-problem}, $-L_\xm u_n=g_n(u_n)+\xr\tau$ in the sense of distribution in $\xO\setminus \xS.$ By \cite[Corollary 1.2.3]{MVbook} and \eqref{42}, $u_n\in W^{1,r}_{loc}(\xO\setminus \xS),$ for any $1<r<\frac{N}{N-1}$ and for any open $D\Subset \xO\setminus \xS,$ there exists $C_2=C_2(\xO,\xS,M,\mu,D,p,t_0)$ such that $\|u_n\|_{W^{1,r}(D)}\leq C_2$.
Hence there exists a subsequence still denoted by $\{u_n\}$ such that $u_n\rightarrow u$ a.e. in $\xO\setminus \xS.$ In additions by \eqref{42} and \eqref{43}, we may invoke Vitali's convergence theorem to derive that $u_n \to u$  and $g_n(u_n)\to g(u)$ in $L^1(\Gw;\ei).$ Thus, by letting $n \to \infty$ in \eqref{ul13}, we derive that $u$ satisfies \eqref{lweakform}, namely $u$ is a positive solution of \eqref{NLP}. The proof is complete. \medskip

(ii) The case $\1_{\partial \Omega}\nu \not \equiv 0$ and \eqref{subcd-00} holds for with $q=\frac{N+1}{N-1}$ can be proceeded similarly as in case (i) with minor modification and hence we omit it.
\end{proof}

 \begin{proof}[\textbf{Proof of Theorem \ref{th2}}.]
The proof of statements (i), (ii) and (iv) is similar to that of Theorem \ref{th1} and we omit it. As for the proof of statement (iii), the point that needs to be paid attention is the use of Theorem \ref{lpweakgreen} (for $\mu>0$) and Theorem \ref{lpweakgreen2} (for $\mu \leq 0$) for $Q_1(\BBS(v))$ as in the first estimate in \eqref{Q1Q2-1}. In particular, for $\mu \leq H^2$, the estimate
\bal
\|\BBS(v)\|_{L_w^q(\Omega \setminus;\ei)} \leq C\| g(v + \xs\BBK_\mu[\nu])\|_{L^{1}(\Gw\setminus \Sigma;\ei)}
\eal
is valid for $q=\min\left\{\frac{N+1}{N-1},\frac{N-\am}{N-\am-2}\right\}$. The rest of the proof of statement (iii) can be proceeded as in the proof of Lemma \ref{transf} and of Theorem \ref{th1} and is left to the reader. 	
\end{proof}

\section{Power case } \label{sec:powercase}
In this section we study the following problem
\ba\label{power} \left\{ \BAL
- L_\gm u&= |u|^{p-1}u +\xr\tau\qquad \text{in }\;\Gw\setminus \Sigma,\\
\tr(u)&=\xs\xn.
\EAL \right. \ea
where $p>1$, $\rho \geq 0$, $\sigma \geq 0$, $\tau\in\mathfrak{M}^+(\xO\setminus \Sigma;\ei)$ and $\nu \in \mathfrak{M}^+(\partial\xO\cup \Sigma)$.

\subsection{Partial existence results}
We provide below necessary and sufficient conditions expressed in terms of Green kernel and Martin kernel for the existence of a solution to \eqref{power}.
\begin{proposition}\label{equivint}
Assume $\mu \leq H^2$, $p>1$ and $\tau\in\mathfrak{M}^+(\xO\setminus \Sigma;\ei)$. Then problem \eqref{power} with $\xn=0$ admits a nonnegative solution  and for some $\xs>0$ if and only if there is a constant $C>0$ such that
\ba\label{58}
\BBG_\xm[\BBG_\xm[\tau]^p]\leq C\,\BBG_\xm[\tau] \quad \text{a.e. in } \Omega\setminus\xS.
\ea
\end{proposition}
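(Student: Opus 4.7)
Assuming \eqref{58}, I would build a nonnegative solution for $\xr>0$ sufficiently small by a decreasing monotone iteration. Set $u_0:=2\xr\,\BBG_\xm[\gt]$ and $u_{n+1}:=\BBG_\xm[u_n^p]+\xr\,\BBG_\xm[\gt]$ for $n\geq 0$. Using \eqref{58},
\[
u_1 \leq (2\xr)^p C\,\BBG_\xm[\gt]+\xr\,\BBG_\xm[\gt] \leq 2\xr\,\BBG_\xm[\gt]=u_0,
\]
provided $2^p\xr^{p-1}C\leq 1$. Since $t\mapsto t^p$ and $\BBG_\xm$ preserve pointwise order, this propagates by induction to $u_{n+1}\leq u_n$, while the trivial bound $u_n\geq \xr\,\BBG_\xm[\gt]$ provides a pointwise floor. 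The pointwise limit $u:=\lim u_n$ then satisfies $u=\BBG_\xm[u^p]+\xr\,\BBG_\xm[\gt]$ by dominated convergence in the integral $\BBG_\xm[u_n^p]$, with dominant $(2\xr)^p\,\BBG_\xm[\BBG_\xm[\gt]^p]\leq (2\xr)^p C\,\BBG_\xm[\gt]$, a.e.\ finite by \eqref{58}. Theorem \ref{linear-problem} recasts this integral identity as a nonnegative weak solution of \eqref{power} with $\xn=0$.

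\textbf{Necessary direction.} Conversely, suppose a nonnegative weak solution $u$ exists for some $\xr>0$, so $u=\BBG_\xm[u^p]+\xr\,\BBG_\xm[\gt]$. The pointwise lower bound $u\geq \xr\,\BBG_\xm[\gt]$ gives $u^p\geq \xr^p\,\BBG_\xm[\gt]^p$, hence
\[
\xr^p\,\BBG_\xm[\BBG_\xm[\gt]^p] \;\leq\; \BBG_\xm[u^p] \;=\; u-\xr\,\BBG_\xm[\gt] \;\leq\; u
\qquad\text{a.e.\ in } \xO\setminus\xS.
\]
To convert the right-hand side into a constant multiple of $\BBG_\xm[\gt]$, I plan to pass to the minimal solution $u^\star$ obtained by iterating $v_0:=\xr\,\BBG_\xm[\gt]$, $v_{n+1}:=\BBG_\xm[v_n^p]+\xr\,\BBG_\xm[\gt]$; this is an increasing sequence sandwiched between $\xr\,\BBG_\xm[\gt]$ and $u$, whose limit $u^\star$ satisfies $u^\star=\BBG_\xm[(u^\star)^p]+\xr\,\BBG_\xm[\gt]$. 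Both $u^\star$ and $\BBG_\xm[\gt]$ are positive $L_\xm$-superharmonic in $\xO\setminus\xS$ with vanishing boundary trace on $\partial\xO\cup\xS$ (Proposition \ref{traceKG}), and a boundary-Harnack-type comparison adapted to $-L_\xm$ will yield $u^\star\leq M\,\BBG_\xm[\gt]$ for some $M>0$. Plugging this into the displayed inequality above then gives \eqref{58} with $C=M\xr^{-p}$.

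\textbf{Main obstacle.} The delicate point is precisely the uniform upper comparison $u^\star\leq M\,\BBG_\xm[\gt]$ in the necessary direction: the easy manipulations only produce $\xr^p\BBG_\xm[\BBG_\xm[\gt]^p]\leq u$, where the right-hand side is not manifestly comparable to $\BBG_\xm[\gt]$. The sharp two-sided Green estimates of Proposition \ref{Greenkernel}, the common boundary trace of $u^\star$ and $\BBG_\xm[\gt]$, and the interior Harnack inequality for $-L_\xm$ are the natural ingredients, but weaving them into a pointwise ratio bound requires a quantitative boundary-Harnack principle for the Schr\"odinger operator $-L_\xm$ valid uniformly near both boundary components $\partial\xO$ and $\xS$, and this comparison is the technical heart of the implication.
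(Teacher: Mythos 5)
Your sufficient direction is fine and is essentially the route the paper takes (it invokes the monotone/fixed-point argument of \cite[Proposition 3.5]{GkNg}); the issue is the necessary direction, where your proof has a genuine gap at exactly the point you flag. The comparison $u^\star\leq M\,\BBG_\xm[\gt]$ does not follow from a boundary-Harnack-type principle: boundary Harnack compares positive $L_\xm$-\emph{harmonic} functions vanishing on a boundary portion, whereas $u^\star$ and $\BBG_\xm[\gt]$ are superharmonic with nontrivial (and different) Riesz measures, and having the same zero boundary trace gives no pointwise ratio bound --- two Green potentials $\BBG_\xm[\gl_1]$ and $\BBG_\xm[\gl_2]$ both have zero trace, yet e.g.\ Green functions with distinct poles are not comparable. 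Worse, the bound $u^\star\leq M\,\BBG_\xm[\gt]$ is essentially equivalent in strength to the inequality \eqref{58} you are trying to prove (given \eqref{58} it follows by iterating $v_{n+1}=\BBG_\xm[v_n^p]+\xr\BBG_\xm[\gt]$ for small $\xr$), so any argument for it that does not already use \eqref{58} would have to carry the full weight of the implication; your sketch does not supply such an argument.

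The paper closes this step by a different, and much more economical, device (going back to Bidaut-V\'eron--Yarur): with $\psi=\BBG_\xm[\gt]$ and $u=\BBG_\xm[u^p+\gt]$, apply Proposition \ref{prop} with $\vgf=u^p$, $w=u$ and the concave nondecreasing function
\bal
\gf(s)= \left\{ \BAL &(1-s^{1-p})/(p-1) \quad &\text{if } s \geq 1, \\
&s-1 &\text{if } s<1,
\EAL \right.
\eal
so that, since $u\geq\psi$, one gets $\gf'(u/\psi)\,u^p=(u/\psi)^{-p}u^p=\psi^p=\BBG_\xm[\gt]^p$ and hence $-L_\xm\bigl(\psi\,\gf(u/\psi)\bigr)\geq \BBG_\xm[\gt]^p$ in the weak sense. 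Because $\gf\leq 1/(p-1)$ on $[1,\infty)$, the auxiliary function $\psi\,\gf(u/\psi)$ is a nonnegative $L_\xm$-superharmonic function bounded by $\tfrac{1}{p-1}\BBG_\xm[\gt]$, so by Proposition \ref{traceKG} it has zero boundary trace; the Riesz-type decomposition of Proposition \ref{super} then writes it as $\BBG_\xm[\tilde\gt]+\BBK_\xm[\xn]$ with $\dd\tilde\gt\geq \BBG_\xm[\gt]^p\dx$, the trace forces $\xn=0$, and one concludes
\bal
\BBG_\xm[\BBG_\xm[\gt]^p]\;\leq\;\psi\,\gf(u/\psi)\;\leq\;\frac{1}{p-1}\,\BBG_\xm[\gt],
\eal
i.e.\ \eqref{58} with the explicit constant $C=\tfrac{1}{p-1}$, with no comparison between $u$ and $\BBG_\xm[\gt]$ ever needed. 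You should replace your boundary-Harnack step by this argument (or an equivalent one); as written, the necessity part of your proof is incomplete.
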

\begin{proof}
If \eqref{58} holds then the existence of a positive solution to problem \eqref{power} with $\xn=0$ follows by a rather similar argument as in the proof of \cite[Proposition 3.5]{GkNg}.

So we will only show that if $u$ is a positive solution of \eqref{power} with $\xn=0$ for some $\xs>0$ then \eqref{58} holds. We adapt the argument used in the proof of \cite[Proposition 3.5]{BY}. We may suppose that $\xs=1.$ By the assumption, we have $u=\BBG_\xm[u^p+\gt]$. By applying Proposition \ref{prop} with $\vgf$ replaced by $ u^p,$ $w$ by $u$ and with
\bal
\gf(s)= \left\{ \BAL &(1-s^{1-p})/(p-1) \quad &\text{if } s \geq 1, \\
&s-1 &\text{if } s<1,
\EAL \right.
\eal
we obtain
\ba\label{59}
-L_\xm(\psi \xf(u/\psi))\geq\xf'(u/\psi)u^p=\BBG_\xm[\tau]^p,
\ea
in the weak sense.
Since $u \geq \BBG_\mu[\tau]=\psi$, we see that
\ba \label{u/psi}
\psi \xf(u/\psi)\leq \frac{1}{p-1}\BBG_\xm[\tau],
\ea
 which, together with Proposition \ref{traceKG}, implies $\tr(\psi \xf(u/\psi))=0$. By \eqref{59} and Proposition \ref{super} there exist $\xn\in \mathfrak{M}^+(\partial\xO\cup \Sigma)$ and $\tilde \tau \in \mathfrak{M}^+(\xO\setminus \Sigma;\ei)$ such that $\dd \tilde \tau \geq \BBG_\xm^p[\tau] \dd x$ and
\be \label{tildetau}
\psi \xf(u/\psi)=\mathbb{G}_{\mu}[\tilde \tau]+\mathbb{K}_{\xm}[\xn].
\ee
Since $\tr(\psi \xf(u/\psi))=0,$ by Proposition \ref{traceKG}, we deduce that $\xn=0$. From \eqref{u/psi} and \eqref{tildetau}, we obtain \eqref{58} with $C=\frac{1}{p-1}$.
\end{proof}

\begin{proposition}\label{equivba}
Assume $\mu \leq H^2$, $p>1$ and $\nu \in \mathfrak{M}^+(\partial\xO\cup \Sigma).$ Then problem \eqref{power} admits a solution with $\tau=0$ if and only if there exists a positive constant $C>0$ such that
\bal
\BBG_\xm[\BBK_\xm[\xn]^p]\leq C\, \BBK_\xm[\xn] \quad \text{a.e. in } \Omega\setminus\xS.
\eal
\end{proposition}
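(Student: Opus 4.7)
The statement is the exact Martin-kernel analogue of Proposition~\ref{equivint}, so the plan is to mirror that proof, replacing $\BBG_\xm[\tau]$ by $\BBK_\xm[\xn]$ and invoking Proposition~\ref{prop2} in place of Proposition~\ref{prop}. The two directions are handled separately.

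For the sufficiency direction, I would use a monotone iteration (or Schauder fixed point) scheme of the kind referenced in the proof of Proposition~\ref{equivint}. Under the hypothesis $\BBG_\xm[\BBK_\xm[\xn]^p]\leq C\,\BBK_\xm[\xn]$, the natural test ansatz is $u=\xl \BBK_\xm[\xn]$: the mapping $T(u):=\gs \BBK_\xm[\xn]+\BBG_\xm[u^p]$ satisfies $T(\xl\BBK_\xm[\xn])\leq(\gs+C\xl^p)\BBK_\xm[\xn]$, so choosing $\xl=(pC)^{-1/(p-1)}$ and $\gs\leq\tfrac{p-1}{p}(pC)^{-1/(p-1)}$ gives $T(\xl\BBK_\xm[\xn])\leq\xl\BBK_\xm[\xn]$. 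Starting from $u_0=\gs\BBK_\xm[\xn]$, the sequence $u_{n+1}=T(u_n)$ is monotone nondecreasing, pointwise dominated by $\xl\BBK_\xm[\xn]$, and by monotone convergence its limit $u$ is a fixed point $u=\gs\BBK_\xm[\xn]+\BBG_\xm[u^p]$. Since $\BBK_\xm[\xn]^p\in L^1(\xO;\ei)$ (via the symmetry identity $\|f\|_{L^1(\xO;\ei)}=\xl_\xm\int_\xO \BBG_\xm[f]\ei\,\dx$ applied to $f=\BBK_\xm[\xn]^p$), Theorem~\ref{linear-problem} turns this integral identity into a weak solution of \eqref{power} with $\tau=0$.

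For the necessity direction, assume $u>0$ solves \eqref{power} with $\tau=0$; by homogeneity I may take $\gs=1$, so $u=\BBG_\xm[u^p]+\BBK_\xm[\xn]$. Set $\psi=\BBK_\xm[\xn]$ and $\xf(s)=(1-s^{1-p})/(p-1)$ for $s\geq 1$, $\xf(s)=s-1$ for $s<1$ (regularized near $s=1$ if strict $C^2$ regularity is needed in Proposition~\ref{prop2}, passing to the limit at the end). Proposition~\ref{prop2} applied with $\vgf=u^p$, $w=u$ yields, in the weak sense on $\xO\setminus\xS$,
\begin{equation*}
-L_\xm\bigl(\psi\xf(u/\psi)\bigr)\ \geq\ \xf'(u/\psi)\,u^p\ =\ \psi^p\ =\ \BBK_\xm[\xn]^p,
\end{equation*}
where I used that $u\geq\psi$ forces $u/\psi\geq 1$, so $\xf'(u/\psi)=(u/\psi)^{-p}$. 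Moreover, since $\xf\leq 1/(p-1)$ on $[1,\infty)$, one has $0\leq \psi\xf(u/\psi)\leq\BBK_\xm[\xn]/(p-1)$, which in particular places this function in $L^1(\xO;\ei)$.

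The function $v:=\psi\xf(u/\psi)$ is then a nonnegative $L_\xm$-superharmonic function, so by Proposition~\ref{super} it decomposes as $v=\BBG_\xm[\tilde\tau]+\BBK_\xm[\tilde\xn]$ with $\tilde\tau\in\mathfrak{M}^+(\xO\setminus\xS;\ei)$ and $\tilde\xn\in\mathfrak{M}^+(\prt\xO\cup\xS)$; the weak inequality above gives $\dd\tilde\tau\geq\BBK_\xm[\xn]^p\dd x$. Dropping the nonnegative Martin term yields
\begin{equation*}
\BBG_\xm[\BBK_\xm[\xn]^p]\ \leq\ \BBG_\xm[\tilde\tau]\ \leq\ v\ \leq\ \frac{1}{p-1}\,\BBK_\xm[\xn],
\end{equation*}
which is the desired inequality with $C=1/(p-1)$. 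The one subtle point, which I expect to be the main technical obstacle, is the smoothness of $\xf$ at $s=1$: unlike the $\BBG$-case in Proposition~\ref{equivint}, here one cannot discard $\BBK_\xm[\tilde\xn]$ via a trace argument (since $\tr(v)$ need not vanish, and bounding it by $\xn/(p-1)$ is not zero), so the dropping of that term must be done directly, and the regularization of $\xf$ together with the subsequent limit has to be carried out with care to preserve the inequality for the Green-part alone.
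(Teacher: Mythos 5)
Your proposal is correct and follows essentially the same route as the paper, whose proof of this proposition is precisely "proceed as in Proposition \ref{equivint}, using Proposition \ref{prop2} in place of Proposition \ref{prop}" (with the sufficiency direction delegated to the monotone-iteration argument of the cited references, which is what you spell out, and with the Martin term in the Riesz decomposition simply dropped by positivity, exactly as you do). Your cautionary remarks about regularizing $\xf$ at $s=1$ and about not having a trace argument to kill $\BBK_\xm[\tilde\xn]$ are harmless refinements of the same argument, not a different approach.
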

\begin{proof}
Proceeding as in the proof of Proposition \ref{equivint} and using Proposition \ref{prop2} instead of Proposition \ref{prop}, we obtain the desired result (see also \cite[Lemma 4.1]{BVi}).
\end{proof}

\subsection{Abstract setting} In this subsection, we present an abstract setting which will be applied to our particular framework in the next subsection.

Let $\mathbf{Z}$ be a metric space and $\gw \in\GTM^+(\mathbf{Z}).$ Let $J : \mathbf{Z} \times \mathbf{Z} \to (0,\infty]$ be a Borel positive kernel such that $J$ is symmetric and $J^{-1}$ satisfies a quasi-metric
inequality, i.e. there is a constant $C>1$ such that for all $x, y, z \in \mathbf{Z}$,
\ba \label{Jest} \frac{1}{J(x,y)}\leq C\left(\frac{1}{J(x,z)}+\frac{1}{J(z,y)}\right).
\ea
Under these conditions, one can define the quasi-metric $d$ by
\bal
\mathbf{d}(x,y):=\frac{1}{J(x,y)}
\eal
and denote by $\GTB(x,r):=\{y\in\mathbf{Z}:\; \mathbf{d}(x,y)<r\}$ the open $\mathbf{d}$-ball of radius $r > 0$ and center $x$.
Note that this set can be empty.

For $\xo\in\GTM^+(\mathbf{Z})$ and a positive function $\phi$, we define the potentials $\BBJ[\gw]$ and $\BBJ[\gf,\gw]$ by
\bal
\BBJ[\gw](x):=\int_{\mathbf{Z}}J(x,y) \dd\xo(y)\quad\text{and}\quad \BBJ[\gf,\gw](x):=\int_{\mathbf{Z}}J(x,y)\gf(y) \dd\gw(y).
\eal
For $t>1$ the capacity $\text{Cap}_{\BBJ,t}^\gw$ in $\mathbf{Z}$ is defined for any Borel $E\subset\mathbf{Z}$ by
\bal
\text{Cap}_{\BBJ,t}^\gw(E):=\inf\left\{\int_{\mathbf{Z}}\gf(x)^{t} \dd\gw(x):\;\;\gf\geq0,\;\; \BBJ[\gf,\gw] \geq\1_E\right\}.
\eal

\begin{proposition}\label{t2.1}  \emph{(\cite{KV})} Let $p>1$ and $\gl,\gw \in\GTM^+(\mathbf{Z})$ such that
	\ba
	\int_0^{2r}\frac{\gw\left(\GTB(x,s)\right)}{s^2} \dd s &\leq C\int_0^{r}\frac{\gw\left(\GTB(x,s)\right)}{s^2} \dd s ,\label{2.3}\\
	\sup_{y\in \GTB(x,r)}\int_0^{r}\frac{\gw\left(\GTB(y,r)\right)}{s^2} \dd s&\leq C\int_0^{r}\frac{\gw\left(\GTB(x,s)\right)}{s^2} \dd s,\label{2.4}
	\ea
for any $r > 0,$ $x \in \mathbf{Z}$, where $C > 0$ is a constant. Then the following statements are equivalent.
	
	1. The equation $v=\BBJ[|v|^p,\gw]+\ell \BBJ[\gl]$ has a positive solution for $\ell>0$ small.
	
	2. For any Borel set $E \subset \mathbf{Z}$, there holds
	$
\int_E \BBJ[\1_E\gl]^p \dd \gw \leq C\, \gl(E).
	$
	
	3. The following inequality holds
	$
\BBJ[\BBJ[\gl]^p,\gw]\leq C\BBJ[\gl]<\infty\quad \gw-a.e.
$

	4. For any Borel set $E \subset \mathbf{Z}$ there holds
$
\gl(E)\leq C\, \emph{Cap}_{\BBJ,p'}^\gw(E).
$
\end{proposition}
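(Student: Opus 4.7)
The plan is to establish the four equivalences by the cyclic chain $1\Rightarrow 3\Rightarrow 1$ together with $3\Leftrightarrow 2$ and $2\Leftrightarrow 4$, following the blueprint of Kalton--Verbitsky. The two hypotheses \eqref{2.3}--\eqref{2.4} play the role of a uniform doubling adapted to the quasi-metric $\mathbf{d}$, and are precisely what allow Whitney-type decompositions and a Wolff-style inequality to be carried out on the quasi-metric measure space $(\mathbf{Z},\mathbf{d},\gw)$.

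For $1\Rightarrow 3$, I would assume $v>0$ satisfies $v=\BBJ[v^p,\gw]+\ell\BBJ[\gl]$ and extract the two immediate lower bounds $v\geq \ell\BBJ[\gl]$ and $v\geq \BBJ[v^p,\gw]\geq \ell^p\BBJ[\BBJ[\gl]^p,\gw]$. The point is then to produce a matching upper bound $v\leq C\ell\BBJ[\gl]$. One does this by the concavity device already used in the proof of Proposition \ref{equivint}: apply a concave $\xf$ (e.g.\ $\xf(s)=(1-s^{1-p})/(p-1)$ for $s\geq 1$ and $\xf(s)=s-1$ for $s<1$) to the ratio $w:=v/\BBJ[\gl]$ and exploit the resulting superharmonic-type inequality for $\BBJ[\gl]\,\xf(w)$ to bound $w$ once $\ell$ is small enough. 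Dividing the lower bound by the upper bound yields statement~3.

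For $3\Rightarrow 1$ I would use Picard iteration: set $v_0:=2\ell\BBJ[\gl]$ and $v_{n+1}:=\BBJ[v_n^p,\gw]+\ell\BBJ[\gl]$. Statement~3 gives $\BBJ[v_0^p,\gw]\leq C(2\ell)^p\BBJ[\gl]\leq \ell\BBJ[\gl]$ as soon as $C(2\ell)^{p-1}\leq 1/2$, hence $v_1\leq v_0$; by induction the sequence is monotone decreasing and bounded below by $\ell\BBJ[\gl]$, so it converges pointwise to a positive $v$, and monotone/dominated convergence identifies $v$ as a solution. The equivalence $3\Leftrightarrow 2$ is a standard symmetry-plus-Fubini argument: integrating statement~3 against $\1_E\,\dd\gl$ and using the symmetry $J(x,y)=J(y,x)$ produces statement~2, while for the converse one decomposes $\BBJ[\BBJ[\gl]^p,\gw]$ along superlevel sets of $\BBJ[\gl]$ and estimates each piece via statement~2 applied to an appropriate Whitney-type covering by quasi-balls.

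Finally, $2\Leftrightarrow 4$ is Maz'ya--Verbitsky's capacitary characterization of the trace inequality for the potential $\BBJ$. The direction $4\Rightarrow 2$ follows from H\"older's inequality combined with the variational characterization of $\mathrm{Cap}_{\BBJ,p'}^{\gw}$. The converse $2\Rightarrow 4$ is the hard direction and rests on a Wolff-type inequality of the form
\[
\int_{\mathbf{Z}}\BBJ[\1_E\gl]^p\,\dd\gw \;\asymp\; \int_E\int_0^\infty\left(\frac{\gl(\GTB(x,s))}{s}\right)^{p-1}\frac{\dd s}{s}\,\dd\gl(x),
\]
valid on quasi-metric spaces satisfying \eqref{2.3}--\eqref{2.4}; once this is available, the capacitary bound follows by a dyadic maximal function argument. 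I expect the main obstacle of the whole proof to be exactly this Wolff inequality in the abstract quasi-metric setting: its proof requires a dyadic lattice on $(\mathbf{Z},\mathbf{d})$ adapted to $\gw$, and the two doubling hypotheses \eqref{2.3}--\eqref{2.4} are used in an essential way to construct such a lattice and to compare local potentials with the nonlinear Wolff potential appearing above.
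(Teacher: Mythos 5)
There is nothing in the paper to compare your argument against: Proposition \ref{t2.1} is not proved in this paper at all — it is quoted from Kalton--Verbitsky \cite{KV} and used as a black box (later applied with $J=\CN_{2\am}$, $\dd\gw=(d\,d_\Sigma^{-\am})^{p+1}\dx$ and $\gl$ equal to $\ei\1_{\Omega\setminus\Sigma}\dd\tau$ or $\nu$). So your text is a reconstruction of the cited proof, and it should be judged as such. As an outline it has the right shape — the hypotheses \eqref{2.3}--\eqref{2.4} are indeed what make a dyadic/Wolff analysis on the quasi-metric space $(\mathbf{Z},\mathbf{d},\gw)$ possible, the implication $3\Rightarrow 1$ by monotone iteration is correct and complete, and the duality between the testing condition and the capacity via the analogue of \eqref{dualcap} is the right mechanism for relating statements 2 and 4.

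However, two of your steps would not survive scrutiny as written. First, in $1\Rightarrow 3$ you claim a pointwise upper bound $v\leq C\ell\,\BBJ[\gl]$ for an \emph{arbitrary} positive solution and then ``divide the lower bound by the upper bound''; such a bound is neither available in general nor needed. The concavity device (the analogue of Proposition \ref{prop}, used in Proposition \ref{equivint}) concludes differently: with $\psi=\ell\,\BBJ[\gl]$ it gives $\BBJ[\varphi'(v/\psi)v^p,\gw]\leq\psi\,\varphi(v/\psi)\leq\frac{1}{p-1}\psi$, and since $\varphi'(v/\psi)v^p\geq\psi^p$ where $v\geq\psi$, statement 3 follows without any upper bound on $v$. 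Moreover, Propositions \ref{prop}--\ref{prop2} are established in the paper only for $\BBG_\mu$, $\BBK_\mu$ via the potential theory of $-L_\mu$; their validity for a general symmetric kernel $\BBJ$ satisfying only \eqref{Jest} is exactly part of what \cite{KV} proves and cannot simply be invoked. Second, $3\Leftrightarrow 2$ is not a ``symmetry-plus-Fubini'' one-liner: integrating statement 3 against $\1_E\,\dd\gl$ and using $J(x,y)=J(y,x)$ yields $\int\BBJ[\1_E\gl]^{p+1}\dd\gw\leq C\int_E\BBJ[\gl]\,\dd\gl$, which is not statement 2. The genuine passage between the pointwise condition 3, the testing inequality 2 and the capacitary condition 4 is precisely the Wolff-potential comparison you defer to the last paragraph and do not prove (and note that, via the dual formula for the capacity, $2\Rightarrow 4$ is the soft direction, while $4\Rightarrow 2$ requires the capacitary strong-type/Wolff machinery — the opposite of the labelling in your sketch). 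In short, everything of substance in the proposition is concentrated in the unproved dyadic/Wolff analysis; for the purposes of this paper the citation of \cite{KV} is the proof, and a self-contained argument would have to supply that analysis rather than assert it.
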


\subsection{Necessary and sufficient conditions for the existence}
For $\xa\leq N-2$, set
\bal
\CN_{\ga}(x,y):=\frac{\max\{|x-y|,d_\xS(x),d_\xS(y)\}^\xa}{|x-y|^{N-2}\max\{|x-y|,d(x),d(y)\}^2},\quad (x,y)\in\overline{\xO}\times\overline{\xO}, x \neq y,
\eal
\bel{opN} \Nthb[\omega](x):=\int_{\overline{\Gw}} \CN_{\xa}(x,y) \dd\omega(y), \quad \omega \in \GTM^+(\overline \Gw).\ee

We will point out below that $\Nthb$ defined in \eqref{opN} with $\dd \gw=d(x)^b d_\xS(x)^\theta \1_{\Omega \setminus \Sigma}(x)\,\dx$ satisfies all assumptions of $\BBJ$  in Proposition \ref{t2.1}, for some appropriate $b,\theta\in \BBR$. Let us first prove the quasi-metric inequality.

\begin{lemma}\label{ineq}
Let $\xa\leq N-2.$ There exists a positive constant $C=C(\xO,\Sigma,\xa)$ such that
\ba \label{dist-ineq} \frac{1}{\CN_\xa(x,y)}\leq C\left(\frac{1}{\CN_\xa(x,z)}+\frac{1}{\CN_\xa(z,y)}\right),\quad \forall x,y,z\in \overline{\xO}.
\ea
\end{lemma}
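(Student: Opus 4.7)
By the symmetry $\CN_\alpha(x,y) = \CN_\alpha(y,x)$ I may assume $|x-z| \geq |z-y|$ without loss of generality, and then it suffices to establish the stronger bound $\CN_\alpha(x,y)^{-1} \leq C\,\CN_\alpha(x,z)^{-1}$ (the opposite case follows by interchanging the roles of $x$ and $y$ via the same argument). Introducing the shorthand $A(\xi,\eta) := \max\{|\xi-\eta|, d_\xS(\xi), d_\xS(\eta)\}$ and $B(\xi,\eta) := \max\{|\xi-\eta|, d(\xi), d(\eta)\}$, one has $\CN_\alpha(x,y)^{-1} = |x-y|^{N-2}B(x,y)^2 A(x,y)^{-\alpha}$, and the plan is to control each of the three factors separately.

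The factors involving $|\cdot|^{N-2}$ and $B^2$ are handled routinely: $|x-y| \leq 2|x-z|$ is immediate, while the $1$-Lipschitz property of $d$ together with $|z-y| \leq |x-z|$ and $d(x), d(z) \leq B(x,z)$ combine to give $B(x,y) \leq 2B(x,z)$. Multiplying yields $|x-y|^{N-2}B(x,y)^2 \leq 2^N |x-z|^{N-2}B(x,z)^2$. The factor $A(x,y)^{-\alpha}$ is the core of the argument, and the sign of $\alpha$ matters. If $\alpha \leq 0$, an identical triangle-inequality calculation yields $A(x,y) \leq 2A(x,z)$, hence $A(x,y)^{-\alpha} \leq 2^{-\alpha}A(x,z)^{-\alpha}$, and one is done.

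The main obstacle is the range $0 < \alpha \leq N-2$, where the naive bound $A(x,y) \leq 2A(x,z)$ goes the wrong way. I plan to split this into two subcases. When $|x-z| \leq 2A(x,y)$, the bound $d_\xS(z) \leq d_\xS(x)+|x-z|$ forces $A(x,z) \leq 3A(x,y)$, and the estimate is immediate. When $|x-z| > 2A(x,y)$, one has $d_\xS(x) < |x-z|/2$ and $d_\xS(z) < \tfrac{3}{2}|x-z|$, giving $A(x,z) \leq \tfrac{3}{2}|x-z|$, while $A(x,y) \geq |x-y|$, so multiplying out the combined ratio becomes
$$\frac{|x-y|^{N-2}A(x,z)^\alpha}{|x-z|^{N-2}A(x,y)^\alpha} \leq \Big(\tfrac{3}{2}\Big)^\alpha \left(\frac{|x-z|}{|x-y|}\right)^{\alpha-(N-2)},$$
which is bounded by a constant precisely because $|x-z| > |x-y|$ and $\alpha \leq N-2$ (the degenerate case $|x-y|=0$ trivially kills the left-hand side of the lemma through the factor $|x-y|^{N-2}$). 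This second subcase is where the hypothesis $\alpha \leq N-2$ enters essentially, balancing the excess $(|x-z|/|x-y|)^\alpha$ from the denominator against $(|x-y|/|x-z|)^{N-2}$ from the numerator. Assembling the three estimates delivers \eqref{dist-ineq} with a constant depending only on $N$ and $\alpha$.
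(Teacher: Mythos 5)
Your proof is correct, and it reaches the same elementary goal as the paper — a pure triangle-inequality argument in which the only non-obvious step is the range $0<\alpha\le N-2$, where the factor $\max\{|x-y|,d_\Sigma(x),d_\Sigma(y)\}^{-\alpha}$ goes the wrong way and must be traded off against $|x-y|^{N-2}$; that balancing (via $A(x,y)\ge |x-y|$, $A(x,z)\lesssim |x-z|$ and $\alpha\le N-2$) is exactly where the hypothesis enters in the paper as well. The organization, however, differs. The paper first proves the auxiliary family of inequalities \eqref{35} for the quantities $|x-y|^{N-b}\max\{|x-y|,d_\Sigma(x),d_\Sigma(y)\}^{-\alpha}$, $0\le b\le 2$ (with an inner case split on $|x-z|$ versus $d_\Sigma(x)$), and then treats the boundary factor by decomposing $\max\{|x-y|,d(x),d(y)\}^2\lesssim |x-y|^2+\min\{d(x),d(y)\}^2$ and invoking \eqref{35} twice, with $b=0$ and $b=2$. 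You instead make the global symmetry reduction $|x-z|\ge|z-y|$ and dominate $\CN_\alpha(x,y)^{-1}$ by the single term $\CN_\alpha(x,z)^{-1}$, bounding the three factors $|x-y|^{N-2}$, $\max\{|x-y|,d(x),d(y)\}^2$ and $\max\{\cdot\}^{-\alpha}$ separately (your case split is $|x-z|\lessgtr 2A(x,y)$ rather than $|x-z|\lessgtr d_\Sigma(x)$, but it plays the same role). Your route is slightly more streamlined — no parametric auxiliary inequality, a constant depending only on $N$ and $\alpha$, and single-term domination throughout — while the paper's version of \eqref{35} is stated for all $b\in[0,2]$ and so is a marginally more flexible intermediate statement; for the lemma itself the two are interchangeable. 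The only loose end in your write-up is the degenerate configurations ($x=y$, or $z$ coinciding with $x$ or $y$), which you dismiss correctly and which the paper ignores as well since $\CN_\alpha$ is only used off-diagonal.
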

\begin{proof}
Let $0\leq b\leq 2,$ we first claim that there exists a positive constant $C=C(N,b,\xa)$ such that the following inequality is valid
\ba \label{35}\begin{aligned}
&\frac{|x-y|^{N-b}}{\max\{|x-y|,d_\xS(x),d_\xS(y)\}^\xa}\\
&\qquad\leq C\left(\frac{|x-z|^{N-b}}{\max\{|x-z|,d_\xS(x),d_\xS(z)\}^\xa}+\frac{|z-y|^{N-b}}{\max\{|z-y|,d_\xS(z),d_\xS(y)\}^\xa}\right).
\end{aligned} \ea
In order to prove \eqref{35}, we consider two cases. \medskip

\noindent \textbf{Case 1: $0<\xa\leq N-2$.}  We first assume that $|x-y|<2|x-z|$. Then by triangle inequality, we have
$d_\xS(z)\leq |x-z|+d_\xS(x)\leq 2\max\{|x-z|,d_\xS(x)\}$ hence
\bal
\max\{|x-z|,d_\xS(x),d_\xS(z)\}\leq 2\max\{ |x-z|,d_\xS(x)\}.
\eal
If $|x-z|\geq d_\xS(x)$ then

\ba\nonumber
\frac{|x-z|^{N-b}}{\max\{|x-z|,d_\xS(x),d_\xS(z)\}^\xa}&\geq 2^{-\xa}|x-z|^{N-b-\xa}\geq 2^{-N+b}|x-y|^{N-b-\xa}\\
&\geq 2^{-N+b}\frac{|x-y|^{N-b}}{\max\{|x-y|,d_\xS(x),d_\xS(y)\}^\xa}.\label{36}
\ea
If $|x-z|\leq d_\xS(x)$ then
\ba\nonumber
\frac{|x-z|^{N-b}}{\max\{|x-z|,d_\xS(x),d_\xS(z)\}^\xa}&\geq 2^{-\xa}d_\xS(x)^{-\xa}|x-z|^{N-b} \\
&\geq 2^{-\xa-N+b}d_\xS(x)^{-\xa}|x-y|^{N-b}\\
&\geq 2^{-N+b+\xa}\frac{|x-y|^{N-b}}{\max\{|x-y|,d_\xS(x),d_\xS(y)\}^\xa}.\label{37}
\ea
Combining \eqref{36}--\eqref{37}, we obtain \eqref{35} with $C=2^{N-b}$.

Next we consider the case $2|x-z|\leq|x-y|$. Then $\frac{1}{2} |x-y|\leq |y-z|,$ thus by symmetry we obtain \eqref{35} with $C=2^{N-b}$. \medskip

\noindent \textbf{Case 2: $\xa\leq 0$.} Since $d_\xS(x)\leq |x-y|+d_\xS(y)$,  it follows that
\bal
\max\{|x-y|,d_\xS(x),d_\xS(y)\}\leq |x-y|+\min\{d_\xS(x),d_\xS(y)\}.
\eal
Using the above estimate, we obtain
\bal
&|x-y|^{N-b}\max\{|x-y|,d_\xS(x),d_\xS(y)\}^{-\xa} \\
&\leq |x-y|^{N-b-\xa}+\min\{d_\xS(x),d_\xS(y)\}^{-\xa}|x-y|^{N-b}\\ \nonumber
&\leq 2^{N-b-\xa}(|x-z|^{N-b-\xa}+|y-z|^{N-b-\xa})\\ \nonumber
&+2^{N-b}(|x-z|^{N-b}\min\{d_\xS(x),d_\xS(y)\}^{-\xa}+|y-z|^{N-b}\min\{d_\xS(x),d_\xS(y)\}^{-\xa})\\
&\leq ( 2^{N-b-\xa}+1)\left(\frac{|x-z|^{N-b}}{\max\{|x-z|,d_\xS(x),d_\xS(z)\}^\xa}+\frac{|z-y|^{N-b}}{\max\{|z-y|,d_\xS(z),d_\xS(y)\}^\xa}\right),
\eal
which yields \eqref{35}. \smallskip

Now we will use \eqref{35} with $b=2$ to prove \eqref{dist-ineq}. Since $d(x)\leq |x-y|+d(y),$ we can easily show that $\max\{|x-y|,d(x),d(y)\}\leq |x-y|+\min\{d(x),d(y)\}.$ Hence
\bal
\frac{1}{\CN_\xa(x,y)}&=\frac{\max\{|x-y|,d(x),d(y)\}^2|x-y|^{N-2}}{\max\{|x-y|,d_\xS(x),d_\xS(y)\}^\xa} \\
&\leq  \frac{2|x-y|^{N}}{\max\{|x-y|,d_\xS(x),d_\xS(y)\}^\xa}
+\frac{2\min\{d(x),d(y)\}^2|x-y|^{N-2}}{\max\{|x-y|,d_\xS(x),d_\xS(y)\}^\xa}\\
&\leq C(N,\xa)\left(\frac{1}{\CN_\xa(x,z)}+\frac{1}{\CN_\xa(z,y)}\right),
\eal
where in the last inequality we have used \eqref{35}. The proof is complete.
\end{proof}

Next we give sufficient conditions for \eqref{2.3} and \eqref{2.4} to hold.
\begin{lemma}\label{l2.3} Let $b>0$, $\theta>k-N$ and  $\dd \gw=d(x)^b d_\xS(x)^\theta \1_{\xO \setminus \Sigma}(x)\,\dx$. Then
\be \label{doubling}
\gw(B(x,s))\approx \max\{d(x),s\}^b\max\{d_\xS(x),s\}^{\theta} s^N, \; \text{for all } x\in\xO \text{ and } 0 < s\leq 4 \mathrm{diam}(\xO).
\ee
\end{lemma}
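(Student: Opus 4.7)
The estimate is a two-weight doubling property for the measure $\omega$. The strategy is to split into cases according to how $s$ compares with $d(x)$ and $d_\Sigma(x)$. In each regime one of the weights $d^b$ or $d_\Sigma^\theta$ is essentially constant on $B(x,s) \cap \Omega$, while the other is integrated using the geometry of $\partial\Omega$ (for $d^b$) or the tubular neighborhood of $\Sigma$ (for $d_\Sigma^\theta$). The hypothesis $\theta > k - N$ is precisely what makes the integral $\int_{B(\xi, r)} d_\Sigma^\theta\,dy$ for $\xi \in \Sigma$ finite and of the expected size $r^{N+\theta}$.

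\textbf{Small scale regime.} When $s \leq \tfrac14 d_\Sigma(x)$, the triangle inequality gives $\tfrac34 d_\Sigma(x) \leq d_\Sigma(y) \leq \tfrac54 d_\Sigma(x)$ on $B(x,s)$, so $d_\Sigma(y)^\theta \approx d_\Sigma(x)^\theta \approx \max\{d_\Sigma(x), s\}^\theta$, and thus $\omega(B(x,s)) \approx d_\Sigma(x)^\theta \int_{B(x,s)\cap\Omega} d(y)^b\,dy$. The remaining integral is the well-known estimate $\int_{B(x,s)\cap \Omega} d(y)^b\,dy \approx \max\{d(x),s\}^b\, s^N$, which follows from the $C^2$ regularity of $\partial\Omega$: one inequality uses $d(y) \leq d(x) + s$, and the other uses the interior corkscrew (ball) condition, which guarantees a subball $B(x',cs) \subset B(x,s)\cap \Omega$ on which $d(y) \gtrsim \max\{d(x),s\}$.

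\textbf{Large scale regime.} When $s > \tfrac14 d_\Sigma(x)$, then $\max\{d_\Sigma(x),s\}\approx s$. Choose $\xi \in \Sigma$ with $|x - \xi| = d_\Sigma(x) \leq 4 s$; then $B(x,s) \subset B(\xi, 5s)$ and conversely $B(\xi, s/4) \cap \Omega$ occupies a positive proportion of $B(x,s) \cap \Omega$, so it suffices to estimate $\int_{B(\xi, r)\cap \Omega} d(y)^b d_\Sigma(y)^\theta\,dy$ for balls centered on $\Sigma$ at scale $r \approx s$. Using the local $C^2$ parametrization of $\Sigma$ from Subsection \ref{assumptionK} together with \eqref{propdist}, \eqref{cover} and Fubini's theorem in the $(N-k)$ directions transverse to $\Sigma$, the $d_\Sigma^\theta$-integral reduces (modulo bounded factors) to $\int_0^{Cs} t^{\theta + N - k - 1}\,dt \cdot s^k \approx s^{N+\theta}$, where the convergence at the origin is exactly the assumption $\theta > k - N$. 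Combining this with $d(y)^b \approx \max\{d(x),s\}^b$ on a set of positive proportion of $B(\xi,s) \cap \Omega$ (again by the interior corkscrew condition) produces $\omega(B(x,s)) \approx s^{N+\theta} \max\{d(x),s\}^b = \max\{d(x),s\}^b \max\{d_\Sigma(x),s\}^\theta\, s^N$.

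\textbf{Main obstacle.} The delicate step is decoupling the two weights in the large scale regime, where the Whitney-type analysis for $d^b$ and the tubular neighborhood analysis for $d_\Sigma^\theta$ must be carried out simultaneously on the same ball. What saves the argument is that $\Sigma \Subset \Omega$, so there exists $c_0>0$ with $d(y) \geq c_0$ on a fixed neighborhood of $\Sigma$; hence whenever $B(x,s)$ interacts substantially with $\Sigma$ but $s$ is smaller than a fixed constant, $d^b$ is pinched between two positive constants on the portion of $B(x,s)$ near $\Sigma$, which makes the two integrals factorize. For $s$ of order $\mathrm{diam}(\Omega)$ the ball covers $\Omega$ up to a constant and both sides of the claimed equivalence are comparable to $\omega(\Omega)$, closing the argument.
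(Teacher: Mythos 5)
Your outline follows essentially the same route as the paper: freeze whichever weight is essentially constant on $B(x,s)$, and compute the remaining integral in the tubular coordinates of Subsection \ref{assumptionK} via \eqref{propdist}, with the hypothesis $\theta>k-N$ entering exactly through the convergence of $\int_0^{Cs}t^{\theta+N-k-1}\,\dd t$. The paper organizes the cases by the position of $x$ (in $\Sigma_{\beta_0/4}$, in $\Omega_{\beta_0/4}$, or away from both) and by $d_\Sigma(x)$ versus $2s$, whereas you split by $s$ versus $\tfrac14 d_\Sigma(x)$; this is the same decomposition in different clothing, and your decoupling observation ($d\approx 1$ on a fixed neighborhood of $\Sigma$, $d_\Sigma\approx 1$ near $\partial\Omega$) is precisely how the paper reduces to a one-weight estimate in each region.

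Two points need attention. First, one step is wrong as written: in the large-scale regime you take $\xi\in\Sigma$ with $|x-\xi|=d_\Sigma(x)\leq 4s$ and assert that $B(\xi,s/4)\cap\Omega$ occupies a positive proportion of $B(x,s)\cap\Omega$; when $d_\Sigma(x)\in(3s/4,4s)$ the ball $B(\xi,s/4)$ may be disjoint from $B(x,s)$, so no lower bound follows there. The repair is routine: if $d_\Sigma(x)\leq s/2$ then $B(\xi,s/4)\subset B(x,s)$ and your argument applies, while if $s/2<d_\Sigma(x)<4s$ then $d_\Sigma(y)\approx s\approx\max\{d_\Sigma(x),s\}$ on $B(x,s/4)$ and the bound is immediate. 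Second, the only genuinely delicate part of the lemma is the lower bound when $\theta\geq 0$ and $d_\Sigma(x)\lesssim s$: your Fubini computation yields the upper bound at once, but for the lower bound one must exhibit inside the ball a full product box in the chart coordinates on which $\delta_\Sigma^{\xi}$ is comparable to $s$, which is what the paper's explicit set $\CA$ and the constants involving $\|\Sigma\|_{C^2}$ in \eqref{propdist} accomplish; your sketch gestures at this but does not carry it out, so that step should be made explicit if you write the proof in full.
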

\begin{proof}
Let $\beta_0$ be as in Subsection \ref{assumptionK} and $s<\frac{\xb_0}{16}$. First we assume that $x\in \xS_{\frac{\xb_0}{4}}$ then $d(y)\approx 1$ for any $y\in B(x,s)$. Thus, it is enough to show that
\be \label{39}
\int_{B(x,s)}d_\xS(y)^{\theta} \dy\approx \max\{d_\xS(x),s\}^{\theta} s^N.
\ee
\noindent\textbf{Case 1:  $d_\xS(x)\geq 2s$.} Then $\frac{1}{2}d_{\xS}(x)\leq d_\xS(y)\leq\frac{3}{2} d_\xS(x)$ for any $y\in B(x,s)$, therefore \eqref{39} follows easily in this case. \medskip

\noindent \textbf{Case 2: $d_\xS(x)\leq 2s$.} Then there exists  $\xi \in\xS $ such that $B(x,s)\subset V(\xi,4\xb_0)$. If $y\in B(x,s),$ then $|y'-x'|<s$ and $d_\xS(y)\leq d_\xS(x)+|x-y|\leq 3s$.
Thus by \eqref{propdist},
$\xd_\Sigma^\xi(y)\leq C_1s$ for any $y\in B(x,s)$, where $C_1$ depends on $\| \Sigma \|_{C^2},N$ and $k$. Thus
\bal
\int_{B(x,s)}d_\xS(y)^\theta \dy \lesssim \int_{\{|x'-y'|<s\}}\int_{\{\xd_\Sigma^\xi(y)\leq C_1s\}}(\xd_\Sigma^\xi(y))^\theta \dy''\dy'
\approx s^{N+\theta} \approx \max\{d_\xS(x),s\}^\theta s^{N}.
\eal
Here the similar constants depend on $N,k,\| \Sigma \|_{C^2}$ and $\beta_0$. \medskip

\noindent \textbf{Case 3:} $d_\xS(x)\leq 2s$ and $\theta<0$. We have that $d_\xS(y)^\theta \geq3^\theta s^{\theta}$  for any $y\in B(x,s)$, which leads to \eqref{39}. \medskip

\noindent \textbf{Case 4:} $d_\xS(x)\leq 2s$ and $\theta \geq 0$. Let $C_2=C \| \Sigma \|_{C^2}$ be the constant in \eqref{propdist}.

If $d_\xS(x) \leq \frac{s}{6(N-k)C_2}$ then
by \eqref{propdist} we have $\xd_\Sigma^\xi(x)\leq\frac{s}{6(N-k)}$. In addition for any
\bal
y\in \left\{\psi = (\psi',\psi'') \in\xO \setminus \Sigma:|x'-\psi'|\leq \frac{s}{6(N-k)C_2},\; \xd_\Sigma^\xi(\psi)\leq \frac{s}{6(N-k)} \right\}=:\CA,
\eal
we have
\bal
|x''-y''|\leq \xd_\Sigma^\xi(x)+\xd_\Sigma^\xi(y)+\left(\sum_{i=k+1}^N|\xG_i^\xi(x')-\xG_i^\xi(y')|^2\right)^\frac{1}{2}\leq  \frac{s}{3}+(N-k)\| \Sigma \|_{C^2}|x'-y'|\leq \frac{s}{2}.
\eal
This implies that $\CA\subset B(x,s)$.  Consequently,
\bal
\int_{B(x,s)}d_\xS(y)^\theta\dy&\approx \int_{B(x,s)}(\xd_\xS^\xi(y))^\theta \dy\gtrsim \int_{\CA}(\xd_\xS^\xi(y))^\theta \dy \approx C\max\{d_\xS(x),s\}^\theta s^{N}.
\eal

If $d_\xS(x)\geq \frac{s}{6(N-k)C_2}$ then
\bal
\int_{B(x,s)}d_\xS(y)^\theta \dy\geq\int_{B(x,\frac{s}{12(N-k)C_2})}d_\xS(y)^\theta \dy
\eal
and hence \eqref{39} follows by case 1.

Next we consider $x\in \xO_{\frac{\xb_0}{4}}$. Then $d_\xS(y)\approx 1$ for any $y\in \xO_{\frac{\xb_0}{2}}$. By proceeding as before we may prove \eqref{doubling} for any $s<\frac{\xb_0}{16}$.

If $x\in \xO\setminus(\xO_{\frac{\xb_0}{4}}\cup \Sigma_{\frac{\xb_0}{4}})$ then
$d_\xS(y),d(x)\approx 1$ for any $y\in B(y,s),$ with $s<\frac{\xb_0}{16}.$ Thus, in this case, we can easily prove \eqref{39} for any $s<\frac{\xb_0}{16}.$

If $ \frac{\xb_0}{16}\leq s\leq 4 \text{diam}(\xO)$ then  $\gw(B(x,s))\approx 1$, hence estimate \eqref{doubling} follows straightforward. The proof is complete.
\end{proof}

\begin{lemma}\label{vol}
Let $\xa< N-2$, $b> 0$, $\theta>\max\{k-N,-2-\xa\}$ and $\dd \gw=d(x)^b d_\xS(x)^\theta$ $\1_{\Omega \setminus \Sigma}(x)\,\dx$.  Then \eqref{2.3}  holds.
\end{lemma}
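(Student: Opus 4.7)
The plan is to reduce the quasi-metric balls $\GTB(x,r)$ to ordinary Euclidean balls and then invoke the volume estimate \eqref{doubling}. The key first step is to show that, uniformly in $x,y\in\overline{\Omega}$ with $x\neq y$, $\mathbf{d}(x,y)$ is comparable to an explicit function of $|x-y|$, $d(x)$, and $d_\Sigma(x)$. Using the $1$-Lipschitz property of $d$ and $d_\Sigma$, one checks
\bal
\max\{|x-y|,d(x),d(y)\} &\approx \max\{|x-y|,d(x)\},\\
\max\{|x-y|,d_\Sigma(x),d_\Sigma(y)\} &\approx \max\{|x-y|,d_\Sigma(x)\},
\eal
whence $\mathbf{d}(x,y)\approx F_x(|x-y|)$ with
\bal
F_x(t):=\frac{\max\{t,d(x)\}^2\,t^{N-2}}{\max\{t,d_\Sigma(x)\}^\alpha}.
\eal
Since $\alpha<N-2$, $F_x$ is a strictly increasing piecewise power-law function; writing $T_x:=F_x^{-1}$, one obtains $\GTB(x,r)\approx B(x,T_x(r))\cap\overline{\Omega}$, and Lemma \ref{l2.3} then yields
\bal
h_x(r):=\gw(\GTB(x,r))\approx\max\{d(x),T_x(r)\}^b\,\max\{d_\Sigma(x),T_x(r)\}^\theta\,T_x(r)^N.
\eal

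Writing $D=d(x)$ and $\delta=d_\Sigma(x)$, I will split the range of $r$ into four regimes according to the location of $T_x(r)$: (A) $T_x(r)\leq\min\{D,\delta\}$, in which $h_x(r)\propto r^{\beta_A}$ with $\beta_A=N/(N-2)$; (B) $\delta\leq T_x(r)\leq D$, with $\beta_B=(N+\theta)/(N-2-\alpha)$; (C) $D\leq T_x(r)\leq\delta$, with $\beta_C=(N+b)/N$; (D) $T_x(r)\geq\max\{D,\delta\}$, with $\beta_D=(N+b+\theta)/(N-\alpha)$, eventually saturating at $\gw(\Omega)$ once $T_x(r)\gtrsim\diam\Omega$. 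The hypotheses $\alpha<N-2$, $b>0$, $\theta>-2-\alpha$ immediately yield $\beta_A,\beta_B,\beta_C>1$, but $\beta_D$ need not exceed $1$.

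Finally, \eqref{2.3} will follow by a piecewise comparison. In any regime with exponent $\beta>1$, one has $\int_0^r h_x(s)/s^2\,\dd s\approx h_x(r)/((\beta-1)r)$ and $\int_r^{2r}h_x(s)/s^2\,\dd s\lesssim h_x(r)/r$ by monotonicity, yielding \eqref{2.3} with ratio $2^{\beta-1}$. For regime (D) with $\beta_D\leq 1$, I will use the accumulated integral from regimes (A)--(C): at the transition scale $r_D^\star$ where (D) begins one has $h_x(r_D^\star)/r_D^\star\lesssim\int_0^{r_D^\star}h_x(s)/s^2\,\dd s$ by super-linear growth below $r_D^\star$, and since $s\mapsto h_x(s)/s=s^{\beta_D-1}$ is non-increasing on (D), one gets
\bal
\int_r^{2r}\frac{h_x(s)}{s^2}\,\dd s\lesssim\frac{h_x(r)}{r}\leq\frac{h_x(r_D^\star)}{r_D^\star}\lesssim\int_0^r\frac{h_x(s)}{s^2}\,\dd s;
\eal
for $\beta_D>1$ the direct calculation within (D) again works, and the saturation zone is trivial since $h_x$ is eventually constant there. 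The main obstacle is regime (D) with $\beta_D\leq 1$, resolved precisely by absorbing its mild growth into the strictly super-linear growth enforced on small scales by the hypotheses $\alpha<N-2$, $b>0$, $\theta>-2-\alpha$.
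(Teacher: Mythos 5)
Your proof is correct and it rests on the same two main ingredients as the paper's — reducing the quasi-metric balls $\GTB(x,s)$ to Euclidean balls via the $1$-Lipschitz comparisons $\max\{|x-y|,d(x),d(y)\}\approx\max\{|x-y|,d(x)\}$, $\max\{|x-y|,d_\xS(x),d_\xS(y)\}\approx\max\{|x-y|,d_\xS(x)\}$ (the paper does this through the sets $\GTC(x,s)$, $\mathcal{E}(x,s)$ and the explicit radii $S_1,S_2,S_3$) and the ball-volume formula of Lemma \ref{l2.3} — but the decomposition is genuinely different. The paper splits according to the position of $x$: for $x\in\Sigma_{\beta_0/4}$ one has $d(x)\approx1$, for $x\in\Omega_{\beta_0/4}$ one has $d_\xS(x)\approx1$, and in the bulk both are $\approx1$; since $\Sigma\Subset\Omega$ the two distances are never simultaneously small, so $\omega(\GTB(x,s))$ is always a two-piece power of $s$ with exponents among $\tfrac{N}{N-2}$, $\tfrac{N+\theta}{N-2-\xa}$, $\tfrac{N+b}{N}$ (all $>1$ under the hypotheses) followed by saturation, as in \eqref{xoest1}--\eqref{xoest4}, and \eqref{2.3} follows directly. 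You instead keep $x$ arbitrary and split by scale, comparing $T_x(r)$ with $d(x)$ and $d_\xS(x)$; this produces the extra regime (D) with exponent $\tfrac{N+b+\theta}{N-\xa}$, which indeed need not exceed $1$, and your absorption of (D) into the strictly super-linear accumulated integral from (A)--(C) is sound (it uses only the lower bound $\int_0^{r_D^\star}h_x(s)s^{-2}\,\dd s\gtrsim h_x(r_D^\star)/r_D^\star$, which follows from doubling of $h_x$ and the matching of the piecewise powers at the breakpoints, together with the monotonicity of $h_x(s)/s$ on (D)). What each route buys: the paper's positional splitting exploits $\mathrm{dist}(\Sigma,\partial\Omega)>0$, under which regime (D) is essentially vacuous — there $T_x(r)\geq\max\{d(x),d_\xS(x)\}\gtrsim1$ confines $r$ to a bounded interval on which $\omega(\GTB(x,r))\approx1$, so the problematic exponent never matters — whereas your scale-based argument is uniform in $x$, does not use $\Sigma\Subset\Omega$, and spells out the final ``combining the estimates yields \eqref{2.3}'' step that the paper leaves to the reader. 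Two small points to tighten: your ``$\int_0^r h_x(s)/s^2\,\dd s\approx h_x(r)/((\beta-1)r)$'' should be only ``$\gtrsim$'' once earlier regimes contribute (and that is all you use), and the case where $[r,2r]$ straddles a regime boundary or the saturation threshold deserves a sentence, though it is routine after splitting the interval.
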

\begin{proof}
We note that if $s\geq (4\diam(\xO))^{N-\xa}$ then $\xo(\GTB(x,s))=\xo(\overline{\xO})<\infty$, where $\GTB(x,s)$ is defined after \eqref{Jest}, namely $\GTB(x,s)=\{y \in \Omega \setminus \Sigma: \mathbf{d}(x,y)<s\}$ and $\mathbf{d}(x,y)= \frac{1}{\CN_\xa(x,y)}$.

We first assume that $0<\xa<N-2$.  Let $x\in \Sigma_\frac{\xb_0}{4}$ then
\ba \label{C0d}
0<C_0 \leq d(x)\leq 2 \diam(\xO),
\ea
where $C_0$ depends on $\xO,\Sigma,\beta_0$. Set
\bal
\GTC(x,s):=\left\{y \in \Omega \setminus \Sigma: \frac{|x-y|^{N-2}}{\max\{|x-y|,d_\xS(x),d_\xS(y)\}^\xa}< s\right\}.
\eal
Then
\ba \label{CB} \GTC\left(x,\frac{s}{4\diam(\xO)^2}\right)\subset\GTB(x,s)\subset \GTC\left(x,\frac{s}{C_0^2}\right).
\ea
We note that $B(x,S_1)\subset\GTC(x,s)\subset B(x,l_1S_1)$
where $S_1=\max\{s^{\frac{1}{N-2-\xa}},s^{\frac{1}{N-2}}d_\xS(x)^{\frac{\xa}{N-2}} \}$ and $l_1=2^{\frac{\xa}{N-2-a}}$. Therefore, by Lemma \ref{l2.3}, we obtain
\ba\nonumber
\xo(\GTB(x,s))&\approx \max\left\{d_\xS(x),\max\{s^{\frac{1}{N-2-\xa}},s^{\frac{1}{N-2}}d_\xS(x)^{\frac{\xa}{N-2}} \}\right\}^\theta \max\{s^{\frac{1}{N-2-\xa}},s^{\frac{1}{N-2}}d_\xS(x)^{\frac{\xa}{N-2}} \}^N\\
&\approx\left\{\BAL                     &d_\xS(x)^{\theta+\frac{\xa N}{N-2}}s^{\frac{N}{N-2}} \quad&&\text{if } s\in (0,d_\xS(x)^{N-2-\xa}), \\
                                        &s^{\frac{\theta+N}{N-2-\xa}} \quad &&\text{if } s\in [d_\xS(x)^{N-2-\xa},M),\\
                                        &1 \quad &&\text{if } s \in [M,\infty).
\EAL\right.\label{xoest1}
\ea
where
\bal
M:=(4\diam(\xO))^\frac{N(N-\xa)}{b+N}+(4\diam(\xO))^{\frac{(N-2)(N-\xa)}{N}}+(4\diam(\xO))^{\frac{(N-\xa-2)(N-\xa)}{\theta +N}}.
\eal

Next we assume that $\xa\leq0$, Let $x\in \Sigma_\frac{\xb_0}{4}$ then \eqref{C0d} and \eqref{CB} hold. We also have $B(x,{l_2S_2})\subset\GTC(x,s)\subset B(x,S_2)$, where $S_2=\min\{s^{\frac{1}{N-2-\xa}},s^{\frac{1}{N-2}}d_\xS(x)^{\frac{\xa}{N-2}}\}$ and $l_2=2^{\frac{\xa}{N-2}}$. Therefore by Lemma \ref{l2.3}, we obtain
\ba\nonumber
\xo(\GTB(x,s))&\approx \max\left\{d_\xS(x),\min\{s^{\frac{1}{N-2-\xa}},s^{\frac{1}{N-2}}d_\xS(x)^{\frac{\xa}{N-2}}\}\right\}^\theta \min\{s^{\frac{1}{N-2-\xa}},s^{\frac{1}{N-2}}d_\xS(x)^{\frac{\xa}{N-2}}\}^N\\
&\approx\left\{\BAL &d_\xS(x)^{\theta+\frac{\xa N}{N-2}}s^{\frac{N}{N-2}} \quad &&\text{if } s\in (0,d_\xS(x)^{N-2-\xa}),\\
&s^{\frac{\theta+N}{N-2-\xa}} \quad &&\text{if } s\in [d_\xS(x)^{N-2-\xa},M),\\
 &1 \quad &&\text{if } s \in [M,\infty).
\EAL\right.\label{xoest2}
\ea

Next consider $x\in \xO_{\frac{\xb_0}{4}},$ then there exists a positive constant $C_3=C_3(\xO,\Sigma,\xa,\xb_0)$ such that $C_3\leq d_\xS(x)< 2 \diam(\xO).$ Set
\bal
\mathcal{E}(x,s):=\{y \in \Omega \setminus \Sigma: |x-y|^{N-2}\max\{|x-y|,d(x),d(y)\}^{2}< s\}.
\eal
We obtain
\bal
\mathcal{E}(x,\min\{C_3^\xa,2^\xa\diam^\xa(\xO)\}s)\subset\GTB(x,s)\subset \mathcal{E}(x,\max\{C_3^\xa,2^\xa\diam^\xa(\xO)\}s).
\eal
We also have
\bal
B(x,l_3S_3)\subset\mathcal{E}(x,s)\subset B(x,S_3),
\eal
where $S_3=\min\{s^{\frac{1}{N}},s^{\frac{1}{N-2}}d(x)^{-\frac{2}{N-2}}\}$ and $l_3=2^{-\frac{2}{N-2}}$. Again, by Lemma \ref{l2.3}, we obtain
\ba\nonumber
\xo(\GTB(x,s))&\approx \max\left\{d(x),\min\{s^{\frac{1}{N}},s^{\frac{1}{N-2}}d(x)^{-\frac{2}{N-2}}\}\right\}^b \min\{s^{\frac{1}{N}},s^{\frac{1}{N-2}}d(x)^{-\frac{2}{N-2}}\}^N\\
&\approx\left\{\BAL &d(x)^{b-\frac{2 N}{N-2}}s^{\frac{N}{N-2}} \quad &&\text{if } s\in (0,d(x)^{N}),\\
&s^{\frac{b+N}{N}}\quad &&\text{if } s\in [d(x)^{N},M),\\
&1\quad &&\text{if } s \in [M,\infty).
\EAL\right.\label{xoest3}
\ea

Let $0<\bar \xb \leq \frac{\xb_0}{4}$ and $x\in \xO\setminus (\xO_{\bar \xb}\cup \Sigma_{\bar \xb}).$ Then there exists a positive constant $C_4=C_4(\xO,\Sigma,\bar \xb)$ such that $C_4\leq d_\xS(x),d(x)< 2 \diam(\xO).$ By Lemma \ref{l2.3}, we can show that
\ba
\xo(\GTB(x,s))&\approx \left\{\BAL &s^{\frac{N}{N-2}} \quad &&\text{if } s \in (0,M),\\
                                           &1 \quad &&\text{if } s \in [M,\infty).
\EAL\right.
\label{xoest4}
\ea

Combining \eqref{xoest1}--\eqref{xoest4} leads to \eqref{2.3}. The proof is complete.
\end{proof}

\begin{lemma}\label{vol-2}
	We assume that $\xa< N-2$, $b> 0$, $\theta>\max\{k-N,-2-\xa\}$ and $\dd \gw=d(x)^b d_\xS(x)^\theta\1_{\Omega \setminus \Sigma}(x)\,\dx$.  Then \eqref{2.4}  holds.
\end{lemma}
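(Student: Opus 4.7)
The approach is to exploit the explicit formulas for $\omega(\GTB(x,s))$ derived case-by-case in the proof of Lemma \ref{vol}, combined with the quasi-metric triangle inequality of Lemma \ref{ineq}, to reduce \eqref{2.4} to two auxiliary facts:
\begin{itemize}
\item[(A)] a doubling estimate $\omega(\GTB(x,2s)) \lesssim \omega(\GTB(x,s))$ uniformly in $x, s$;
\item[(B)] an integral-versus-boundary comparison $\int_0^r \omega(\GTB(z,s)) s^{-2}\,ds \approx \omega(\GTB(z,r))/r$ uniformly in $z, r$.
\end{itemize}

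First I would establish (A) by reading off the four-case analysis \eqref{xoest1}--\eqref{xoest4} from the proof of Lemma \ref{vol}: in each regime of $s$, $\omega(\GTB(x,s))$ is a pure power of $s$ with a prefactor depending only on $d(x)$ and $d_\Sigma(x)$. The doubling estimate holds regime-by-regime, and across the thresholds $s = d_\Sigma(x)^{N-2-\alpha}$, $s = d(x)^N$, and $s = M$ it is verified directly by matching the formulas on both sides. Next I would establish (B) by observing that the exponents governing $\omega(\GTB(x,s))$ as a function of $s$, namely $N/(N-2)$, $(\theta + N)/(N-2-\alpha)$, and $(b+N)/N$, all exceed $1$: the first trivially, the second precisely because the hypothesis $\theta > -2-\alpha$ gives $\theta + N > N-2-\alpha$, and the third because $b > 0$. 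Hence on any single regime $[a,b]$, the power-law computation $\int_a^b s^{\gamma-2}\,ds \approx b^{\gamma-1}/(\gamma-1)$ gives $\int_a^b \omega(\GTB(x,s))/s^2\,ds \lesssim \omega(\GTB(x,b))/b$, and summing over the (at most three) regimes contained in $(0,r)$ yields the upper bound in (B); the lower bound follows from the monotonicity of $s \mapsto \omega(\GTB(x,s))$ and integration over $[r/2, r]$.

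With (A) and (B) in hand, the conclusion is an application of Lemma \ref{ineq}. For $y \in \GTB(x,r)$ and any $z \in \GTB(x,r)$, \eqref{dist-ineq} yields $\mathbf{d}(y,z) \leq C(\mathbf{d}(y,x) + \mathbf{d}(x,z)) < 2Cr$, so $\GTB(x,r) \subset \GTB(y, 2Cr)$. Iterating the doubling estimate (A) a bounded number of times gives $\omega(\GTB(y,2Cr)) \lesssim \omega(\GTB(y,r))$, whence $\omega(\GTB(x,r)) \lesssim \omega(\GTB(y,r))$. The reverse bound is obtained by interchanging the roles of $x$ and $y$. Combining with (B),
\[
\int_0^r \frac{\omega(\GTB(y,s))}{s^2}\,ds \approx \frac{\omega(\GTB(y,r))}{r} \approx \frac{\omega(\GTB(x,r))}{r} \approx \int_0^r \frac{\omega(\GTB(x,s))}{s^2}\,ds,
\]
and taking the supremum over $y \in \GTB(x,r)$ delivers \eqref{2.4}.

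The main obstacle is the bookkeeping in (A), since the thresholds delimiting the regimes of $\omega(\GTB(x,s))$ depend on the position of $x$ (whether near $\Sigma$, near $\partial\Omega$, or in the interior), producing several sub-cases that mirror cases 1--4 of the proof of Lemma \ref{vol}. The verification that the doubling constant can be chosen uniformly in $x$ requires checking that on each side of a threshold, the two formulas give comparable values at the common endpoint, which is essentially automatic from the piecewise-power structure but must be recorded carefully.
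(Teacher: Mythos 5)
Your route is genuinely different from the paper's: the paper compares $\int_0^r \gw(\GTB(y,s))s^{-2}\,\dd s$ and $\int_0^r \gw(\GTB(x,s))s^{-2}\,\dd s$ directly, by integrating the piecewise formulas of the proof of Lemma \ref{vol} (see \eqref{intxoest1}) and running a case analysis on the position of $y\in\GTB(x,r)$ relative to $d_\xS(x)$ and $r^{1/(N-2-\xa)}$, while you factor the comparison through doubling of $s\mapsto\gw(\GTB(x,s))$, the engulfing property $\GTB(x,r)\subset\GTB(y,2Cr)$ coming from \eqref{dist-ineq}, and the claim (B) that the integral is comparable to its boundary term $\gw(\GTB(z,r))/r$. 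Step (A) and the engulfing argument are sound, and (B) restricted to the power-law regimes is also sound: the exponents $N/(N-2)$, $(\theta+N)/(N-2-\xa)$ and $(b+N)/N$ indeed exceed $1$ under $\xa<N-2$, $\theta>-2-\xa$, $b>0$, and the formulas \eqref{xoest1}--\eqref{xoest4} match at the thresholds up to constants, so $s\mapsto\gw(\GTB(z,s))/s$ is essentially nondecreasing there. (Note also that your lower bound in (B) needs one more use of doubling, since integrating over $[r/2,r]$ only produces $\gw(\GTB(z,r/2))/r$; you have (A), so this is harmless.)

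The genuine gap is that (B), asserted ``uniformly in $z,r$'', is false for large $r$: in the regime $s\geq M$ (with $M$ the constant from the proof of Lemma \ref{vol}) one has $\gw(\GTB(z,s))\approx 1$, i.e.\ the ``exponent'' there is $0$, not $>1$, and your justification of (B) never addresses this regime. Concretely, for $r\gg M$ the left-hand side of (B) stays bounded below by a positive constant while $\gw(\GTB(z,r))/r\approx 1/r\to 0$, so the upper bound in (B) -- precisely the inequality your final chain invokes at the point $y$ -- fails. The repair is exactly the paper's Case 2: for $r$ above a fixed constant, both $\int_0^r\gw(\GTB(y,s))s^{-2}\,\dd s$ and $\int_0^r\gw(\GTB(x,s))s^{-2}\,\dd s$ are comparable to $1$ uniformly in $x,y\in\overline{\xO}$, so \eqref{2.4} holds trivially there, and (B) only needs to be proved (and is true) for $r\lesssim M$. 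With that case split added, your argument closes and is arguably tidier than the paper's direct comparison, since it isolates the quasi-metric and doubling structure instead of repeating the positional case analysis.
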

\begin{proof}
We  consider only the case $\xa>0$  and $x\in \Sigma_{\frac{\xb_0}{16}}$ since the other cases $x\in \xO_{\frac{\xb_0}{16}}$ and $x\in \xO\setminus(\xO_{\frac{\xb_0}{16}}\cup \Sigma_{\frac{\xb_0}{16}})$ can be treated similarly and we omit them. We take $r>0$.

\noindent \textbf{Case 1:} $0<r<\left(\frac{\xb_0}{16(2\diam(\xO))^\xa}\right)^{N}$. In this case, we note that $\GTB(x,r) \subset \Sigma_{\frac{\xb_0}{8}}$. This and  \eqref{xoest1} imply that, for any $y \in \GTB(x,r)$,
\ba\nonumber
\int_0^r\frac{\xo(\GTB(y,s))}{s^2} \dd s\approx\left\{\BAL                     &d_\xS(y)^{\theta+\frac{\xa N}{N-2}}r^{\frac{2}{N-2}} \quad &&\text{if } r\in (0,d_\xS(y)^{N-2-\xa}),\\
                                        &r^{\frac{\theta+2+\xa}{N-2-\xa}} \quad &&\text{if } r\in [d_\xS(y)^{N-2-\xa},M), \\
                                        &1 \quad &&\text{if } r \in [M,\infty).
\EAL\right.\label{intxoest1}
\ea

If $|x-y|\leq \frac{1}{2}d_\xS(x)$ then $\frac{1}{2}d_\xS(x)\leq d_\xS(y)\leq \frac{3}{2}d_\xS(x)$. Therefore, when $d_\xS(y),d_\xS(x)\geq r^{\frac{1}{N-2-\xa}}$, we obtain
\bal
\int_0^r\frac{\xo(\GTB(y,s))}{s^2} \dd s \approx d_\xS(y)^{\theta+\frac{\xa N}{N-2}}r^{\frac{2}{N-2}}\approx d_\xS(x)^{\theta+\frac{\xa N}{N-2}}r^{\frac{2}{N-2}}\approx \int_0^r\frac{\xo(\GTB(x,s))}{s^2} \dd s.
\eal
If $d_\xS(y)\geq r^{\frac{1}{N-2-\xa}}$ and $d_\xS(x)\leq r^{\frac{1}{N-2-\xa}}$ then $d_\xS(y)\leq \frac{3}{2}r^{\frac{1}{N-2-\xa}}$, which implies
\bal
\int_0^r\frac{\xo(\GTB(y,s))}{s^2} \dd s \approx d_\xS(y)^{\theta+\frac{\xa N}{N-2}}r^{\frac{2}{N-2}}\approx r^{\frac{\theta+2+\xa}{N-2-\xa}}\approx \int_0^r\frac{\xo(\GTB(x,s))}{s^2} \dd s.
\eal
If $d_\xS(y)\leq r^{\frac{1}{N-2-\xa}}$ and $d_\xS(x)\geq r^{\frac{1}{N-2-\xa}}$ then $d_\xS(x)\leq 2r^{\frac{1}{N-2-\xa}}$, which yields
\bal
\int_0^r\frac{\xo(\GTB(x,s))}{s^2} \dd s\approx d_\xS(x)^{\theta+\frac{\xa N}{N-2}}r^{\frac{2}{N-2}}\approx r^{\frac{\theta+2+\xa}{N-2-\xa}}\approx \int_0^r\frac{\xo(\GTB(y,s))}{s^2} \dd s.
\eal
If $d_\xS(y)\leq r^{\frac{1}{N-2-\xa}}$ and $d_\xS(x)\leq r^{\frac{1}{N-2-\xa}}$ then
\bal
\int_0^r\frac{\xo(\GTB(x,s))}{s^2} \dd s\approx r^{\frac{\theta+2+\xa}{N-2-\xa}}\approx \int_0^r\frac{\xo(\GTB(y,s))}{s^2} \dd s.
\eal

Now we assume that $y \in \GTB(x,r)$ and $|x-y|\geq \frac{1}{2}d_\xS(x).$ Then
\bal d_\xS(y)\leq \frac{3}{2}|x-y| \quad \text{and} \quad
|x-y|\leq C(\xb_0,\xO,N,\xS) r^{\frac{1}{N-\xa-2}}.
\eal
Hence $d_\xS(x),d_\xS(y)\lesssim r^{\frac{1}{N-\xa-2}}$. Proceeding as above we obtain the desired result. \medskip

\noindent \textbf{Case 2:} $r\geq \left(\frac{\xb_0}{16(2\diam(\xO))^\xa}\right)^{N}$. By \eqref{xoest1}--\eqref{xoest4}, we can easily prove that
\bal
\int_0^r\frac{\xo(\GTB(y,s))}{s^2} \dd s \approx 1,\quad \forall y\in \overline{\xO},
\eal
and the desired result follows easily in this case.
\end{proof}

For $b>0$, $\theta>-N+k$ and $s>1$, define the capacity $\text{Cap}_{\Nthb,s}^{b,\theta}$ by
\bal \text{Cap}_{\Nthb,s}^{b,\theta}(E) :=\inf\left\{\int_{\overline{\xO}}d^b d^\theta_\xS\gf^s\,\dx:\;\; \gf \geq 0, \;\;\Nthb[ d^b d_{\Sigma}^\theta\gf ]\geq\1_E\right\} \quad \text{for  Borel set } E\subset\overline{\xO}.
\eal
Here $\1_E$ denotes the indicator function of $E$. Furthermore, by \cite[Theorem 2.5.1]{Ad},
\be\label{dualcap}
(\text{Cap}_{\Nthb,s}^{b,\theta}(E))^\frac{1}{s}=\sup\{\tau(E):\tau\in\GTM^+(E), \|\Nthb[\tau]\|_{L^{s'}(\xO;d^b d^\theta_\xS)} \leq 1 \}.
\ee

%
%
%
%

Now we are ready to prove Theorem \ref{theoremint}.

\begin{proof}[\textbf{Proof of Theorem \ref{theoremint}}.] We will apply Proposition \ref{t2.1} with $J(x,y)=\CN_{2\am}(x,y)$, $\dd \xo= \left(d(x)d_{\xS}(x)^{-\am}\right)^{p+1}\dx$ and $\dd \lambda = \ei\1_{\xO\setminus \Sigma} \dd \gt$. Estimate \eqref{Jest} is satisfied thanks to Lemma \ref{ineq}, while assumptions \eqref{2.3}--\eqref{2.4} are fulfilled thanks to Lemmas \ref{vol}--\ref{vol-2} respectively with $\alpha=2\am$, $b=p+1$ and $\theta=-\am(p+1)$. We note that condition \eqref{p-cond} ensures that $b$ and $\theta$ satisfy the assumptions in Lemmas \eqref{vol}--\eqref{vol-2}.

Moreover, we have the following observations.

(i) There holds
\ba \label{GN2a}
G_\mu(x,y)\approx d(x)d(y)(d_{\xS}(x)d_{\xS}(y))^{-\am} \CN_{2\am}(x,y) \quad \forall x,y \in \Omega \setminus \Sigma, x \neq y.
\ea
Consequently, if  the equation
\ba \label{vN2a} v=\BBN_{2\am}[(dd_{\xS}^{-\am})^{p+1}v^p]+\ell \BBN_{2\am}[\gl]
\ea
has a solution $v$ for some $\ell>0$ then the function $\tilde v(x)=d(x)d_{\xS}(x)^{-\am}v(x)$ satisfies
$\tilde v \approx\BBG_\xm[\tilde v^p]+\ell \BBG_\xm[\gt]$. By \cite[Proposition 2.7]{BHV}, there exists $\rho>0$ small such that equation \eqref{u-rhotau} has a positive solution $u$. By the above argument, we can show that equations \eqref{vN2a} has a solution for $\ell>0$ small if and only if equation \eqref{u-rhotau} has a solution for $\rho>0$ small. In other words, statement 1 of Proposition \ref{t2.1} is equivalent to statement 1 of the present Theorem.

(ii) With $J,\omega$ and $\lambda$ as above, from \eqref{GN2a}, we deduce easily that statements 2--4 of Proposition \ref{t2.1} reduce to statements 2--4 of the present Theorem respectively.

From the above observations and  Proposition \ref{t2.1}, we obtain the desired results.
\end{proof}
\begin{remark} \label{capp-1} Assume $0<\mu \leq H^2$. By combining
\eqref{dualcap}, \eqref{GN2a} and \eqref{Gphi_mu}, we derive that	for any $1<p<\frac{N+1}{N-1}$,
\be \label{Nz}
\inf_{z \in \Omega \setminus \Sigma} \mathrm{Cap}_{\BBN_{2\am},p'}^{p+1,-\am(p+1)}(\{z\})> C.
\ee	
Hence, for $1<p<\frac{N+1}{N-1}$, statement 3 of Theorem \ref{theoremint} is valid, therefore, statements 1 and 2 of Theorem \ref{theoremint} hold true. This covers Theorem \ref{th2} (i) with $\gamma=1$ and Proposition \eqref{equivint}.
\end{remark}
\begin{proposition} \label{nonexist-1} Assume $0<\mu<\left( \frac{N-2}{2} \right)^2$ and $p \geq \frac{N+1}{N-1}$. Then there exists a measure $\tau \in \GTM^+(\Omega \setminus \Sigma;\ei)$ with $\| \tau \|_{\GTM(\Omega \setminus \Sigma;\ei)}=1$ such that problem \eqref{u-rhotau} does not admit positive solution for any $\xr>0$.
\end{proposition}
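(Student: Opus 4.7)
The plan is to exhibit a specific measure $\tau \in \GTM^+(\Omega \setminus \Sigma; \phi_\mu)$ with $\|\tau\|_{\GTM(\Omega \setminus \Sigma;\phi_\mu)}=1$ that violates Statement~3 of Theorem~\ref{theoremint}; the nonexistence of positive solutions for every $\rho > 0$ then follows from the equivalence of Statements~1 and 3 in that theorem. If $p \geq (2+\am)/\am$, Proposition~\ref{remove1} already yields nonexistence for every positive $\tau$, so I may restrict to the range $(N+1)/(N-1) \leq p < (2+\am)/\am$ in which Theorem~\ref{theoremint} applies.

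The starting observation is that $\BBG_\mu[K_\mu(\cdot,z)^p]\equiv +\infty$ on $\Omega \setminus \Sigma$ for any $z \in \partial\Omega$ with $d_\Sigma(z) > 0$ whenever $p \geq (N+1)/(N-1)$. Fix such a $z$ (which exists since $\Sigma \Subset \Omega$). In flat coordinates $x = z - s\mathbf{n} + \eta$ near $z$, with $s>0$ and $\eta$ tangent to $\partial\Omega$, \eqref{Martinest1} gives $K_\mu(x,z) \approx s(s^2+|\eta|^2)^{-N/2}$ (using $d_\Sigma \approx 1$ there), and for $y \in K_0 \Subset \Omega \setminus \Sigma$ with $z \notin \overline{K_0}$, \eqref{Greenesta} gives $G_\mu(y,x) \approx d(y)\,s\,|y-z|^{-N}$ for $x$ near $z$. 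After rescaling $\rho = su$ in the tangential radial variable, the divergent part of the integral becomes
\bal
d(y)|y-z|^{-N}\int_0^{r_0} s^{N-p(N-1)}\,ds\, \int_0^{\infty}\!\frac{u^{N-2}}{(1+u^2)^{pN/2}}\,du,
\eal
which diverges at $s=0$ precisely when $p \geq (N+1)/(N-1)$.

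To build $\tau$ in $\Omega$, set $x_n := z - t_n\mathbf{n} \in \Omega$ with $t_n \downarrow 0$ to be chosen, and $\tau_n := \phi_\mu(x_n)^{-1}\delta_{x_n}$, so that $\|\tau_n\|_{\GTM(\Omega \setminus \Sigma;\phi_\mu)}=1$. The same calculation, but with $|x-z|$ replaced by $|x-x_n|$ and using \eqref{Greenesta} to estimate $G_\mu(x,x_n)$ for $x$ away from $x_n$, shows
\bal
B_n(y) := \BBG_\mu\bigl[(G_\mu(\cdot,x_n)/\phi_\mu(x_n))^p\bigr](y) \geq c_{K_0}\,t_n^{N+1-p(N-1)},
\eal
with a logarithmic factor $|\ln t_n|$ at the critical exponent $p=(N+1)/(N-1)$, uniformly for $y \in K_0$; in particular $B_n(y) \to +\infty$ uniformly on $K_0$ as $t_n \to 0$. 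Set $\tau := \sum_{n=1}^\infty 2^{-n}\tau_n$, so $\|\tau\|_{\GTM(\Omega \setminus \Sigma;\phi_\mu)} = 1$. The elementary inequality $(\sum_j a_j)^p \geq \sum_j a_j^p$ for nonnegative $a_j$ and $p \geq 1$, applied termwise to $\BBG_\mu[\tau]$, combined with monotone convergence, yields
\bal
\BBG_\mu[\BBG_\mu[\tau]^p](y) \geq \sum_{n=1}^\infty 2^{-np}\,B_n(y) \quad \text{for every } y \in \Omega \setminus \Sigma.
\eal
I then choose the $t_n$ recursively so that $B_n(y) \geq 2^{n(p+2)}$ uniformly on $K_0$, which forces the series to diverge to $+\infty$ on the positive-measure set $K_0$.

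On the other hand, since $G_\mu(y,x_n)/\phi_\mu(x_n) \to K_\mu(y,z)$ uniformly on $K_0$ (Martin boundary convergence), the sequence is bounded in $n$ and $y \in K_0$, so $\BBG_\mu[\tau](y) = \sum_n 2^{-n}G_\mu(y,x_n)/\phi_\mu(x_n)$ is bounded on $K_0$. Hence no constant $C$ satisfies $\BBG_\mu[\BBG_\mu[\tau]^p] \leq C\,\BBG_\mu[\tau]$ a.e., and Statement~3 of Theorem~\ref{theoremint} fails; by that theorem, \eqref{u-rhotau} admits no positive solution for any $\rho > 0$. The principal technical obstacle is the uniform lower bound on $B_n$ over $K_0$, which is available because the divergent contribution comes from integration over a small neighborhood of $z$, where Harnack's inequality in the $y$-variable makes $G_\mu(y,x)$ comparable to a positive function of $y$ alone that stays bounded above and below on $K_0$.
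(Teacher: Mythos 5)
Your argument is correct, but it reaches the conclusion by a genuinely different route than the paper. The paper does not go through Theorem \ref{theoremint} at all: it argues directly by contradiction, picking points $y_{n_j}\to y^*\in\partial\Omega$ away from $\Sigma$, setting $\tau=\sum_j 2^{-j}\phi_\mu(y_{n_j})^{-1}\delta_{y_{n_j}}$, and using the representation $u=\BBG_\mu[u^p]+\rho\BBG_\mu[\tau]$ together with Fatou's lemma and the same interior-cone computation (divergence of $\int_0^{r_0}t^{N-p(N-1)}\,dt$ when $p\ge\frac{N+1}{N-1}$) to contradict the integrability $u\in L^p(\Omega;\phi_\mu)$ that any solution must have. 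You instead build essentially the same measure and falsify the pointwise condition $\BBG_\mu[\BBG_\mu[\tau]^p]\le C\,\BBG_\mu[\tau]$ on a compact set $K_0$, then invoke the equivalence of statements 1 and 3. Your route needs two ingredients the paper's does not: the case split at $p\ge\frac{2+\am}{\am}$ via Proposition \ref{remove1} (to stay inside the hypothesis $p<\frac{2+\am}{\am}$ of Theorem \ref{theoremint}), and a bridge from ``no solution for $\rho$ small'' (statement 1) to ``no solution for any $\rho>0$''; the latter is not literally what the equivalence gives, and the clean fix is to cite Proposition \ref{equivint}, which says that existence of a solution for \emph{some} $\rho>0$ already forces $\BBG_\mu[\BBG_\mu[\tau]^p]\le C\,\BBG_\mu[\tau]$ a.e.\ (alternatively, run the monotone iteration from Proposition \ref{remove1} to pass from a solution at some $\rho^*$ to solutions at all smaller $\rho$). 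With that citation your proof closes: the uniform lower bound $B_n\gtrsim t_n^{N+1-p(N-1)}$ (with the factor $|\ln t_n|$ at the critical exponent) on $K_0$ follows from \eqref{Greenesta} exactly as in the paper's cone estimate, superadditivity of $t\mapsto t^p$ plus monotone convergence give $\BBG_\mu[\BBG_\mu[\tau]^p]=+\infty$ on $K_0$, and the two-sided Green estimate gives boundedness of $\BBG_\mu[\tau]$ there (you do not need exact Martin-boundary convergence, only $G_\mu(y,x_n)\approx t_n\approx\phi_\mu(x_n)$ uniformly for $y\in K_0$). What the paper's approach buys is self-containedness --- only the representation formula and Fatou, no case split, and nonexistence for every $\rho$ comes out directly; what yours buys is an explicit exhibition of the failure of the Green/capacitary criterion, at the price of leaning on the heavier equivalence machinery.
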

\begin{proof}
Suppose by contradiction that for every $\tau \in \GTM^+(\Omega \setminus \Sigma;\ei)$ with $\norm{\tau}_{\GTM(\Omega \setminus \Sigma;\ei)} = 1$, there exists a positive solution to problem \eqref{u-rhotau} for some $\xr>0.$ Let $y^* \in \partial \Omega$ and $\{y_n\} \subset \Omega \setminus \Sigma$ such that $y_n \to y^* \in \partial \Omega$ and $\dist(y_n,\xS)>\xe>0,$ for some $\xe>0$.

From \eqref{GN2a} and \eqref{eigenfunctionestimates}, we have
\ba\label{72}
G_\mu(x,y_n)\ei(y_n)^{-1} \gtrsim \dfrac{1}{|x-y_n|^{N-2}}\cdot \frac{\ei(x)}{\max\{d(x)^2,d(y_n)^2,|x-y_n|^2\}}=:F(x,y_n).
\ea
By using  Fatou lemma and \eqref{eigenfunctionestimates}, we deduce that
\ba \label{eq:bound1} \nonumber
		\liminf_{ n \to \infty} \int_{\Omega \setminus \Sigma} F(x,y_n)^p \ei(x) dx& \geq \int_{\Omega \setminus \Sigma} (\liminf_{n \to \infty} F(x,y_n)^p ) \ei(x) \dd x \\  \nonumber
		&\gtrsim  \int_{\Omega \setminus \Sigma} \left( \frac{\ei(x)}{|x-y^*|^{N}} \right)^p \ei(x) \dd x \\
		&\approx  \int_{\Omega \setminus \Sigma} \left( \frac{d(x)d_\Sigma(x)^{-\am}}{|x-y^*|^{N}} \right)^p d(x)d_\Sigma(x)^{-\am} \dd x.
\ea

Since $\Omega$ is a $C^2$ domain, it satisfies the interior cone condition, hence there exists $r_0 > 0$ small enough such that the circular cone at vertex $y^*$
\bal
{\mathcal C}_{r_0}(y^*):=\left\{ x \in B_{r_0}(y^*):  (x-y^*)\cdot {\bf n}_{y^*} > \frac{1}{2}|x-y^*| \right\}  \subset \Omega \setminus \Sigma,
\eal
where ${\bf n}_{y^*}$ denotes the inward unit normal vector to $\partial \Omega$ at $y^*$.

Without loss of generality, suppose that the coordinates are placed so that $y^* = 0 \in \partial \Omega$, the tangent hyperplane to $\partial \Omega$ at $0$ is $\{ x=(x_1,\ldots,x_{N-1},x_N) \in \R^N: x_N=0\}$ and ${\bf n}_0 = (0,\ldots, 0,1)$. We can choose $r_0$ small enough such that $d(x) \geq \alpha |x|$ for all $x \in \mathcal{C}_{r_0}(0)$ and for some $\alpha \in (0,1)$.
Then we have
\be\label{intr0}
		\int_{\Omega \setminus \Sigma} \left( \frac{d(x)d_\Sigma(x)^{-\am}}{|x|^{N}} \right)^p d(x)d_\Sigma(x)^{-\am} \dd x
		 \gtrsim \int_{{\mathcal C}_{r_0}(0)} |x|^{1-(N-1)p} \dd x  \sim \int_0^{r_0} t^{N-(N-1)p} \dd t.
\ee
Since $p \geq \frac{N+1}{N-1}$, the last integral in \eqref{intr0} is divergent. This and \eqref{eq:bound1}, \eqref{intr0} yield
$\liminf_{n \to \infty}\int_{\Omega \setminus \Sigma}F(x,y_n)^p \ei \dd x  =\infty$. Consequently, for any $j\in \BBN,$ there exists $n_j\in \BBN$ such that
\ba\label{73}
2^{jp}\leq \int_{\Omega \setminus \Sigma}F(x,y_{n_j})^p \ei \dd x.
\ea

Put $\tau_k: = \sum_{j=1}^k 2^{-j}\frac{\delta_{y_{n_j}}}{\phi_\mu}$ then $\| \tau_k \|_{\GTM^+(\Omega \setminus \Sigma;\ei)} \leq 1$ and  $\tau_k\leq \tau_{k+1}$ for any $k\in \BBN$. Put $\tau=\lim_{k\to\infty}\tau_k$ then 
\bal \int_{\xO\setminus\xS}\ei \dd \tau=\sum_{j=1}^\infty 2^{-j}=1.
\eal
By the supposition, there exists a positive solution $u\in L^p(\xO\setminus\xS;\ei)$ of problem \eqref{u-rhotau} with datum $\rho \tau$.
From the representation formula and \eqref{72}, we deduce
\bal
 u = \BBG_\mu[u^p] + \rho \BBG_\mu[\tau]\geq \xr\sum_{j=1}^\infty 2^{-j}\BBG_\mu[\frac{\delta_{y_{n_j}}}{\phi_\mu}]\gtrsim \xr\sum_{j=1}^\infty 2^{-j}F(x,y_{n_j}).
\eal
The above inequality and \eqref{73} yield
\bal
\int_{\xO\setminus \xS}u^p\ei dx\gtrsim \xr^p\sum_{j=1}^\infty 2^{-jp}\int_{\xO\setminus \xS}F(x,y_{n_j})^p\ei dx\geq \xr^p\sum_{j=1}^\infty1=\infty,
\eal
which is clearly a contradiction since $u\in L^p(\xO\setminus\xS;\ei).$ The proof is complete.
\end{proof}

\begin{proposition} \label{remove1} Assume $0<\mu<\left( \frac{N-2}{2} \right)^2$ and $p \geq \frac{\am+2}{\am}$. Then  for any $\rho>0$ and any $\tau \in \GTM^+(\Omega \setminus \Sigma;\ei)$ with $\| \tau \|_{\GTM(\Omega \setminus \Sigma;\ei)}=1$, there is no solution of problem \eqref{u-rhotau}.
\end{proposition}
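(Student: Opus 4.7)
The plan is by contradiction. Suppose $u>0$ is a weak solution of $u=\BBG_\mu[u^p]+\rho\BBG_\mu[\gt]$ for some $\rho>0$ and some $\gt\in\GTM^+(\xO\setminus\xS;\ei)$ with $\|\gt\|_{\GTM(\xO\setminus\xS;\ei)}=1$. The goal is to derive a lower bound for $u$ near $\xS$ that, after iterating against the equation, becomes incompatible with $u\in L^1(\xO;\ei)$.

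\textbf{Step 1 (initial lower bound from the source).} Since $\gt\neq 0$, fix a compact $K\Subset\xO\setminus\xS$ with $\gt(K)>0$, a point $\xi_0\in\xS$, and a small open neighborhood $V$ of $\xi_0$ with $\dist(V,K)>0$. Proposition \ref{Greenkernel}(i) gives $G_\mu(x,y)\gtrsim d(x)d_\xS(x)^{-\am}$ uniformly for $x\in V$ and $y\in K$. Integrating against $\gt|_K$ yields $u(x)\geq \rho\,\BBG_\mu[\gt](x)\geq c_0\, d_\xS(x)^{-\am}$ on $V$.

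\textbf{Step 2 (key Green-kernel estimate).} For $\beta\in(\am+2,\,N-k-\am)$ and a shrunk tube $V'\Subset V$,
\bal
\BBG_\mu\!\left[d_\xS^{-\beta}\,\1_V\right](x)\gtrsim d_\xS(x)^{2-\beta},\qquad x\in V'.
\eal
This is the central analytic input. I would prove it by restricting the defining integral to the Hardy regime $d_\xS(x)d_\xS(y)\leq|x-y|^2$, where Proposition \ref{Greenkernel}(i) gives the clean form $G_\mu(x,y)\approx |x-y|^{2-N+2\am}d_\xS(x)^{-\am}d_\xS(y)^{-\am}$, and then rescaling $y=x+d_\xS(x)z$; the resulting dimensionless integral converges precisely for $\beta\in(2+2\am,\,N-k)$, which is nonempty because $\mu<(\tfrac{N-2}{2})^2$ combined with the standing assumption forces $\am<H$. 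At the borderline $\beta=\am+2$ a logarithm appears: $\BBG_\mu[d_\xS^{-\am-2}\1_V](x)\gtrsim d_\xS(x)^{-\am}|\ln d_\xS(x)|$ on $V'$. This can be verified by an explicit tubular-model computation showing $-L_\mu(d_\xS^{-\am}|\ln d_\xS|)=2(H-\am)d_\xS^{-\am-2}+O(d_\xS^{-\am-1})$, where the leading log term cancels thanks to $\am^2-2H\am+\mu=0$.

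\textbf{Step 3 (bootstrap, strict case $p>\tfrac{\am+2}{\am}$).} From $u\geq \BBG_\mu[u^p]$ and the Step 1 bound, $u\geq c_0^p\BBG_\mu[d_\xS^{-p\am}\1_V]$. Since $p\am>\am+2$, Step 2 upgrades this to $u(x)\geq c_1 d_\xS(x)^{-\am_1}$ on $V'$ with $\am_1=p\am-2$. Iterating, set $\am_{n+1}=p\am_n-2$; the map $\beta\mapsto p\beta-2$ has the unstable fixed point $\beta^{\ast}=2/(p-1)$, and the hypothesis $p>\tfrac{\am+2}{\am}$ is exactly $\am>\beta^{\ast}$. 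Hence $\am_n\uparrow\infty$, and once $\am_n\geq N-k-\am$ we conclude
\bal
\int_\xO u\,\ei\,\dx\gtrsim \int_{V'}d_\xS(x)^{-\am_n}\,d(x)\,d_\xS(x)^{-\am}\,\dx=\infty,
\eal
contradicting $u\in L^1(\xO;\ei)$.

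\textbf{Step 4 (critical case $p=\tfrac{\am+2}{\am}$).} Here $\am_1=\am$ and the pure-power bootstrap stalls, so I would invoke the logarithmic refinement of Step 2: iterating the analog of Step 3 with the log-version gives $u(x)\gtrsim c_n d_\xS(x)^{-\am}|\ln d_\xS(x)|^{m_n}$ with $m_{n+1}=p\,m_n+1$, so $m_n\to\infty$. One then converts this log-blowup into a contradiction with $u^p\in L^1(\xO;\ei)$ by testing the equation against a power-log weight adapted to the critical Hardy scale (or, equivalently, applying a Pohozaev-type identity that exploits criticality).

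The main obstacle is the sharp Green-kernel lower bound of Step 2, both its power form and its borderline log version, which requires carefully isolating the dominant Hardy-regime contribution under rescaling and verifying that the dimensionless integral does not vanish. Closing the critical case cleanly is also delicate, since only the logarithm—not a polynomial gain—is available for the bootstrap, and the final integrability contradiction must exploit the log blowup against the marginally integrable weight $\ei$.
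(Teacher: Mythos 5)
Your proposal takes a genuinely different route from the paper. The paper reduces to a measure $\tau_B$ compactly supported away from $\Sigma$, produces a minimal solution below $u$ by monotone iteration, invokes Proposition \ref{equivint} to obtain the necessary inequality $\BBG_\mu[\BBG_\mu[\tau_B]^p]\leq C\,\BBG_\mu[\tau_B]$, and then contradicts this by a \emph{single} kernel computation: near a point $0\in\Sigma$ the left-hand side is bounded below by $d_\Sigma(x)^{-\am}|\ln|x||$ (if $p=\frac{2+\am}{\am}$) or $d_\Sigma(x)^{-\am}|x|^{2+\am-p\am}$ (if $p>\frac{2+\am}{\am}$), while the right-hand side is $\approx d_\Sigma(x)^{-\am}$. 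Your direct bootstrap of pointwise lower bounds on $u$ works in the strictly supercritical case $p>\frac{2+\am}{\am}$: Step 1 is correct, the lower bound of Step 2 is true (in fact it follows already from restricting to $|x-y|\leq d_\Sigma(x)/2$), and finitely many iterations of $\am_{n+1}=p\am_n-2$ yield a non-integrable singularity; the only missing bookkeeping concerns the shrinking tubes and the overshoot case $p\am_n\geq N-k-\am$, where $\BBG_\mu[d_\Sigma^{-p\am_n}\1_V]\equiv+\infty$ and one contradicts the a.e. finiteness of $u$ directly. Note also that your claim ``$\mu<(\frac{N-2}{2})^2$ forces $\am<H$'' is false when $k\geq1$ (the case $\mu=H^2$ is allowed); in that borderline case the relevant potentials are infinite and the conclusion is actually immediate, but your Steps 2 and 4 implicitly assume $\am<H$.

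The genuine gap is Step 4, i.e. the critical case $p=\frac{2+\am}{\am}$, which is part of the statement. The log bootstrap $u\gtrsim c_n\,d_\Sigma^{-\am}\bigl(\ln(1/d_\Sigma)\bigr)^{m_n}$ with $m_{n+1}=pm_n+1$ is correct stage by stage, but no fixed finite stage contradicts $u\in L^p(\Omega;\phi_\mu)$ or $u\in L^1(\Omega;\phi_\mu)$: since $2\am+2<N-k$ when $\am<H$, one has $\int_{\Sigma_\beta} d_\Sigma^{-(2\am+2)}\bigl(\ln(1/d_\Sigma)\bigr)^{pm_n}\dx<\infty$ for every $n$, so each iterate is consistent with the integrability of a solution. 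A contradiction can only come from the limit $n\to\infty$, and that requires tracking the multiplicative constants (which degrade like $c_n\approx c_0^{p^n}$) against the log powers $m_n\approx p^n/(p-1)$ and showing the iterates blow up pointwise on a fixed small tube where $c_0^{p-1}\ln(1/d_\Sigma)>1$; your suggested substitute --- ``testing against a power-log weight'' or a ``Pohozaev-type identity'' --- is not worked out and, as stated, does not yield the contradiction with $u^p\in L^1(\Omega;\phi_\mu)$. This is precisely the difficulty the paper's detour through Proposition \ref{equivint} avoids: once the inequality $\BBG_\mu[\BBG_\mu[\tau_B]^p]\leq C\,\BBG_\mu[\tau_B]$ is available, a single logarithmic divergence at $\Sigma$ already finishes the critical case.
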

\begin{proof}
Suppose by contradiction that there exist $\tau \in \GTM^+(\Omega \setminus \Sigma;\ei)$ with $\| \tau \|_{\GTM(\Omega \setminus \Sigma;\ei)}=1$ and $\xr>0$ such that problem \eqref{u-rhotau} admits a positive solution $u \in L^p(\Omega;\ei)$.

Since $\tau\not\equiv0,$ there exist $x_0\in\xO\setminus\xS$, $r,\xe>0$ such that $B(x_0,r)\subset\xO\setminus\xS$,  $\dist(B(x_0,r),\xS)>\xe$, and $\tau(B(x_0,r))>0$. Set $\tau_B=\1_{B(x_0,r)}\tau,$ then $\tau_B\leq \tau$. Let $v_1=\BBG_\xm[\xr \tau_B],$ we consider the sequence $\{v_{k}\}_{k=1}^\infty\subset L^p(\Omega;\ei)$ which satisfies the following problem
\bal
- L_\gm v_{k+1}= |v_{k}|^{p-1}v_{k} +\xr \tau_B\qquad \text{in }\;\Gw\setminus \Sigma, \quad 
\tr(v_{k+1})=0,
\eal
for any $k\in\BBN.$ Using \eqref{poi5}, we can easily show that $0 \leq v_k\leq v_{k+1}$ and $v_k\leq u$ for any $k\in\BBN.$ Since $u\in  L^p(\Omega;\ei),$ by monotone convergence theorem, we deduce that $v=\lim_{k\to\infty}v_k$ belongs to $ L^p(\Omega;\ei)$, $v \geq 0$, and
$
v=\lim_{k\to\infty} v_{k+1}=\lim_{k\to\infty}\BBG_\xm[v_k^p+\xr \tau_B]=\BBG_\xm[v^p+\xr \tau_B],
$
which means that $v\in L^p(\Omega;\ei)$ is a weak solution of
\bal 
- L_\gm v= |v|^{p-1}v +\xr \tau_B\qquad \text{in }\;\Gw\setminus \Sigma, \quad 
\tr(v)=0.
\eal
By Proposition \ref{equivint}, there exists a positive constant $C$ depending on $\xr$ and $p$ such that
\ba\label{58c}
\BBG_\xm[\BBG_\xm[\tau_B]^p]\leq C\, \BBG_\xm[\tau_B] \quad \text{a.e. in } \Omega\setminus\xS.
\ea

 Assume $0 \in \Sigma$ and set $\xb=\frac{1}{4}\min\{\xb_0,r\}$. Let $x\in\xO\setminus \xS$ such that $|x|\leq \frac{\xb}{2}$. Since
 \ba\label{74}
 \BBG_\xm[\tau_B] \approx \ei\approx   d_{\xS}^{-\am}\quad \text{in}\;\; \xS_\xb,
 \ea
 and $d_\Sigma(y) \leq |y|$ for any $y \in \Sigma_\beta$, we have, for any $x \in B(0,\frac{\beta}{2}) \setminus \Sigma$,
\bal
\BBG_\xm[\BBG_\xm[\gt_B]^p](x)
&\gtrsim d_\xS(x)^{-\xa}\int_{\Sigma_{\xb}} d_{\xS}(y)^{-\am(p+1)}|x-y|^{2+2\am-N}\dy\\
&\geq d_\xS(x)^{-\am}\int_{\Sigma_{\xb}} |y|^{-\am(p+1)}|x-y|^{2+2\am-N}\dy\\
&\approx\left\{
\BAL
&d_\xS(x)^{-\am}|\ln|x|| \quad &&\text{if}\;p=\frac{2+\am}{\am},\\
&d_\xS(x)^{-\am}|x|^{2+\am-p\am} \quad &&\text{if}\;p>\frac{2+\am}{\am}.
\EAL\right.
\eal
This and \eqref{74} yield that \eqref{58c} is not valid as $|x| \to 0$, which is clearly a contradiction.
\end{proof}

In order to study the boundary value problem with measure data concentrated on $\partial \Omega \cup \Sigma$, we make use of  specific capacities which are defined below.

For $\ga\in\BBR$ we define the Bessel kernel of order $\ga$ in $\R^d$ by  $\CB_{d,\ga}(\xi):=\CF^{-1}\left((1+|.|^2)^{-\frac{\ga}{2}}\right)(\xi),
$
where $\CF$ is the Fourier transform in the space $\mathcal{S}'(\R^d)$ of moderate distributions in $\BBR^d$. For $\lambda \in \GTM(\R^d)$, set
\bal
\BBB_{d,\alpha}[\lambda](x):= \int_{\R^d}\CB_{d,\alpha}(x-y) \dd\lambda(y), \quad x \in \R^d.
\eal
Let
$L_{\ga,\kappa}(\BBR^d):=\{f=\CB_{d,\alpha} \ast g:g\in L^{\kappa}(\BBR^d)\}
$ be the Bessel space with the norm
\bal
\|f\|_{L_{\ga,\kappa}}:=\|g\|_{L^\kappa}=\|\CB_{d,-\ga}\ast f\|_{L^\kappa}.
\eal
It is known that if $1<\kappa<\infty$ and $\ga>0$, $L_{\ga,\kappa}(\BBR^d)=W^{\ga,\kappa}(\BBR^d)$ if $\ga\in\BBN.$ If $\ga\notin\BBN$ then the positive cone of their dual coincide, i.e. $(L_{-\ga,\kappa'}(\BBR^d))_+=(B^{-\ga,\kappa'}(\BBR^d))_+$, always with equivalent norms. The Bessel capacity is defined for compact subsets
$K \subset\BBR^d$ by
\bal
\mathrm{Cap}_{{\CB_{d,\alpha},\kappa}}^{\R^d}(K):=\inf\{\|f\|^\kappa_{L_{\ga,\kappa}}, f\in\CS'(\BBR^d),\,f\geq \1_K \}.
\eal

If $\Gamma \subset \overline{\Omega}$ is a $C^2$ submanifold without boundary, of dimension $d$ with $1 \leq d \leq N-1$ then there exist open sets $O_1,...,O_m$ in $\BBR^N$, diffeomorphism $T_i: O_i \to B^{d}(0,1)\times B^{N-d}(0,1) $ and compact sets $K_1,...,K_m$ in $\Gamma$ such that

(i) $K_i \sbs O_i$, $1 \leq i \leq m$ and $ \Gamma= \cup_{i=1}^m K_i$,

(ii) $T_i(O_i \cap \Gamma)=B_1^{d}(0) \times \{ x'' = 0_{\mathbb{R}^{N-d}} \}$, $T_i(O_i \cap \Gw)=B_1^{d}(0)\times B_1^{N-d}(0)$,

(iii) For any $x \in O_i \cap (\xO\setminus \Gamma)$, there exists $y \in O_i \cap  \xS$ such that $d_\Gamma(x)=|x-y|$ (here $d_\Gamma(x)$ denotes the distance from $x$ to $\Gamma$). \smallskip

We then define the $\mathrm{Cap}_{\gth,s}^{\Gamma}-$capacity of a compact set $E \sbs \Gamma$ by
\bel{Capsub} \mathrm{Cap}_{\gth,s}^{\Gamma}(E):=\sum_{i=1}^m \mathrm{Cap}_{\CB_{d,\gth},s}^{\mathbb{R}^d}(\tl T_i(E \cap K_i)), \ee
where $T_i(E \cap K_i)=\tl T_i(E \cap K_i) \times  \{ x'' = 0_{\mathbb{R}^{N-d}} \}$. We remark that the definition of the capacities does not depends on $O_i$.

Note that if $\theta s > d$ then
\be \label{CapGamma}
\inf_{z \in \Gamma}\mathrm{Cap}_{\gth,s}^{\Gamma}(\{z\})>C>0.
\ee

By using the above capacities and Proposition \ref{t2.1}, we are able to prove Theorem \ref{subm}.

\begin{proof}[\textbf{Proof of Theorem \ref{subm}}.]
First we note that \eqref{GN2a} holds and
\ba \label{KN2a1}
K_\mu(x,z)\approx d(x)d_{\xS}(x)^{-\am} N_{2\am}(x,z) \quad \forall x \in \Omega \setminus \Sigma, z \in \Sigma.
\ea
By using a similar argument as in the proof of Theorem \ref{theoremint}, together with \eqref{GN2a} and \eqref{KN2a1}, we deduce that equation
$
v=\BBN_{2\am}[v^p (dd_{\xS}^{-\am})^{p+1}]+\ell\BBN_{2\am}[\xn]
$
has a positive solution for $\ell>0$ small if and only if equation \eqref{u-sigmanu} has a positive solution $u$ for $\sigma$ small enough.

Therefore, as in the proof of Theorem \ref{theoremint}, in light of Lemmas \ref{ineq}, \ref{vol} and \ref{vol-2}, we may apply Proposition \ref{t2.1} with $J(x,y)=\CN_{2\am}(x,y)$, $\dd \xo= \left(d(x)d_{\xS}(x)^{-\am}\right)^{p+1}\dx$ and $\lambda=\nu$. Estimate \eqref{Jest} is satisfied thanks to Lemma \ref{ineq}, while assumptions \eqref{2.3}--\eqref{2.4} are fulfilled thanks to Lemmas \ref{vol}--\ref{vol-2} respectively with $b=p+1$ and $\theta=-\am(p+1)$. We note that condition $p<\frac{2+\ap}{\ap}$ ensures that $b$ and $\theta$ satisfy the assumptions in Lemmas \eqref{vol}--\eqref{vol-2}. Therefore, by employing Proposition \ref{t2.1}, we can show that statements 1--3 of Proposition \ref{t2.1} are equivalent to statements 1--3 of the present theorem respectively.

Next we will show that, under assumption \eqref{p-cond-2},  statement 4 of Proposition \ref{t2.1} is equivalent to statement 4 of the present theorem.
More precisely, we show that for any compact subset $E \subset \Sigma$, there hold
\ba \label{Cap-equi-1} \mathrm{Cap}_{\vartheta,p'}^{\xS}(E) \approx \mathrm{Cap}_{\BBN_{2\am},p'}^{p+1,-\am(p+1)}(E),
\ea
where $\vartheta$ is defined in \eqref{gamma}. From \eqref{Capsub}, we see that
\bal
\mathrm{Cap}_{\vartheta,p'}^{\xS}(E):=\sum_{i=1}^m \mathrm{Cap}_{\CB_{k,\vartheta},p'}^{\mathbb{R}^k}(\tl T_i(E \cap K_i)),
\eal
where $T_i(E \cap K_i)=\tl T_i(E \cap K_i) \times  \{ x'' = 0_{\mathbb{R}^{N-k}} \}$. Also,
\bal
\mathrm{Cap}_{\BBN_{2\am},p'}^{p+1,-\am(p+1)}(E)\approx\sum_{i=1}^m\mathrm{Cap}_{\BBN_{2\am},p'}^{p+1,-\am(p+1)}(E\cap K_i).
\eal
Therefore, in order to prove \eqref{Cap-equi-1}, it's enough to show that
\ba \label{Cap-split} \mathrm{Cap}_{\CB_{k,\vartheta},p'}^{\mathbb{R}^k}(\tl T_i(E \cap K_i))\approx \mathrm{Cap}_{\BBN_{2\am},p'}^{p+1,-\am(p+1)}(E \cap K_i), \quad i=1,2,\ldots,m.
\ea

Let $\xl \in \GTM^+(\partial\xO\cup \xS)$ with compact support in $\xS$ be such that $\BBK_\xm[\xl]\in L^p(\xO;\ei)$. Put $\lambda_{K_i} = \1_{K_i}\lambda$. On one hand, from \eqref{eigenfunctionestimates}, \eqref{Martinest1} and since $p< \frac{2+\ap}{\ap} \leq  \frac{N-k-\am}{\am}$, we have
\bal
\int_{O_i}\BBK_\xm[\xl_{K_i}]^p\ei \,\dx\gtrsim \lambda(K_i)^p\int_{O_i}d(x)^{p+1}d_\Sigma(x)^{-(p+1)\am} \,\dx\gtrsim\lambda(K_i)^p.
\eal
On the other hand,
\bal
\int_{\xO\setminus O_i}\BBK_\xm[\lambda_{K_i}]^p\ei \dx\lesssim \lambda(K_i)^p\int_{O_i}d(x)^{p+1}d_\Sigma(x)^{-(p+1)\am}\dx\lesssim \lambda(K_i)^p.
\eal
Combining the above estimate, we obtain
\ba\label{45}
\int_\xO\BBK_\xm[\lambda_{K_i}]^p \ei \,\dx\approx  \int_{O_i}\BBK_\xm[\lambda_{K_i}]^p\ei \,\dx, \quad \forall i=1,2,..,m.
\ea

In view of the proof of \cite[Lemma 5.2.2]{Ad}, there exists a measure $\overline \lambda_i\in \GTM^+(\mathbb{R}^k)$ with compact support in $B^k(0,1)$ such that for any Borel $E\subset B^k(0,1)$, there holds
	\bal
\overline \lambda_i(E)=\lambda(T_i^{-1}(E\times \{0_{\R^{N-k}}\})).
\eal
Set $\psi=(\psi',\psi'')=T_i(x)$ then. By \eqref{propdist}, \eqref{eigenfunctionestimates} and \eqref{Martinest1}, we have
	\bal
	&\ei(x)\approx |\psi''|^{-\am},\\
	 &K_{\mu}(x,y)\approx |\psi''|^{-\am}(|\psi''|+|\psi'-y'|)^{-(N-2\am-2)}, \quad
	 \forall x\in  O_i\setminus \Sigma,\;\forall y\in O_i\cap \Sigma.
	\eal
The above estimates, together with \eqref{45}, imply
\ba \label{46} \BAL
	&\int_{\Omega} \BBK_{\mu}[\lambda_{K_i}]^p\ei \,\dx \approx \int_{ O_i } \BBK_{\mu}[\lambda_{K_i}]^p\ei \,\dx\\
	&\approx \int_{B^k(0,1)}\int_{B^{N-k}(0,1)}|\psi''|^{-(p+1)\am}
	\left(\int_{B^k(0,1)}(|\psi''|+|\psi'-y'|)^{-(N-2\am-2)}\dd\overline{\lambda_i}(y')\right)^p \dd \psi'' \dd \psi'\\
	&=C(N,k)\int_{B^k(0,1)}\int_{0}^{\xb_0}r^{N-k-1-(p+1)\am}
	\left(\int_{B^k(0,1)}(r+|\psi'-y'|)^{-(N-2\am-2)}\dd\overline{\lambda_i}(y')\right)^p \dd r \dd\psi'.\\
    &\approx \int_{\mathbb{R}^k} \BB_{k,\vartheta}[\overline \lambda_i](x')^p\dx'.
	\EAL
	\ea
Here the last estimate is due to \cite[Lemma 8.1]{GkiNg_absorption} (note that \cite[Lemma 8.1]{GkiNg_absorption} holds under assumptions \eqref{p-cond-2}). Combining \eqref{KN2a} and \eqref{46} yields
\bal
\| \BBN_{2\am}[\xl_{K_i}]  \|_{L^p(\Omega;d^{p+1} d_\Sigma^{(p+1)\am})}  \approx \| \BBK_\xm[\xl_{K_i}] \|_{L^p(\Omega;\phi_\mu)}  \approx \| \BBB_{k,\vartheta}[\overline \lambda_i] \|_{L^p(\R^k)}.
\eal
This and \eqref{dualcap} lead to \eqref{Cap-split}, which in turn implies \eqref{Cap-equi-1}. The proof is complete.
\end{proof}

\begin{remark} \label{existSigma}
By \eqref{CapGamma}, if $p<\frac{N-\am}{N-2-\am}$ (equivalently $\vartheta p' > k$) then $\inf_{z \in \Sigma}\mathrm{Cap}_{\vartheta,p'}^{\xS}(\{z\}) >0$. Hence, under the assumption of Theorem \ref{subm}, statement 3 of Theorem \ref{subm} holds and therefore statement 1 also holds true.	
\end{remark}

\begin{remark} \label{nonexistSigma}
Assume $\mu < \left( \frac{N-2}{2} \right)^2$ and $p \geq \frac{N-\am}{N-\am-2}$. Then for any $z \in \Sigma$ and any $\sigma>0$, problem \eqref{u-sigmanu} with $\nu=\delta_z$ does not admit any positive weak solution. Indeed, suppose by contradiction that for some $z \in \Sigma$ and $\sigma>0$, there exists a positive solution $u \in L^p(\Omega;\ei)$ of equation \eqref{u-sigmanu}. Without loss of generality, we can assume that $z =0 \in \Sigma$ and $\sigma=1$. From \eqref{u-sigmanu}, $u(x) \geq \BBK_{\mu}[\delta_0] (x)= K_\mu(x,0)$ for a.e. $x \in \Omega \setminus \Sigma$. Let $\CC$ be a cone of vertex $0$ such that $\CC \subset \Omega \setminus \Sigma$ and there exist $r>0$, $0<\ell<1$ satisfying for any $x \in \CC$, $|x|<r$ and $d_\Sigma(x)> \ell |x|$. Then, by \eqref{Martinest1} and \eqref{eigenfunctionestimates},
\bal
\int_{\Omega \setminus \Sigma} u(x)^p \ei(x) \dd x \geq \int_{\CC} K_\mu(x,0)^p \ei(x) \dd x
 \geq \int_{\CC} |x|^{-\am-(N-\am-2)p} \dd x
 \approx \int_0^r t^{N-1-\am - (N-\am-2)}\dd t.
\eal
Since $p \geq \frac{N-\am}{N-\am-2}$, the last integral is divergent, hence $u \not \in L^p(\Omega \setminus \Sigma;\ei)$, which leads to a contradiction.
\end{remark}

\begin{remark}\label{rem2}
	Assume $\xS=\{0\}$ and $\mu= \left( \frac{N-2}{2} \right)^2$. If $p< \frac{2+\ap}{\ap}$ then there is a solution of \eqref{u-sigmanu} with $\xn=\sigma\xd_0$ for $\sigma>0$ small. Indeed, for any $1<p<\frac{2+\ap}{\ap},$ we have $0<\int_\xO \BBK_\xm[\xd_0]^p \ei \, \dx<\infty$.
	Therefore, by \eqref{dualcap}, we find
	$
\mathrm{Cap}_{\BBN_{2\am},p'}^{p+1,-\am(p+1)}(\{0\})>0.
$
	In view of the proof of Theorem \ref{subm}, we may apply Proposition \ref{t2.1} for $J(x,y)=\CN_{2\am}(x,y),$ for $\dd \xo= \left(d(x)d_{\xS}(x)^{-\am}\right)^{p+1}\dx$ and $\lambda=\xd_0$ to obtain the desired result.
\end{remark}

When $p \geq \frac{2+\ap}{\ap}$, the nonexistence occurs, as shown in the following remark.

\begin{remark} \label{rem3}
If $p\geq \frac{2+\ap}{\ap}$ then, for any measure $\xn \in \GTM^+(\partial \Omega \cup \Sigma)$ with compact support in $\xS$ and any $\sigma>0$, there is no solution of problem \eqref{u-sigmanu}. Indeed, it can be proved by contradiction. Suppose that we can find $\sigma>0$ and a measure $\xn \in \GTM^+(\partial \Omega \cup \Sigma)$ with compact support in $\xS$ such that there exists a solution $0\leq u \in L^p(\xO;\ei)$ of \eqref{u-sigmanu}.
It follows that $\BBK_\xm[\xn]\in L^p(\xO;\ei)$. On one hand, by \cite[Theorem 1.4]{GkiNg_absorption} there is a unique nontrivial nonnegative solution $v$ of
\bal
- L_\gm v + |v|^{p-1}v =0\qquad \text{in }\;\Gw\setminus \Sigma,\quad \tr(v)=\xn.
\eal

Moreover, $v \leq \BBK_{\mu}[\nu]$ in $\Omega \setminus \Sigma$. This, together with Proposition \ref{Martin} and the fact that $\nu$ has compact support in $\Sigma$, implies, for $x$ near $\partial \Omega$,
$ v(x) \leq \BBK_{\mu}[\nu](x)  \lesssim d(x)\nu(\Sigma)$. Therefore, by \cite[Theorem 1.8]{GkiNg_absorption}, we have that $v \equiv 0$, which leads to a contradiction.

\end{remark}

When $\nu$ concentrates on $\partial \Omega$, we also obtain criteria for the existence of problem \eqref{power}. We will treat the case $\mu < \left(\frac{N-2}{2}\right)^2$ and the case $\mu = \left(\frac{N-2}{2}\right)^2$ separably.

\begin{proof}[\textbf{Proof of Theorem \ref{th:existnu-prtO} when $\xm<\left(\frac{N-2}{2}\right)^2$}.]
As in the proof of Theorem \ref{theoremint}, in light of Lemmas \ref{ineq}, \ref{vol} and \ref{vol-2}, we may apply Proposition \ref{t2.1} with $J(x,y)=\CN_{2\am}(x,y)$, $\dd\xo= \left(d(x)d_{\xS}(x)^{-\am}\right)^{p+1}\dx$ and $\lambda=\nu$ in order to show that statements 1--3 of Proposition \ref{t2.1} are equivalent to statements 1--3 of the present theorem respectively.

Next we will show that statement 4 of Proposition \ref{t2.1} is equivalent to statement 4 of the present theorem. More precisely, we will show that for any subset $E \subset \partial \Omega$, there holds
\ba \label{Cap-equi-2} \mathrm{Cap}_{\frac{2}{p},p'}^{\partial \Omega}(E) \approx \mathrm{Cap}_{\BBN_{2\am},p'}^{p+1,-\am(p+1)}(E).
\ea

Indeed, by a similar argument as in the proof of \eqref{45}, under the stated assumptions on $p$, we can show that
for any $\lambda \in \GTM^+(\partial \Omega \cup \Sigma)$ with compact support in $\partial \Omega$, there holds
\bal
\int_\xO\BBK_\xm^p[\lambda]\ei \dx\approx \sum_{i=1}^m \int_{O_i}\BBK_\xm^p[ \1_{K_i} \lambda]\ei \dx.
\eal
This and the estimate
\ba \label{KN2a}
K_\mu(x,z)\approx d(x)d_{\xS}(x)^{-\am} N_{2\am}(x,z) \quad \forall x \in \Omega \setminus \Sigma, z \in \partial \Omega,
\ea
imply
\bal \begin{aligned}
\int_{\Omega} \BBN_{2\am}[\lambda]^p d^{p+1} d_\Sigma^{-\am(p+1)}\dx & \approx \sum_{i=1}^m \int_{O_i} \BBN_{2\am}[\1_{K_i} \lambda]^p d^{p+1} d_\Sigma^{-\am(p+1)} \dx \\
& \approx \sum_{i=1}^m \int_{O_i} \BBN_{2\am}[\1_{K_i} \lambda]^p d^{p+1} \dx.
\end{aligned} \eal
Therefore, in view of the proof of \cite[Proposition 2.9]{BHV} (with $\xa=\xb=2$, $s=p'$ and $\xa_0=p+1$) and \eqref{dualcap}, we obtain \eqref{Cap-equi-2}. The proof is complete.
\end{proof}

%
%
%
%
%
%
%
%
%
%
%
%
%
%
%

\begin{remark} \label{rem4}
If $\am>0$ and $p\geq \frac{2+\am}{\am}$ then for any measure $\xn \in \GTM^+(\partial \Omega \cup \Sigma)$ with compact support in $\partial\xO$ and any $\sigma>0$, there is no solution of \eqref{u-sigmanu}. Indeed, it can be proved by contradiction. Suppose that we can find a measure $\xn \in \GTM^+(\partial \Omega \cup \Sigma)$ with compact support in $\partial \Omega$ and $\sigma>0$ such that there exists a solution $0\leq u \in L^p(\xO;\ei)$ of \eqref{u-sigmanu}. Then by Theorem \ref{th:existnu-prtO}, estimate \eqref{GKp<K} holds for some constant $C>0$.

For simplicity, we assume that $0\in \Sigma$.
Then, for $x$ near $0$, we have
\ba \label{GKp>} \begin{aligned}
\int_{\xO} G_\xm(x,y)\BBK_\xm[\xn](y)^p \dy&\gtrsim d_{\xS}(x)^{-\am}\xn(\partial \Omega)^p \int_{\Sigma_{\xb_0}} |y|^{-(p+1)\am}|x-y|^{-(N-2\am-2)}\dy\\
&\gtrsim \left\{\BAL
&d_\xS(x)^{-\am}|\ln|x|| \quad &&\text{if}\;p=\frac{2+\am}{\am}\\
&d_\xS(x)^{-\am}|x|^{2+\am-p\am} \quad &&\text{if}\;p>\frac{2+\am}{\am}.
\EAL\right.
\end{aligned} \ea
From  \eqref{GKp<K} and \eqref{GKp>}, we can reach at a contradiction by letting  $|x|\to0$.
\end{remark}

\subsection{The case $\xS=\{0\}$ and $\xm= H^2$} In this subsection we treat the case $\xS=\{0\}$ and $\xm= H^2$. Let us introduce some notations.
Let $0<\xe<N-2$, put
\bal
\CN_{1,\xe}(x,y):=\frac{\max\{|x-y|,|x|,|y|\}^{N-2}+|x-y|^{N-2-\xe}}{|x-y|^{N-2}\max\{|x-y|,d(x),d(y)\}^2},\quad\forall(x,y)\in\overline{\xO}\times\overline{\xO}, x \neq y,	
\eal
\bal
\CN_{N-2-\xe}(x,y):=\frac{\max\{|x-y|,|x|,|y|\}^{N-2-\xe}}{|x-y|^{N-2}\max\{|x-y|,d(x),d(y)\}^2},\quad\forall(x,y)\in\overline{\xO}\times\overline{\xO}, x \neq y,
\eal
\bal \begin{aligned}
G_{H^2,\xe}(x,y) &:= |x-y|^{2-N} \left(1 \wedge \frac{d(x)d(y)}{|x-y|^2}\right) \left(1 \wedge \frac{|x||y|}{|x-y|^2} \right)^{-\frac{N-2}{2}} \\
&+(|x||y|)^{-\frac{N-2}{2}}|x-y|^{-\xe}  \left(1 \wedge \frac{d(x)d(y)}{|x-y|^2}\right), \quad x,y \in \Omega \setminus \{0\}, \, x \neq y,
\end{aligned} \eal
\ba \label{tilG}
\tilde G_{H^2,\xe}(x,y):=d(x)d(y)(|x||y|)^{-\frac{N-2}{2}} \CN_{N-2-\xe}(x,y), \quad \forall x,y \in \Omega \setminus \{0\}, \, x \neq y.
\ea

Note that
\bal \begin{aligned}
(|x||y|)^{-\frac{N-2}{2}}\left|\ln\left(1 \wedge \frac{|x-y|^2}{d(x)d(y)}\right)\right|
&\leq(|x||y|)^{-\frac{N-2}{2}}\left|\ln\frac{|x-y|}{\mathcal{D}_\Omega}\right|\left(1 \wedge \frac{d(x)d(y)}{|x-y|^2}\right) \\
&\leq C(\Omega,\xe) (|x||y|)^{-\frac{N-2}{2}}|x-y|^{-\xe}  \left(1 \wedge \frac{d(x)d(y)}{|x-y|^2}\right),
\end{aligned}
\eal
which together with \eqref{Greenestb}, implies
\ba \label{GGe}
G_{H^2}(x,y)\lesssim G_{H^2,\xe}(x,y), \quad \forall x,y \in \Omega \setminus \{0\}, \, x \neq y.
\ea

Next, from the estimates
\bal
G_{H^2,\varepsilon}(x,y) \approx d(x)d(y)(|x||y|)^{-\frac{N-2}{2}} \CN_{1,\varepsilon}(x,y), \quad x,y \in \Omega \setminus \{0\}, \, x \neq y,
\eal
\bal
\CN_{1,\xe}(x,y)\leq C(\xe,\xO) \CN_{N-2-\xe}(x,y),\quad x,y \in \Omega \setminus \{0\}, \, x \neq y,
\eal
we obtain
\ba \label{GGe1}
G_{H^2,\xe}(x,y)\lesssim \tilde G_{H^2,\xe}(x,y), \quad \forall x,y \in \Omega \setminus \{0\}, \, x \neq y.
\ea

Set
\bal
\tilde\BBG_{H^2,\xe}[\tau](x):=\int_{\xO\setminus \xS} \tilde G_{H^2,\xe}(x,y) \dd\tau(y), \\ \BBN_{N-2-\varepsilon}[\tau](x):=\int_{\xO\setminus \xS} \CN_{N-2-\varepsilon}(x,y) \dd\tau(y).
\eal
Proceeding as in the proof of Theorem \ref{theoremint}, we obtain the following result

\begin{theorem}\label{theoremint2}
 Let $0<\xe<\min\{N-2,2\},$ $1<p<\frac{N+2-2\xe}{N-2}$ and $\tau \in \GTM^+(\Omega \setminus \{0\}; \phi_{H^2})$. Then the following statements are equivalent.
	
	1. The equation
	\be \label{eq:uH2e} u=\tilde\BBG_{H^2,\xe}[u^p]+\rho\tilde\BBG_{H^2,\xe}[\gt]
	\ee
	has a positive solution for $\rho>0$ small.
	
	2. For any Borel set $E \subset \xO\setminus \{0\}$, there holds
	\bal
\int_E \tilde\BBG_{H^2,\xe}[\1_E\gt]^p \phi_{H^2}\, \dx \leq C\int_E\phi_{H^2}\dd\gt.
\eal
	
	3. The following inequality holds
	\bal
\tilde\BBG_{H^2,\xe}[\tilde\BBG_{H^2,\xe}[\gt]^p]\leq C\,\tilde\BBG_{H^2,\xe}[\gt]<\infty\quad a.e.
\eal

	4. For any Borel set $E \subset \xO\setminus \{0\}$ there holds
\bal
\int_E\phi_{H^2}\dd\gt\leq C\, \mathrm{Cap}_{\BBN_{N-2-\xe},p'}^{1,-\frac{N-2}{2}(p+1)}(E).
\eal
\end{theorem}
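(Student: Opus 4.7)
The plan is to mirror the proof of Theorem~\ref{theoremint}, applying Proposition~\ref{t2.1} with $J(x,y) = \CN_{N-2-\xe}(x,y)$, weight $\dd\xo = (d(x)|x|^{-\frac{N-2}{2}})^{p+1}\dx$ on $\overline{\xO}$, and $\dd\gl = \phi_{H^2}\1_{\xO\setminus\{0\}}\dd\gt$. The translation dictionary is furnished by \eqref{tilG} and \eqref{eigenfunctionestimates}, which together give
\bal
\tilde G_{H^2,\xe}(x,y) \approx \phi_{H^2}(x)\phi_{H^2}(y)\,\CN_{N-2-\xe}(x,y),
\eal
so that if one sets $\tilde v(x) = d(x)|x|^{-\frac{N-2}{2}}v(x)$, then
\bal
v=\BBN_{N-2-\xe}[(d|x|^{-\frac{N-2}{2}})^{p+1}v^p]+\ell\,\BBN_{N-2-\xe}[\gl]
\eal
is equivalent (up to constants) to $\tilde v \approx \tilde\BBG_{H^2,\xe}[\tilde v^p]+\ell\,\tilde\BBG_{H^2,\xe}[\gt]$. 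A routine application of \cite[Proposition 2.7]{BHV} then converts the solvability of this auxiliary equation into the solvability of \eqref{eq:uH2e} for $\rho>0$ small, yielding the equivalence between statement 1 of Proposition~\ref{t2.1} and statement 1 here. The correspondences for statements 2, 3, 4 follow immediately upon inserting the weight $\phi_{H^2}\approx d(x)|x|^{-\frac{N-2}{2}}$.

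First I would verify the quasi-metric inequality \eqref{Jest} for $J=\CN_{N-2-\xe}$: since $N-2-\xe < N-2$, this is an instance of Lemma~\ref{ineq} with $\xa = N-2-\xe$. Next I would check the doubling hypotheses \eqref{2.3} and \eqref{2.4} on $\xo$ via Lemmas~\ref{vol} and \ref{vol-2}, with parameters $\xa = N-2-\xe$, $b=p+1$, $\theta = -\frac{N-2}{2}(p+1)$. The condition $\xa<N-2$ is clear, $b>0$ is clear, and $\theta > \max\{k-N,-2-\xa\} = \max\{-N,-N+\xe\} = -N+\xe$ reduces to $\frac{N-2}{2}(p+1) < N-\xe$, i.e. to the upper bound $p<\frac{N+2-2\xe}{N-2}$ imposed in the statement. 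Thus all hypotheses of Proposition~\ref{t2.1} are in force.

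With Proposition~\ref{t2.1} applicable, the four statements listed there for the pair $(J,\xo,\gl)$ are mutually equivalent. Translating statement 2 using $\phi_{H^2}(x)\phi_{H^2}(y)\CN_{N-2-\xe}(x,y)\approx \tilde G_{H^2,\xe}(x,y)$ gives statement 2 of the present theorem; the same substitution turns statement 3 into \smash{$\tilde\BBG_{H^2,\xe}[\tilde\BBG_{H^2,\xe}[\gt]^p]\lesssim \tilde\BBG_{H^2,\xe}[\gt]$} as desired, and statement 4 reproduces the capacity inequality with $\mathrm{Cap}_{\BBN_{N-2-\xe},p'}^{1,-\frac{N-2}{2}(p+1)}$ by the very definition of that capacity (note that the exponent $b=1$ appearing in the capacity, rather than $p+1$, comes from absorbing a factor of $d^{p}$ into the rescaling $v\mapsto \tilde v$, exactly as in the derivation of statement 4 of Theorem~\ref{theoremint}).

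The main obstacle I anticipate is a bookkeeping one rather than a conceptual one: making sure the change of unknown $\tilde v = \phi_{H^2}\,v$ is carried out consistently across all four statements so that the weights, exponents, and signs in the capacity $\mathrm{Cap}_{\BBN_{N-2-\xe},p'}^{1,-\frac{N-2}{2}(p+1)}$ match the ones that come out of Proposition~\ref{t2.1}. A secondary point worth checking is that the bound $\xe<\min\{N-2,2\}$ is exactly what is needed to keep both $\xa=N-2-\xe\in(0,N-2)$ (for Lemma~\ref{ineq}) and $-2-\xa=-N+\xe$ compatible with the constraint $\theta>-2-\xa$ in Lemmas~\ref{vol}--\ref{vol-2}; once these compatibilities are explicitly written out, the rest of the argument is a verbatim adaptation of the proof of Theorem~\ref{theoremint}.
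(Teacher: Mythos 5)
Your proposal is essentially the paper's own proof: the paper disposes of Theorem \ref{theoremint2} in one line ("proceeding as in the proof of Theorem \ref{theoremint}"), and your choices $J=\CN_{N-2-\xe}$, $\dd\xo=(d\,|x|^{-\frac{N-2}{2}})^{p+1}\dx$, $\dd\gl=\phi_{H^2}\dd\gt$, together with the verification that $\xa=N-2-\xe<N-2$, $b=p+1>0$ and $\theta=-\tfrac{N-2}{2}(p+1)>-N+\xe$ is exactly the bound $p<\frac{N+2-2\xe}{N-2}$, are precisely what that instruction means, with Lemma \ref{ineq}, Lemmas \ref{vol}--\ref{vol-2}, \eqref{tilG}, \eqref{eigenfunctionestimates} and \cite[Proposition 2.7]{BHV} used in the same way.

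One caveat: your parenthetical explanation of the first superscript in $\mathrm{Cap}_{\BBN_{N-2-\xe},p'}^{1,-\frac{N-2}{2}(p+1)}$ is not right. Applying Proposition \ref{t2.1} with the weight $\dd\xo=(d\,|x|^{-\frac{N-2}{2}})^{p+1}\dx$ produces, verbatim as in Theorem \ref{theoremint}, the capacity with superscripts $(p+1,-\tfrac{N-2}{2}(p+1))$; in Theorem \ref{theoremint} no factor of $d^{p}$ is "absorbed into the rescaling," so your stated mechanism does not account for the exponent $1$ appearing in the statement. Either that exponent should be read as $p+1$ (consistent with the scheme of Theorem \ref{theoremint}) or one would need a separate comparability argument between the two weighted capacities, which your remark does not supply; this bookkeeping point aside, the argument is the same as the paper's.
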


\begin{theorem}\label{singl}
We assume that at least one of the statements 1--4 of Theorem \ref{theoremint2} is valid. Then the equation
\ba \label{eq:uH2}
u=\BBG_{H^2}[u^p]+\rho\BBG_{H^2}[\gt]
\ea
has a positive solution for $\rho>0$ small.
\end{theorem}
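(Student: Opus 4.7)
My plan is to deduce existence of a solution to \eqref{eq:uH2} from the existence of a solution to \eqref{eq:uH2e}, leveraging the pointwise kernel domination $G_{H^2}(x,y) \leq c_0\, \tilde G_{H^2,\varepsilon}(x,y)$ furnished by \eqref{GGe} and \eqref{GGe1}, where $c_0$ is a constant depending only on $\Omega$ and $\varepsilon$. First I would invoke the equivalence of statements 1--4 in Theorem \ref{theoremint2} to reduce the hypothesis to statement 1: there exists $\rho_0>0$ such that for every $\rho'\in(0,\rho_0]$, the equation $w = \tilde\BBG_{H^2,\varepsilon}[w^p] + \rho'\, \tilde\BBG_{H^2,\varepsilon}[\tau]$ admits a positive solution $w_{\rho'}$.

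The heart of the argument is to build a pointwise supersolution of \eqref{eq:uH2} by rescaling $w_{\rho'}$. Set $A := c_0^{-1/(p-1)}$, and given a small $\rho>0$, choose $\rho' := c_0^{p/(p-1)}\rho$, which lies in $(0,\rho_0]$ once $\rho$ is small enough. Writing $w := w_{\rho'}$ and $v := A w$, the domination of kernels yields
\begin{align*}
\BBG_{H^2}[v^p] + \rho\, \BBG_{H^2}[\tau]
&\leq c_0 A^p\, \tilde\BBG_{H^2,\varepsilon}[w^p] + c_0 \rho\, \tilde\BBG_{H^2,\varepsilon}[\tau] \\
&= A\, \tilde\BBG_{H^2,\varepsilon}[w^p] + A \rho'\, \tilde\BBG_{H^2,\varepsilon}[\tau] = A w = v,
\end{align*}
so $v$ is a pointwise supersolution of \eqref{eq:uH2} a.e.\ in $\Omega\setminus\{0\}$.

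With $v$ in hand, I would run the standard monotone iteration: set $u_0 \equiv 0$ and $u_{n+1} := \BBG_{H^2}[u_n^p] + \rho\, \BBG_{H^2}[\tau]$. Monotonicity of $s \mapsto s^p$ on $[0,\infty)$, together with an induction against the supersolution $v$, yields $0 = u_0 \leq u_1 \leq \cdots \leq u_n \leq v$ a.e. Passing to the monotone limit $u := \lim_n u_n$ and invoking monotone convergence (justified since $\BBG_{H^2}[v^p] \leq c_0 A^p\, \tilde\BBG_{H^2,\varepsilon}[w^p] < \infty$ a.e.) yields $u = \BBG_{H^2}[u^p] + \rho\, \BBG_{H^2}[\tau]$ a.e., while positivity follows from $u \geq u_1 = \rho\, \BBG_{H^2}[\tau] > 0$.

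The main obstacle is managing the constant $c_0 > 1$ in the kernel comparison: a naive attempt to use $w$ itself as a supersolution fails because substituting $\BBG_{H^2} \leq c_0\, \tilde\BBG_{H^2,\varepsilon}$ into the nonlinear term inflates the constant to $c_0^{p+1}$. The key observation, which exploits $p>1$, is that the rescaling $A = c_0^{-1/(p-1)}$ precisely absorbs the excess factor in the nonlinear part, at the modest cost of enlarging the auxiliary source from $\rho$ to $\rho' = c_0^{p/(p-1)}\rho$, which is still arbitrarily small and thus compatible with the smallness assumption in Theorem \ref{theoremint2}.
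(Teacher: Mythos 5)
Your argument is correct and follows essentially the same route as the paper: both pass through statement 1 of Theorem \ref{theoremint2} and use the kernel domination \eqref{GGe}--\eqref{GGe1} to turn a solution of \eqref{eq:uH2e} into a supersolution, up to a multiplicative constant, of \eqref{eq:uH2}. The only difference is that the paper disposes of that constant by citing \cite[Proposition 2.7]{BHV}, whereas you absorb it explicitly through the rescaling $A=c_0^{-1/(p-1)}$ (with the adjusted parameter $\rho'=c_0^{p/(p-1)}\rho$) followed by monotone iteration, which is a valid, self-contained substitute for that citation.
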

\begin{proof}
 From the assumption, by Theorem \ref{theoremint2}, there exists a solution $u$ equation \eqref{eq:uH2e} for $\rho>0$ small.  By \eqref{GGe} and \eqref{GGe1}, we have
 $u\gtrsim \BBG_{H^2}[u^p]+\rho\BBG_{H^2}[\gt]$.
By \cite[Proposition 2.7]{BHV}, we deduce that equation \eqref{eq:uH2} has a solution for $\rho>0$ small.
\end{proof}

\begin{theorem}\label{theeqint}
Assume $\xS=\{0\},$ $\mu=\left( \frac{N-2}{2}\right)^2$ and $\tau \in \GTM^+(\Omega \setminus \{0\}; \phi_{\mu})$ has compact support in $\xO\setminus \{0\}$.
Then Theorem \ref{theoremint} is valid.	
\end{theorem}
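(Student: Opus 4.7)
The strategy is to reduce to Theorem \ref{theoremint2} by exploiting the compact support assumption. Since $\mathrm{supp}(\tau) \Subset \Omega \setminus \{0\}$, there exists $r_0 > 0$ such that $|y| \geq r_0$ for all $y \in \mathrm{supp}(\tau)$; consequently $d_\Sigma(y) \approx 1$ on the support and $\phi_\mu(y) \approx d(y)$ there. I fix $\varepsilon \in (0, \min\{2, N-2\})$ small enough that $1 < p < \frac{N+2-2\varepsilon}{N-2}$, which is compatible with the hypothesis \eqref{p-cond} since $p < \frac{2+\am}{\am} = \frac{N+2}{N-2}$.

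The first and principal step is to establish the pointwise comparability
\[ G_{H^2}(x,y) \approx \tilde G_{H^2,\varepsilon}(x,y), \qquad x \in \Omega \setminus \{0\}, \; |y| \geq r_0. \]
The $\lesssim$ bound is immediate from \eqref{GGe} and \eqref{GGe1}. For the reverse inequality I would carry out a case analysis on the relative sizes of $|x|$, $|y|$, $|x-y|$ and of $d(x)d(y)$ versus $|x-y|^2$; in every regime the constraint $|y| \geq r_0$ lets one absorb the extra $|x-y|^{-\varepsilon}$ factor appearing in $\tilde G_{H^2,\varepsilon}$ into the polynomial terms of \eqref{Greenestb} (in particular, when $|x-y|$ is uniformly bounded from below, $|x-y|^{-\varepsilon}$ is itself bounded, while when $|x-y|$ is small relative to $|x|$ one has $|x| \approx |y| \geq r_0$ and the leading polynomial term of $G_{H^2}$ absorbs it).

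With the kernel comparability in hand, one obtains $\BBG_\mu[\1_E \tau] \approx \tilde\BBG_{H^2,\varepsilon}[\1_E \tau]$ on $\Omega \setminus \{0\}$ for any Borel $E$. Applying Theorem \ref{theoremint2} together with Theorem \ref{singl} (for the direction from the integral/pointwise conditions to the existence statement), this yields the equivalence of statements 1, 2 and 3 of Theorem \ref{theoremint} in the present critical case. For statement 4, I will show that, for every Borel $E \subset \mathrm{supp}(\tau)$,
\[ \mathrm{Cap}_{\BBN_{N-2},p'}^{p+1,-\frac{N-2}{2}(p+1)}(E) \approx \mathrm{Cap}_{\BBN_{N-2-\varepsilon},p'}^{1,-\frac{N-2}{2}(p+1)}(E), \]
using the dual characterization \eqref{dualcap}, the analogous kernel comparability $\CN_{N-2}(x,y) \approx \CN_{N-2-\varepsilon}(x,y)$ for $y$ in a compact subset of $\Omega \setminus \{0\}$, and the fact that $d(y) \approx 1$ on $\mathrm{supp}(\tau)$ so that the weights $d^{p+1}$ and $d$ agree there up to constants; it suffices to test against measures supported in $\mathrm{supp}(\tau)$, which reduces the comparison to a compact $y$-regime.

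The principal technical obstacle is the lower bound in the kernel comparison: one must verify across every configuration---including $x$ approaching $\partial \Omega$, where the logarithm in \eqref{Greenestb} can blow up, and $x$ approaching $0$, where the factor $(|x||y|)^{-(N-2)/2}$ becomes singular---that $G_{H^2}(x,y)$ controls $\tilde G_{H^2,\varepsilon}(x,y)$ by a constant depending only on $r_0, \varepsilon$ and $\Omega$. The separation of $\mathrm{supp}(\tau)$ from $0$ is precisely what saves the argument; without it, the logarithmic factor in the Green kernel could not be polynomially bounded uniformly in $y$, and no choice of $\varepsilon$ would suffice.
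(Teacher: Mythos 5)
Your reduction to Theorem \ref{theoremint2} via the auxiliary kernels is the same strategy as the paper's, and the first step is fine: for $y\in K:=\supp\tau$, which is separated from both $0$ and $\partial\Omega$, the two-sided bound $G_{H^2}(x,y)\approx \tilde G_{H^2,\xe}(x,y)$ (and $\CN_{N-2}\approx\CN_{N-2-\xe}$) holds uniformly in $x$, which is exactly \eqref{62}; this transfers statements 2 and 4 (for the capacities one also has to deal with the fact that the weights $d^{p+1}$ and $d$ differ in the \emph{$x$-integral} of \eqref{dualcap}, not on $\supp\tau$, so your justification ``$d(y)\approx 1$ on the support'' is aimed at the wrong variable; the comparison is still true, but it needs a word about the boundary strip where $d(x)^p$ degenerates).

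The genuine gap is in your treatment of statements 1 and 3. First, Theorem \ref{singl} only gives the implication (primed conditions) $\Rightarrow$ (existence for \eqref{u-rhotau}); you never establish the converse, i.e.\ that a solution of \eqref{eq:uH2} forces statements 2--4. The paper closes this loop with Proposition \ref{equivint}, which identifies statement 1 with statement 3 for all $\mu\le H^2$; without it (or a substitute) your equivalence ``1 $\Leftrightarrow$ 2 $\Leftrightarrow$ 3'' is only one-directional. Second, and more seriously, statement 3 concerns the \emph{iterated} operator $\BBG_{H^2}[\BBG_{H^2}[\gt]^p]$: the outer integration runs over all $y\in\Omega\setminus\{0\}$ with density $\BBG_{H^2}[\gt]^p$, which is \emph{not} supported in $K$, so your kernel comparability ``for $y\in\supp\tau$'' does not yield $3\Leftrightarrow 3'$. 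This is precisely why the paper proves \eqref{63}--\eqref{64} separately: it splits the outer integral at $B(0,\tilde\xb/4)$, uses $1<p<\frac{N+2-2\xe}{N-2}$ to show the near-origin contribution of $\tilde\BBG_{H^2,\xe}[\tilde\BBG_{H^2,\xe}[\gt]^p]$ is $\lesssim \tau(K)^p d(x)|x|^{-\frac{N-2}{2}}$ and matches it against the lower bound \eqref{66} for $\BBG_{H^2}[\BBG_{H^2}[\gt]^p]$, and away from the origin uses $\tilde G_{H^2,\xe}\lesssim G_{H^2}$ together with \eqref{62}. Your proposal contains no analogue of this step, so the equivalence of statement 3 (and hence, via Proposition \ref{equivint}, the necessity direction for statement 1) remains unproved.
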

\begin{proof}
Let $\xe>0$ be small enough such that $1<p<\frac{N+2-2\xe}{N-2}.$ Let $K=\supp(\tau)\Subset \xO\setminus \{0\}$ and $\tilde \xb=\frac{1}{2}\dist(K,\partial\xO\cup \{0\})>0$. By \eqref{Greenestb}, \eqref{GGe} and \eqref{GGe1}, we can show that
\ba\label{62}
\BBG_{H^2}[\1_E \tau] \approx \BBG_{H^2,\xe}[\1_E \tau] \approx \tilde \BBG_{H^2,\xe}[\1_E \tau] \quad\text{and}\quad \BBN_{N-2}[\tilde\tau]\approx \BBN_{N-2-\xe}[\tilde\tau] \quad \text{in } \Omega \setminus \{0\},
\ea
for all Borel $E\subset \xO\setminus \{0\}$ and $\tilde\tau \in \GTM^+(\Omega \setminus \{0\}; \phi_{H^2})$ with $\supp(\tilde\tau)\subset K$. The implicit constants in the above estimates depend only on $N,\Omega,\tilde \beta, \varepsilon$. Hence, statements 2,4 of Theorem \ref{theoremint2} are equivalent with respective statements 2,4 (with $\am=\frac{N-2}{2}$) of Theorem \ref{theoremint}.

By Proposition \ref{equivint}, it is enough to show that statement 3 of Theorem \ref{theoremint2} is equivalent with statement 3 of Theorem \ref{theoremint}. By \eqref{62}, it is enough to prove that
\ba\label{63}
\tilde\BBG_{H^2,\xe}[\tilde\BBG_{H^2,\xe}[\gt]^p]\approx \BBG_{H^2}[\BBG_{H^2}[\gt]^p] \quad \text{in } \Omega \setminus \{0\}.
\ea
By \eqref{GGe} and \eqref{GGe1}, it is sufficient to show that
 \ba\label{64}
\tilde\BBG_{H^2,\xe}[\tilde\BBG_{H^2,\xe}[\gt]^p]\lesssim \BBG_{H^2}[\BBG_{H^2}[\gt]^p] \quad  \text{in } \Omega \setminus \{0\}.
\ea

Indeed, on one hand, since $1<p<\frac{N+2-2\xe}{N-2}$, we have, for any $x\in \xO\setminus \{0\}$,
\ba\label{65}\BAL
&\int_{B(0,\frac{\tilde \xb}{4})}\tilde G_{H^2,\xe}(x,y)\tilde\BBG_{H^2,\xe}[\gt](y)^p \dd y\approx \tau(K)^p \int_{B(0,\frac{\tilde \xb}{4})} \tilde G_{H^2,\xe}(x,y)|y|^{-\frac{p(N-2)}{2}} \dd y\\
&\lesssim  \tau(K)^p d(x)|x|^{-\frac{N-2}{2}}\int_{B(0,\frac{\tilde \xb}{4})} |x-y|^{-\xe}|y|^{-\frac{(p+1)(N-2)}{2}} \dd y\\
& \quad +\tau(K)^p d(x)|x|^{-\frac{N-2}{2}}\int_{B(0,\frac{\tilde \xb}{4})} |x-y|^{-N+2}|y|^{N-2-\xe-\frac{(p+1)(N-2)}{2}} \dd y \\
& \lesssim\tau(K)^p d(x)|x|^{-\frac{N-2}{2}}.
\EAL
\ea
The implicit constants in the above inequalities depend only on $\xO,K,\tilde \beta,p,\xe$.

On the other hand, we have
\ba\label{66}\BAL
\int_{B(0,\frac{\tilde \xb}{4})} G_{H^2}(x,y)\BBG_{H^2}[\gt](y)^p \dd y&\gtrsim\tau(K)^p d(x)|x|^{-\frac{N-2}{2}}\int_{B(0,\frac{\tilde \xb}{4})} |y|^{-\frac{(p+1)(N-2)}{2}} \dd y \\
&\gtrsim\tau(K)^p d(x)|x|^{-\frac{N-2}{2}},
\EAL
\ea
where the implicit constants in the above inequalities depend only on $\xO,K,\tilde \beta,p$. Hence by \eqref{65} and \eqref{66}, we have that
\ba\label{67}
\int_{B(0,\frac{\tilde \xb}{4})} \tilde G_{H^2,\xe}(x,y)\tilde\BBG_{H^2,\xe}[\gt](y)^p \dd y\lesssim\int_{B(0,\frac{\tilde \xb}{4})} G_{H^2}(x,y)\BBG_{H^2}[\gt](y)^p \dd y \quad\forall x\in \xO\setminus \{0\}.
\ea

Next, by \eqref{Greenestb} and \eqref{tilG}, we have, for $x \in \Omega \setminus \{0\}$ and $y \in \Omega \setminus B(0,\frac{\tilde \beta}{4})$,
\bal
\tilde G_{H^2,\xe}(x,y) \approx d(x)d(y)(|x||y|)^{-\frac{N-2}{2}} \CN_{N-2}(x,y) \lesssim G_{H^2}(x,y).
\eal
This and \eqref{62} yield
\ba\label{68}
\int_{\xO\setminus B(0,\frac{\tilde \xb}{4})} \tilde G_{H^2,\xe}(x,y)\tilde\BBG_{H^2,\xe}[\gt](y)^p \, \dd y\lesssim\int_{\xO\setminus B(0,\frac{\tilde \xb}{4})} G_{H^2}(x,y)\BBG_{H^2}[\gt](y)^p \, \dd y \quad\forall x\in \xO\setminus \{0\}.
\ea

Combining \eqref{67} and \eqref{68}, we deduce \eqref{64}. The proof is complete.
\end{proof}

\begin{proof}[\textbf{Proof of Theorem \ref{th:existnu-prtO} when $\xS=\{0\}$ and $\xm=\frac{(N-2)^2}{4}$}.] Proceeding as in the proof of Theorem \ref{theeqint}, we obtain the desired result.
\end{proof}

\begin{remark} \label{partO-N}
	If $p<\frac{N+1}{N-1}$, by using a \eqref{CapGamma}, we obtain that $\inf_{z \in \partial \Omega}\mathrm{Cap}_{\frac{2}{p},p'}^{\partial \Omega}(\{z\})>C>0$, hence statement 3 of Theorem \ref{th:existnu-prtO} holds true. Consequently, under the assumptions of Theorem \ref{th:existnu-prtO}, equation \eqref{u-sigmanu} has a positive solution for $\gs>0$ small. When $p \geq \frac{N+1}{N-1}$, by using a similar argument as in Remark \ref{nonexistSigma}, we can show that for any $\sigma>0$ and $z \in \partial \Omega$,  equation \eqref{u-sigmanu} does not admit any positive weak solution.
\end{remark}

\appendix\section{Some estimates} \label{app:A}
\setcounter{equation}{0}
In this appendix, we give an estimate which is used several times in the paper.
\begin{lemma} \label{lemapp:1}
Assume $\ell_1>0$, $\ell_2>0$, $\alpha_1$ and $\alpha_2$ such that $N-k+\alpha_1 + k\alpha_2 >0$. For $y \in \Omega \setminus \Sigma$, put
$
 \CA(y):= \{ x \in (\Omega \setminus \Sigma): d_{\Sigma}(x) \leq \ell_1 \quad \text{and} \quad |x-y| \leq \ell_2 d_{\Sigma}(x)^{\alpha_2} \}.
$
Then
\bal
\int_{\CA(y) \cap \Sigma_{\beta_1}} d_{\Sigma}(x)^{\alpha_1}\dx \lesssim \ell_1^{N-k+\alpha_1 + k\alpha_2}\ell_2^k.
\eal
\end{lemma}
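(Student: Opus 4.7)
The plan is to localize via the finite cover \eqref{cover} and then carry out an elementary Fubini computation in local coordinates adapted to $\Sigma$.

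First, since $\Sigma_{\beta_1} \subset \bigcup_{j=1}^{m_0} V(\xi^j,\beta_0)$, it will suffice to estimate the integral over each piece $\CA(y) \cap \Sigma_{\beta_1} \cap V(\xi^j,\beta_0)$ and then sum over the finitely many charts. Fix one such chart and write $x=(x',x'') \in \R^k \times \R^{N-k}$. I will introduce the change of variables $\tilde x'' := x'' - \Gamma^{\xi^j}(x')$, whose Jacobian is $1$ for each fixed $x'$, so that $|\tilde x''| = \xd_\Sigma^{\xi^j}(x) \approx d_\Sigma(x)$ by \eqref{propdist}. In particular the constraint $d_\Sigma(x)\leq \ell_1$ transforms into $|\tilde x''| \leq C\ell_1$ for a constant depending only on $N,k,\Sigma$.

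Next, I will apply Fubini. For fixed $\tilde x''$ with $|\tilde x''|=r$, the condition $|x-y| \leq \ell_2 d_\Sigma(x)^{\alpha_2}$ forces in particular $|x'-y'| \leq c\,\ell_2 r^{\alpha_2}$, so the $x'$-slice of the integration domain has $k$-dimensional Lebesgue measure at most $c(\ell_2 r^{\alpha_2})^k$. Passing to polar coordinates in $\R^{N-k}$ for the variable $\tilde x''$ then yields
\begin{align*}
\int_{\CA(y) \cap \Sigma_{\beta_1} \cap V(\xi^j,\beta_0)} d_\Sigma(x)^{\alpha_1}\,dx \;\lesssim\; \ell_2^k \int_0^{C\ell_1} r^{\alpha_1 + (N-k-1) + k\alpha_2}\,dr.
\end{align*}
The hypothesis $N-k+\alpha_1+k\alpha_2 > 0$ is exactly what is needed to ensure integrability at $r=0$, and the last integral evaluates to $c\,\ell_1^{N-k+\alpha_1+k\alpha_2}$. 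Summing over $j=1,\ldots,m_0$ produces the claimed bound.

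There is no real obstacle here: the only step requiring care is the change of variables in each chart together with the two-sided comparison $|\tilde x''| \approx d_\Sigma(x)$, both of which are immediate consequences of \eqref{propdist} and the uniform $C^2$ bound $\|\Sigma\|_{C^2}<\infty$ recalled in \eqref{supGamma}. All implicit constants depend only on $N,k,\Sigma$ (and on $\alpha_1,\alpha_2$ through the final antiderivative), as required by the statement.
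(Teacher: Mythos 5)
Your proposal is correct and follows essentially the same route as the paper: localize over the finite cover from \eqref{cover}, change variables $z''=x''-\Gamma^{\xi^j}(x')$ in each chart, use the two-sided comparison \eqref{propdist} to replace $d_\Sigma$ by $|z''|$, bound the $x'$-slice by $c(\ell_2|z''|^{\alpha_2})^k$ from the constraint $|x'-y'|\leq |x-y|$, and integrate in $z''$ using $N-k+\alpha_1+k\alpha_2>0$. The only cosmetic difference is your use of polar coordinates in $\R^{N-k}$ where the paper writes the iterated integral directly.
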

\begin{proof}
By \eqref{cover}, we have
\bal
\int_{\CA(y) \cap \Sigma_{\beta_1}} d_{\Sigma}(x)^{\alpha_1}\dx \leq \sum_{j=1}^{m_0}\int_{\CA(y) \cap V(\xi^j,\beta_0)} d_{\Sigma}(x)^{\alpha_1}\dx.
\eal	
For any $j \in \{1,...,m_0\}$, in view of \eqref{propdist}, we have
\ba \label{app:3}
d_\Sigma(x) \leq \delta_\Sigma^{\xi^j}(x) \leq C \| \Sigma\|_{C^2} d_\Sigma(x) \quad \forall x \in V(\xi^j,\beta_0),
\ea
where
\bal
\delta_\Sigma^{\xi^j}(x):=\sqrt{\sum_{i=k+1}^N|x_i-\Gamma_i^{\xi^j}(x')|^2}, \qquad x=(x',x'')\in V(\xi^j,\beta_0).
\eal
Therefore, by the change of variables $z'=x'-(\xi^j)'$ and $z''=(z_{k+1},\ldots,z_N)$ with $z_i=x_i-\Gamma_i^{\xi^j}(x')$, $i=k+1,..,N$, and \eqref{app:3}, we have
\bal \begin{aligned}
\int_{ \CA(y) \cap V(\xi^j,\beta_0) }d_\Sigma(x)^{\alpha_1}\dx &\lesssim \int_{ \{\delta_\Sigma^{\xi_j}(x) \leq c\ell_1, |x-y| \leq c\ell_2 \delta_\Sigma^{\xi_j}(x)^{\alpha_2}  \} \cap V(\xi^j,\beta_1) }\delta_\Sigma^{\xi_j}(x)^{\alpha_1}\dx \\
&\lesssim   \int_{ \{ |z''| \leq c\ell_1 \} } \int_{ \{ |z'|< c\ell_2 |z''|^{\alpha_2} \} } |z''|^{\alpha_1} \dz' \dz'' \lesssim \ell_1^{N-k+\alpha_1 + k\alpha_2} \ell_2^k.
\end{aligned}
 \eal
The last estimate holds because $N-k+\alpha_1 + k\alpha_2>0$. The proof is complete.
\end{proof}


\end{document}